\newtheorem{theorem}{Theorem}[section]
\newtheorem{lemma}[theorem]{Lemma}
\newtheorem{proposition}[theorem]{Proposition}
\newtheorem{corollary}[theorem]{Corollary}
\theoremstyle{definition}
\newtheorem{definition}[theorem]{Definition}
\theoremstyle{remark}
\newtheorem{remark}{Remark}
\newtheorem{example}[theorem]{Example}
\numberwithin{equation}{section}
\definecolor{magenta}{rgb}{0.75,0,0.25}
\definecolor{violet}{rgb}{0.25,0,0.75}
\definecolor{dgreen}{rgb}{0.0,0.5,0.0}
\newcommand{\w} {\mathrm{white}}
\newcommand{\y} {\mathrm{yellow}}
\newcommand{\fr}{\partial}
\newcommand{\linfw}{L^{\mathrm{white}}_{\infty}}
\newcommand{\linfy}{L^{\mathrm{yellow}}_{\infty}}
\newcommand{\Gw}{{\mathcal G}^{\mathrm{white}}}
\newcommand{\Gy}{{\mathcal G}^{\mathrm{yellow}}}
\newcommand {\D}  {{\mathcal D}}
\newcommand {\T}  {{\mathcal T}}
\newcommand {\W}  {{\mathcal W}}
\newcommand {\Y}  {{\mathcal Y}}
\newcommand {\V}  {{\mathcal V}}
\newcommand {\G}  {{\mathcal G}}
\newcommand {\B}  {{\mathcal B}}
\newcommand {\C}  {{\mathcal C}}
\newcommand {\E}  {{\mathcal E}}
\renewcommand {\P}  {\mathcal{P}}
\newcommand {\HH}  {{\mathcal H}}
\newcommand {\NN}  {{\mathbb N}}
\newcommand {\RR}  {{\mathbb R}}
\newcommand{\abs}[1]{{\left\lvert#1\right\rvert}}
\title{Triangular labyrinth fractals}
\author{Ligia L. Cristea \thanks{This author's work was supported by the Austrian Science Fund (FWF), 
Project P 27050-N26.} 
\\Karl-Franzens-Universit\"at Graz\\ Institut f\"ur Mathematik und Wissenschaftliches Rechnen
\\Heinrichstra{\ss}e 36, 8010 Graz,Austria\\ \tt{strublistea@gmail.com} 
 \and Paul Surer \thanks{This author's work was supported by the Austrian Science Fund (FWF), 
Project P 28991-N35.}\\ Universit\"at f\"ur Bodenkultur,\\Institut f\"ur Mathematik,\\ Gregor Mendel Stra{\ss}e 33, 1180 Wien, Austria\\ \tt{paul.surer@boku.ac.at} }
\date{\today}
\begin{document}

\maketitle

\emph{This article is dedicated to Christian Krattenthaler on the occasion of his $60$th birthday.}
\\\\
\textbf{Keywords:} fractal, dendrite, pattern, graph, tree, path length, arc length, Sierpi\'nski gasket, graph directed constructions \\\\
\textbf{AMS Classification:}   28A80, 
05C38, 28A75, 51M25, 52A38, 05C05

\abstract{
We define and study a class of fractal dendrites called triangular labyrinth fractals. 
For the construction we use triangular labyrinth pattern systems, consisting of two triangular patterns: a white and a yellow one. Correspondingly, we have two fractals: a white and a yellow one. The fractals studied here are self-similar, and fit into the framework of graph directed constructions.
The main results consist in showing how special families of triangular labyrinth patterns systems, which are defined based on some shape features, 
can generate exactly three types of dendrites: labyrinth fractals where all non-trivial arcs have infinite length, fractals where all non-trivial arcs have finite length, or fractals where the only arcs of finite lengths are line segments parallel to a certain direction.
We also study the existence of tangents to arcs.
The article is inspired by research done on labyrinth fractals in the unit square that have been studied during the last decade. In the triangular case, due to the geometry of triangular shapes, some new techniques and ideas are necessary in order to obtain the results.
}

\section{Introduction}
Labyrinth fractals were introduced by Cristea and Steinsky \cite{laby_4x4,laby_oigemoan} as special families of  (self-similar) Sierpi\'nski carpets. They are dendrites that can be obtained by a successive application of a replacement rule on a  square. Recently, they have  been studied in more general settings  in  \cite{mixlaby, cristealeobacher_arcs,  supermix}. Self-similar labyrinth fractals \cite{laby_4x4,laby_oigemoan} can be seen as a special case of the self similar objects called ``fractal squares'' studied a few years later by Lau et. al. \cite{LauLuoRao}, who concentrated on the topological structure (connectedness properties). 
In the present article we adopt the concept from \cite{laby_4x4,laby_oigemoan} and introduce triangular labyrinth fractals.
They are obtained as the limit of an iterative process: we start with an equilateral triangle  $T_1$ of side length one, divide it into $m^2$ equilateral triangles of side-length $\nicefrac{1}{m}$,  
and then colour some of them in black 
\begin{figure}[h]
\begin{center}
\includegraphics[width=0.2\textwidth]{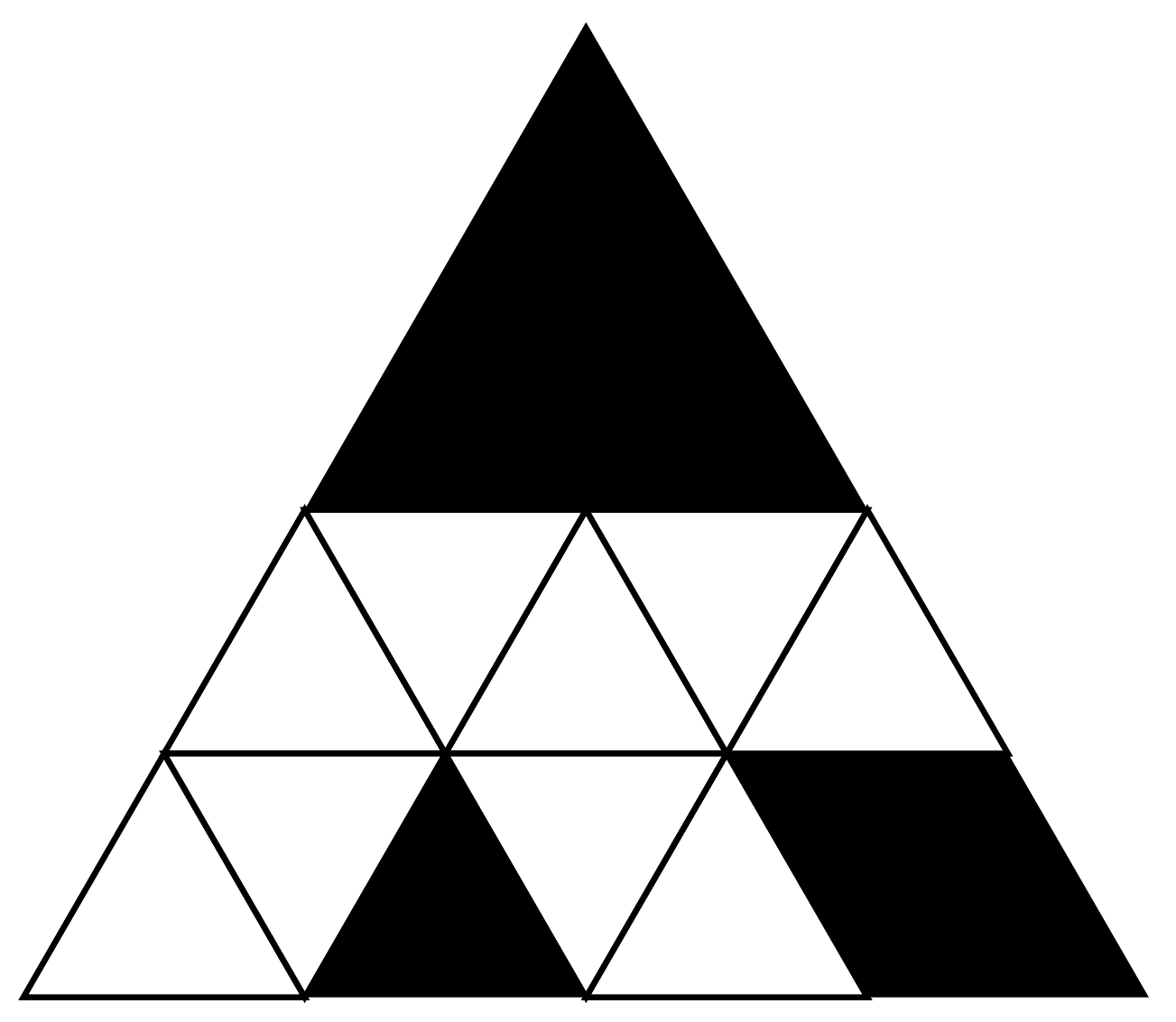}
\hspace{3cm}
\includegraphics[width=0.2\textwidth]{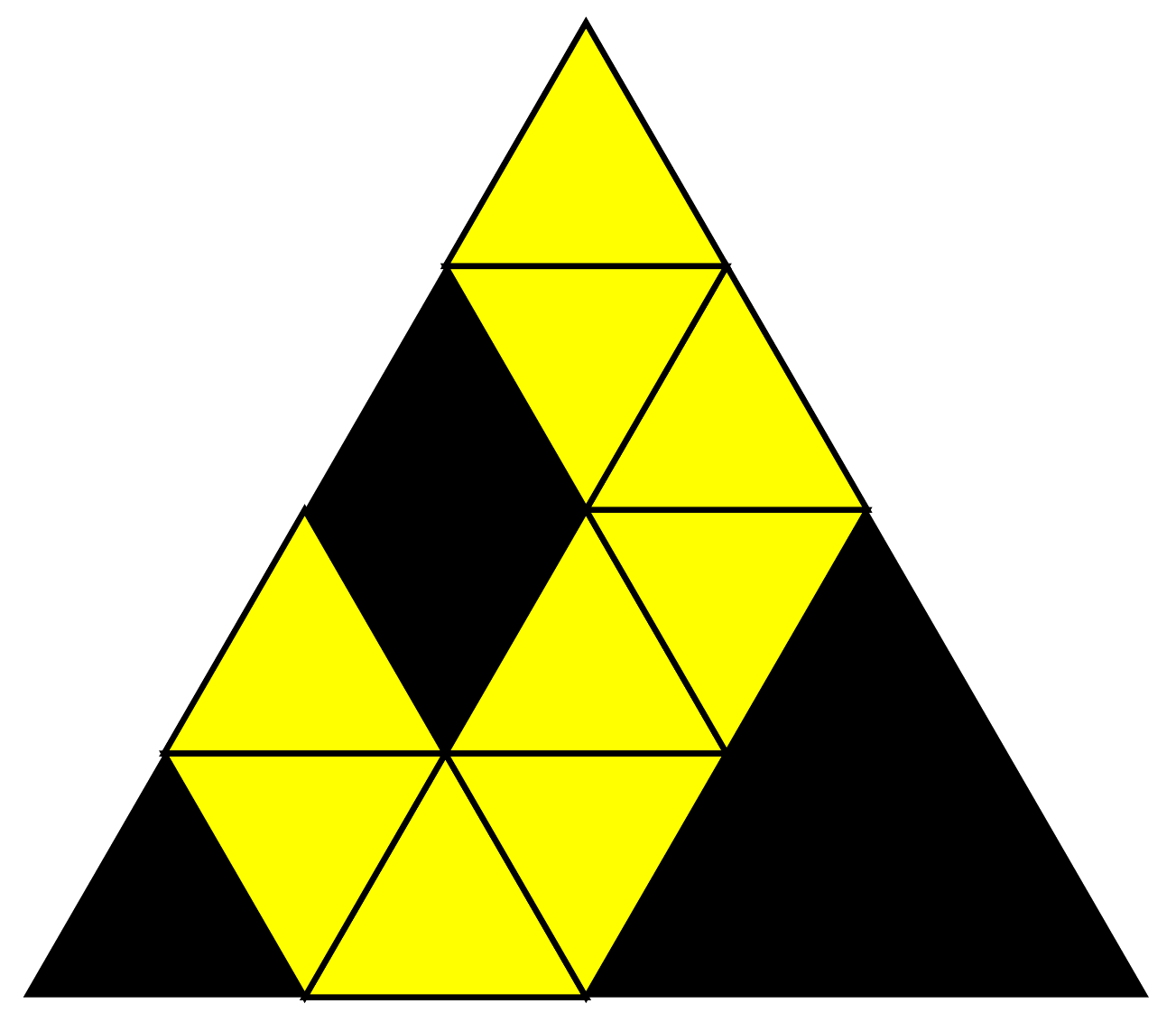}
\caption{An example of a triangular labyrinth patterns system. The white one on the left coincides with the  first step of the iterative process.}
\label{fig:Bsp1WY1}
\end{center}
\end{figure}
(indicating the sets that will be cut out throughout the iterative construction) and the rest in white 
according to a given triangular pattern. Then we repeat the process with the remaining white triangles.
The construction rule is determined by a triangular labyrinth patterns system that consists of a pattern for the $\nicefrac{m(m+1)}{2}$ ``upright'' triangles (the white pattern), and one for the $\nicefrac{m(m-1)}{2}$ ``upside down'' triangles (the yellow pattern).
\begin{figure}[h]
\begin{center}
\includegraphics[width=0.30\textwidth]{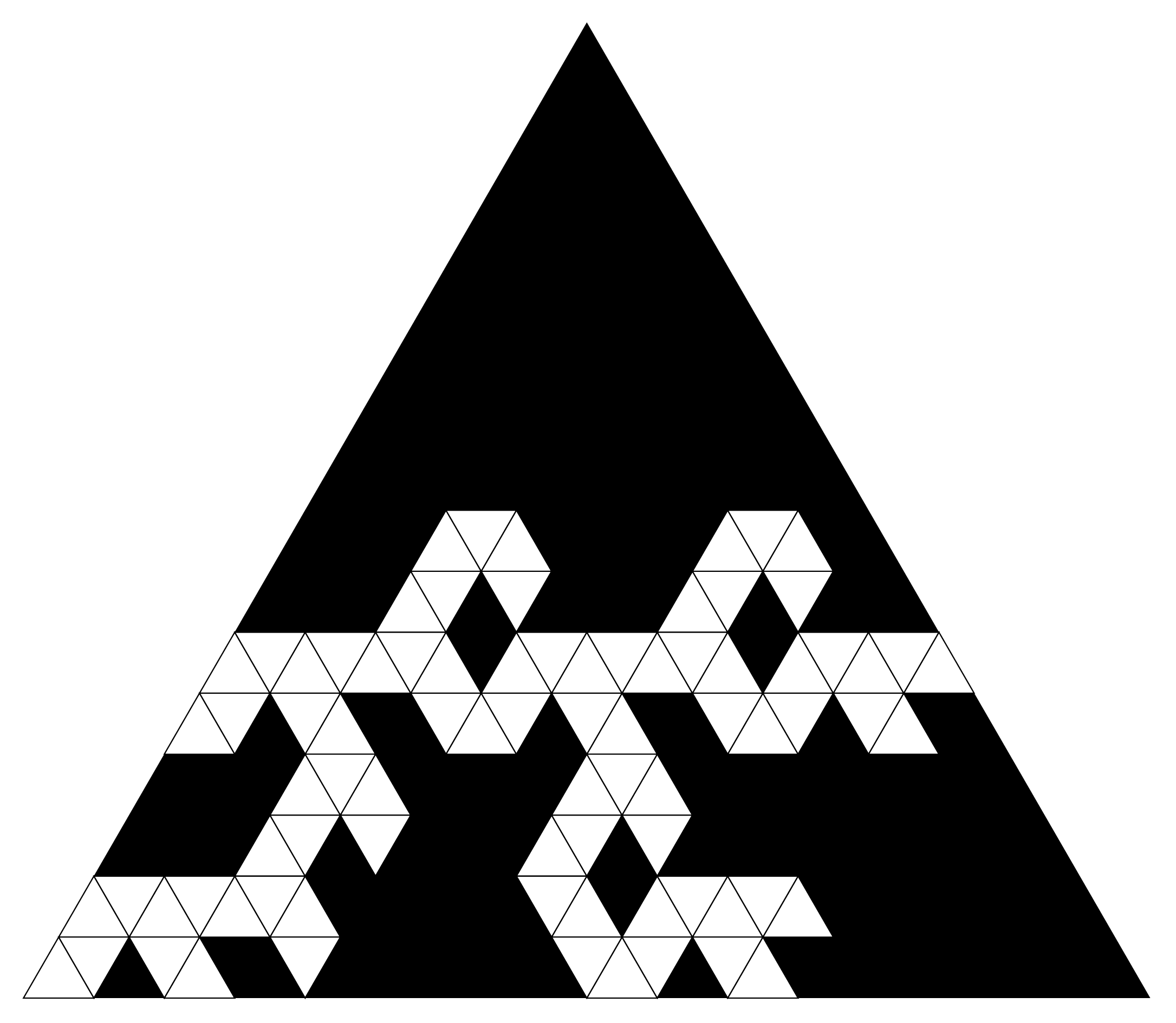}
\hspace{2.00cm}
\includegraphics[width=0.30\textwidth]{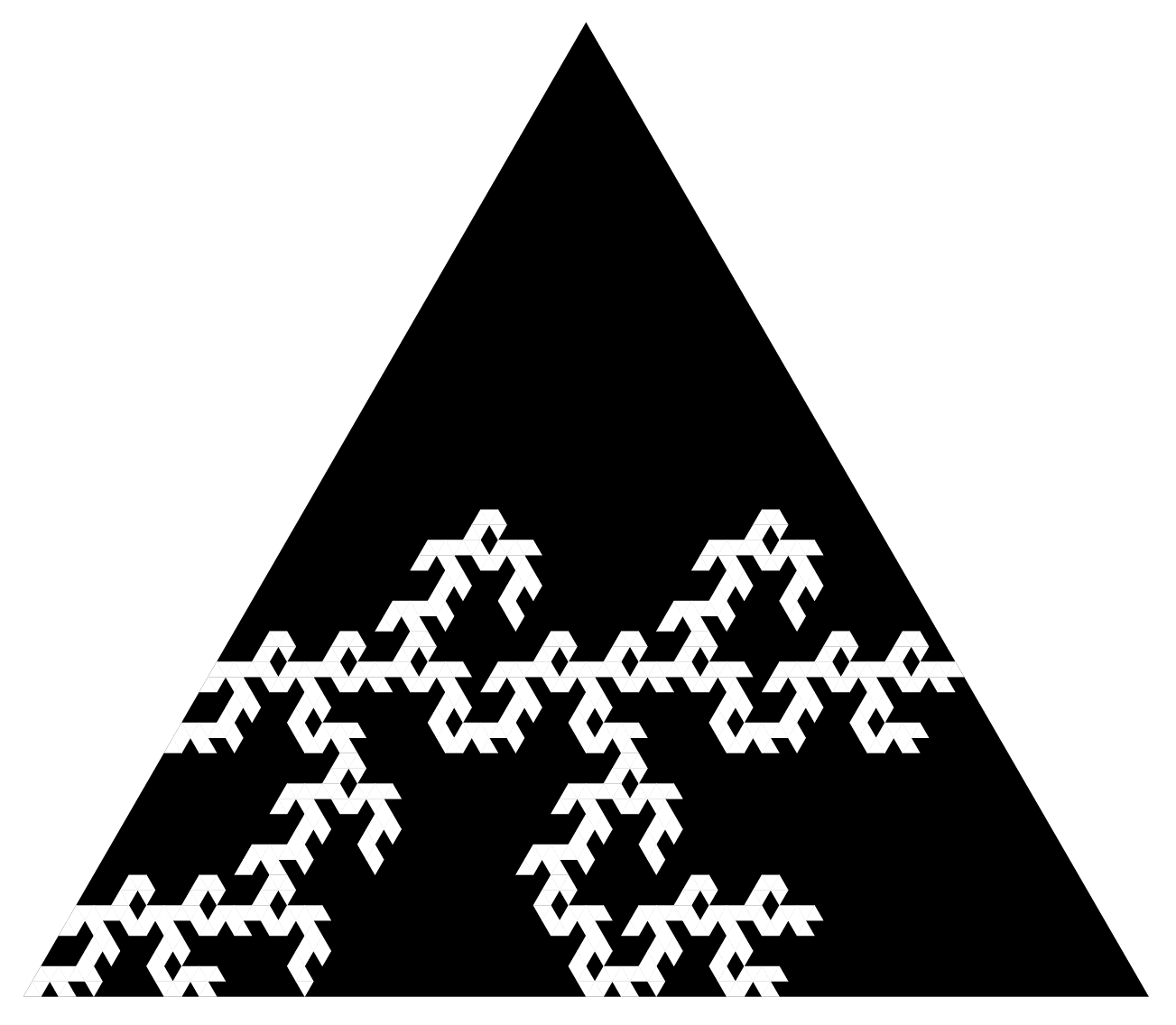}
\caption{The second and third step of the iterative process.}
\label{fig:Bsp1W23}
\end{center}
\end{figure}

While in the case of (square) labyrinth fractals, all squares in the patterns have the 
same position, i.e., they all ``look the same'', since they are just translated images of each other, 
in the triangular case some of the small triangles are ``upright'', like the initial triangle $T_1$, and the others are 
``upside down''. So in this case, in order to perform the iterations in the construction of the fractal, one uses two triangular patterns:
one for the upright triangles, and a second one for the upside down triangles.

The triangular labyrinth patterns used in the construction of triangular labyrinth fractals have to satisfy three conditions, given by the `tree property'', the ``exits property'', and the ``corners property'' which provide the labyrinth shape of the sets obtained at each iteration and, subsequently, the dendrite structure of the fractals.
Of course, one could also use one single pattern, but the symmetry requirements that it would have to satisfy would essentially reduce the class of patterns that are eligible 
for the construction and therefore also the richness of the class of resulting triangular labyrinth fractals. The fact that we use two patterns in the construction has the effect that triangular labyrinth fractals always appear in pairs since we can exchange the role of the white pattern and the yellow one. We therefore speak of the white  and yellow triangle labyrinth fractal.
\begin{figure}[h]
\begin{center}
\includegraphics[width=0.30\textwidth]{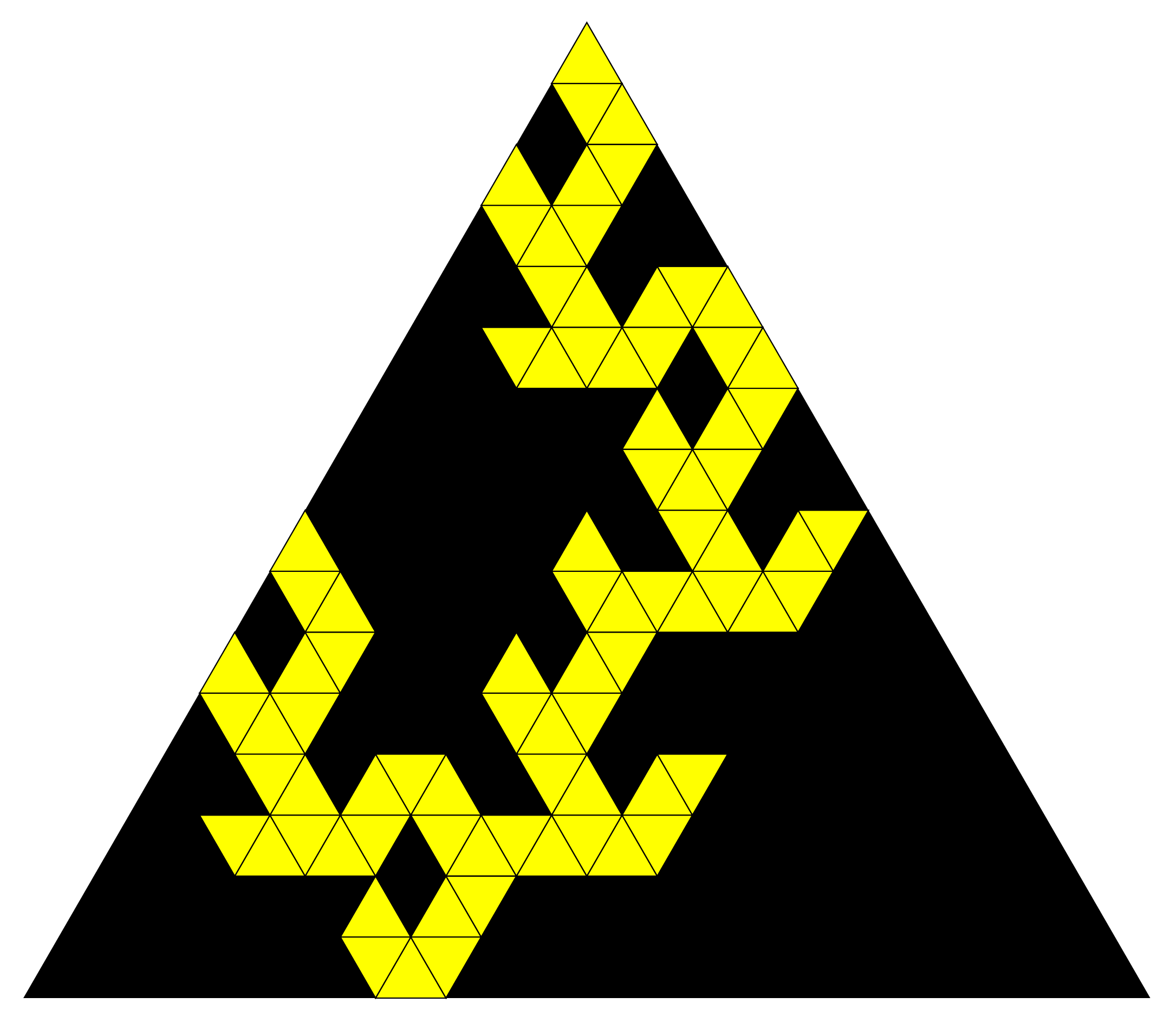}
\hspace{2.00cm}
\includegraphics[width=0.30\textwidth]{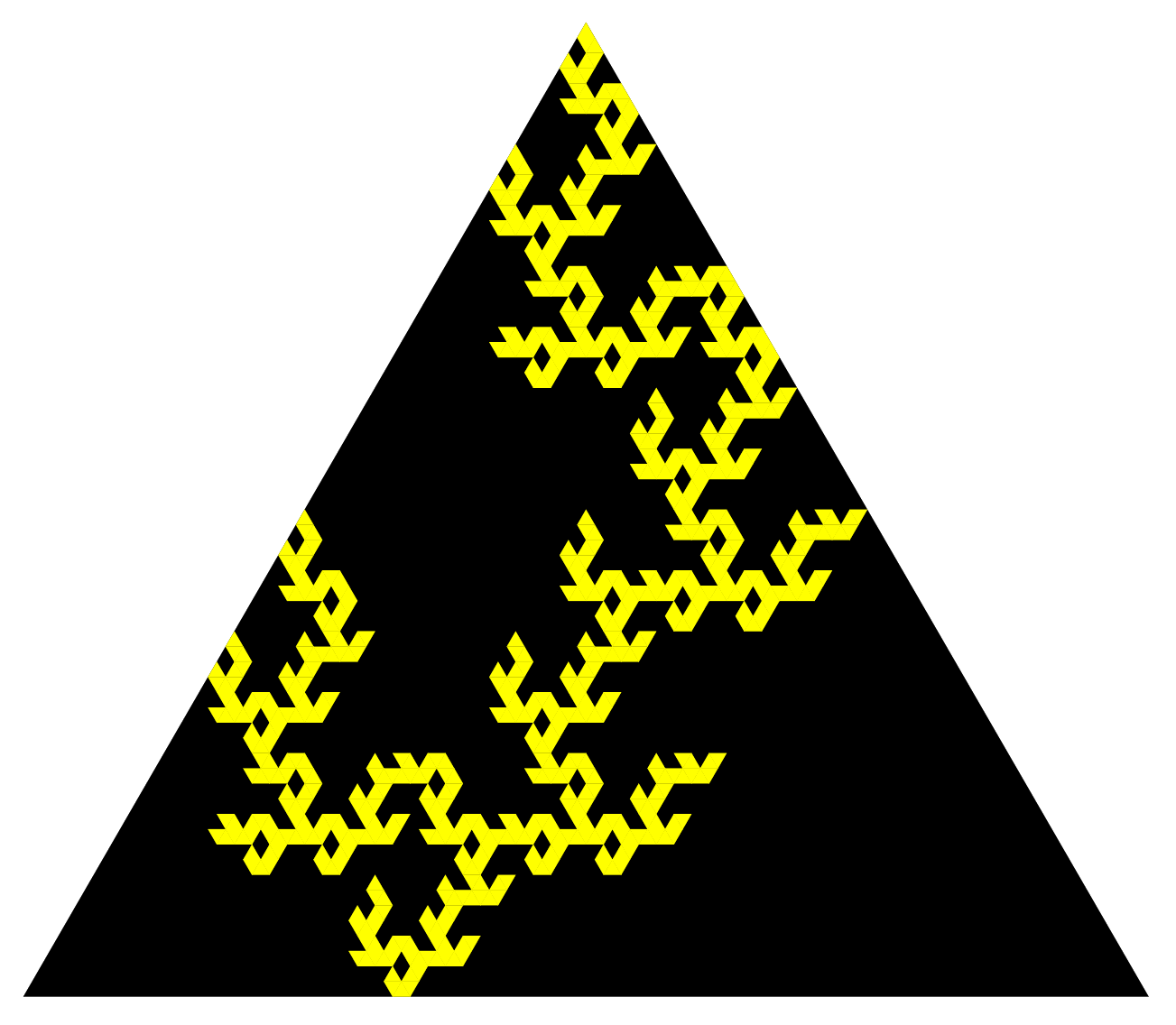}
\caption{Exchanging the role of the white and yellow patten yields the iterative construction of 
the yellow fractal. The figure shows the second and third step.
The first step corresponds to the yellow pattern on the right hand side of Figure~\ref{fig:Bsp1WY1}.}
\label{fig:Bsp1Y23}
\end{center}
\end{figure}
In fact, we consider the white and the yellow fractal in parallel since this is the more natural and also more convenient way.

The dual character of triangular labyrinth fractals
is the main difference to the classical labyrinth fractals from \cite{laby_oigemoan}.
While these (square) labyrinth fractals can be defined via an iterated function system (IFS, cf.~\cite{falconerbook}), triangular labyrinth fractals  correspond to a graph directed iterated function system (GIFS) in the sense of \cite{MW}. 
\begin{figure}[h]
\begin{center}
\includegraphics[width=0.3\textwidth]{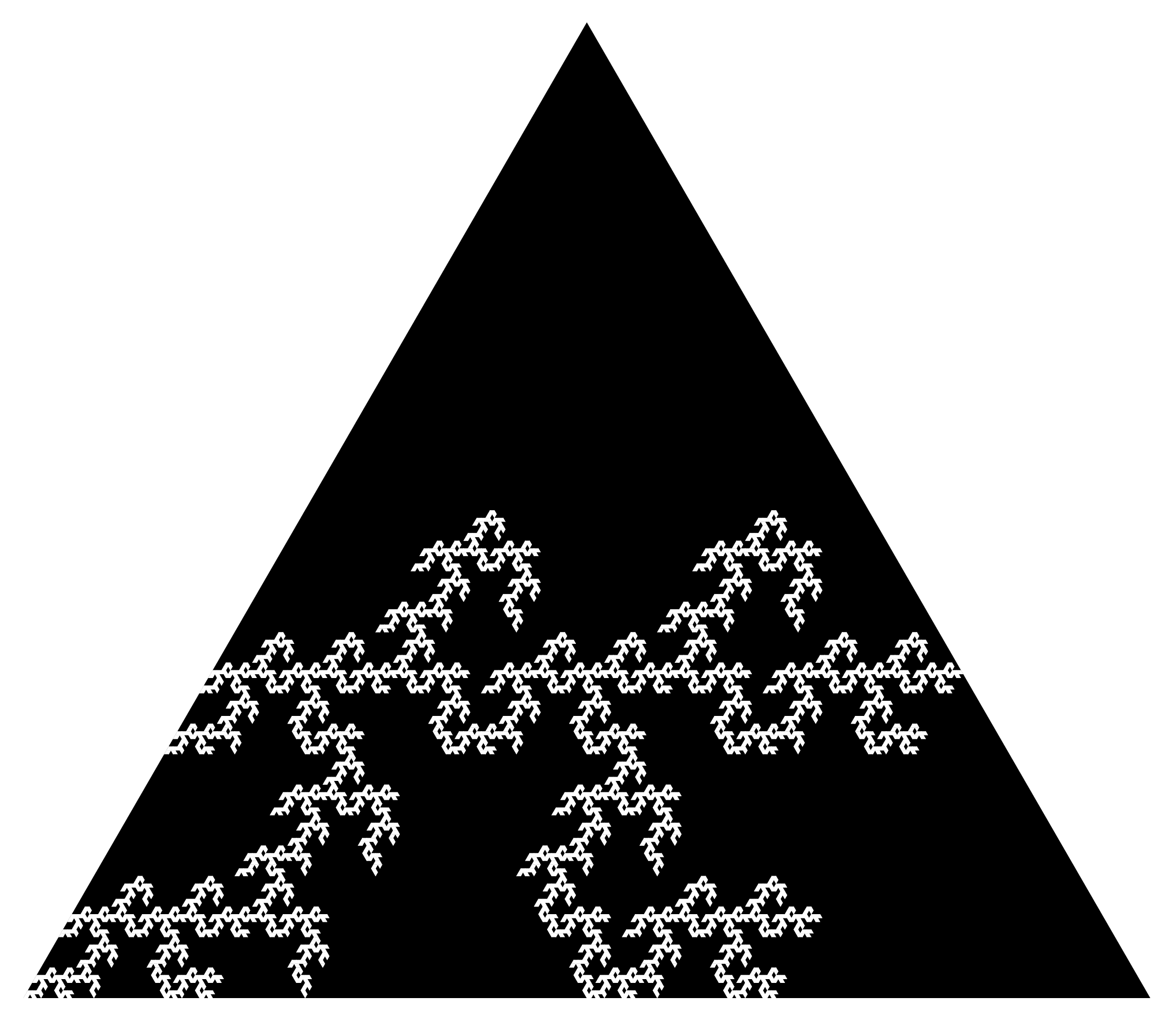}
\hspace{2cm}
\includegraphics[width=0.3\textwidth]{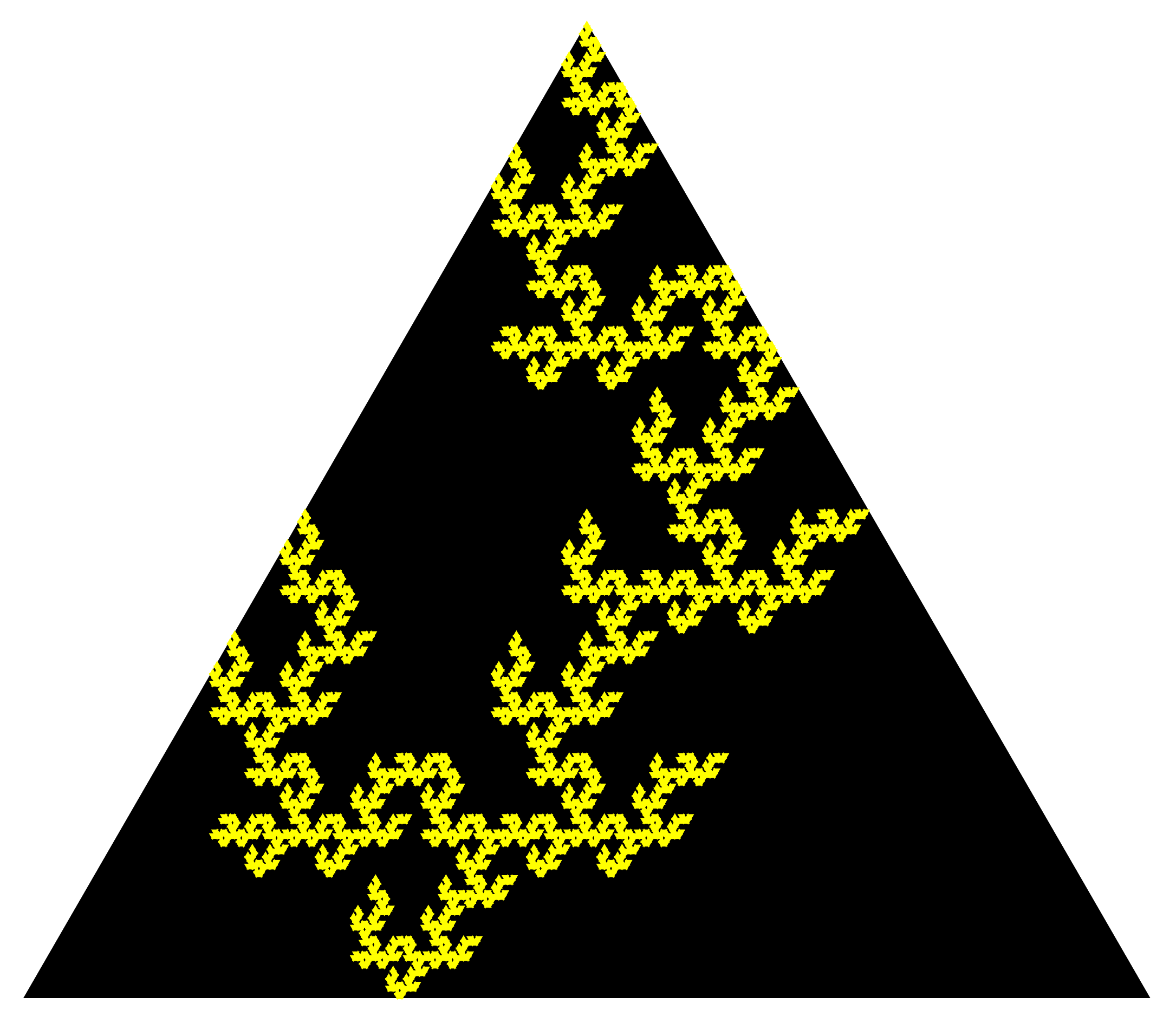}
\caption{We study both triangular labyrinth fractals in parallel. After 4 steps we already have a good approximation of them.}
\label{fig:Bsp1WY4}
\end{center}
\end{figure}

As already mentioned, triangular labyrinth fractals are dendrites. We give an explicit formula for the Hausdorff dimension that depends on the number of the non-black triangles in the triangular labyrinth patterns system.
This is proven based on the GIFS structure of these fractals. 
A large part of the paper deals with the arcs in the fractal. We prove that, in general, the dendrite has three main branches each of which ends up on one side of the initial triangle $T_1$, in a  so-called exit. The length of the arcs between these exits, i.e., their one-dimensional Lebesgue measure, can be finite or infinite.

The finite length of these arcs occurs under special circumstances, namely, when both patterns of the triangular labyrinth patterns system have, roughly speaking, no ``detours''.
We call such systems unblocked.  
To prove this we first show that these arcs are also given by another GIFS in order to afterwords apply the results form \cite{MW}. Subsequently, we extend the obtained results to arbitrary arcs in the fractals.
The main results consist in showing how special families of triangular labyrinth patterns systems, which are defined based on some shape features, 
can generate exactly three types of dendrites: labyrinth fractals where all non-trivial arcs have infinite length, fractals where all non-trivial arcs have finite length, or fractals where the only arcs of finite lengths are line segments parallel to a certain direction.

 Let us mention that the objects studied in this paper are related to other triangle-based fractals and dendrites. One could also view triangular labyrinth fractals as a family of more general Sierpi\'nski gaskets, (we refer, e.g., to \cite{freiberghamblyhutchinson_Vvariable} for $V$-variable Sierpi\'nski gaskets). 
Triangular patterns also occur in 
other  research articles of the last decade, for example
\cite{lipschitz_gaskets}, where Lipschitz equivalence of totally disconnected self similar sets generated by such patterns is studied.  In that case, only one pattern is used for generating the fractal and neighbouring triangles in the pattern share only a common vertex (and not a common side). More precisely, the triangular patterns consist only of  upright triangles, while in the case of our triangular labyrinth fractals we also consider triangles in the upside down position.
We also remark that there is a very recent article on fractal dendrites \cite{Samuel_selfsimilar_dendrites}, where self-similar  dendrites are constructed by using polygonal systems in the plane, a method based on IFS that is different from  the construction method of both square-related labyrinth fractals \cite{laby_4x4, laby_oigemoan} and  triangular labyrinth fractals.

The results on square based labyrinth fractals
have already several applications in physics,  e.g., in the study of planar nanostructures
\cite{GrachevPotapovGerman2013}, the fractal reconstruction of complicated
images, signals and radar backgrounds \cite{PotapovGermanGrachev2013}, and 
the construction of prototypes of ultra-wide band radar antennas
\cite{PotapovZhang2016}. Very recently,
 fractal labyrinths are used in combination
with genetic algorithms for the synthesis of big robust antenna arrays and
nano-antennas in  telecommunication \cite{PotapovPotapovPotapov_dec2017}. It is straightforward that triangle labyrinth fractals could be used in  similar contexts. 
Concerning further applications of triangular labyrinth fractals, 
we want to mention very recent results in materials engineering \cite{JanaGarcia_lithiumdendrite2017}, which show that dendrite growth, a largely unsolved problem,  plays an essential role when dealing with high power and energy lithium-ion batteries. Furthermore, there is recent research on crystal growth \cite{tarafdar_multifractalNaCl2013}, which suggests that, depending on the geometric structure of the studied material, labyrinth fractals are a suitable model for studying various phenomena and objects occurring in other fields of science.
Physicists' work regarding the
modelling of porous structures   by using Sierpi\'nski carpets \cite{Tarafdar_modelporstructrepeatedSC2001} indicate that the triangular labyrinth fractals could also provide such models when the basic structure of the studied material is not square- but triangle-related.
Finally, we remark that Koch curves, which are of interest in
theoretical physics in the context of diffusion processes ({e.g., }
\cite{SeegerHoffmannEssex2009_randomKoch}), are related to arcs  in triangular labyrinth fractals.  

The article is organised in the following way: in Section~\ref{sec:construction} we introduce the formalism that we need for the (iterative) construction. 
In Section~\ref{sec:labyrinth_patterns_systems} we give the definition of a
labyrinth patterns system and
associate with each step of the iteration a finite graph whose vertices correspond to the white (yellow, respectively), triangles. Two triangles that share a common side are linked by an edge in the graph. These graphs are an important tool throughout the entire article.
In Section~\ref{sec:TLF} we introduce the labyrinth fractals as the limit of the iterative process. We show that they fit into the framework of GIFS and calculate the Hausdorff dimension based on this fact.
In Section~\ref{sec:topological_prop} we study the topology of labyrinth fractals, especially, we prove that they are dendrites. In the rest of the article we are mainly interested in arcs within the fractals. 
We first study the arcs between exits  of the dendrite. 
Therefore, in Section~\ref{sec:paths_TLG} we first deal with the paths between exits in the above mentioned graphs.  
We introduce the path matrix of a labyrinth patterns system, which is an important instrument in several further considerations throughout the paper.
In Section~\ref{sec:arcs_TLF} we show how to construct these arcs.
All in all, we have six arcs between exists, three in the white labyrinth fractal and three in the yellow one. It turns out that these arcs can also be obtained from an GIFS.
They have Hausdorff dimension strictly larger than $1$ (and thus, infinite length) if the labyrinth pattern system satisfies the condition of being ``globally blocked''. The respective calculations are done in  Section~\ref{sec:blocked}.
In Section~\ref{sec:arc_length} we use the results on the arcs between exists in order to see that (in the case of a globally blocked system)  all non-trivial arcs in the fractal have infinite length. Finally, in Section~\ref{sec:partially_blocked} we investigate labyrinth fractals defined by not globally blocked (i.e., unblocked) triangular patterns systems.

\section{Construction}\label{sec:construction}

For three non-collinear points $\mathbf{A}_1$, $\mathbf{A}_2$, $\mathbf{A}_3$ we denote by
$\Delta(\mathbf{A}_1, \mathbf{A}_2,\mathbf{A}_3)$ the full triangle spanned by $\mathbf{A}_1$, $\mathbf{A}_2$, $\mathbf{A}_3$.
We denote by $H\subset \RR^3$ the set of homogeneous coordinates, that is
\[H=\{(\alpha_1, \alpha_2, \alpha_3) \in [0,1]^3: \, \alpha_1+\alpha_2+\alpha_3=1\}.\]
Every $\mathbf{x} \in \Delta(\mathbf{A}_1, \mathbf{A}_2,\mathbf{A}_3)$ can be uniquely represented by $(\alpha_1, \alpha_2, \alpha_3) \in H$ such that
$\mathbf{x}=\alpha_1\mathbf{A_1} + \alpha_2\mathbf{A_2}+\alpha_2\mathbf{A_2}$.

We define $T_1$ to be the equilateral triangle with side-length one
$T_1=\Delta(\mathbf{P}_1, \mathbf{P}_2,\mathbf{P}_3)$ where  
\[\mathbf{P}_1 = \left(0,0\right), \qquad \mathbf{P}_2 = \left(1,0 \right)
, \qquad \mathbf{P}_3 = \left(\nicefrac12, \nicefrac{\sqrt{3}}{2} \right)\]
(actually, $\mathbf{P}_1$, $\mathbf{P}_2$ and $\mathbf{P}_3$ can be arbitrary non-collinear elements of $\RR^2$).

The boundary of $T_1$ consists of  three subsets
$\partial T_1 = \varrho_1 \cup \varrho_2 \cup \varrho_3$ where the line segment $\varrho_i$ is the  side of $T_1$ that  lies opposite  to $\mathbf{P}_i$, for all $i \in \{1,2,3\}$, that is
\[\varrho_i = \{\alpha_1\mathbf{P}_1+\alpha_2\mathbf{P}_2+\alpha_3\mathbf{P}_3: (\alpha_1, \alpha_2, \alpha_3) \in H, \alpha_i=0\}.\]

For non-collinear points $\mathbf{A}_1, \mathbf{A}_2, \mathbf{A}_3 \in T_1$ we define
\emph{the projection map} onto the triangle $\Delta(\mathbf{A}_1, \mathbf{A}_2,\mathbf{A}_3)$
\begin{equation*}
P_{\Delta(\mathbf{A}_1, \mathbf{A}_2,\mathbf{A}_3)}: T_1 \longrightarrow T_1, 
\mathbf{x} \longmapsto \alpha_1 \mathbf{A}_1+ \alpha_2 \mathbf{A}_2 +\alpha_3 \mathbf{A}_3,
\end{equation*}
where $(\alpha_1, \alpha_2, \alpha_3) \in H$ are the homogeneous coordinates of $\mathbf{x}$ (with respect to $\mathbf{P}_1$, $\mathbf{P}_2$ and $\mathbf{P}_3$).
Clearly, $P_{\Delta(\mathbf{A}_1, \mathbf{A}_2,\mathbf{A}_3)}$ maps $T_1$ onto $\Delta(\mathbf{A}_1, \mathbf{A}_2,\mathbf{A}_3)$. In particular, if $A_1, A_2, A_3$ are the vertices of a equilateral triangle, then $P_{\Delta(\mathbf{A}_1, \mathbf{A}_2,\mathbf{A}_3)}$ is a similarity mapping.

Now let $m \geq 2$ and define for integers $k_1, k_2, k_3$ 
\begin{equation}\label{formel_Tm}
\def\arraystretch{1.5}
\begin{array}{rlrl}
T_{m}(k_1,k_2,k_3) = & \Delta(\mathbf{A}_1, \mathbf{A}_2, \mathbf{A}_3) & \text{with } &
\mathbf{A}_1=\frac{(k_1+1)\mathbf{P}_1 + k_2\mathbf{P}_2 + k_3\mathbf{P}_3}{m}, \\
&&&\mathbf{A}_2= \frac{k_1\mathbf{P}_1 + (k_2+1)\mathbf{P}_2 + k_3\mathbf{P}_3}{m}, \\
&&&\mathbf{A}_3 = \frac{k_1\mathbf{P}_1 + k_2\mathbf{P}_2 + (k_3+1)\mathbf{P}_3}{m} \\
T'_{m}(k_1,k_2,k_3) = & \Delta(\mathbf{A}_1, \mathbf{A}_2, \mathbf{A}_3) & \text{with } &
\mathbf{A}_1=\frac{k_1\mathbf{P}_1 + (k_2+1)\mathbf{P}_2 + (k_3+1)\mathbf{P}_3}{m},\\
&&&\mathbf{A}_2 = \frac{(k_1+1)\mathbf{P}_1 + k_2\mathbf{P}_2 + (k_3+1)\mathbf{P}_3}{m},\\
&&&\mathbf{A}_3 = \frac{(k_1+1)\mathbf{P}_1 + (k_2+1)\mathbf{P}_2 + k_3\mathbf{P}_3}{m}.
\end{array} 
\end{equation}
Each of these triangles is similar to $T_1$. If $k_1, k_2, k_3$ are non-negative then 
$T_{m}(k_1,k_2,k_3) \subset T_1$ provided that
$k_1+k_2+k_3=m-1$ and $k_1+k_2+k_3=m-2$ implies that $T'_{m}(k_1,k_2,k_3) \subset T_1$.
More precisely, these triangles provide a decomposition  of $T_1$ into $m^2$ triangles of side length $1/m$.
Define
\begin{align*}
\T_{m} = & \{T_m(k_1,k_2,k_3): (k_1,k_2,k_3) \in\NN^3, k_1+k_2+k_3=m-1\} \\
\T'_{m} = & \{T'_m(k_1,k_2,k_3): (k_1,k_2,k_3) \in\NN^3, k_1+k_2+k_3=m-2\}.
\end{align*}
The elements of $\T_{m} \cup \T'_{m}$ have pairwise disjoint interior and
\[T_1= \bigcup_{T_m \in \T_m} T_m \cup \bigcup_{T'_m \in \T'_m} T'_m.\]
Based on our choice of $\mathbf{P}_1$, $\mathbf{P}_2$ and $\mathbf{P}_3$  we also refer to the 
triangles contained in $\T_{m}$ as {\it upright triangles}, while the
the elements of $\T'_{m}$ are the {\it upside down} triangles.
A side in a triangle $T \in \T_m \cup \T'_m $ that is parallel to $\varrho_i$ is called the \emph{side of type} $i$ of $T$, for $i\in\{1,2,3\}$.

Observe that for a fixed $k_1:=K$, with $K \in\{0, \dots, m-1  \}$, the set of triangles 
\begin{multline*}
\{T_m(K,k_2,k_3): k_2,k_3 \in \NN, K+k_2+k_3=m-1\}  \cup \\
\{T'_m(K,k_2,k_3): k_2,k_3 \in \NN, K+k_2+k_3=m-2\} \subset \T_{m} \cup \T'_{m}
\end{multline*}
builds a strip ``parallel'' to $\varrho_1$. We call it a \emph{strip of type $1$}. In particular, for $K=0$ we get the border strip of type $1$, where all triangles  $T \in  \T_{m} \cup \T'_{m}$ situated in the strip have non-empty intersection with $\varrho_1$. 
Analogously, by fixing $k_2$ ($k_3$, respectively) we obtain strips of type $2$  (type $3$, respectively) that are ``parallel'' to $\varrho_2$ ($\varrho_3$, respectively).

Two distinct elements of $\T_{m} \cup \T'_{m}$ are {\it neighbours} if they have non-empty intersection. This intersection is either a singleton or a line segment.
In the latter case, one triangle is necessarily  contained in $\T_m$ and the other one is an element of 
$\T'_{m}$. More precisely, 
if $T=T_m(k_1,k_2,k_3)$ and $T'=T'_m(k'_1,k'_2,k'_3)$ then $T \cap T'$ is a line segment if and only if there exists a $j \in \{1,2,3\}$ such that $k_j=k'_j+1$ and $k_i=k'_i$ for $i \not= j$.
We call such a pair of triangles {\it $j$-neighbours}. We remark that the intersection of $j$-neighbours is the side of type $j$ of both triangles. This situation, for our choice of $\mathbf{P}_1$, $\mathbf{P}_2$ and $\mathbf{P}_3$, is sketched in Figure~\ref{fig:neighbours}.

\begin{figure}[h]
\begin{center}
 \begin{tikzpicture}
[scale=1.0]

\draw[line width=1.2pt, draw= black] (-4-1.299,-0.75) -- (-4-0.433,0.75);
\draw[line width=1.2pt, draw= black] (-4-0.433,0.75) -- (-4+0.433,-0.75);
\draw[line width=1.2pt, draw= black] (-4+0.433,-0.75) -- (-4-1.299,-0.75);
\draw[line width=1.2pt, draw= black] (-4+0.433,-0.75) -- (-4+1.299,0.75) ;
\draw[line width=1.2pt, draw= black] (-4+1.299,0.75) -- (-4-0.433,0.75);

\draw[line width=1.2pt, draw= black] (1.299,-0.75) -- (0.433,0.75);
\draw[line width=1.2pt, draw= black] (0.433,0.75) -- (-0.433,-0.75);
\draw[line width=1.2pt, draw= black] (-0.433,-0.75) -- (1.299,-0.75);
\draw[line width=1.2pt, draw= black] (-0.433,-0.75) -- (-1.299,0.75) ;
\draw[line width=1.2pt, draw= black] (-1.299,0.75) -- (0.433,0.75);

\draw[line width=1.2pt, draw= black] (4+0,1.5) -- (4+0.866,0);
\draw[line width=1.2pt, draw= black] (4+0.866,0) -- (4-0.866,0);
\draw[line width=1.2pt, draw= black] (4+0,1.5) -- (4-0.866,0);
\draw[line width=1.2pt, draw= black] (4+0,-1.5) -- (4+0.866,0);
\draw[line width=1.2pt, draw= black] (4+0,-1.5) -- (4-0.866,0);
\end{tikzpicture}
\end{center}
\caption{From left to right: $1$-neighbours, $2$-neighbours and $3$-neighbours.}
\label{fig:neighbours}
\end{figure}
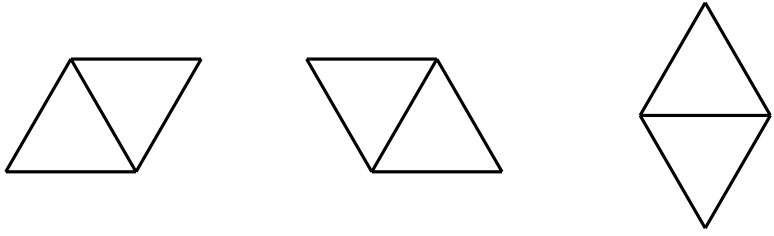

A triangle $T \in \T_m$ is a \emph{border triangle} if it has non-empty intersection with the border of $T_1$, {\it i.e.} $T \cap \partial T_1 \not=\emptyset$. We easily see that $T$ is a border triangle if and only if $T=T_m(k_1,k_2,k_3)$ where $k_1k_2k_3=0$.
A {\it corner triangle} is an element $T \in \T_m$ that contains $\mathbf{P}_1$, $\mathbf{P}_2$ or $\mathbf{P}_3$, i.e., $T=T_m(k_1,k_2,k_3)$ with $k_1k_2 + k_1k_3+ k_2k_3=0$.

Let $\W, \Y \subset \T_{m}$ and $\W', \Y' \in \T'_{m}$. We call $(\W \cup \W',\Y \cup \Y')$ an \emph{$m$-triangular patterns system}, where  we often omit the $m$ for simplicity.
Starting with such a triangular pattern system, we perform an inductive construction defined by it. We proceed as follows.
Let $\W_1:=\W,\;  \W'_1:=\W'$ and $\Y_1:=\Y, \; \Y'_1:=\Y'.$ 
The set $\W_1 \cup \W'_1$ is called \emph{the set of white triangles of level} $1$. 
We define the white triangles of higher order recursively.  To obtain the white triangles of level $2$ we, roughly speaking,  replace each triangle in $\W_1$ by smaller copies of $\W_1 \cup \W_1'$ and each triangle in $\W'_1$ by smaller copies of $\Y_1 \cup \Y_1'$.

For $n\ge 1$,  $\W_n \cup \W'_n$ is called \emph{the set of white triangles  of level} $n$ generated by the triangular patterns system $(\W \cup \W',\Y \cup \Y')$. For $n>1$ the sets of triangles $\W_n$ and $\W'_n$ are formally defined in the following way:
\begin{align*}
\W_n:=\{P_{W_{n-1}}(W_1): W_1 \in \W_1, W_{n-1} \in \W_{n-1}\} \cup \{P_{W'_{n-1}}(Y'_1): Y'_1 \in \Y'_1, W'_{n-1} \in \W'_{n-1}\} \\
\W'_n:=\{P_{W_{n-1}}(W'_1): W'_1 \in \W'_1, W_{n-1} \in \W_{n-1}\} \cup \{P_{W'_{n-1}}(Y_1): Y_1 \in \Y_1, W'_{n-1} \in \W'_{n-1}\}.
\end{align*}
For all $n \geq 1$ we have $\W_n \subset \T_{m^n}$ and $\W'_n \subset \T'_{m^n}$.
We can do the same for $\Y_n \cup \Y'_n$, with $n \ge 1$, which we will refer to as the \emph{set of yellow triangles of level} $n$ 
generated by the triangular patterns system $(\W\cup \W',\Y \cup \Y')$. Here we have, for $n>1$,
\begin{align*}
\Y_n:=\{P_{Y_{n-1}}(Y_1): Y_1 \in \Y_1, Y_{n-1} \in \Y_{n-1}\} \cup \{P_{Y'_{n-1}}(W'_1): W'_1 \in \W'_1, Y'_{n-1} \in \Y'_{n-1}\} \\
\Y'_n:=\{P_{Y_{n-1}}(Y'_1): Y'_1 \in \Y'_1, Y_{n-1} \in \Y_{n-1}\} \cup \{P_{Y'_{n-1}}(W_1): W_1 \in \W_1, Y'_{n-1} \in \Y'_{n-1}\}.
\end{align*}

\begin{example}\label{Beispiel1}
Let $m=4$ and
\begin{align*}
\W_{1}:= & \{T_{4}({\bf k}): {\bf k} \in \{(0, 2, 1), (1, 1, 1), (1, 2, 0), (2, 0, 1), (3, 0, 0)\}\}, \\
\W'_{1} := & \{T'_{4}({\bf k}): {\bf k} \in \{(0, 1, 1), (1, 0, 1), (1, 1, 0), (2, 0, 0)\}\}, \\
\Y_{1}:= & \{T_{4}({\bf k}): {\bf k} \in \{(0, 0, 3), (0, 1, 2), (1, 1, 1), (2, 0, 1), (2, 1, 0)\}\}, \\
\Y'_{1} := & \{T'_{4}({\bf k}): {\bf k} \in \{(0, 0, 2), (0, 1, 1), (1, 1, 0), (2, 0, 0)\}\}.
\end{align*}
Then $(\W \cup \W',\Y \cup \Y')$ is the $4$-triangular patterns system that we presented in the introduction.
Figure~\ref{fig:Bsp1WY1}  shows exactly the white and yellow triangles of level $1$. On the left we see $T_1$. The upright white triangles are the elements of $\W_1$ while the upside down ones correspond to the elements of $\W'_1$. The right hand side shows the yellow analogon. Both $\W_{1}$ and  $\Y_{1}$ contain one corner triangle, namely $T_4(3, 0, 0)$ and $T_4(0, 0, 3)$, respectively. Each corner triangle is also a border triangle. Both sets, $\W_{1}$ and $\Y_{1}$, contain $3$ more border triangles.

In order to obtain the white triangles of level $2$ we
 replace every element of $\W_1$ ({\it i.e.} the upright white triangles of level $1$) by smaller copies (ratio $\frac{1}{4}$) of the white triangles of level $1$ and each element of $\W'_1$ (i.e., the upside down white triangles of level $1$) by smaller copies of the yellow triangles of level $1$, coloured in white.
One obtains the white and yellow triangles of level $3$ in a similar way. For the yellow triangles of higher order the construction is done in the corresponding analogous way.
The white and yellow triangles of level $2$ and $3$ are depicted in
Figure~\ref{fig:Bsp1W23} and Figure~\ref{fig:Bsp1Y23}, respectively. Figure~\ref{fig:Bsp1WY4} shows those of level $4$.
\end{example}


\begin{section}{Triangular labyrinth patterns systems}\label{sec:labyrinth_patterns_systems}

Recall that a graph $\G=\left(\V(\G),\E(\G)\right)$ consists of a set of vertices $\V(\G)$ and a set of edges
$\E(\G)$. 
In the present section we consider undirected graphs.
In this case $\E(\G)$ is a subset of the unordered pairs of $\V(\G)$.
If $\{v_1,v_2\} \in \E(\G)$ then we say that the two vertices $v_1$ and $v_2$ are connected by an edge, that they are adjacent or, simply, that they are neighbours in the graph.
A path in an undirected graph is a sequence of pairwise distinct vertices $v_1, \ldots, v_k$, $k\ge 1$, such that for every $j \in \{1, \ldots, k-1\}$ the vertices $v_j$ and $v_{j+1}$ are connected by an edge. 
We call $v_1$ the initial vertex  and $v_k$ the terminal vertex of the path. 
If there exists an edge that connects the initial vertex $v_1$ with the terminal vertex $v_k$  then the path is called a cycle (provided that $k \geq 3$).

A graph is connected if for any two distinct vertices $v, v'$ in the graph there exists a path with initial vertex $v$ and terminal vertex $v'$. A tree is a connected graph that contains no cycles.

For any $m$-triangular patterns system $(\W \cup \W',\Y \cup \Y')$
we define the graph $\G(\W \cup\W')$ as the undirected graph with vertex set $\W \cup \W'$ where two vertices are connected by an edge  if and only if the corresponding two triangles are $j$-neighbours for a $j \in \{1,2,3\}$. We mark the edge with the label $j$.
The graph $\G(\Y \cup\Y')$ with set of vertices $\Y \cup \Y'$ is defined analogously.

We remark that  the graphs $\G(\W_n \cup\W'_n)$ and $\G(\Y_n \cup\Y'_n)$ are bipartite. Indeed,
every edge in $\G(\W_n \cup\W'_n)$  connects an element of $\W_n$ with an element of  $\W'_n$ and every edge in $\G(\Y_n \cup\Y'_n)$  connects an element of  $\Y_n$ with an element of  $\Y'_n$.

Since for $n\ge 1$ the triangles of level $n$ yield the $m^n$-triangular patterns system $(\W_n \cup \W'_n,\Y_n \cup \Y'_n)$,
we can analogously define the graphs $\G(\W_n \cup\W'_n)$ and $\G(\Y_n \cup\Y'_n)$.

It is important to keep in mind that for $\G(\W \cup\W')$ and $\G(\Y \cup\Y')$ two vertices are neighbours in the graph if and only if the respective triangles are $j$-neighbours.
Thus, here the property ``neighbour in the graph''  is not equivalent to the  notion of neighbours given in Section \ref{sec:construction} for triangles of $\T_m \cup \T'_m$, which only requires that two triangles have non-empty intersection.

\begin{definition}\label{def:triangular labyrinth patterns system}
Let $(\W \cup \W', \Y\cup \Y')$ be an $m$-triangular patterns system, with the corresponding graphs $\Gw$ and $\Gy$, as defined above. \\We call $(\W \cup \W', \Y \cup \Y')$ an $m$-\emph{triangular labyrinth patterns system} if it has the following three properties:
\begin{enumerate}
\item{{\bf The tree property.} $\Gw$ as well as $\Gy$ are trees.}
\item{{\bf The exits property.} There exists exactly one triple $(k_1,k_2,k_3)$ of non-negative integers (each of which is smaller than $m$) such that
\begin{align*}
T_{m}(0,k_1,m-1-k_1) \in \W \wedge & T_{m}(0,m-1-k_1,k_1) \in \Y & \text{and} \\
T_{m}(k_2,0,m-1-k_2) \in \W \wedge & T_{m}(m-1-k_2,0,k_2) \in \Y & \text{and} \\
T_{m}(k_3,m-1-k_3,0) \in \W \wedge & T_{m}(m-1-k_3,k_3,0)  \in \Y.
\end{align*}
We call the above triangles $T_{m}(0,k_1,m-1-k_1)\in \W$ and $ T_{m}(0,m-1-k_1,k_1) \in \Y$ the exit $e_1^{\w}$ of $\W$ and, respectively, the exit $e_1^{\y}$ of $\Y$.
Correspondingly, we have exits $e_2^{\w}$, $e_2^{\y}$, $e_3^{\w}$ and $e_3^{\y}.$ 
} 
\item{{\bf The corners property.} Denote by $\C(\W)$ and $\C(\Y)$ the sets of corner triangles contained in 
$\W$ and $\Y$, respectively, that is
\begin{align*}
\C(\W):=&\left\{W \in \W: W \cap \{\mathbf{P}_1, \mathbf{P}_2, \mathbf{P}_3\} \not= \emptyset\right\}, \\
\C(\Y):=&\left\{Y \in \Y: Y \cap \{\mathbf{P}_1, \mathbf{P}_2, \mathbf{P}_3\} \not= \emptyset\right\}.
\end{align*}
Then  $\C(\W)$ and $\C(\Y)$ are disjoint sets each of which contains at most one element, i.e.,
\[\C(\W) \cap \C(\Y) = \emptyset \wedge \#\C(\W) \leq 1 \wedge \#\C(\Y) \leq 1.\]
}
\end{enumerate}
\end{definition}

One can easily check that the triangular patterns system presented in Example~\ref{Beispiel1} satisfies the conditions of the above definition and is therefore a $4$-triangular labyrinth patterns system.

\begin{proposition}\label{prop:triangle_laby_system_order_n}
Let $(\W \cup \W', \Y\cup \Y')$ be an $m$-triangular labyrinth patterns system, and let $\W_1:=\W$, $\W'_1:=\W'$, $\Y_1:=\Y$, and $ \Y'_1:=\Y'$. For every $n\ge 1$ let
$\W_n \cup \W'_n$, be the set of white triangles  of level $n$ generated by the triangular system $(\W_1\cup \W'_1,\Y_1 \cup \Y'_1)$ and $\Y_n \cup \Y'_n$ be the set of yellow triangles  of level $n$ generated by the triangular system $(\W_1\cup \W'_1,\Y_1\cup \Y'_1)$.
Then, the pair  $(\W_n \cup \W'_n, \Y_n \cup \Y'_n)$ (see Definition \ref{def:triangular labyrinth patterns system}) has the tree property, the exits property and the corner property.
\end{proposition}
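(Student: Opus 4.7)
The plan is induction on $n$. The base case $n=1$ is the hypothesis. For the inductive step, observe that the level-$n$ pair is obtained by ``substituting'' a level-$1$ pattern into each triangle of the level-$(n-1)$ pair: every upright white $W_{n-1}\in\W_{n-1}$ is replaced by $P_{W_{n-1}}(\W_1\cup\W'_1)$, and every upside-down $W'_{n-1}\in\W'_{n-1}$ by $P_{W'_{n-1}}(\Y_1\cup\Y'_1)$; the analogous rule applies to the yellow pair. The three properties will be verified separately.

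\emph{Tree property.} Each sub-pattern from a single level-$(n-1)$ triangle yields a subgraph of $\G(\W_n\cup\W'_n)$ isomorphic to either $\G(\W_1\cup\W'_1)$ or $\G(\Y_1\cup\Y'_1)$, hence a tree by the level-$1$ hypothesis. Two sub-patterns are linked precisely when their parent triangles are $j$-neighbours in $\G(\W_{n-1}\cup\W'_{n-1})$, and one checks by a short homogeneous-coordinate computation that across such a shared side, the type-$j$ exit of one sub-pattern and the type-$j$ exit of the other share a common side at level $n$, i.e.\ form a unique $j$-neighbour edge. This is the main technical obstacle of the proof: it works because the positional complementarity of white and yellow exits at level $1$ (namely $(0,k_1,m-1-k_1)$ versus $(0,m-1-k_1,k_1)$) exactly compensates for the swap $\mathbf{A}_2\leftrightarrow\mathbf{A}'_3$, $\mathbf{A}_3\leftrightarrow\mathbf{A}'_2$ of vertex labels across the shared edge between adjacent upright and upside-down parents. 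Once this matching is in place, connectedness of $\G(\W_n\cup\W'_n)$ follows from the connectedness of each sub-pattern and of $\G(\W_{n-1}\cup\W'_{n-1})$, while any cycle at level $n$ must either lie inside a single sub-pattern (impossible, it is a tree) or project onto a cycle in $\G(\W_{n-1}\cup\W'_{n-1})$ (also impossible). The same reasoning handles $\G(\Y_n\cup\Y'_n)$.

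\emph{Exits property.} On side $\varrho_1$, the only level-$n$ triangles that touch it come from subdividing the unique level-$(n-1)$ exit, which by induction sits at position $(0,K,m^{n-1}-1-K)$ in $\W_{n-1}$. Subdividing it with $\W_1$, whose exit on $\varrho_1$ is $(0,k_1,m-1-k_1)$, and reducing in homogeneous coordinates yields a unique level-$n$ white exit at $(0,mK+k_1,m^n-1-mK-k_1)$; the parallel computation on the yellow side (using the complementary exits at levels $n-1$ and $1$) produces $(0,m^n-1-mK-k_1,mK+k_1)$, which is precisely the complementary position required. The argument is identical on $\varrho_2$ and $\varrho_3$, and uniqueness of the triple across all three sides follows from the corresponding uniqueness at level $n-1$ and level $1$.

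\emph{Corners property.} Any level-$n$ corner triangle at $\mathbf{P}_i$ lies inside the unique level-$(n-1)$ corner triangle at $\mathbf{P}_i$ (which is upright), so for it to lie in $\W_n$ the level-$(n-1)$ corner triangle must belong to $\W_{n-1}$ and the level-$1$ pattern $\W_1$ must contain the corner at $\mathbf{P}_i$. The inductive constraints $\#\C(\W_{n-1})\le 1$ and $\#\C(\W_1)\le 1$ then force $\#\C(\W_n)\le 1$, and analogously $\#\C(\Y_n)\le 1$; disjointness $\C(\W_n)\cap\C(\Y_n)=\emptyset$ follows because a common level-$n$ corner would require the level-$(n-1)$ corner triangle to lie in both $\W_{n-1}$ and $\Y_{n-1}$, contradicting the inductive disjointness.
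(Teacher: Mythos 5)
Your proposal is correct and takes essentially the same route as the paper: induction via the substitution structure, with the exits property supplying exactly one level-$n$ edge across each shared side of adjacent level-$(n-1)$ triangles, so that a cycle would have to lie in a single level-$1$ copy or, after projection, produce a cycle in $\G(\W_{n-1}\cup\W'_{n-1})$ --- and you additionally spell out the exits and corners steps that the paper only asserts. One auxiliary sentence is literally off (it is not true that the only level-$n$ triangles meeting $\varrho_1$ come from subdividing the level-$(n-1)$ exit; every level-$(n-1)$ border triangle of that side contributes some), but your uniqueness argument in fact rests on the correct observation that a paired white/yellow position at level $n$ forces paired exit positions at levels $n-1$ and $1$, so the proof stands.
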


We call $\W_n \cup \W'_n$ the \emph{white triangular labyrinth set of level} $n$ and  $\Y_n \cup \Y'_n$ \emph{the yellow triangular labyrinth set of level} $n$.

\begin{proof}
We prove the above assertion by induction. For $n=1$ it holds, because $(\W \cup \W', \Y\cup \Y')$ is an $m$-triangle labyrinth patterns system.
Now, we need to prove that $(\W_n \cup \W'_n, \Y_n \cup \Y'_n)$ has the tree, the exits and the corner property, for $n\ge 2$. Therefore, we now assume that these properties hold for $(\W_{n-1} \cup \W'_{n-1}, \Y_{n-1} \cup \Y'_{n-1})$. 

From the structure of an $m$-triangle labyrinth patterns system and the construction  of the white and yellow labyrinth set of level $n$, respectively, one can see that both the exits property and the corners property are satisfied by the sets $\W_n$ and $\Y_n$.

In order to prove the tree property, let us first notice that $\G(\W \cup\W')$ and $\G(\Y \cup\Y')$ are trees, by the proposition's assumption and, by the induction hypothesis, $\G(\W_{n-1} \cup\W'_{n-1})$ and $\G(\Y_{n-1} \cup\Y'_{n-1})$ are trees, which ensures that both graphs are connected. Therefore, by the construction of $\W_n, \W'_n, \Y_n$ and $\Y'_n$, $\G(\W_{n} \cup\W'_{n})$ and $\G(\Y_{n} \cup\Y'_{n})$ are connected. We have to prove that there is no cycle in $\G(\W_{n} \cup\W'_{n})$ and $\G(\Y_{n} \cup\Y'_{n})$. Due to the dual character of the construction of these sets and graphs, we only prove this property for $\G(\W_{n} \cup\W'_{n})$, since the proof for $\G(\Y_{n} \cup\Y'_{n})$ is analogous.

In order to give an indirect proof, let us assume that there exists a cycle 
$c=\{U_0,U_1,\dots,U_s\}$ in $\G(\W_{n} \cup\W'_{n})$. For $U \in \W_{n} \cup\W'_{n}$ let $t(U)$ be the white triangle in $\W_{n-1} \cup\W'_{n-1}$ that contains $U$ (as a subset of points in the plane). Let $j_0:=0$, $V_0:=t(u_0)$ and $j_k:=\min\{i: t(U_i)\neq V_{k-1}, j_{k-1}<i\le s\}$, $V_k:=t(U_{j_k})$ for $k\ge 1$. Now we choose $r$ minimal such that the set $\{  i: \, t(U_i)\ne V_r, j_r<i\le s \}$ is empty.  For $i=1, \dots, r$ the vertex {$V_{i-1}$ is a neighbour of $V_i$ in $\G(\W_{n-1} \cup\W'_{n-1})$}. If the graph which is induced by $\G(\W_{n-1} \cup\W'_{n-1})$ on the set of vertices $\{ V_0, V_1, \dots,V_r\}$ contains a cycle in $\G(\W_{n-1} \cup\W'_{n-1})$, this contradicts the induction hypothesis. On the other hand,  $\G(\W_{1} \cup\W'_{1})$ is a tree, thus not all (white) triangles of the cycle $c$ in  $\G(\W_{n} \cup\W'_{n})$ can be contained in $V_0$, which implies $r\ge 1$. Thus the graph induced by $\G(\W_{n-1} \cup\W'_{n-1})$ on the set $\{ V_0, V_1, \dots,V_r\}$ is a tree with more than one vertex. This implies that  the cycle $c$ returns to $V_0$ through the same side where it leaves the triangle $V_0$. Since $\W_1$ and $\W'_1$ have the exits property (only one exit on each side of the initial triangle), this comes in contradiction with the assumption that $U_0,U_1,\dots,U_s$  are pairwise distinct.
\end{proof}

\begin{remark}
In other words, the triangular system $(\W_n \cup \W'_n, \Y_n \cup \Y'_n)$ consisting of the white and the yellow labyrinth set of level $n$ obtained in the $n$-th iteration is, in fact, a $m^n$-triangular labyrinth patterns system. 
Nevertheless, we  use here two distinct notions, of patterns and sets, in order to emphasise the idea that the construction of triangular labyrinth sets of level $n$ is based on triangular labyrinth patterns systems.
\end{remark}

For  a  triangular labyrinth patterns system $(\W \cup \W', \Y\cup \Y')$ and $n \geq 1$  we denote the exits of $\W_n \cup \W'_n$ and  $\Y_n\cup \Y'_n$ by $e_i^{\w}(n)$ and $e_i^{\y}(n)$, respectively, where $i \in \{1,2,3\}$. With this notation we obviously have, $e_i^{\w}= e_i^{\w}(1)$ and $e_i^{\y}= e_i^{\y}(1)$. We call these the exits of the labyrinth sets of level $n$.

\begin{proposition}\label{prop:no_double_exits}
Let $(\W \cup \W', \Y \cup \Y')$ be an $m$-triangular labyrinth patterns system. Then, for any $i,j \in \{1,2,3\}$ with $i \ne j$ we have $e_i^{\w}\ne e_j^{\w}$ and $e_i^{\y}\ne e_j^{\y}.$ 
\end{proposition}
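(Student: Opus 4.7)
My plan is to argue by contradiction, exploiting the corners property. Suppose $e_i^{\w} = e_j^{\w}$ for some $i \ne j$; since the yellow case is symmetric (swap the roles of $\W$ and $\Y$), it suffices to derive a contradiction in the white case. The exit $e_i^{\w}$ lies on the side $\varrho_i$ and $e_j^{\w}$ lies on $\varrho_j$, so the coincident triangle would have to meet both sides. Inspecting the parametrisation in \eqref{formel_Tm}, an upright triangle $T_m(k_1',k_2',k_3')$ can meet two distinct sides $\varrho_i$ and $\varrho_j$ only if it contains their common endpoint $\mathbf{P}_k$, where $\{i,j,k\}=\{1,2,3\}$; that forces it to be the corner triangle at $\mathbf{P}_k$. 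In particular, $e_i^{\w}=e_j^{\w}\in\C(\W)$.

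Next I would translate this geometric conclusion back to the common parameter $(k_1,k_2,k_3)$ coming from the exits property and read off the induced yellow exits $e_i^{\y}$ and $e_j^{\y}$. The key observation is that the formulas for the exits on side $i$ swap the role of the two non-constant coordinates between the white and yellow patterns. Concretely, if $e_1^{\w}$ equals the corner triangle at $\mathbf{P}_k\in\varrho_1$ (so $k_1\in\{0,m-1\}$), then $e_1^{\y}=T_m(0,m-1-k_1,k_1)$ is forced to be the corner triangle at the other endpoint of $\varrho_1$, and the same symmetry holds on $\varrho_2$ and $\varrho_3$. Applying this to the assumed coincidence, $e_i^{\y}$ lies at the endpoint of $\varrho_i$ distinct from $\mathbf{P}_k$, namely $\mathbf{P}_j$, while $e_j^{\y}$ lies at $\mathbf{P}_i$; since $i\ne j$, these are two \emph{different} corner triangles, both in $\Y$. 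Hence $\#\C(\Y)\geq 2$, contradicting the corners property.

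The symmetric statement for the yellow exits then follows by running the same argument with $\W$ and $\Y$ interchanged (so that the contradiction is with $\#\C(\W)\leq 1$). I expect the only delicate point to be the index bookkeeping: one must check in each of the three pairs $(i,j)\in\{(1,2),(1,3),(2,3)\}$ that the white corner forced on the common exit and the two yellow corners forced on $e_i^{\y},e_j^{\y}$ are indeed all distinct. Once this case check is in hand, the violation of the corners property is immediate and the proposition follows.
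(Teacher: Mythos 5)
Your proposal is correct and follows essentially the same route as the paper: assume two exits of one pattern coincide, observe that the coincident triangle is then forced (via the parametrisation of the exits property) to be a corner triangle, and conclude that the two corresponding exits of the other pattern are two distinct corner triangles, contradicting $\#\C(\Y)\leq 1$ (respectively $\#\C(\W)\leq 1$). The paper simply carries this out concretely for $i=1$, $j=2$ with $e_1^\w=T_m(0,k_1,m-1-k_1)=e_2^\w=T_m(k_2,0,m-1-k_2)$ forcing $k_1=k_2=0$, which is the same bookkeeping you describe.
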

\begin{proof} We give, w.l.o.g, an indirect proof for   $i=1$ and $j=2$ in the white pattern.
Let $e^\w_1 = T_m(0,k_1,m-1-k_1)$ and $e^\w_2 = T_m(k_2,0,m-1-k_2)$.
Assume  $e^\w_1 = e^\w_2$, which implies $k_1=k_2=0$.
By the exits property in Definition~\ref{def:triangular labyrinth patterns system} we conclude that $\Y$ contains $T_m(0,m-1-k_1,k_1)=T_m(0,m-1,0)$ as well as $T_m(m-1-k_2,0,k_2)=T_m(m-1,0,0)$ which are both corner triangles. But this is a contradiction to the corner property. This completes the proof.
\end{proof}
In other words, the above proposition asserts that no corner triangle in $\W$ or $\Y$ can be a ``double'' exit. 
One can see that Proposition~\ref{prop:no_double_exits} also holds for the pair $(\W_n \cup \W'_n, \Y_n \cup \Y'_n)$,  for  all  $n\ge 1$, due to Proposition \ref{prop:triangle_laby_system_order_n}.


\section{Triangular labyrinth fractals}\label{sec:TLF}

In this section we introduce triangular labyrinth fractals and study them, e.g., with respect to their fractal dimension.  Nevertheless, the considerations throughout the section hold for fractals obtained from $m$-triangular patterns systems in general.

Let $(\W \cup \W', \Y\cup \Y')$ denote an $m$-triangular labyrinth  patterns system and, for $n\ge 1$, $\W_n \cup \W'_n$ and $\Y_n \cup \Y'_n$ the corresponding sets of white and, respectively, of yellow triangles of level $n$, as defined earlier. 

Let 
\[
L^{\w}_n:=\bigcup_{W \in \W_n} W \cup \bigcup_{W' \in \W'_n}W' \text  { and }  L^{\y}_n:=\bigcup_{Y \in \Y_n} Y \cup \bigcup_{Y' \in \Y'_n}Y'.
\] 
Then $\{L^{\w}_n\}_{n\ge 1}$ and $\{L^{\y}_n\}_{n\ge 1}$  are decreasing sequences of (white, and respectively, yellow) compact sets in the plane, i.e., 
\begin{align*}
&T_1 \supset  L^{\w}_1 \supset L^{\w}_2 \supset L^{\w}_3 \supset \cdots \\
&T_1 \supset  L^{\y}_1 \supset L^{\y}_2 \supset L^{\y}_3 \supset \cdots
\end{align*}

\begin{definition}\label{def:TLF}
We call the limit set of the nested sequence $\{L^{\w}_n\}_{n\ge 1}$,
\begin{equation}
\linfw:= \bigcap_{n\ge1} \Big(\bigcup_{W \in \W_n} W \cup \bigcup_{W' \in \W'_n}W' \Big)  \, =\bigcap_{n\ge1}L^{\w}_n 
\end{equation}
the \emph{white triangular labyrinth fractal} generated by the triangular labyrinth patterns system  $(\W \cup \W', \Y\cup \Y')$. Correspondingly, we call  the limit set of the nested sequence $\{L^{\y}_n\}_{n\ge 1}$
\begin{equation}
\linfy:=  \bigcap_{n\ge1} \Big(\bigcup_{Y \in \Y_n} Y \cup \bigcup_{Y' \in \Y'_n}Y' \Big)  \, = \bigcap_{n\ge1}L^{\y}_n   
\end{equation}
the \emph{yellow triangular labyrinth fractal} generated by the triangular labyrinth pattern system  $(\W \cup \W', \Y\cup \Y')$
\end{definition}

In order to study the sets $\linfw$ and $\linfy$ we first show 
that they can be obtained from a graph directed construction in the sense of Mauldin and Williams~\cite{MW}. 
This provides certain informations about these fractals, like, e.g., the Hausdorff dimension.

Therefore, let us now introduce the GIFS (graph directed iterated functions systems) framework. (For more details on GIFS see, e.g., the references  \cite{Edgar, EM, MW}.)
We now consider a directed multigraph $\HH$, i.e., each edge has a direction given by the ordered pair of vertices that it connects and 
there is possibly more than one edge from one vertex to another one. The edges can be distinguished by their labels. Furthermore, we also allow edges that start and end in the same vertex (loops). A path in a directed (multi)graph is a sequence of edges (differently from the case of undirected graphs mentioned earlier). 

Let $\HH = (\V(\HH), \E(\HH))$ denote the directed multigraph with (two) vertices $\V(\HH):=\{\Omega, \Gamma\}$. The set of edges $\E(\HH)$ is determined by the sets $\W$, $\W'$, $\Y$, and $\Y'$ of white and yellow triangles. Concretely,
for every $W \in \W$ there is an edge from $\Omega$ to $\Omega$ labelled by $W$ and for every $W' \in \W'$ there is an edge from $\Omega$ to $\Gamma$ labelled by $W'$. Likewise, every $Y \in \Y$ induces an edge from $\Gamma$ to $\Gamma$ labelled by $Y$ and every $Y' \in \Y'$ gives an edge from $\Gamma$ to $\Omega$ labelled by $Y'$. 
For a positive integer $n$ let $\HH^n$ be the set of paths of length $n$ of $\HH$. 
The paths are denoted by the corresponding sequence of labels.
We write $\HH^n(\Omega)$ for the set consisting of elements of $\HH^n$ that start in $\Omega$ and 
 $\HH^n(\Gamma)$ for those starting in $\Gamma$. 

Note that the labels of the edges of $\HH$ correspond to triangles. We associate with every edge $\Delta\in \E(\HH)$ the map $P_\Delta$. For a path $(\Delta_{j})_{j=1}^n$ we write $(\Delta_{j})_{j=1}^n = \Delta_1, \ldots, \Delta_n \in \HH^n$, and let $P_{\Delta_1, \ldots, \Delta_n}$  denote the composition $P_{\Delta_1} \circ \cdots \circ P_{\Delta_n}$.

\begin{lemma}\label{lemma:pathsinH}
For every integer $n \geq 1$ we have
\begin{align*}
\W_n = & \{P_{\Delta_1, \ldots, \Delta_n}(T_1) : (\Delta_{j})_{j=1}^{n} \in \HH^n(\Omega) \text{ is a path that terminates in } \Omega\}, \\
\W'_n = & \{P_{\Delta_1, \ldots, \Delta_n}(T_1) : (\Delta_{j})_{j=1}^{n} \in \HH^n(\Omega) \text{ is a path that terminates in } \Gamma\}, \\
\Y_n = & \{P_{\Delta_1, \ldots, \Delta_n}(T_1) : (\Delta_{j})_{j=1}^{n} \in \HH^n(\Gamma) \text{ is a path that terminates in } \Gamma\}, \\
\Y'_n = & \{P_{\Delta_1, \ldots, \Delta_n}(T_1) : (\Delta_{j})_{j=1}^{n} \in \HH^n(\Gamma) \text{ is a path that terminates in } \Omega\}. 
\end{align*}
\end{lemma}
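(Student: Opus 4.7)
The plan is to prove this by induction on $n$, handling all four cases in parallel. The base case $n=1$ is immediate from the definitions: a path of length one in $\HH(\Omega)$ terminating in $\Omega$ is a single edge labeled by some $W \in \W = \W_1$, and $P_W(T_1) = W$; the other three sets are treated analogously.

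For the inductive step, I take a path $(\Delta_j)_{j=1}^n \in \HH^n(\Omega)$ terminating in $\Omega$ and split off its last edge to obtain
\[P_{\Delta_1, \ldots, \Delta_n}(T_1) = P_{\Delta_1, \ldots, \Delta_{n-1}}(P_{\Delta_n}(T_1)) = P_{\Delta_1, \ldots, \Delta_{n-1}}(\Delta_n).\]
Because the last edge terminates at $\Omega$, either $\Delta_n = W_1 \in \W$ (whence the sub-path $(\Delta_j)_{j=1}^{n-1}$ ends at $\Omega$) or $\Delta_n = Y'_1 \in \Y'$ (whence it ends at $\Gamma$). The induction hypothesis applied to the sub-path yields $W_{n-1} \in \W_{n-1}$ in the first case and $W'_{n-1} \in \W'_{n-1}$ in the second, with $P_{\Delta_1, \ldots, \Delta_{n-1}}(T_1)$ equal to this triangle. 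Granted the identification of the composition $P_{\Delta_1, \ldots, \Delta_{n-1}}$, as a map on $T_1$, with the canonical projection $P_{W_{n-1}}$ or $P_{W'_{n-1}}$ prescribed by~\eqref{formel_Tm}, the resulting image equals $P_{W_{n-1}}(W_1)$ or $P_{W'_{n-1}}(Y'_1)$, which belongs to $\W_n$ by its recursive definition. Conversely, every element of $\W_n$ has one of these two forms, and the path furnished inductively for $W_{n-1}$ or $W'_{n-1}$ can be extended by the appropriate final edge to yield the required path. The three remaining cases for $\W'_n$, $\Y_n$ and $\Y'_n$ follow by symmetric arguments, splitting off the last edge according to its type.

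The main obstacle is precisely the map-level identification $P_{\Delta_1, \ldots, \Delta_{n-1}} = P_{W_{n-1}}$ (resp.\ $P_{W'_{n-1}}$), which has to be carried along as part of a strengthened induction hypothesis. This reduces to a short calculation in homogeneous coordinates: for triangles $T = \Delta(\mathbf{A}_1, \mathbf{A}_2, \mathbf{A}_3)$ and $T' = \Delta(\mathbf{B}_1, \mathbf{B}_2, \mathbf{B}_3)$ with their canonical vertex orderings, the composition $P_T \circ P_{T'}$ acts by $\mathbf{P}_i \mapsto P_T(\mathbf{B}_i)$, and one must verify that this ordered triple coincides with the canonical labeling of $P_T(T')$ prescribed by~\eqref{formel_Tm}. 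The verification splits into sub-cases according to whether $T$ and $T'$ are upright or upside-down, since projection onto an upside-down triangle is orientation-reversing; this case analysis is precisely where the distinction between the four sets $\W$, $\W'$, $\Y$, $\Y'$ and the corresponding edge types of $\HH$ becomes essential to ensure that the labels match up correctly throughout the recursion.
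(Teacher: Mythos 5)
Your proof is correct and follows essentially the same route as the paper's: induction on $n$, peeling off the last edge of the path so that the recursive definitions of $\W_n$, $\W'_n$, $\Y_n$, $\Y'_n$ apply, with the composition identity $P_{\Delta_A}\circ P_{\Delta_B}=P_{P_{\Delta_A}(\Delta_B)}$ (the paper's \eqref{eq:compose_projections}, established by exactly the homogeneous-coordinate computation you describe) playing the role of your map-level identification. The only difference is that you make explicit the verification that the vertex ordering induced on the image triangle by the composition agrees with the canonical labeling of \eqref{formel_Tm} --- a point the paper's proof leaves implicit; this check does hold (a short barycentric computation in the upright/upside-down cases), so flagging it refines rather than changes the argument.
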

\begin{proof}
We prove the lemma by induction on $n$. 
First, observe that $P_{\Delta}(T_1)=\Delta = P_{T_1}(\Delta)$ for every $\Delta \subset T_1$.
This yields the assertion for $n=1$ .

Now let $n >1$ and assume the assertions of the lemma to be true for $n-1$. We restrict ourselves to $\W_n$ (for the other sets the proof works analogously).
By definition we have
\[\W_n:=\{P_{W_{n-1}}(W_1): W_1 \in \W_1, W_{n-1} \in \W_{n-1}\} \cup \{P_{W'_{n-1}}(Y'_1): Y'_1 \in \Y'_1, W'_{n-1} \in \W'_{n-1}\}.\]
By the induction hypothesis and the structure of $\HH$ we obtain
\begin{align*}
\W_n=&\left\{P_{P_{\Delta_1,\ldots,\Delta_{n-1}}(T_1)}(\Delta_n): \substack{(\Delta_{j})_{j=1}^{n-1} \in \HH^{n-1}(\Omega) \text{ is a path that terminates in } \Omega \\ \text{and } \Delta_n \in \HH^1(\Omega) \text{ is an edge that terminates in } \Omega}\right\} \\
& \cup \left\{P_{P_{\Delta_1,\ldots,\Delta_{n-1}}(T_1)}(\Delta_n): \substack{(\Delta_{j})_{j=1}^{n-1} \in \HH^{n-1}(\Omega) \text{ is a path that terminates in } \Gamma \\ \text{and } \Delta_n \in \HH^1(\Gamma) \text{ is an edge that terminates in } \Omega}\right\} \\
= & 
\left\{P_{P_{\Delta_1}\circ \cdots \circ P_{\Delta_{n-1}(T_1)}}(\Delta_n): (\Delta_{j})_{j=1}^{n} \in \HH^{n}(\Omega) \text{ is a path that terminates in }  \Omega \right\}.
\end{align*}
Now we claim that for $\Delta_A:=\Delta(\mathbf{A}_1, \mathbf{A}_2,\mathbf{A}_3)$, $\Delta_B:=\Delta(\mathbf{B}_1, \mathbf{B}_2,\mathbf{B}_3)$ we have  
\begin{equation}\label{eq:compose_projections}
P_{\Delta_A} \circ P_{\Delta_B} = P_{P_{\Delta_A}(\Delta_B)}. 
\end{equation}
Indeed, for every $i \in \{1,2,3\}$, let $(\beta^{(i)}_1, \beta^{(i)}_2, \beta^{(i)}_3) \in H$ denote the homogeneous coordinates of $\mathbf{B}_i$  (with respect to $\mathbf{P}_1$, $\mathbf{P}_2$, $\mathbf{P}_3$).
Then for every $\mathbf{x} \in T_1$ with homogeneous coordinates $(\alpha_1, \alpha_2, \alpha_3) \in H$ we have
\[P_{\Delta_B}(\mathbf{x}) = 
\sum_{i=1}^3 \alpha_i\mathbf{B}_i = \sum_{i=1}^3 \sum_{j=1}^3 \alpha_i\beta_j^{(i)}\mathbf{P}_j,\]
hence
$\left(\sum_{i=1}^3 \alpha_i\beta_1^{(i)}, \sum_{i=1}^3 \alpha_i\beta_2^{(i)}, \sum_{i=1}^3 \alpha_i\beta_3^{(i)}\right) \in H$
are the homogeneous coordinates of $P_{\Delta_B}(\mathbf{x})$. Thus,
\begin{multline*}
P_{\Delta_A} \circ P_{\Delta_B}(\mathbf{x}) = 
\sum_{j=1}^3 \sum_{i=1}^3 \alpha_i\beta_j^{(i)}\mathbf{A}_j
=\sum_{i=1}^3 \alpha_i \left(\sum_{j=1}^3\beta_j^{(i)}\mathbf{A}_j\right) \\
= P_{\Delta(P_{\Delta_A}(\mathbf{B}_1),P_{\Delta_A}(\mathbf{B}_2),P_{\Delta_A}(\mathbf{B}_3))}(\mathbf{x})
=P_{P_{\Delta_A}(\Delta_B)}(\mathbf{x}).
\end{multline*}
Form \eqref{eq:compose_projections} and the above formul{\ae} for $\W_n$ we obtain
\begin{align*}
\W_n=&\left\{P_{\Delta_1}\circ \cdots \circ P_{\Delta_{n-1}}\circ P_{T_1}(\Delta_n): (\Delta_{j})_{j=1}^{n} \in \HH^{n}(\Omega) \text{ is a path that terminates in } \Omega \right\} \\
= &\left\{P_{\Delta_1}\circ \cdots \circ P_{\Delta_{n-1}}\circ P_{\Delta_n}(T_1): (\Delta_{j})_{j=1}^{n} \in \HH^{n}(\Omega) \text{ is a path that terminates in } \Omega \right\} \\
= &\left\{P_{\Delta_1,\ldots,\Delta_n}(T_1): (\Delta_{j})_{j=1}^{n} \in \HH^{n}(\Omega) \text{ is a path that terminates in } \Omega \right\}.
\end{align*}
\end{proof}

The above lemma and \eqref{eq:compose_projections} immediately yield the following result.
\begin{corollary}
Let $n$ be a positive integer. Then
\[
L^{\w}_n = \bigcup_{(\Delta_{j})_{j=1}^{n} \in \HH^n(\Omega)} P_{\Delta_1, \ldots, \Delta_n}(T_1),
\qquad
L^{\y}_n = \bigcup_{(\Delta_{j})_{j=1}^{n} \in \HH^n(\Gamma)} P_{\Delta_1, \ldots, \Delta_n}(T_1).
\]
For integers $n, k$  with $0 < k < n$ we have
\begin{align*}
\W_n:=&\{P_{W_{n-k}}(W_k): W_k \in \W_k, W_{n-k} \in \W_{n-k}\} \cup \{P_{W'_{n-k}}(Y'_k): Y'_k \in \Y'_k, W'_{n-k} \in \W'_{n-k}\}, \\
\W'_n:=&\{P_{W_{n-k}}(W'_k): W'_k \in \W'_k, W_{n-k} \in \W_{n-k}\} \cup \{P_{W'_{n-k}}(Y_k): Y_k \in \Y_k, W'_{n-k} \in \W'_{n-k}\}, \\
\Y_n:=&\{P_{Y_{n-k}}(Y_k): Y_k \in \Y_k, Y_{n-k} \in \Y_{n-k}\} \cup \{P_{Y'_{n-k}}(W'_k): W'_k \in \W'_k, Y'_{n-k} \in \Y'_{n-k}\}, \\
\Y'_n:=&\{P_{Y_{n-k}}(Y'_k): Y'_k \in \Y'_k, Y_{n-k} \in \Y_{n-k}\} \cup \{P_{Y'_{n-k}}(W_k): W_k \in \W_k, Y'_{n-k} \in \Y'_{n-k}\}.
\end{align*}
\end{corollary}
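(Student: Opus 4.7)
The first assertion is immediate from Lemma~\ref{lemma:pathsinH}. By definition, $L^{\w}_n = \bigcup_{W \in \W_n} W \cup \bigcup_{W' \in \W'_n} W'$; the lemma identifies $\W_n$ (respectively, $\W'_n$) with the set of projected triangles $P_{\Delta_1,\ldots,\Delta_n}(T_1)$ along paths in $\HH^n(\Omega)$ that terminate at $\Omega$ (respectively, at $\Gamma$). Since every path in $\HH^n(\Omega)$ terminates at one of these two vertices, taking the union over all such paths recovers $L^{\w}_n$. The formula for $L^{\y}_n$ is symmetric, replacing $\Omega$ by $\Gamma$.

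For the second assertion I would argue by concatenation of paths, combined with the composition identity \eqref{eq:compose_projections}. A straightforward induction on $\ell$ using \eqref{eq:compose_projections} shows that for any path $(\Delta_j)_{j=1}^{\ell}$ in $\HH$, the composition $P_{\Delta_1}\circ\cdots\circ P_{\Delta_\ell}$ coincides with the single projection $P_T$, where $T := P_{\Delta_1,\ldots,\Delta_\ell}(T_1)$ is the triangle of level $\ell$ attached to the path by Lemma~\ref{lemma:pathsinH}. Now fix $0 < k < n$. A path of length $n$ in $\HH$ from $\Omega$ to $\Omega$ factors uniquely as the concatenation of an initial segment of length $n-k$ and a terminal segment of length $k$, joined at an intermediate vertex $v \in \{\Omega, \Gamma\}$. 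If $v = \Omega$, the initial segment corresponds to some $W_{n-k} \in \W_{n-k}$ and the terminal segment to some $W_k \in \W_k$; combining the previous observation with the elementary fact $P_{W_k}(T_1) = W_k$ yields $P_{\Delta_1,\ldots,\Delta_n}(T_1) = P_{W_{n-k}}(W_k)$. If $v = \Gamma$, the same reasoning gives $P_{W'_{n-k}}(Y'_k)$. Taking the union over both cases, and recalling from Lemma~\ref{lemma:pathsinH} that $\W_n$ enumerates exactly the length-$n$ paths from $\Omega$ to $\Omega$, proves the formula for $\W_n$. The three remaining identities are obtained by the same case distinction on the initial, intermediate, and terminal vertex triple, applied to the appropriate classes of paths in $\HH^n$.

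\textbf{Main obstacle.} The argument is essentially bookkeeping; there is no deep difficulty beyond Lemma~\ref{lemma:pathsinH}. The one point that genuinely requires care is the inductive verification that the composition of projections along a path reduces to a single projection onto the resulting triangle; this is the mechanism that converts iterated compositions of $P_{\Delta_i}$'s into the compact expression $P_{W_{n-k}}(W_k)$ appearing in the statement. Once this reduction is in place, the case split on the intermediate vertex matches the two unioned sets on each right-hand side, and the corollary follows.
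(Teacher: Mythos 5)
Your proposal is correct and follows essentially the same route as the paper, which derives the corollary directly from Lemma~\ref{lemma:pathsinH} together with the composition identity \eqref{eq:compose_projections}; your splitting of a length-$n$ path at the intermediate vertex after $n-k$ steps is exactly the bookkeeping the paper leaves implicit. The one step you rightly single out, that $P_{\Delta_1}\circ\cdots\circ P_{\Delta_\ell}=P_{T}$ with $T=P_{\Delta_1,\ldots,\Delta_\ell}(T_1)$, is indeed the needed consequence of \eqref{eq:compose_projections} and is handled correctly.
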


\begin{proposition}\label{prop:GDIFS}
Let $\W'$ and $\Y'$ be non-empty.
Then the sets $\linfw$ and $\linfy$ have a graph-directed  structure
and satisfy
\begin{align*}
\linfw & =\bigcup_{\Delta \in \W}P_\Delta(\linfw) \cup \bigcup_{\Delta' \in \W'}P_{\Delta'}(\linfy), \\
\linfy & =\bigcup_{\Delta \in \Y}P_\Delta(\linfy) \cup \bigcup_{\Delta' \in \Y'}P_{\Delta'}(\linfw).
\end{align*}
The Hausdorff dimension $\dim_H$ and the box-dimension $\dim_B$ of the two fractals coincide and are given by
\[\dim_H(\linfw) = \dim_H(\linfy) = \dim_B(\linfw) = \dim_B(\linfy) = \frac{\log{\lambda}}{\log m},\]
where
\[\lambda = \frac{\abs{\W} + \abs{\Y} + \sqrt{(\abs{\W} - \abs{\Y})^2 + 4\abs{\W'}\abs{\Y'}}}{2}.\]
\end{proposition}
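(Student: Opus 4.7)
The plan is to apply the theory of graph directed iterated function systems of Mauldin and Williams~\cite{MW}, using the directed multigraph $\HH$ that was introduced above. First, I would verify the set equations. From the corollary to Lemma~\ref{lemma:pathsinH} (with $k=1$ and $n$ replaced by $n+1$) one obtains
\[
L^{\w}_{n+1}=\bigcup_{\Delta\in\W}P_\Delta(L^{\w}_{n})\cup\bigcup_{\Delta'\in\W'}P_{\Delta'}(L^{\y}_{n}),
\]
together with the analogous identity for $L^{\y}_{n+1}$. Both sequences $\{L^{\w}_n\}$, $\{L^{\y}_n\}$ are nested sequences of non-empty compact sets, so their intersections are non-empty compact sets. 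Since each $P_\Delta$ is continuous and each index set is finite, one may intersect the displayed relation over $n\ge 1$ and interchange the intersection with the finite union and the continuous map to obtain the fixed-point system for $(\linfw,\linfy)$. The non-emptiness of $\W'$ and $\Y'$ is needed so that the graph $\HH$ strongly connects $\Omega$ and $\Gamma$, ensuring the associated system is irreducible.

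Next I would identify the graph directed system: the vertices are $\Omega,\Gamma$, and to each edge $\Delta\in\E(\HH)$ one associates the similarity $P_\Delta$. Each such $P_\Delta$ is a similarity with ratio $1/m$, because the target triangle $\Delta$ has side length $1/m$ and the three vertices $\mathbf{P}_1,\mathbf{P}_2,\mathbf{P}_3$ form an equilateral triangle of side one. The open set condition is satisfied by taking as the open set the interior of $T_1$ at both vertices: the images of the interior of $T_1$ under the maps $P_\Delta$, $\Delta\in\W\cup\W'$ (resp. $\Delta\in\Y\cup\Y'$) have pairwise disjoint interiors and are contained in the interior of $T_1$, because the elements of $\T_m\cup\T'_m$ have pairwise disjoint interiors.

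Having verified the hypotheses, the Mauldin--Williams theorem yields that $\dim_H(\linfw)=\dim_H(\linfy)=\dim_B(\linfw)=\dim_B(\linfy)$ is the unique $s\ge 0$ for which the spectral radius of the $2\times 2$ non-negative matrix
\[
A(s)=\frac{1}{m^s}\begin{pmatrix}\abs{\W} & \abs{\W'}\\ \abs{\Y'} & \abs{\Y}\end{pmatrix}
\]
equals $1$. Equivalently, $m^s$ must coincide with the Perron eigenvalue $\lambda$ of the integer matrix $M=\begin{pmatrix}\abs{\W} & \abs{\W'}\\ \abs{\Y'} & \abs{\Y}\end{pmatrix}$. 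A direct computation of the roots of the characteristic polynomial $X^2-(\abs{\W}+\abs{\Y})X+\abs{\W}\abs{\Y}-\abs{\W'}\abs{\Y'}$ gives
\[
\lambda=\frac{\abs{\W}+\abs{\Y}+\sqrt{(\abs{\W}-\abs{\Y})^2+4\abs{\W'}\abs{\Y'}}}{2},
\]
which is the advertised formula, and then $s=\log\lambda/\log m$.

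The main obstacle in this plan is the clean verification of the open set condition in the graph directed setting (rather than the standard IFS setting) together with the irreducibility requirement needed to obtain a simple Perron eigenvalue; once these technicalities are in place, the dimension computation reduces to an elementary $2\times 2$ eigenvalue problem. The nesting $L^{\w}_n\supset L^{\w}_{n+1}$ and the self-similar recursion take care of the identification of $\linfw$ and $\linfy$ as the attractor of the GIFS, so no separate uniqueness argument is needed.
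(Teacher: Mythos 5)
Your proposal is correct and follows essentially the same route as the paper: both fit the construction into the Mauldin--Williams graph-directed framework with the two-vertex multigraph $\HH$, use the non-emptiness of $\W'$ and $\Y'$ for strong connectedness, verify the open set condition with ${\rm int}(T_1)$ and similarity ratio $\nicefrac{1}{m}$, and obtain the dimension from \cite[Theorem~3]{MW}. You merely spell out two steps the paper leaves implicit, namely the derivation of the set equations from the corollary to Lemma~\ref{lemma:pathsinH} via nested compact intersections, and the explicit computation of $\lambda$ as the Perron root of the $2\times 2$ counting matrix.
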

\begin{proof}
Since $\W'$ and $\Y'$ are non-empty, it follows that $\HH$ is strongly connected. For every edge $\Delta \in \HH^1$ we have $\Delta \in \T_m \cup \T'_m$, thus the map $P_\Delta$ is a similarity of ratio $\nicefrac{1}{m}$. Therefore, our setting fits into the graph directed constructions framework discussed in \cite{MW}.
Note that 
\[\bigcup_{\Delta \in \W}P_\Delta({\rm int}(T_1)) \cup \bigcup_{\Delta' \in \W'}P_{\Delta'}({\rm int}(T_1)) \subset {\rm int}(T_1) \supset \bigcup_{\Delta \in \Y}P_\Delta({\rm int}(T_1)) \cup \bigcup_{\Delta' \in \Y'}P_{\Delta'}({\rm int}(T_1)),\]
where the unions are, by construction, disjoint, and ${\rm int}(T_1)$ denotes the interior of the triangle $T_1$. Hence, the open set condition is fulfilled.
The assertion concerning the dimensions follows from \cite[Theorem~3]{MW}.
\end{proof}
The set equation can be easily extended to paths of arbitrary length in $\HH$. One can see that Lemma~\ref{lemma:pathsinH} immediately yields the following corollary.
\begin{corollary}\label{cor:fractal_as_finite_union}
For all $n\ge 1$ the self-similar sets $\linfw$ and $\linfy$ satisfy 
\begin{align*}
\linfw & =\bigcup_{\Delta \in \W_n}P_\Delta(\linfw) \cup \bigcup_{\Delta' \in \W'_n}P_{\Delta'}(\linfy), \\
\linfy & =\bigcup_{\Delta \in \Y_n}P_\Delta(\linfy) \cup \bigcup_{\Delta' \in \Y'_n}P_{\Delta'}(\linfw).
\end{align*}
\end{corollary}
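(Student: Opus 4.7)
The plan is to argue by induction on $n$. For $n=1$ the claim is exactly the set equation of Proposition~\ref{prop:GDIFS} (recalling that $\W_1=\W$, $\W'_1=\W'$, $\Y_1=\Y$, $\Y'_1=\Y'$), so the base case is immediate.

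For the inductive step, assume the identities hold at level $n$ and start again from the level-$1$ equation
$\linfw = \bigcup_{W_1 \in \W} P_{W_1}(\linfw) \cup \bigcup_{W'_1 \in \W'} P_{W'_1}(\linfy)$.
Substitute the inductive hypothesis into each occurrence of $\linfw$ and $\linfy$ on the right. Since every function commutes with unions, $P_{W_1}(\linfw)$ expands to $\bigcup_{W_n \in \W_n} P_{W_1} \circ P_{W_n}(\linfw) \cup \bigcup_{W'_n \in \W'_n} P_{W_1} \circ P_{W'_n}(\linfy)$, and analogously $P_{W'_1}(\linfy)$ expands to $\bigcup_{Y_n \in \Y_n} P_{W'_1} \circ P_{Y_n}(\linfy) \cup \bigcup_{Y'_n \in \Y'_n} P_{W'_1} \circ P_{Y'_n}(\linfw)$. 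Applying the composition identity \eqref{eq:compose_projections}, $P_{\Delta_A} \circ P_{\Delta_B} = P_{P_{\Delta_A}(\Delta_B)}$, collapses every composite projection into a single one indexed by a triangle of level $n+1$.

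It remains to recognise which triangles appear. This is precisely the content of the generalised recursion stated in the unnumbered corollary just before Proposition~\ref{prop:GDIFS}, applied with $k=n$: the triangles $P_{W_1}(W_n)$ (for $W_1\in\W$, $W_n\in\W_n$) together with $P_{W'_1}(Y'_n)$ (for $W'_1\in\W'$, $Y'_n\in\Y'_n$) exhaust $\W_{n+1}$, while $P_{W_1}(W'_n)$ together with $P_{W'_1}(Y_n)$ exhaust $\W'_{n+1}$. Reassembling the four unions therefore yields $\bigcup_{\Delta \in \W_{n+1}} P_\Delta(\linfw) \cup \bigcup_{\Delta' \in \W'_{n+1}} P_{\Delta'}(\linfy)$, as required; the identity for $\linfy$ follows by the symmetric argument obtained by exchanging the roles of white and yellow. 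No genuine obstacle arises --- the only subtlety is the bookkeeping of upright versus upside-down triangles when composing projections, and this is precisely what the bipartite structure of the graph $\HH$ encodes. A more conceptual variant would invoke Lemma~\ref{lemma:pathsinH} directly: parametrise every level-$(n+1)$ triangle by a path of length $n+1$ in $\HH$, split such a path into its first edge and its trailing length-$n$ subpath, and read the set equation off from the terminal vertex of the first edge.
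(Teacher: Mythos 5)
Your proof is correct and follows essentially the route the paper intends: iterating the level-one set equation of Proposition~\ref{prop:GDIFS}, collapsing compositions via \eqref{eq:compose_projections}, and identifying the resulting triangles through Lemma~\ref{lemma:pathsinH} and the recursion corollary (your $k=n$ case), which is exactly the paper's one-line argument spelled out in detail. No gaps; the closing remark about splitting paths in $\HH$ is precisely the paper's phrasing of the same idea.
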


\section{Topological properties of triangular labyrinth fractals}\label{sec:topological_prop}

For $\W \subset \T_m$ and $\W' \subset \T'_m $, let $\B:=\T_m\setminus \W $ and 
$\B':=\T'_m\setminus \W' $. Analogously, for $\Y \subset \T_m$ and $\Y' \subset \T'_m $, let $\D:=\T_m\setminus \Y $ and 
$\D':=\T'_m\setminus \Y' $. We call $\B \cup \B'$ the set of w-black triangles and $\D \cup \D'$ the set of y-black triangles. We define $\G(\B \cup \B')$ as the graph with vertex set $\B \cup \B'$, such that two distinct vertices $B_1,B_2\in \V(\G(\B\cup \B'))$ are connected by an edge in the graph if one of the following three conditions is satisfied:
\begin{enumerate}
\item the black triangles $B_1$ and $B_2$ intersect along a common side;

\item $B_1$ and $B_2$ intersect in exactly one point which does not lie on the boundary of $T_1$, i.e.,
\[B_1 \cap  B_2 = \{\alpha_1\mathbf{P}_1 + \alpha_2\mathbf{P}_2 +\alpha_3\mathbf{P}_3\} \text{ where } (\alpha_1, \alpha_2, \alpha_3) \in H, \text{ and } \alpha_1\alpha_2\alpha_3 \not=0;\]

\item 
$B_1$ and $B_2$ are border triangles that intersect in exactly one point, which lies on the side $\varrho_i$ of the triangle $T_1$, 
for some $i\in\{ 1,2,3\}$, and the corner triangle $T_m(k_1, k_2, k_3)$, where $k_i=m-1$ and $k_j=0$ for $j \in \{ 1,2,3\}\setminus \{i\}$, is not an element of in $\Y$. 
\end{enumerate}

$\G(\D \cup \D')$ is defined analogously, we just have to replace $\Y$ by $\W$ in the third condition occurring in the above definition.

Let us remark that two distinct vertices in $\G(\B\cup \B')$ are connected by an edge (and thus are neighbours) in the graph $\G(\B \cup \B')$ if and only if the corresponding  black triangles are neighbours (as defined in Section \ref{sec:construction}), except in the case when they intersect on the boundary $\partial T_1$ and the above condition (3) is violated. The analogon holds for $\G(\D\cup \D')$.

In the following we also use the notations $\B_n$ and $\B'_n$, $\D_n$ and $\D'_n$ for the corresponding elements of level $n$, and 
 $\G(\B_n \cup \B'_n)$ and $\G(\D_n \cup \D'_n)$ for the corresponding graphs, respectively.

\begin{lemma}\label{lemma:steinhaus}
If $\W \cup \W'$ is a set of white triangles such that the graph $\G(\W \cup \W')$ is a tree, then from every (black) triangle  in $\B \cup \B'$ there is a path in $\G(\B \cup \B')$ to a border triangle. An analogous result holds for the set of yellow triangles $\Y \cup \Y'$ with respect to $\D \cup \D'$.
\end{lemma}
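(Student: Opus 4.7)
The plan is to argue the contrapositive, in the spirit of a discrete Steinhaus/Jordan-type argument: I will assume that some $B_0 \in \B \cup \B'$ has no path in $\G(\B \cup \B')$ to a border triangle, let $\mathcal{K}$ denote its connected component in $\G(\B \cup \B')$, and deduce that $\G(\W \cup \W')$ must contain a cycle, contradicting the tree hypothesis. Intuitively, since $T_1$ is simply connected and $\mathcal{K}$ is a ``black island'' not reaching $\partial T_1$, the white triangles that hug the outside of $\mathcal{K}$ are forced to close up into a loop.

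Concretely, I would first attach to $U := \bigcup_{B \in \mathcal{K}} B$ a small closed disk around every vertex $v$ at which two triangles of $\mathcal{K}$ are joined via condition (2) or (3) of the definition of $\G(\B \cup \B')$, producing a compact connected set $\tilde U \subset T_1$. Because $\mathcal{K}$ contains no border triangle, $\tilde U$ meets $\partial T_1$ at most in isolated vertices arising from non-border upside-down black triangles, and condition (3) rules out that $\mathcal{K}$ escapes through such a point to a border triangle. I would then list, in the cyclic order induced by traversing the component of $\partial \tilde U$ that faces $\partial T_1$, the set of white frontier triangles
\[
\mathcal{F} := \{W \in \W \cup \W' : W \text{ shares an edge with some } B \in \mathcal{K}\}.
\]
The key claim is that consecutive triangles in this cyclic list are $j$-neighbours for some $j \in \{1,2,3\}$, so that the list defines a closed walk in $\G(\W \cup \W')$. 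By construction this walk separates $B_0$ from $\partial T_1$ in the plane, hence after removing backtracks it cannot collapse, and it yields a cycle in the white graph, contradicting the tree property. The statement for $\Y \cup \Y'$ versus $\D \cup \D'$ then follows by interchanging the roles of the white and yellow patterns.

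The hard part will be verifying this consecutive-sharing property at vertices where $\mathcal{K}$ has a corner-connection. At an interior vertex $v$ where two triangles of $\mathcal{K}$ meet only at $v$, six small triangles meet at $v$ and alternate between upright and upside-down; the corner-bridge of $\mathcal{K}$ across $v$ splits the vertex star into two angular sectors, and one has to check that the two outermost white triangles of $\mathcal{F}$ in the sector crossed by $\partial \tilde U$ are in fact $j$-neighbours. A parallel analysis on the three-triangle boundary star at a vertex of $\partial T_1$ is needed to handle condition (3): the asymmetric formulation of that condition in terms of $\Y$ rather than $\W$ is precisely what ensures that the corner of $T_1$ is not ``leaked'' to the wrong side. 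Once the geometry of the vertex stars is laid out carefully, this case analysis becomes routine.
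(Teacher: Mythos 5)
Your overall strategy (surround the black island and extract a white cycle) is genuinely different from the paper's proof, which never traces boundaries at all but prunes leaves: it repeatedly recolours a degree-one white triangle black, shows the assumption ``the component of $B$ reaches no border triangle'' survives each recolouring, and derives a contradiction when either no white triangle is left or every remaining white triangle has at least two neighbours. As written, however, your sketch has two genuine gaps. First, the ``key claim'' is false: consecutive elements of your frontier set $\mathcal F$ (white triangles sharing an \emph{edge} with the component $\mathcal K$) need not be $j$-neighbours, and this fails already at ordinary vertices of the black region, not only at the corner-connections you single out as the hard part. A side is always shared by one upright and one upside-down triangle, so if, say, $\mathcal K$ is a single upright black triangle in the interior, its three edge-sharing white neighbours are all upside-down and pairwise non-adjacent in $\G(\W \cup \W')$. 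The closed walk must instead consist of \emph{all} white triangles met by a curve running just outside $\tilde U$, i.e.\ it must also include the white triangles meeting $\mathcal K$ only in a vertex (these are white, since by conditions (1)--(2) any black triangle meeting $\mathcal K$ in a side or an interior vertex already lies in $\mathcal K$). With that repair the claim is salvageable, but note also that ``the walk separates $B_0$ from $\partial T_1$, hence yields a cycle'' still needs an explicit argument (winding number or a crossing-parity argument), since closed walks exist in trees as well.

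The second, more serious gap is the boundary case. The paper defines border triangles only for upright triangles, so ``$\mathcal K$ contains no border triangle'' does not prevent $\mathcal K$ from containing upside-down black triangles that touch $\partial T_1$ in a single vertex. At such a vertex only three triangles of the subdivision meet; the two upright (white) triangles there intersect only in that point and are \emph{not} adjacent in $\G(\W \cup \W')$, so the white ring around $\mathcal K$ does not close up and no cycle is produced. In this situation the contradiction with the tree hypothesis comes from \emph{connectedness}, not acyclicity: the white triangles enclosed between $\mathcal K$ and the nearby portion of $\partial T_1$ are cut off from the rest of the white graph (for $m=3$, a single black upside-down triangle of the bottom row isolates the corner triangle $T_3(2,0,0)$ in the white graph). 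Your appeal to condition (3) does not cover this, since condition (3) only concerns two upright black border triangles meeting on $\partial T_1$ and says nothing about an upside-down black triangle reaching the boundary. So you must either treat this case separately (via disconnection) or justify a reading of ``border triangle'' under which it cannot arise; as it stands, the argument proves the lemma only for components staying strictly inside $T_1$.
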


\begin{proof} 
We give an indirect proof. Therefore, we assume that there is a black triangle $B$ in $\V(\G( \B \cup \B'))$, such that the connected component $\C_B$ of $B$ in $\G( \B \cup \B')$ contains no border triangle.  
By the lemma's assumption, $\G(\W \cup \W')$ is a tree, therefore there exists a triangle $W \in \W \cup \W'$, such that $W$ has only one neighbour in $\G(\W \cup \W')$. 
We consider, w.l.o.g, that $W \in \W$. Otherwise, we can rotate the whole planar object to get $W \in \W$. Let us assume, without loss of generality, that this neighbour lies below $W$, i.e., is the $3$-neighbour of the triangle $W$. Now we introduce new graphs $\G'(\W \cup \W')$ and $\G'(\B \cup \B')$, which are obtained from $\G(\W \cup \W')$ and $\G(\B \cup \B')$ by colouring $W$ in black. The next step is to show that our indirect assumption on $B$ still holds in $\G'(\B \cup \B')$. Therefore, we assume the contrary, i.e., that the new connected component of $B$ in $\G'(\B \cup \B')$ contains a border triangle. Then it follows that $W$ has a black neighbour $B_1$ in  $\G'(\B \cup \B')$ such that $B_1 \in\C_B$. Here, two  cases may occur. In the first of these cases, $W$ is a border triangle, and in the second case $W$ is not a border triangle, but has a neighbour $B_2$ from which there exists a path in $\G'(\B \cup \B')$ to a black border triangle.
In the first case, we may assume w.l.o.g. that $W$ lies in the border strip  of type $2$ (i.e., its side of type $2$ lies on $\varrho_2$). Then the triangle $W$ has a black $1$-neighbour. It follows that $B_1$ either lies in the border strip of type $2$, or it is a neighbour of a black triangle of $\G(\B \cup \B')$ that lies in the border strip of type $2$.
No matter where  $B_1$ (the black neighbour of $W$ in  $\G'(\B \cup \B')$) lies, there is a path in $\G(\B \cup \B')$ from $B_1$ to a black border triangle, i.e., $\C_B$ contains a black border triangle, which leads to a contradiction.

If $W$ is not a border triangle, then the $2$- and $1$-neighbours of $W$ 
are black, which implies that all neighbour triangles of $W$ in $\G'(\B \cup \B')$ are in the same connected component in $\G'(\B \cup \B')$. Since $B_1$ and $B_2$ have to be in distinct connected components of $\G'(\B \cup \B')$, this is a contradiction.

Now, we may proceed with colouring white triangles with at most one neighbour until no such triangle is left. It is not possible that the new graph $\G^{new}(\W \cup \W')$ that arises at the end of  this colouring procedure is empty, because in that case the connected component of $B$ would contain a border triangle, which contradicts our assumption.

The other possibility would be that all white triangles in $\G^{new}(\W \cup \W')$ have at least two neighbours in $\G^{new}(\W \cup \W')$, which implies that $\G(\W \cup \W')$ is not a tree, in contradiction with the lemma's assumption.
\end{proof}

Let us remark that the obove result is a Steinhaus chessboard type theorem dealing with a triangular chessboard. 

\begin{lemma}\label{lemma:arc_in_the_complement}
Let $n\ge 1$. If $x$ is a point in $T_1 \setminus L^{\w}_n$, then there is an arc $a$ in $T_1 \setminus L^{\w}_{n+1 }$ that connects $x$ and a point in the boundary $\fr(T_1)$.  An analogous result holds for $x \in T_1 \setminus L^{\y}_{n+1}$.
\end{lemma}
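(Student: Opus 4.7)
My plan is to combine a graph-theoretic path through the w-black level-$n$ triangles with a careful topological realisation at level $n+1$. Since $x\in T_1\setminus L^{\w}_n$ and the white and w-black level-$n$ triangles cover $T_1$ (with no white one meeting $x$), there is a triangle $B_0\in\B_n\cup\B_n'$ with $x\in B_0$. By Proposition~\ref{prop:triangle_laby_system_order_n} the pair $(\W_n\cup\W_n',\Y_n\cup\Y_n')$ is again a triangular labyrinth patterns system, so $\G(\W_n\cup\W_n')$ is a tree. Lemma~\ref{lemma:steinhaus} applied at level $n$ then produces a path $B_0,B_1,\dots,B_k$ in $\G(\B_n\cup\B_n')$ ending at a w-black border triangle $B_k$; I fix a target point $y_{k+1}\in B_k\cap\fr(T_1)$.

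Next I would realise this graph path as an arc in $T_1\setminus L^{\w}_{n+1}$ by concatenation. A crucial observation is that the interior of every $B_i$ lies in $T_1\setminus L^{\w}_{n+1}$, because the recursive definitions of $\W_{n+1}$ and $\W_{n+1}'$ place each level-$(n+1)$ white triangle inside some level-$n$ white triangle. Hence motion inside every $B_i$ is unconstrained, and it remains to cross the shared feature between $B_i$ and $B_{i+1}$. For an edge of type $(1)$ in $\G(\B_n\cup\B_n')$, the common side of the two w-black triangles likewise has its interior disjoint from every level-$(n+1)$ white triangle, so a short segment through an interior point of the side suffices.

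The delicate case is an edge of type $(2)$ or $(3)$, where $B_i\cap B_{i+1}=\{v\}$ and $v$ may itself belong to $L^{\w}_{n+1}$. If $v\notin L^{\w}_{n+1}$ the arc passes directly through $v$; otherwise I must detour inside an arbitrarily small disc about $v$. The existence of such a detour should follow from two structural facts: first, the tree property of $\G(\W_n\cup\W_n')$, which rules out a full six-cycle of white level-$n$ triangles around an interior vertex and hence forces several w-black sectors to appear around $v$; and second, the corners property of the underlying patterns, which bounds how many of the refined corner sub-triangles at $v$ can be white. Together these produce a cyclic run of w-black level-$(n+1)$ corner cells at $v$ connecting the sector of $B_i$ to that of $B_{i+1}$, and the detour is drawn inside this run. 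For type $(3)$ (a shared vertex on $\fr(T_1)$) the built-in condition on the yellow corner triangle in the definition of $\G(\B\cup\B')$ plays the analogous role.

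Concatenating the sub-arcs and appending a final segment from the interior of $B_k$ to $y_{k+1}$ yields the desired arc in $T_1\setminus L^{\w}_{n+1}$ from $x$ to $\fr(T_1)$. The yellow analogue follows verbatim by exchanging the roles of $(\W,\W')$ and $(\Y,\Y')$ (and of w-black and y-black triangles). The main obstacle is the vertex-detour step: making the existence of the cyclic w-black run around $v$ fully rigorous requires a case analysis based on the geometry of the six level-$n$ triangles incident to $v$, combined with the tree and corners properties propagated to level $n$ via Proposition~\ref{prop:triangle_laby_system_order_n}.
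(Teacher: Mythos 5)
Your overall strategy is exactly the paper's: locate $x$ in a w-black level-$n$ triangle $B_0$, use Lemma~\ref{lemma:steinhaus} (legitimately applied at level $n$ via Proposition~\ref{prop:triangle_laby_system_order_n}) to get a path $B_0,\dots,B_k$ in $\G(\B_n\cup\B'_n)$ to the boundary, and realise it as an arc at level $n+1$, crossing shared sides through their interiors and detouring around shared vertices. Two points, however, are genuinely incomplete. First, the endgame: you append a straight segment from the interior of $B_k$ to an arbitrarily fixed $y_{k+1}\in B_k\cap\fr(T_1)$. If $B_k$ meets $\fr(T_1)$ in a single point (for instance an upside-down triangle of the border strip), or if you happen to pick an endpoint of the boundary side of $B_k$, that point is a vertex shared with white level-$n$ border triangles and may well lie in $L^{\w}_{n+1}$ (their corner sub-triangle there can be white), so your arc would not be contained in $T_1\setminus L^{\w}_{n+1}$. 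The paper treats this case separately, routing through a black triangle of $\B_{n+1}\cup\B'_{n+1}$ adjacent to $B_k$ whose other side lies on $\fr(T_1)$, and ending at an interior point of that side.

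Second, at the interior-vertex detour the observation that the corners property \emph{bounds how many} corner sub-triangles at $v$ can be white is not sufficient. Around an interior vertex $v$ sit six level-$n$ triangles, and $v$ is a different corner $\mathbf{P}_i$ for each of the three upright ones (and likewise for the three upside-down ones), so indeed at most one upright and at most one upside-down sector can carry a white corner cell at $v$ --- at most two blockers. But if these two blockers could lie in non-adjacent sectors, \emph{both} ways around $v$ from $B_i$ to $B_{i+1}$ could be blocked and no level-$(n+1)$ detour through corner cells would exist. What rescues the construction is the other half of the corners property, $\C(\W)\cap\C(\Y)=\emptyset$: the white corner of $\W$ and the white corner of $\Y$ (if present) sit at different corners of $T_1$, and a short coordinate check of the six triangles incident to $v$ shows that the corresponding two sectors are then always \emph{adjacent} in the cyclic order around $v$; hence one of the two ways around $v$ meets only black corner cells, and these side-adjacent black level-$(n+1)$ cells carry the detour (condition (3) in the definition of $\G(\B\cup\B')$ plays this role at boundary vertices, as you note). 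Your appeal to the tree property here is a red herring: two black sectors at $v$ are already given, namely $B_i$ and $B_{i+1}$ themselves. So the "case analysis" you defer is not routine bookkeeping; it is precisely where the disjointness clause of the corners property must be brought in.
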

\begin{proof} 
Let $x$ be a point in  $T_1 \setminus L^{\w}_n$. Thus $x$ lies in a black triangle $B_0 \in \G(\B_n \cup \B'_n )$. By Lemma \ref{lemma:steinhaus} applied to $\W_n \cup \W'_n$, there exists a path $\{B_0, B_1, \dots,B_k\}$, with $k\ge 0$, in $\G(\B_n \cup \B'_n)$ from $B_0$ to a black border triangle $B_k$. Let us first construct the arc $ a'\subset T_1 \setminus L^{\w}_{n +1} $ as the union of arcs between centers of consecutive black triangles in the path $\{B_0, B_1, \dots,B_k\}$. Therefore, we proceed as follows.
When $B_j$ and $B_{j+1}$, with $0\le j\le k-1$, are consecutive black triangles in the above path, their intersection is either a common side or  only a point. 
In the first case, we can construct an arc that connects their centres as an arc passing through the mid-point of the segment (the side) that is their intersection.
If the intersection of $B_j$ and $B_{j+1}$ is only one point (their common vertex), which we denote for convenience by $P_{j,j+1}$, then  for this point we have either $P_{j,j+1}\in L^{\w}_n$ or $P_{j,j+1}\in T_1\setminus L^{\w}_n$. 

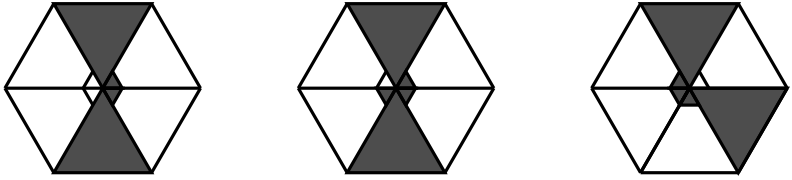
\begin{figure}[h]
\begin{center}
 \begin{tikzpicture}
[scale=1.30]
\draw[line width=1.2pt, draw= black] (-1, 0) -- (-0.5,0.865);
\draw[line width=1.2pt, draw= black] (1, 0) -- (0.5,0.865);
\draw[line width=1.2pt, draw= black] (-1, 0) -- (1,0);
\draw[line width=1.2pt, draw= black] (-1, 0) -- (-0.5,-0.865);
\draw[line width=1.2pt, draw= black] (1, 0) -- (0.5,-0.865);
\draw[line width=1.2pt, draw=black, fill=black, fill opacity=0.7]
       (0,0) -- (0.5,0.865) -- (-0.5,0.865) -- cycle;
\draw[line width=1.2pt, draw=black, fill=black, fill opacity=0.7]
       (0,0) -- (0.5,-0.865) -- (-0.5,-0.865) -- cycle;
\draw[line width=1.2pt, draw= black] (-0.2, 0) -- (-0.1,0.173);
\draw[line width=1.2pt, draw= black] (-0.2, 0) -- (-0.1,-0.173);
\draw[line width=1.2pt, draw=black, fill=black, fill opacity=0.7]
       (0,0) -- (0.2, 0) -- (0.1,0.173) -- cycle;
\draw[line width=1.2pt, draw=black, fill=black, fill opacity=0.7]
       (0,0) -- (0.2, 0) -- (0.1,-0.173) -- cycle;

\draw[line width=1.2pt, draw= black] (2, 0) -- (2.5,0.865);
\draw[line width=1.2pt, draw= black] (4, 0) -- (3.5,0.865);
\draw[line width=1.2pt, draw= black] (2, 0) -- (4,0);
\draw[line width=1.2pt, draw= black] (2, 0) -- (2.5,-0.865);
\draw[line width=1.2pt, draw= black] (4, 0) -- (3.5,-0.865);
\draw[line width=1.2pt, draw=black, fill=black, fill opacity=0.7]
       (3,0) -- (3.5,0.865) -- (2.5,0.865) -- cycle;
\draw[line width=1.2pt, draw=black, fill=black, fill opacity=0.7]
       (3,0) -- (3.5,-0.865) -- (2.5,-0.865) -- cycle;

\draw[line width=1.2pt, draw= black] (2.8, 0) -- (2.9,0.173);
\draw[line width=1.2pt, draw=black, fill=black, fill opacity=0.7]
       (3,0) -- (3.2, 0) -- (3.1,0.173) -- cycle;
\draw[line width=1.2pt, draw=black, fill=black, fill opacity=0.7]
       (3,0) -- (3.2, 0) -- (3.1,-0.173) -- cycle;
\draw[line width=1.2pt, draw=black, fill=black, fill opacity=0.7]
       (3,0) -- (2.8, 0) -- (2.9,-0.173) -- cycle;

\draw[line width=1.2pt, draw= black] (5, 0) -- (5.5,0.865);
\draw[line width=1.2pt, draw= black] (6, 0) -- (5.5,-0.865);
\draw[line width=1.2pt, draw= black] (5, 0) -- (7,0);
\draw[line width=1.2pt, draw= black] (5, 0) -- (5.5,-0.865);
\draw[line width=1.2pt, draw= black] (7, 0) -- (6.5,-0.865);
\draw[line width=1.2pt, draw= black] (6.5, 0.865) -- (7,0);
\draw[line width=1.2pt, draw= black] (5.5,-0.865) -- (6.5,-0.865);
\draw[line width=1.2pt, draw= black] (5.5,-0.865) -- (6,0);
\draw[line width=1.2pt, draw=black, fill=black, fill opacity=0.7]
       (6,0) -- (6.5,0.865) -- (5.5,0.865) -- cycle;
\draw[line width=1.2pt, draw=black, fill=black, fill opacity=0.7]
       (6,0) -- (6.5,-0.865) -- (7,0) -- cycle;
\draw[line width=1.2pt, draw=black, fill=black, fill opacity=0.7]
       (6,0) -- (5.8, 0) -- (5.9,-0.173) -- cycle;
\draw[line width=1.2pt, draw=black, fill=black, fill opacity=0.7]
       (6,0) -- (5.8, 0) -- (5.9,0.173) -- cycle;
\draw[line width=1.2pt, draw=black, fill=black, fill opacity=0.7]
       (6,0) -- (5.9, -0.173) -- (6.1,-0.173) -- cycle;
\draw[line width=1.2pt, draw= black] (6.2, 0) -- (6.1,0.173);

\end{tikzpicture}
\end{center}
\caption{Positions of black triangles  $\B_{n} \cup \B'_{n}$ that intersect in one point, which does not lie on the boundary of $T_1$, and the black triangles from $\B_{n+1} \cup \B'_{n+1}$ that play a role in the construction described in the proof of Lemma \ref{lemma:arc_in_the_complement}. Here black triangles are coloured in grey for better visibility in the figure.}
\label{fig:black_triangles_1}
\end{figure}

 Let us first analyse the case when $P_{j,j+1}\in T_1 \setminus L^{\w}_n$. In this case, it is either the common vertex of six black triangles from $\B_n$, such that neighbouring black triangles share a common edge (i.e., $P_{j,j+1}$ is the centre of the regular hexagon which is the union of the six black triangles,) as in Figure \ref{fig:black_triangles_1}, but having all triangles or order $n$ coloured in black), or it is the common vertex of three black triangles from $\B_n$ such that $P_{j,j+1}$ lies on one of the sides of $T_1$ and the three black triangles of level $n$ are like the triangles shown in Figure \ref{fig:black_triangles_2}, but all coloured in black. It is easy to see that in these cases where the point $P_{j,j+1}$ is ``surrounded'' by black triangles there is no difficulty to construct a curve from any centre or midpoint of a side of one of these triangles  to any centre or midpoint of a side of an other among these black triangles, such that this curve passes through $P_{j,j+1}$ and is contained in $T_1\setminus L^\w_n$.

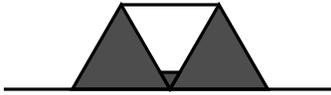
\begin{figure}[h]
\begin{center}
 \begin{tikzpicture}
[scale=1.30]

\draw[line width=1.2pt, draw= black] (-1.7, 0) -- (1.7,0);
\draw[line width=1.2pt, draw= black] (-0.5,0.865) -- (0.5,0.865);
\draw[line width=1.2pt, draw=black, fill=black,fill opacity=0.7]
       (0,0) -- (-0.5,0.865) -- (-1,0) -- cycle;
\draw[line width=1.2pt, draw=black, fill=black,fill opacity=0.7]
       (0,0) -- (0.5,0.865) -- (1,0) -- cycle;
%
\draw[line width=1.2pt, draw=black, fill=black,fill opacity=0.7]
       (0,0) -- (0.1,0.173) -- (-0.1,0.173) -- cycle;
\end{tikzpicture}
\end{center}
\caption{Black triangles from $\B_{n} \cup \B'_{n}$ that intersect in one point, which lies on the boundary of $T_1$ or of a white triangle from some $\W_k\cup \W'_k$, $k<n$, that contains them.}
\label{fig:black_triangles_2}
\end{figure}

For the case when $P_{j,j+1}\in L^{\w}_n$ the argumentation works as follows.
 By the corner property and the definition of $\G(\B \cup \B')$ in all situations that can occur in this case one can construct an arc that starts at the centre of $B_j$, ends at the centre of $B_{j+1}$, and goes through the centre(s) of one, two or three (neighbouring) black triangles from $\B_{n+1}\cup \B'_{n+1}$, such there if there is more than one such small triangle, then the triangles share pairwise a side, and the first and last of them has a side lying on a side of $B_j$ and $B_{j+1}$, respectively. In other words, when $B_j$ and $B_{j+1}$ intersect in only one point, we can construct a curve in  $T_1 \setminus L^{\w}_{n+1}$ that connects their centres and passes ``close'' to $P_{j,j+1}$, but makes a detour around this point, passing through black triangles that belong to $\B_{n+1}\cup \B'_{n+1}$. 
Figures \ref{fig:black_triangles_1}  and \ref{fig:black_triangles_2} show, up to symmetry and rotation, what cases can occur when $B_j$ and $B_{j+1}$ are black triangles whose intersection is one point. 

\begin{figure}[h]
\begin{center}
 \begin{tikzpicture}
[scale=1.30]
\draw[line width=1.2pt, draw= black] (-1.8, 0) -- (1.3,0);

\draw[line width=1.2pt, draw=black, fill=black,fill opacity=0.7]
       (0,0) -- (-0.5,0.865) -- (-1,0) -- cycle;

jetzt noch die kloanen dreieckerl dazu
\draw[line width=1.2pt, draw= black] (0, 0) -- (0.5,0.865);
\draw[line width=1.2pt, draw= black] (-0.5, 0.856) -- (0.5,0.865);
\draw[line width=1.2pt, dashed, draw=black]
       (-1,0) -- (-0.5,0.865) -- (-1.5,0.865) -- cycle;

\draw[line width=1.2pt, draw= black] (2, 0) -- (2.5,0.865);
\draw[line width=1.2pt, draw= black] (4, 0) -- (3.5,0.865);
\draw[line width=1.2pt, draw= black] (1.7, 0) -- (4.3,0);

\draw[line width=1.2pt, draw=black, fill=black, fill opacity=0.7]
       (3,0) -- (3.5,0.865) -- (2.5,0.865) -- cycle;

\draw[line width=1.2pt, draw= black] (2.8, 0) -- (2.9,0.173);
\draw[line width=1.2pt, draw=black, fill=black, fill opacity=0.7]
       (3,0) -- (3.2, 0) -- (3.1,0.173); -- cycle;

\end{tikzpicture}
\end{center}
\caption{ Black border triangles from $\B_{n} \cup \B'_{n}$. In the left example the dashed triangle can be white or black}
\label{fig:black_triangles_3}
\end{figure}
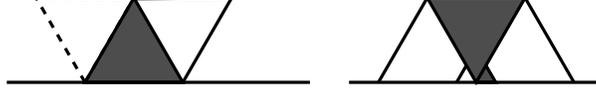

Finally, we construct the arc $a \subset T_1 \setminus L^{\w}_n$ as the union of $a'$ with an arc from $x$ to the centre of $B_0$ and with a further arc which connects the centre of $B_k$ to a point on the boundary $\fr(T_1)$ that lies in $T_1 \setminus L^{\w}_{n+1}$.
If $B_k$ intersects the boundary $\fr(T_1)$ along one of its sides, then we simply connect the centre of $B_k$ with the mid-point of that side. If $B_k$ intersects the boundary $\fr(T_1)$ only in one point, then there exists a triangle $B_{k+1}$ in $\B_{n+1}\cup \B'_{n+1}$ such that one of its sides lies on a side of $B_k$, and another of its sides lies on the boundary $\fr(T_1)$,  and we can construct, e.g., a curve that starts at the centre of $B_k$, passes through  the mid-point of the side of $B_{k+1}$ that lies on one side of $B_{k}$ and then ends at the midpoint of the side of $B_{k+1}$ that lies on boundary $\fr(T_1)$, see also Figure \ref{fig:black_triangles_3}. This completes the proof.

The proof for the yellow fractal $\linfy$ works analogously. 
\end{proof}

The above lemma and the construction of triangular labyrinth fractals immediately yield the following result.

\begin{proposition}\label{prop:arc_in_the_complement}
If $x$ is a point in $T_1 \setminus \linfw$ then there exists an arc $a \subset T_1 \setminus \linfw$ that connects  $x$ and a point in $\fr(T_1)$. An analogous result holds for $x \in T_1 \setminus \linfy$.
\end{proposition}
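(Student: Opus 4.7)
The plan is that the proposition follows almost immediately from Lemma~\ref{lemma:arc_in_the_complement} together with the nested structure of the compact sets $L^{\w}_n$ and the definition $\linfw = \bigcap_{n \geq 1} L^{\w}_n$. Since all of the geometric work has already been done in the lemma, what remains is just to pass from a level-$n$ statement to a statement about the limit.

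Concretely, suppose $x \in T_1 \setminus \linfw$. Because $\linfw = \bigcap_{n \geq 1} L^{\w}_n$, there must exist some positive integer $n$ with $x \notin L^{\w}_n$. Lemma~\ref{lemma:arc_in_the_complement} then supplies an arc $a \subset T_1 \setminus L^{\w}_{n+1}$ joining $x$ to a point of $\fr(T_1)$. But by definition $\linfw \subset L^{\w}_{n+1}$, hence $T_1 \setminus L^{\w}_{n+1} \subset T_1 \setminus \linfw$, so $a \subset T_1 \setminus \linfw$, which is exactly the desired arc. The case $x \in T_1 \setminus \linfy$ is handled symmetrically by invoking the yellow analogue of Lemma~\ref{lemma:arc_in_the_complement}.

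There is no real obstacle: the Steinhaus-type path construction through black triangles, together with the detours around shared vertices and the final segment to $\fr(T_1)$, has been carried out one level at a time in the proof of the lemma. The only conceptual point to notice is that although the lemma's conclusion places the arc in the complement of $L^{\w}_{n+1}$, which is a priori larger than $T_1 \setminus \linfw$, the monotonicity $L^{\w}_1 \supset L^{\w}_2 \supset \cdots \supset \linfw$ makes the containment of complements run in the direction we need.
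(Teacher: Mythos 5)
Your argument is correct and matches the paper exactly: the paper states that Proposition~\ref{prop:arc_in_the_complement} follows immediately from Lemma~\ref{lemma:arc_in_the_complement} and the nested construction, which is precisely the passage from $x \notin L^{\w}_n$ for some $n$ to an arc in $T_1 \setminus L^{\w}_{n+1} \subseteq T_1 \setminus \linfw$ that you spell out. No issues.
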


\begin{theorem}\label{theo:dendrite}
$\linfw$ and $\linfy$ are dendrites.

\end{theorem}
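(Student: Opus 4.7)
My plan is to verify directly the four defining properties of a dendrite: compact, connected, locally connected, and free of simple closed curves. I focus on $\linfw$; the argument for $\linfy$ is completely analogous. Compactness is immediate, since $\linfw$ is a nested intersection of compact subsets $L^{\w}_n$ of $T_1$.

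Connectedness follows from Proposition~\ref{prop:triangle_laby_system_order_n}: the graph $\G(\W_n \cup \W'_n)$ is a tree, hence connected, and consecutive vertices in any path in the graph correspond to triangles sharing an entire common side; thus $L^{\w}_n$ is path-connected. A decreasing intersection of compact connected subsets of a Hausdorff space is connected, so $\linfw$ is connected.

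For the absence of simple closed curves I argue by contradiction, using Proposition~\ref{prop:arc_in_the_complement}. The corner property forces at least two of the three upright corner triangles of $\T_m$ to be absent from $\W$ and at least two to be absent from $\Y$, so $|\W|+|\W'|<m^2$ and $|\Y|+|\Y'|<m^2$. The dominant eigenvalue $\lambda$ of the nonnegative matrix in Proposition~\ref{prop:GDIFS} is bounded above by the maximum row sum, hence $\lambda<m^2$, so $\dim_H(\linfw)<2$ and $\linfw$ has empty planar interior. Now assume a simple closed curve $\gamma\subset\linfw$ exists, and let $U$ denote the bounded complementary region of $\gamma$; since $T_1$ is convex and $\gamma\subset T_1$, we have $U\subset T_1$ and $U\cap\partial T_1=\emptyset$. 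Pick $x\in U\setminus\linfw$, which is possible since $\linfw$ has empty interior. Proposition~\ref{prop:arc_in_the_complement} supplies an arc $a\subset T_1\setminus\linfw$ joining $x$ to some $y\in\partial T_1$. Because $\gamma\subset\linfw$ one has $y\notin\gamma$, and because $U\cap\partial T_1=\emptyset$ one has $y\notin U$; hence $y\notin\overline{U}=U\cup\gamma$, and $a$ must cross $\partial\overline{U}\subset\gamma\subset\linfw$, contradicting $a\cap\linfw=\emptyset$.

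For local connectedness I use the self-similar decomposition. Given $x\in\linfw$ and $\varepsilon>0$, choose $n$ so large that $2\sqrt{3}/m^n<\varepsilon$, and let $\mathcal{N}$ be the (finite, nonempty) family of triangles in $\W_n\cup\W'_n$ containing $x$. By Corollary~\ref{cor:fractal_as_finite_union} each $T\in\mathcal{N}$ satisfies $T\cap\linfw=P_T(\linfw)$ or $T\cap\linfw=P_T(\linfy)$, which is a scaled similar copy of one of the two fractals; it is connected by the connectedness step and contains $x$. Their union $V:=\bigcup_{T\in\mathcal{N}}(T\cap\linfw)$ is therefore connected with diameter strictly less than $\varepsilon$. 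Any ball about $x$ of radius smaller than the (positive) distance from $x$ to the finitely many triangles in $\W_n\cup\W'_n$ not containing $x$ meets $L^{\w}_n$ only inside $\bigcup_{T\in\mathcal{N}}T$, hence meets $\linfw\subset L^{\w}_n$ only inside $V$; thus $V$ is a neighbourhood of $x$ in $\linfw$. The main obstacle I anticipate is the no-simple-closed-curve step, where the Hausdorff-dimension estimate that rules out interior points must be combined with a careful application of the Jordan curve theorem relative to the ambient triangle $T_1$.
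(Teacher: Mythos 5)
Your compactness, connectedness and no-simple-closed-curve steps are correct, and the last one is essentially the paper's argument: the paper also combines Proposition~\ref{prop:arc_in_the_complement} with the Jordan curve theorem; the only difference is that you produce a point of the bounded Jordan domain outside $\linfw$ via the dimension bound $\dim_H(\linfw)<2$ (a valid alternative -- note that Proposition~\ref{prop:GDIFS} assumes $\W',\Y'\neq\emptyset$, which is automatic for labyrinth patterns systems), whereas the paper simply finds a black triangle of some level inside the Jordan domain.

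The genuine gap is in your local connectedness step. The identity $T\cap\linfw=P_T(\linfw)$ (resp.\ $P_T(\linfy)$) is \emph{not} a consequence of Corollary~\ref{cor:fractal_as_finite_union}: the corollary only gives the inclusion $T\cap\linfw\supseteq P_T(\cdot)$, and $T\cap\linfw$ may contain additional points of $\partial T$ belonging to the pieces of \emph{other} level-$n$ triangles. This actually happens in Example~\ref{Beispiel1}: there $\mathbf{P}_1\in\linfw$ (the corner triangle $T_4(3,0,0)$ is white at every level) while $\mathbf{P}_1\notin\linfy$ and $\mathbf{P}_2\notin\linfw$. Hence the piece $P_{T_4(1,1,1)}(\linfw)$ contains the vertex $v=\tfrac{1}{4}(2\mathbf{P}_1+\mathbf{P}_2+\mathbf{P}_3)$, which is also a vertex of the white triangle $T=T'_4(2,0,0)\in\W'_1$, yet $v=P_T(\mathbf{P}_1)\notin P_T(\linfy)$ because $\mathbf{P}_1\notin\linfy$. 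Moreover $v$ is isolated in $T\cap\linfw$: the only piece meeting ${\rm int}(T)$ is the closed set $P_T(\linfy)$, which misses $v$, the side-neighbour $T_4(2,1,0)$ is black, and the piece of the side-neighbour $T_4(2,0,1)$ could accumulate at $v=P_{T_4(2,0,1)}(\mathbf{P}_2)$ only if $\mathbf{P}_2\in\linfw$. So $T\cap\linfw$ is disconnected, and for $x\in{\rm int}(T)\cap\linfw$ your set $V$ is not connected -- the claimed connected neighbourhood fails. To repair this you must either first prove that $\linfw$ meets $\partial W$, for $W\in\W_n\cup\W'_n$, only in points of $P_W(\cdot)$ (essentially only in the exits of $W$; this uses the exits and corner properties, not just the set equations), or argue as the paper does: for every $\varepsilon>0$ the continuum $\linfw$ is a finite union of continua of diameter less than $\varepsilon$ (the pieces $P_\Delta(\linfw)$, $P_{\Delta'}(\linfy)$, connected by your connectedness step), and the Hahn--Mazurkiewicz--Sierpi\'nski criterion \cite[Theorem 2, p.~256]{Kuratowski} then yields local connectedness without requiring the pieces to form neighbourhoods.
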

\begin{proof}
We only give the proof for $\linfw$ since the proof for the yellow fractal works analogously.
The set $\linfw$ is the intersection of compact connected sets and thus is connected. From the construction it immediately follows that for any $\varepsilon> 0$ there exists  an integer $n\ge 1,$ such that the diameter of every triangle in $\W_n \cup \W'_n$ is strictly less than $\varepsilon$. In other words, for any $\varepsilon > 0$, the set $\linfw$ is the finite union of connected sets  of diameter less than $\varepsilon$, by Corollary~\ref{cor:fractal_as_finite_union}. We obtain, by applying the Hahn-Mazurkiewicz-Sierpi\'nski Theorem \cite[Theorem 2, p 256]{Kuratowski}, that $\linfw$ is locally connected. In order to prove that $\linfw$ is a dendrite we give an indirect proof for the fact that $\linfw$ contains no simple closed curve. Therefore, we assume the there is  a simple closed curve $c$ in $\linfw$. The Jordan Curve Theorem then implies that the bounded region $J$ inside $c$ is non-empty and open. Therefrom it follows that there exists an integer $n\ge 2$, such that there is a black triangle $B$ of level $n$ that is contained in $J$. Then, by Proposition \ref{prop:arc_in_the_complement} we obtain that there is an arc in $T_1 \setminus \linfw$ from a point $x_1 \in B$ to a point $x_2$ that lies in $\fr T_1$, which comes in contradiction with the Jordan Curve Theorem, since $x_1$ is inside the curve $c$, and $x_2$ lies outside $c$.

\end{proof}
\end{section}

In the rest of the article we  often formulate the problems that we want to solve on the white fractal $\linfw$, since everything works analogously on the yellow fractal $\linfy$.

\section{Paths in graphs of trianular labyrinth patterns and sets} \label{sec:paths_TLG}
Let us introduce the following notation. For $n\ge 1$ and two distinct triangles $W_1, W_2\in \V(\G(\W_n\cup \W'_n))$, let $p_n(W_1,W_2)$ denote the path in $ \in \G(\W_n \cup \W'_n)$ that connects $W_1$ and $W_2$. The existence and uniqueness of this path is provided by the fact that $\G(\W_n \cup \W'_n)$ is a tree.

For any $m$-triangular labyrinth patterns system  $(\W\cup \W', \Y \cup \Y')$ we introduce the following types of paths in the associated graphs.
For convenience we let $\P$ denote the set of unordered pairs of $\{1,2,3\}$, i.e.,
\[\P:=\{\{1,2\}, \{1,3\}, \{2,3\}\}.\]
For $\{i,j\} \in \P$  we call 
the path $p(e_i^\w, e_j^\w)$ in $\G(\W \cup  \W')$ that connects the exits $e_i^{\w}$ and $e_j^{\w}$ the \emph{ white path of type} $\{i,j\}$. Analogously, we call the  path in $\G(\Y \cup  \Y')$ that connects the exits $e_i^{\y}$ and $e_j^{\y}$ \emph{ the yellow path of type} $\{i,j\}$. By the tree property of $\G(\W \cup  \W')$ and $\G(\Y \cup  \Y')$ these paths are unique. 
Of course,  one can associate to each path between exits of  $\G(\W_n \cup  \W'_n)$ the same types as above, for all $n \geq 2$.

We define the length of any of these paths as the number of triangles in the path. 
For $n\ge 1$  and $1\le i<j\le3$ we denote the length of the white path of type  $\{i,j\}$ in $\G(\W_n \cup  \W'_n)$ by $\ell_{\{i,j\}}(n)$, and the length of the yellow path of type  $\{i,j\}$ in $\G(\Y_n \cup  \Y'_n)$ by 
$\ell'_{\{i,j\}}(n)$.
Observe that all exits are contained in $\W_n$ ($\Y_n$, respectively) and $\G(\W_n \cup  \W'_n)$ ($\G(\Y_n \cup  \Y'_n)$, respectively) is bipartite. Therefore, the length of the white and yellow paths is always an odd integer.
Together with Proposition~\ref{prop:no_double_exits} this shows that $\ell_{\{i,j\}}(n) \geq 3$ and $\ell'_{\{i,j\}}(n) \geq 3$ hold for all $\{i, j\} \in \P$ and $n \geq 1$.
\begin{proposition}\label{prop:path_matrix} With the above notations, let 
$\ell_{\{i,j\}}(0)=\ell'_{\{i,j\}}(0)=1$, for all $\{i,j\} \in \P$.
There are non-negative $3 \times 3$ integer matrices $M_\w$ and $M_\y$ such that for all $n\ge 1$ we have
\begin{equation}\label{eq:matrices_iteration}
\left(
 \begin{array}{l}
\ell_{\{1,2\}}(n) \\  
\ell_{\{1,3\}}(n) \\  
\ell_{\{2,3\}}(n) \\  
\ell'_{\{1,2\}}(n) \\  
\ell'_{\{1,3\}}(n) \\  
\ell'_{\{2,3\}}(n)
\end{array}
\right)
= \left(\begin{array}{cc} M_\w & \tilde M_\w \\ \tilde M_\y& M_\y \end{array}\right)
\left(
 \begin{array}{l}
\ell_{\{1,2\}}(n-1) \\  
\ell_{\{1,3\}}(n-1) \\  
\ell_{\{2,3\}}(n-1) \\  
\ell'_{\{1,2\}}(n-1) \\  
\ell'_{\{1,3\}}(n-1) \\  
\ell'_{\{2,3\}}(n-1)
\end{array}
\right) ,
\end{equation}
with $\tilde M_\w=M_\w - I_3$, $\tilde M_\y=M_\y - I_3$, and $I_3$ is the $3\times 3$ identity matrix.
\end{proposition}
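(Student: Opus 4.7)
My plan is to read the matrix entries off the level-one paths and then unfold the self-similar construction to obtain the recursion. For each $\{i,j\} \in \P$ write the vertices of the white path $p(e_i^\w, e_j^\w)$ in $\G(\W \cup \W')$ as $W_0, W_1, \ldots, W_p$, with $W_0 = e_i^\w$ and $W_p = e_j^\w$. Since $\G(\W \cup \W')$ is bipartite and both exits belong to $\W$, we have $W_k \in \W$ if and only if $k$ is even, and in particular $p$ is even. For $1 \leq k \leq p$ let $s_k$ denote the type of the common side of $W_{k-1}$ and $W_k$, and define the \emph{path type} of $W_k$ to be $t_0 := \{i,s_1\}$, $t_k := \{s_k,s_{k+1}\}$ for $1 \leq k \leq p-1$, and $t_p := \{s_p,j\}$; these record the types of the two sides of $W_k$ that are used either as a shared side with a neighbour or as the boundary side of $T_1$. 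One easily checks that $s_1 \neq i$ (the side of $W_0$ of type $i$ lies on $\varrho_i \subset \partial T_1$ and so cannot be shared), $s_p \neq j$, and $s_k \neq s_{k+1}$ (the path is injective), so each $t_k \in \P$. I then define
\[
(M_\w)_{\{i,j\},\{a,b\}} := \#\{k : t_k=\{a,b\},\ W_k\in\W\},\qquad
(\tilde M_\w)_{\{i,j\},\{a,b\}} := \#\{k : t_k=\{a,b\},\ W_k\in\W'\},
\]
and analogously $M_\y, \tilde M_\y$ from the yellow path $p(e_i^\y,e_j^\y)$ in $\G(\Y \cup \Y')$.

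I then establish the recursion \eqref{eq:matrices_iteration} by a self-similar decomposition. By Proposition~\ref{prop:triangle_laby_system_order_n}, $(\W_n \cup \W'_n, \Y_n \cup \Y'_n)$ is an $m^n$-triangular labyrinth patterns system, and by construction each $W_k \in \W$ (resp.\ $\W'$) is filled, at level $n$, by a similar image of $\W_{n-1} \cup \W'_{n-1}$ (resp.\ $\Y_{n-1} \cup \Y'_{n-1}$). A short homogeneous-coordinate check shows that the similarity $P_{W_k}$ preserves the three side types: for $W_k$ upright this is immediate, while for $W_k$ upside down the linear part is $-I/m$, i.e.\ a $180^\circ$ rotation composed with a scaling, which sends sides parallel to $\varrho_c$ to sides parallel to $\varrho_c$. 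Repeating the tree argument from the proof of Proposition~\ref{prop:triangle_laby_system_order_n}, I show that the level-$n$ white path between $e_i^\w(n)$ and $e_j^\w(n)$ visits the parent triangles $W_0, \ldots, W_p$ in this same order, and that inside each $W_k$ its restriction is exactly the (unique) exit-to-exit path of the scaled pattern on the sides of types $a$ and $b$, where $t_k = \{a,b\}$. Consequently this sub-path has length $\ell_{\{a,b\}}(n-1)$ when $W_k \in \W$ and $\ell'_{\{a,b\}}(n-1)$ when $W_k \in \W'$; summing over $k = 0, \ldots, p$ gives the first three rows of \eqref{eq:matrices_iteration}, and the yellow case is identical.

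It remains to prove $\tilde M_\w = M_\w - I_3$ (and analogously $\tilde M_\y = M_\y - I_3$). For $c \in \{1,2,3\}$ let $\mathbf{e}_c$ be the standard basis vector of $\RR^3$ and, for each $k$, set $\sigma_k := \mathbf{e}_a + \mathbf{e}_b$ whenever $t_k = \{a,b\}$. For each $1 \leq k \leq p$ the basis vector $\mathbf{e}_{s_k}$ appears in $\sigma_{k-1}$ and in $\sigma_k$ with opposite signs $(-1)^{k-1}$ and $(-1)^k$, so all interior summands cancel and only the two boundary contributions survive:
\[
\sum_{k=0}^{p}(-1)^k \sigma_k \;=\; \mathbf{e}_i + (-1)^p \mathbf{e}_j \;=\; \mathbf{e}_i + \mathbf{e}_j,
\]
using $p$ even. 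Grouping the left-hand side by path type and reading off coordinates gives, for every $c \in \{1,2,3\}$,
\[
\sum_{\{a,b\} \ni c}\bigl[(M_\w - \tilde M_\w)_{\{i,j\},\{a,b\}}\bigr] \;=\; \delta_{c,i} + \delta_{c,j}.
\]
These three linear equations have coefficient matrix $\bigl(\mathbf{1}_{c \in \{a,b\}}\bigr)_{c,\{a,b\}}$ of determinant $-2 \neq 0$, so there is a unique solution; the candidate $(M_\w - \tilde M_\w)_{\{i,j\},\{a,b\}} = \delta_{\{a,b\},\{i,j\}}$ visibly satisfies the system, hence it is the solution. Letting $\{i,j\}$ vary through $\P$ gives $M_\w - \tilde M_\w = I_3$, and the yellow identity follows by replacing $(\W,\W',\Y,\Y')$ with $(\Y,\Y',\W,\W')$ throughout.

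The main obstacle I expect is the middle step: one must show, uniformly in $n$, that the parent-triangle sequence traversed by the level-$n$ path coincides with the level-one path $p(e_i^\w,e_j^\w)$ and that the induced restriction to each parent is the correct exit-to-exit path of the scaled pattern. This requires the tree, exits, and corners properties guaranteed by Proposition~\ref{prop:triangle_laby_system_order_n}, together with the side-type-preservation property of the similarities $P_{W_k}$ (including the $180^\circ$ rotations arising for upside-down parents, which swap $\W\leftrightarrow\Y$ but keep side types fixed). Once this geometric bookkeeping is in place, the identity $\tilde M = M - I_3$ is a clean two-line telescoping.
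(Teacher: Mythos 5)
Your proposal is correct and follows essentially the same route as the paper: the matrices are defined by counting triangle types along the level-one exit-to-exit paths, the recursion comes from the same hierarchical decomposition of the level-$n$ path into scaled exit-to-exit subpaths (white inside upright parents, yellow inside upside-down ones), and the identity $\tilde M_\w=M_\w-I_3$ rests on the same parity count of side labels along the path — your alternating telescoping sum together with the invertible $3\times 3$ incidence matrix is just a tidier packaging of the paper's three linear equations and its ``one easily verifies'' step. Both write-ups treat the key combinatorial step (that the level-$n$ path restricted to a level-one triangle of type $\{a,b\}$ is exactly the scaled path of that type and colour) at the same sketch level, so no additional gap is introduced.
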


\begin{proof}
As a first step of the proof, let us describe the construction of paths between exits in $\G(\W_n \cup \W'_n)$, for $n \ge 1$. The construction for  $\G(\Y_n \cup \Y'_n)$ works analogously. Note that the method used here is inspired by the construction of paths in the graphs of  (square) labyrinth sets \cite{laby_4x4}.

Let us, without loss of generality, start with the path of type $\{1,2\}$ in $\G(\W \cup \W')$, i.e., the path between the exits $e_1^\w(1)$ and $e_2^\w(1)$, in $\G(\W_1 \cup \W'_1)$. 
Observe that it passes  an odd number of vertices 
$W_1, \ldots, W_{2k+1}$ where $W_1=e_1^\w(1)$ and $W_{2k+1} = e_2^\w(1)$.
We assign to each of these triangles a \emph{type of triangle}  $\gamma \in \P$, which is $\{1,2\}$, $\{1,3\}$, or $\{2,3\}$, according to the neighbours of this triangle in the path.
Let $h \in \{2, \ldots, k\}$. Then $W_{2h-1} \in \W_1$ and we assign to it the type 
$\{i,j\}$ where $i$ and $j$ correspond to the labels of the edges between $W_{2h-2}$ and $W_{2h-1}$ and 
$W_{2h-1}$ and $W_{2h}$. 
We have $W_{2h} \in  \W'_1$ for each $h \in \{1, \ldots, k\}$ and assign to this triangle, analogously, the type $\{i,j\}$, where $i$ and $j$ are the labels of the connecting edges.

Concerning the exits themselves, we define $W_1=e_1^\w(1)$ to be of type $\{1,j\}$,  if $W_1$ and $W_2$ are $j$-neighbours, i.e., the edge that connects $W_1$ and $W_2$ in $\G(\W \cup \W')$ is labelled by $j$. Observe that $j \not=1$ since $e_1^\w(1)$ is a border triangle having its side of type $1$ on $\varrho_1$. Analogously, $W_{2k+1}=e_2^\w(1)$ is of type $\{j,2\}$, depending on the edge between $W_{2k}$ and $W_{2k+1}$. 

We emphasise that the type $\gamma \in \P $ of a triangle is always defined with respect to the path (in the triangular labyrinth pattern or triangular labyrinth set of some level $n$) that we consider. (In other words, a triangle can change its type when analysing it from the perspective of different paths, corresponding to different pairs of exits in the pattern.)

After we have assigned to each triangle in the path one of the above 3 types, we proceed to the next step. In order to obtain the path of type $\{1,2\}$ in $\G(\W_2 \cup \W'_2)$, we replace all triangles in the considered path in the following way: if $W\in \W_1$ is of type $\{i,j\}$, then we replace it by the path of type $\{i,j\}$ in $\G(\W \cup \W')$, and if  $W\in \W'_1$ is of type $\{i,j\}$, the we replace it by the path of type $\{i,j\}$ in $\G(\Y \cup \Y')$. 

More generally,  we proceed as follows: for every $n \ge 1$  and every pair of exits in $\W_n \cup \W'_n$  we replace each triangle  $W \in \W_n$ of type $\{i,j \}$ in the path in $\G(\W_n \cup \W'_n)$ between these exits by the  path of type $\{i,j \}$ in $\G(\W \cup  \W')$ and  each triangle $W\in \W'_n$ of type $\{i,j \}$ by the path of type $\{i,j \}$ in $\G(\Y \cup  \Y')$. 

For $\pi, \gamma \in \P$ we introduce the following notation.\\
Let $m^\w_{\pi,\gamma}$ be the number of triangles from $\W$ of type $\gamma$ in the path of type $\pi$ in $\G(\W \cup \W')$,  
$\tilde m^\w_{\pi,\gamma}$ be the number of triangles from $\W'$ of type $\gamma$ in the path of type $\pi$ in $\G(\W \cup \W')$,  
$m^\y_{\pi,\gamma}$ be the number of triangles from $\Y$ of type $\gamma$ in the path of type $\pi$ in $\G(\Y \cup \Y')$,  
and
$\tilde m^\y_{\pi,\gamma}$ be the number of triangles from $\Y'$ of type $\gamma$ in the path of type $\pi$ in $\G(\Y \cup \Y')$.  

By the above described construction of the path between exits in the graph when passing from the path in $\G(\W_{n-1} \cup \W'_{n-1})$ to the path between the same pair of exits in the graph $\G(\W_n \cup \W'_n)$ of the white labyrinth set obtained at the next iteration, both for the white and the yellow paths, we obtain, for all $n\ge 1$:
 \[ \ell_{\pi}(n)=\sum_{\gamma \in \P} m^\w_{\pi,\gamma} \cdot \ell_{\gamma}(n-1)+\sum_{\gamma \in \P}  \tilde m^\w_{\pi,\gamma} \cdot \ell'_{\gamma}(n-1), \] 

\[ \ell'_{\pi}(n)=\sum_{\gamma \in \P} \tilde m^\y_{\pi,\gamma} \cdot \ell_{\gamma}(n-1)+\sum_{\gamma \in \P} m^\y_{\pi,\gamma}\cdot \ell'_{\gamma}(n-1). \] 

At this point, we introduce the non-negative matrices 
\[
M_\w:=\left( m^\w_{\pi,\gamma}\right)_{(\pi,\gamma)\in \P \times \P } \text{ and } 
\tilde M_\w:=\left( \tilde m^\w_{\pi,\gamma}\right)_{(\pi,\gamma)\in \P \times \P },
\] 
associated with $\G(\W \cup \W')$, and
\[
M_\y:=\left( m^\y_{\pi,\gamma}\right)_{(\pi,\gamma)\in \P \times \P } \text{ and } 
\tilde M_\y:=\left( \tilde m^\y_{\pi,\gamma}\right)_{(\pi,\gamma)\in \P \times \P },
\] 
associated with $\G(\Y \cup \Y')$.
With these notations, the above equations immediately yield the formula \eqref{eq:matrices_iteration}.

Now, let us show that $\tilde M_\w=M_\w - I_3$, $\tilde M_\y=M_\y - I_3$.
In order to prove  the first relation, for the ``white'' matrices, let us consider, w.l.o.g.,  the path from the exit $e_1^\w(1)$ to $e_2^\w(1)$ in $\G(\W_1\cup \W'_1)$.
The exit $e_1^\w(1)$ is a triangle of type $\{1,j_1\}$ with $j_1 \in \{2,3\}$.   Its neighbour in the path is of type $\{j_1,j_2\}$. The next triangle is of type $\{j_2,j_3\}$,  and so on. Finally, the exit $e_2^\w(1)$ is of type $\{j_r,2\}$, where $r$ is odd.

Therefore, for every $h \in \{1, \ldots, r\}$ we have $W_{2h}$ is of type $\{j_{2h-1}, j_{2h}\}$ and
for every $h \in \{1, \ldots, r-1\}$ we have $W_{2h+1}$ is of type $\{j_{2h}, j_{2h+1}\}$. Additionally, 
$W_{1}$ is of type $\{1,j_1\}$ and $W_{2r+1}$ is of type $\{j_{2r},2\}$.
We see that ``$3$'' occurs in the type of the even indexed triangles as often as it occurs in the odd indexed ones while the number of occurrences of  $1$ and $2$, respectively, in the type of the even indexed triangles is one less than the number of occurrences of  $1$ and $2$, respectively, in the type of the odd indexed ones.
From this observation we deduce the equations
\begin{align*}
m^\w_{\pi,\{1,2\}}+m^\w_{\pi, \{1,3\}} = & \tilde m^\w_{\pi, \{1,2\}}+\tilde m^\w_{\pi,\{1,3\}}+1, \\
m^\w_{\pi,\{1,2\}}+m^\w_{\pi,\{2,3\}} = & \tilde m^\w_{\pi, \{1,2\}}+\tilde m^\w_{\pi,\{2,3\}}+1, \\
m^\w_{\pi,\{1,3\}}+m^\w_{\pi, \{2,3\}} = & \tilde m^\w_{\pi,\{1,3\}}+\tilde m^\w_{\pi,\{2,3\}}.
\end{align*}

One easily verifies that the above equations yield
$$m^\w_{\pi,\{1,2\}}=\tilde m^\w_{\pi,\{1,2\}}+1, \;m^\w_{\pi,\{1,3\}}=\tilde m^\w_{\pi,\{1,3\}} \mbox{ and } m^\w_{\pi, \{2,3\}}=\tilde m^\w_{\pi,\{2,3\}}.$$ 

In the analogous way we obtain  $m^\w_{\pi,\gamma}=\tilde m^\w_{\pi,\gamma}+\delta_{\pi,\gamma},$ for $\pi \in \P \setminus \{ \{1,2\} \}, \gamma \in \P$, and $m^\y_{\pi,\gamma}=\tilde m^\y_{\pi,\gamma}+\delta_{\pi,\gamma},$ where $\delta_{\pi,\gamma}$  is the Kronecker symbol, for $\pi, \gamma \in \P$.
This completes the proof.
\end{proof}

With the above notations, we call the $6 \times 6$-matrix $$M=\left(\begin{array}{cc} M_\w & \tilde M_\w\\ \tilde M_\y& M_\y \end{array}\right)$$ the \emph{global path matrix}, $M_\w$ the \emph{white path matrix} and $M_\y$ the \emph{yellow path matrix} of the triangular labyrinth patterns system, respectively.
Let $M(n)$ denote the global matrix of the system $(\W_n\cup \W'_n, \Y_n \cup \Y'_n)$, for $n \ge 1$.
\\
The above proposition immediately yields the following result. 
\begin{corollary}\label{cor:iteration_matrix} With the above notations we have
\begin{equation} \label{eq:iteration_matrix}
\left(
 \begin{array}{l}
\ell_{\{1,2\}}(n) \\  
\ell_{\{1,3\}}(n) \\  
\ell_{\{2,3\}}(n) \\  
\ell'_{\{1,2\}}(n) \\  
\ell'_{\{1,3\}}(n) \\  
\ell'_{\{2,3\}}(n) \\  
\end{array}
\right)
=M^n \cdot 
\left(
\begin{array}{l}
1 \\  
1 \\  
1 \\  
1 \\  
1 \\  
1 \\  
\end{array}\right),
\end{equation} 
for all $n \ge 1,$
where $M=\left(\begin{array}{cc} M_\w & M_\w-I_3\\ M_\y-I_3& M_\y \end{array}\right)$  is the global path matrix.\\
In other words, $M(n)=M^n$, for all $n\ge 1$.

\end{corollary}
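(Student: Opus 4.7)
Write $\vec{\ell}(n)$ for the column vector $(\ell_{\{1,2\}}(n),\ldots,\ell'_{\{2,3\}}(n))^\top$ of the six path lengths appearing in \eqref{eq:matrices_iteration}. The first assertion $\vec{\ell}(n) = M^n \mathbf{1}$ is immediate from Proposition~\ref{prop:path_matrix}: iterating the one-step recursion $\vec{\ell}(n) = M\vec{\ell}(n-1)$ with initial condition $\vec{\ell}(0) = \mathbf{1}$ gives the stated formula at once.

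The matrix identity $M(n) = M^n$ is strictly stronger than the vector identity $M(n)\mathbf{1} = M^n \mathbf{1}$ and therefore requires its own argument; the plan is to prove it by induction on $n$. The base case $n = 1$ holds by the definition of $M(1)$. For the inductive step, recall that by Proposition~\ref{prop:triangle_laby_system_order_n} the system $(\W_n \cup \W'_n, \Y_n \cup \Y'_n)$ is itself an $m^n$-triangular labyrinth patterns system, and going from level $n$ to level $n+1$ is implemented by replacing each level-$n$ upright white triangle by a scaled copy of the level-1 white pattern and each level-$n$ upside-down white triangle by a scaled copy of the level-1 yellow pattern (with the analogous rule for yellow triangles). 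This is exactly the combinatorial replacement analysed in the proof of Proposition~\ref{prop:path_matrix}, now with the level-$n$ system cast in the role of the outer pattern and the level-1 system in the role of the inner refinement.

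Tracking the contribution of each sub-triangle type rather than only the total path length, the number of level-$(n+1)$ triangles of a given color, orientation, and type $\gamma$ appearing in the level-$(n+1)$ path of type $\pi$ decomposes as
\[[M(n+1)]_{\pi, \gamma} = \sum_{\delta} [M(n)]_{\pi, \delta} \cdot [M]_{\delta, \gamma},\]
where the $6 \times 6$ block structure of $M$ and of $M(n)$ automatically records the white/yellow swap triggered by refining an upside-down triangle. Consequently $M(n+1) = M(n) \cdot M$, and invoking the induction hypothesis $M(n) = M^n$ yields $M(n+1) = M^{n+1}$.

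The main point to verify carefully is that the type of each sub-triangle inside a refined sub-path coincides with the type it receives when one reads off the level-$(n+1)$ path directly; this is however built into the definition, since the type of a triangle along a path is determined by the labels of the sides through which the path enters and leaves, and those labels are preserved by the similarity map that inserts the level-1 sub-pattern into the enclosing level-$n$ triangle.
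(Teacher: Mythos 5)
Your proposal is correct and takes essentially the same route as the paper, which states the corollary as an immediate consequence of iterating the recursion \eqref{eq:matrices_iteration} from Proposition~\ref{prop:path_matrix}. Your extra induction $M(n+1)=M(n)\cdot M$ for the matrix identity simply makes explicit the substitution-and-type-preservation bookkeeping already contained in the proof of that proposition (where the paper leaves ``in other words, $M(n)=M^n$'' unargued), so it is a welcome elaboration rather than a different method.
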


\begin{example} 
Again, we consider the triangular labyrinth patterns system from Example~\ref{Beispiel1}. In this case we have
\[M= \left(\begin{array}{cccccc}
3 & 1 & 0 & 2 & 1 & 0 \\ 1 & 1 & 1 & 1 & 0 & 1 \\ 1 & 1 & 2 & 1 & 1 & 1 \\
0 & 0 & 1 & 1 & 0 & 1 \\ 0 & 2 & 0 & 0 & 3 & 0 \\ 0 & 2 & 1 & 0 & 2 & 2
\end{array}\right),\]
where 
\[M_\w= \left(\begin{array}{ccc}
3 & 1 & 0 \\ 1 & 1 & 1 \\ 1 & 1 & 2 
\end{array}\right), 
\mbox{ and  }
M_\y=
\left(\begin{array}{ccc}
1 & 0 & 1 \\ 0 & 3 & 0 \\ 0 & 2 & 2 
\end{array}\right).
\]
\end{example}

In Section~\ref{sec:blocked} we deal again with the matrix $M$ . There we  see that 
$M$ is a primitive matrix provided that the 
triangle labyrinth patterns system satisfies certain conditions (named in Section \ref{sec:blocked} as the property of being``blocked'') which ensure, roughly speaking, that the labyrinth sets obtained in the resulting iterations (and thus also the resulting labyrinth fractals) are ``sufficiently winding".

\section{Arcs in triangular labyrinth fractals}\label{sec:arcs_TLF}
 In this section we deal with aspects of the structure, topology, and fractal geometric properties of the arcs in triangular labyrinth fractals.
By a non-trivial arc in a labyrinth fractal we always mean an arc that connects two distinct points of the fractal.

The following lemma, whose proof is based on  a theorem from the book of Kuratowski \cite[Theorem 3, par 47, V, p.181]{Kuratowski}, provides a construction method for the  unique arc between any two distinct points in the dendrite $\linfw$. The proof is based on the same idea as for the (square) self-similar labyrinth fractals \cite{laby_4x4}.

\begin{theorem}[\cite{Kuratowski}, Theorem 3, par 47, V, p.181]\label{theo:arc_Kuratowski}
In a compact metric space let $C_n$, $n=1,2,\dots$ be connected sets containing the points $\mathbf{A}$ and $\mathbf{B}$, such that to every point $\mathbf{X} \in \bigcap_{n\geq 1} C_n$ there corresponds a decomposition $C_n = A_n \cup B_n$ which satisfies the following conditions
\begin{enumerate}
\item[(i)] $\mathbf{A} \in A_n$, $\mathbf{B}\in B_n$, $\mathbf{X} \in A_n \cap B_n,$
\item[(ii)] The diameter of $A_n\cap B_n$ tends to zero, as $n\to \infty$,
\item[(iii)] $\overline{A}_{n+1} \subseteq A_n$, $\overline{B}_{n+1} \subseteq B_n$,
\end{enumerate}
then the intersection $\cap_n C_n$ is an arc between $a$ and $b$.
\end{theorem}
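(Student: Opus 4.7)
The plan is to realise $K:=\bigcap_{n\ge 1}C_n$ as an arc from $\mathbf{A}$ to $\mathbf{B}$ by showing that $K$ is a compact connected metric space in which $\mathbf{A}$ and $\mathbf{B}$ are the only non-cut points, and then invoking the classical characterisation of an arc as a non-degenerate Peano continuum with exactly two non-cut points. First, condition (iii) forces $C_{n+1}\subseteq C_n$, so $K$ is the decreasing intersection of compact connected sets in a compact metric space, hence itself compact and connected; it is nonempty since it contains $\mathbf{A}$ and $\mathbf{B}$.

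Next, for a fixed $\mathbf{X}\in K$ with associated decomposition $C_n=A_n\cup B_n$, I would introduce
\[
K_A(\mathbf{X})\;:=\;K\cap\bigcap_{n\ge1}\overline{A_n},\qquad K_B(\mathbf{X})\;:=\;K\cap\bigcap_{n\ge1}\overline{B_n}.
\]
By (iii) both sets are decreasing intersections of compact sets, containing $\mathbf{A}$ and $\mathbf{B}$ respectively, and condition (ii) gives $K_A(\mathbf{X})\cap K_B(\mathbf{X})=\{\mathbf{X}\}$. A separate lemma would show $K=K_A(\mathbf{X})\cup K_B(\mathbf{X})$: given $\mathbf{Y}\in K$ with $d(\mathbf{Y},\mathbf{X})=\varepsilon>0$, property (ii) yields an $N$ with $\mathrm{diam}(A_n\cap B_n)<\varepsilon$ for $n\ge N$, so $\mathbf{Y}$ must sit in $A_n\setminus B_n$ or $B_n\setminus A_n$ for $n\ge N$; condition (iii) then prevents $\mathbf{Y}$ from switching sides, so $\mathbf{Y}\in K_A(\mathbf{X})$ or $\mathbf{Y}\in K_B(\mathbf{X})$. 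For any $\mathbf{X}\in K\setminus\{\mathbf{A},\mathbf{B}\}$, this decomposes $K\setminus\{\mathbf{X}\}$ as a disjoint union of two nonempty closed subsets, so $\mathbf{X}$ is a cut point.

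To control the endpoints I would exploit the compatibility of the various decompositions by introducing the relation $\mathbf{X}\prec\mathbf{Y}$ when $\mathbf{X}\in K_A(\mathbf{Y})\setminus\{\mathbf{Y}\}$, and check that $\prec$ is a linear order on $K$ with minimum $\mathbf{A}$ and maximum $\mathbf{B}$. Under this order one identifies $K_A(\mathbf{Z})=\{\mathbf{X}\in K:\mathbf{X}\preceq\mathbf{Z}\}$ and sees that $K\setminus\{\mathbf{A}\}$ (respectively $K\setminus\{\mathbf{B}\}$) is the union over a cofinal family of $\mathbf{Z}$ of the connected sets $K_B(\mathbf{Z})\setminus\{\mathbf{A}\}$ sharing the common point $\mathbf{B}$, so it is connected; thus $\mathbf{A}$ and $\mathbf{B}$ are not cut points. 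The characterisation of the arc then yields the desired conclusion.

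The principal obstacle is verifying connectedness of the ``half-continua'' $K_A(\mathbf{Z})$ and $K_B(\mathbf{Z})$, equivalently, that the order topology induced by $\prec$ coincides with the subspace topology on $K$. This is precisely where one needs the compatibility of the family of decompositions $\{A_n(\mathbf{X}),B_n(\mathbf{X})\}_{\mathbf{X}\in K}$ encoded in (i)--(iii): monotonicity from (iii) controls the direction in which points ``fall'', while (ii) guarantees that the separation scales shrink to zero, ruling out accumulation of cut points that would split $K_A(\mathbf{Z})$ further. Once these order-theoretic facts are established, the compact linearly ordered connected space $K$ with two endpoints is order-isomorphic, and hence homeomorphic, to $[0,1]$.
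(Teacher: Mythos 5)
First, note that the paper itself gives no proof of this statement: it is quoted verbatim from Kuratowski's book and used as a black box, so you are supplying a proof of a classical theorem rather than reproving an argument of the authors. The first half of your plan is essentially correct. Condition (iii) gives $\overline{C_{n+1}}=\overline{A_{n+1}}\cup\overline{B_{n+1}}\subseteq A_n\cup B_n=C_n$, so (even though the $C_n$ are not assumed closed, a point you gloss over) $K=\bigcap_n C_n=\bigcap_n\overline{C_n}$ is a nonempty nested intersection of compact connected sets, hence a continuum containing $\mathbf{A}$ and $\mathbf{B}$. Your sets $K_A(\mathbf{X})$, $K_B(\mathbf{X})$ are closed, cover $K$ (the ``no switching of sides'' argument via (ii) and (iii) is sound), and meet exactly in $\{\mathbf{X}\}$ because $\overline{A_{n+1}}\cap\overline{B_{n+1}}\subseteq A_n\cap B_n$ and $\mathrm{diam}(A_n\cap B_n)\to0$. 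Hence every $\mathbf{X}\in K\setminus\{\mathbf{A},\mathbf{B}\}$ is a cut point of $K$.

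The genuine gap is in the second half. The characterisation you invoke is misquoted: the correct classical statement is that a nondegenerate \emph{metric continuum} with exactly two non-cut points is an arc (no Peano/local-connectedness hypothesis, which you never verify and would not know how to verify here). More seriously, your route to showing that $\mathbf{A}$ and $\mathbf{B}$ are non-cut points does not work as described: the hypotheses attach to each point $\mathbf{X}$ \emph{some} sequence of decompositions, with no compatibility whatsoever between the decompositions chosen for different points, so totality, antisymmetry and transitivity of your relation $\prec$ are not available; and the $A_n$, $B_n$ are not assumed connected, so the connectedness of $K_A(\mathbf{Z})$, $K_B(\mathbf{Z})$ --- which you yourself flag as ``the principal obstacle'' --- is exactly the missing step, and it cannot be read off the construction (a priori these are intersections of closures of possibly badly disconnected sets). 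The repair is standard and makes the whole order-theoretic discussion unnecessary: every nondegenerate continuum has at least two non-cut points (Moore's theorem; e.g.\ Nadler, \emph{Continuum Theory}, Theorem 6.6), so your cut-point computation already shows that the non-cut points of $K$ are precisely $\mathbf{A}$ and $\mathbf{B}$; then the two-non-cut-point characterisation (Nadler, Theorem 6.17) yields that $K$ is an arc, and since the non-cut points of an arc are its endpoints, $K$ is an arc from $\mathbf{A}$ to $\mathbf{B}$. With that substitution your argument becomes a complete and correct proof.
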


\begin{lemma}[Arc Construction]\label{lemma:arc_construction} 
Let $\mathbf{A},\mathbf{B} \in \linfw$, such that $\mathbf{A} \ne \mathbf{B}$. For all $n \ge 1$, there exist $W_n(\mathbf{A}), W_n(\mathbf{B}) \in \W_n \cup \W'_n $ such that 
\begin{enumerate}
\item[(a)] $W_1(\mathbf{A})\supseteq W_2(\mathbf{A})\supseteq\dots$,
\item[(b)] $W_1(\mathbf{B})\supseteq W_2(\mathbf{B})\supseteq\dots$,
\item[(c)] $\{\mathbf{A}\}=\bigcap_{n=1}^{\infty}W_n(\mathbf{A})$,
\item[(d)] $\{\mathbf{B}\}=\bigcap_{n=1}^{\infty}W_n(\mathbf{B})$,
\item[(e)] The set $\displaystyle \bigcap_{n=1}^{\infty} \left( \bigcup _{W\in p_n(W_n(\mathbf{A}),W_n(\mathbf{B}))}W \right)$ is an arc that connects $\mathbf{A}$ and $\mathbf{B}$.
\end{enumerate}
\end{lemma}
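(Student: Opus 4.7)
The plan is to verify the hypotheses of Kuratowski's theorem (Theorem~\ref{theo:arc_Kuratowski}) for the sets $C_n:=\bigcup_{W\in p_n(W_n(\mathbf{A}),W_n(\mathbf{B}))}W$, proceeding in three steps.

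First I construct the nested sequences inductively. At level $1$, any triangle in $\W_1\cup\W'_1$ containing $\mathbf{A}$ can be chosen as $W_1(\mathbf{A})$; such a triangle exists since $\mathbf{A}\in\linfw\subset L^{\w}_1$. Given $W_n(\mathbf{A})$, Corollary~\ref{cor:fractal_as_finite_union} tells us that $W_n(\mathbf{A})\cap L^{\w}_{n+1}$ is a union of level-$(n+1)$ triangles contained in $W_n(\mathbf{A})$, and $\mathbf{A}$ lies in at least one of them, which I take as $W_{n+1}(\mathbf{A})$; the analogous recipe gives $W_n(\mathbf{B})$. Properties (a) and (b) hold by construction, while (c) and (d) follow because the diameter $1/m^n$ of these triangles tends to zero. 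Since $\mathbf{A}\ne\mathbf{B}$, we have $W_n(\mathbf{A})\ne W_n(\mathbf{B})$ for all $n$ large enough, so the tree path $p_n$ is well-defined and non-trivial by Proposition~\ref{prop:triangle_laby_system_order_n}.

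Second I show $C_{n+1}\subset C_n$. Each $C_n$ is compact and connected since consecutive triangles along $p_n$ share a whole side. Projecting each level-$(n+1)$ triangle of $p_{n+1}$ to the level-$n$ triangle containing it yields a walk in $\G(\W_n\cup\W'_n)$ from $W_n(\mathbf{A})$ to $W_n(\mathbf{B})$. This walk cannot backtrack: any transition from a level-$n$ triangle $W$ to an adjacent one $W'$ inside $p_{n+1}$ must use the unique pair of matched exit triangles on the shared side (by the exits property of the refinement patterns), so a return from $W'$ to $W$ would repeat this pair and hence repeat a vertex of $p_{n+1}$, contradicting that $p_{n+1}$ is a simple path. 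Consequently the projected walk is the unique reduced tree-path $p_n$, so every triangle of $p_{n+1}$ sits inside a triangle of $p_n$, giving $C_{n+1}\subset C_n$.

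Third, for $\mathbf{X}\in\bigcap_n C_n$ I write $p_n=(W_{n,0},\ldots,W_{n,k_n})$, pick $j(n)$ with $\mathbf{X}\in W_{n,j(n)}$ compatibly in $n$ using the refinement of Step~2, and set $A_n:=\bigcup_{j\le j(n)}W_{n,j}$, $B_n:=\bigcup_{j\ge j(n)}W_{n,j}$. Conditions (i) and (iii) of Theorem~\ref{theo:arc_Kuratowski} are then immediate from the construction. The hard part is condition (ii), namely $\operatorname{diam}(A_n\cap B_n)\to 0$. The intersection certainly contains $W_{n,j(n)}$, of diameter $1/m^n$, but it can also contain isolated grid points where a triangle $W_{n,i}$ with $i<j(n)$ touches a triangle $W_{n,j}$ with $j>j(n)$ at a common vertex; such extra intersections arise because a tree path may wind around a vertex $\mathbf{v}$ without using an edge between the two triangles meeting at $\mathbf{v}$. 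I would control these by studying how $p_m$ at higher levels interacts with the fan of at most six level-$m$ triangles around a fixed vertex $\mathbf{v}\ne\mathbf{X}$: the exits property forces every entry and exit at this fan to use specific boundary triangles, and the tree structure of $\G(\W_m\cup\W'_m)$ rules out the path simultaneously reaching $\mathbf{v}$ from the $A_m$-side and from the $B_m$-side unless $\mathbf{v}$ is already adjacent to $W_{m,j(m)}$. Combined with $\operatorname{diam}(W_{n,j(n)})\to 0$, this yields condition (ii), and Kuratowski's theorem then provides the arc in (e).
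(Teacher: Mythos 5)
The decisive step of your argument --- Kuratowski's condition (ii) --- is exactly where the proposal breaks down. Working with $C_n=\bigcup_{W\in p_n}W$ as unions of \emph{full} closed triangles, you correctly notice that $A_n\cap B_n$ may pick up vertices $\mathbf{v}$ where two non-consecutive path triangles touch, but your proposed remedy is only a sketch, and the claim it rests on is not justified: the tree property of $\G(\W_n\cup\W'_n)$ does \emph{not} forbid two triangles of the path, lying on opposite sides of $W_{n,j(n)}$ along the path, from sharing a vertex far away from $W_{n,j(n)}$ --- vertex contact is not an edge of the graph, so no cycle is created and nothing in the tree or exits properties rules it out. In such a configuration $\operatorname{diam}(A_n\cap B_n)$ does not tend to zero and your decomposition fails condition (ii). The ingredient you are missing is the \emph{corners property}: the paper replaces each path triangle $W$ by $\tilde W:=W\cap L^{\w}_{n+1}$, defines $C_n$, $A_n$, $B_n$ from these sets, and uses the corners property to show that for distinct $W_1,W_2$ in the path one has $\tilde W_1\cap\tilde W_2\neq\emptyset$ if and only if $W_1,W_2$ are neighbours in the graph --- i.e.\ the offending vertex is cut away at the next level on at least one side. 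Then $A_n\cap B_n=\tilde W_n(\mathbf{X})$ has diameter at most $m^{-n}$, condition (ii) follows, and since $\bigcap_n\bigcup_{W\in p_n}\tilde W=\bigcap_n\bigcup_{W\in p_n}W$, the conclusion (e) as stated is recovered. Without intersecting with $L^{\w}_{n+1}$ and invoking the corners property (which you never use), the argument cannot be completed along your lines.

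There is a second, smaller but genuine, gap in your construction of the nested sequences. The inductive claim that $\mathbf{A}$ lies in a level-$(n+1)$ white triangle \emph{contained in} $W_n(\mathbf{A})$ is false in general: if $\mathbf{A}$ lies on the side shared by $W_n(\mathbf{A})$ and a neighbouring white triangle $U$, it can happen that all level-$(n+1)$ white triangles containing $\mathbf{A}$ lie inside $U$ (e.g.\ $\mathbf{A}$ the exit of a level-$(n+1)$ triangle of $U$ whose type-$i$ neighbour inside $W_n(\mathbf{A})$ is black), so your recursion can get stuck while still satisfying $\mathbf{A}\in\linfw$. The same issue affects the choice of $W_n(\mathbf{X})$ ``compatibly in $n$'' in your third step. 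The paper's proof avoids this by a K\"onig-type selection: $W_n(\mathbf{A})$ is chosen so that infinitely many triangles of $\bigcup_{k\geq1}(\W_k\cup\W'_k)$ containing $\mathbf{A}$ are subsets of it, which guarantees the nesting can be continued at every level; your argument needs this (or an equivalent) selection principle. Your second step (projecting $p_{n+1}$ to level $n$ and using that a non-edge-repeating walk in a tree is a path) is fine and matches what the paper uses implicitly, but the two gaps above, especially the first, mean the proof as proposed does not go through.
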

\begin{proof}
We denote by $\W(\mathbf{A})$ the set of all white triangles in $\bigcup_{n\ge 1} (\W_n \cup \W'_n)$ that contain $\mathbf{A}$. Let $W_1(\mathbf{A})$ be a white triangle in $\W_1 \cup \W'_1$, with the property that infinitely many white triangles of ${\mathcal W}(\mathbf{A})$ are subsets of $W_1(\mathbf{A})$. For $n \ge 2$, let $W_n(\mathbf{A})$ be a white triangle in $\W_n \cup \W'_n$, such that $W_{n-1}(\mathbf{A} )\supseteq W_n(\mathbf{A})$ and  infinitely many white triangles of $\W(\mathbf{A})$ are contained in $W_n(\mathbf{A})$. We define $W_n(\mathbf{B})$, for all $n\ge 1$, analogously. Herewith the  assertions (a) to (d) in the above lemma are proven.

In order to prove (e) we apply Theorem \ref{theo:arc_Kuratowski}.
For a triangle $W \in \W_n \cup \W'_n$ we define $\tilde{W}:=W \cap L^{\w}_{n+1}$.
By the tree property and the exits property it follows that the set $C_n:=\bigcup_{W \in p_n(W_n(\mathbf{A}), W_n(\mathbf{B}))}\tilde{W}$ is connected for all $n\ge 1$. Let now $C_{\infty}=\bigcap_{n=1}^{\infty} C_n$ and observe that $C_\infty \not= \emptyset$. As a consequence of the fact that $C_{\infty}=\bigcap_{n=1} \big( \bigcup_{W \in p_n(W_n(\mathbf{A}), W_n(\mathbf{B}))}{W} \big)$, it is now sufficient to prove that $C_{\infty}$ is an arc between $\mathbf{A}$ and $\mathbf{B}$. The idea of taking the set $\tilde W$ is to avoid the situation when triangles of such a path could intersect at a common vertex they share, which would create problems in proving that condition (ii) in the mentioned theorem is satisfied. Therefore, 
for $\mathbf{X} \in C_{\infty}$ let $\W(\mathbf{X})$ denote the set of all triangles in $\bigcup_{n=1}^{\infty}p_n(W_n(\mathbf{A}),W_n(\mathbf{B}))$ that contain the point $\mathbf{X}$. Let $W_1(\mathbf{X})$ be a triangle in $p_1(W_1(\mathbf{A}), W_1(\mathbf{B}))$ that contains infinitely many triangles of  $\W(\mathbf{X})$ as subsets. Now, for $n\ge 2,$ let $W_n(\mathbf{X})$  be a triangle occurring in the path $p_n(W_n(\mathbf{A}), W_n(\mathbf{B}))$, such that $W_{n-1}\supseteq W_n$, and $W_n(\mathbf{X})$  contains infinitely many triangles of $\W(\mathbf{X})$ as subsets. Let $A_n=\bigcup_{W\in p_n(W_n(\mathbf{A}),W_n(\mathbf{X})}\tilde W$ and $B_n=\bigcup_{W\in p_n(W_n(\mathbf{X}),W_n(\mathbf{B}))}\tilde W$. The set $A_n\cap B_n$ coincides with $\tilde W_n(\mathbf{X})$, based on the fact that if $W_1, W_2 \in p_n(W_n(\mathbf{A}), W_n(\mathbf{B}))$ are distinct triangles then  the corner property yields that $\tilde W_1 \cap \tilde W_2$ is non-empty if and only if $W_1$ and $W_2$ are neighbours in $\G(\W \cup \W')$. We have $\overline{A}_{n+1}\subseteq A_n$, since $\tilde{W}_{n+1}(\mathbf{A})\subset \tilde{W}_{n}(\mathbf{A})$ and $\tilde{W}_{n+1}(\mathbf{X})\subset \tilde{W}_{n}(\mathbf{X})$, and the path between triangles in $\G(\W_k \cup \W'_k)$ is obtained as described in Proposition \ref{prop:path_matrix}, for all $k\ge 1$.
Analogously we obtain $\overline{B}_{n+1}\subseteq B_n$, and herewith all three conditions in Theorem \ref{theo:arc_Kuratowski} are fulfilled, which implies that $C_{\infty}$ is an arc that connects $\mathbf{A}$ and $\mathbf{B}$ in $\linfw$.
\end{proof}

In the following, for any two points $\mathbf{X} \ne \mathbf{Y}$ in a dendrite we denote by $a(\mathbf{X},\mathbf{Y})$ the arc  that connects them in the dendrite. 

With the notation of Section \ref{sec:labyrinth_patterns_systems} we introduce the exits of the white  and yellow fractal as follows.

\begin{definition}\label{def:exits_fractal}
For every $i \in \{1,2,3\}$, \emph{the exit of type} $i$ in $\linfw$ is the point $\mathbf{E}_i^\w$, defined by $\displaystyle \{\mathbf{E}_i^\w\}=\bigcap_{n=1}^{\infty}e_i^\w(n)$, where  $e_i^\w(n)$ is the exit of  type $i$ in $\W_n$, and the point $\mathbf{E}_i^\y $, with $\displaystyle\{\mathbf{E}_i^\y\}=\bigcap_{n=1}^{\infty}e_i^\y(n)$, is  \emph{the exit of type} $i$ in $\linfy$, where $e_i^\y(n)$ is the exit of  type $i$ in $\Y_n$.
\end{definition}

Since we frequently use the arcs that connect the exists of the labyrinth fractals we introduce a special notation for them. In particular, for any pair $\{i,j\} \in \P $ let 
\[a_{\{i,j\}}^\w:=a(\mathbf{E}_i^\w, \mathbf{E}_j^\w) \text{ and }
a_{\{i,j\}}^\y:=a(\mathbf{E}_i^\y, \mathbf{E}_j^\y).\]

The proposition below gives information about a topological (shape) feature of the
dendrites $\linfw$ and $\linfy$.

\begin{proposition}\label{prop:exists_C}
Let $\linfw$ be a white triangular labyrinth fractal, and $\mathbf{E}_1^\w,\mathbf{E}_2^\w,\mathbf{E}_3^\w$ its exits. There exists a point $\mathbf{C}^\w \in \linfw$ such that 
\[\{\mathbf{C}^\w  \}=a_{\{1,2\}}^\w \cap a_{\{2,3\}}^\w \cap a_{\{1,3\}}^\w.\]
Similarly, there exists a point $\mathbf{C}^\y \in \linfy$ with the analogous property.
\end{proposition}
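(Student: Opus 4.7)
The plan is to produce $\mathbf{C}^\w$ as the single point of intersection of a nested sequence of ``median triangles'' $C_n\in\W_n\cup\W'_n$, one per level of the construction. By Proposition~\ref{prop:triangle_laby_system_order_n} each $\G(\W_n\cup\W'_n)$ is a tree and by Proposition~\ref{prop:no_double_exits} its three exits $e_1^\w(n)$, $e_2^\w(n)$, $e_3^\w(n)$ are pairwise distinct; in any tree, three pairwise distinct vertices admit a unique common vertex lying on each of the three pairwise connecting paths (the Steiner or median vertex), and I denote it $C_n$. Since $C_n$ lies on each of the three paths between distinct exits, it must have three distinct neighbours in the tree — one per exit-direction — occupying three distinct sides of the triangle $C_n$.

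The main technical step, and what I expect to be the principal obstacle, is the monotonicity $C_{n+1}\subseteq C_n$. Using the substitution description of paths given in the proof of Proposition~\ref{prop:path_matrix}, the portion of $p_{n+1}(e_i^\w(n+1),e_j^\w(n+1))$ contained in $C_n$ is exactly the path in the scaled level-$1$ pattern — the white pattern $(\W\cup\W')$ if $C_n$ is upright, the yellow pattern $(\Y\cup\Y')$ if it is upside down — connecting the two pattern-exits on the two sides of $C_n$ through which the level-$n$ path leaves $C_n$. As $\{i,j\}$ ranges over $\P$ these three restricted subpaths connect the three exits of the scaled pattern pairwise, and the median argument applied inside this pattern tree produces a unique triangle of level $n+1$ lying inside $C_n$ on all three subpaths. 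Since this triangle also lies on the three global level-$(n+1)$ paths, uniqueness of the median at level $n+1$ forces it to equal $C_{n+1}$, giving $C_{n+1}\subseteq C_n$.

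With monotonicity in hand, $\{C_n\}$ is a decreasing sequence of compact equilateral triangles of side $m^{-n}$, and therefore $\bigcap_{n\geq 1} C_n$ consists of a single point $\mathbf{C}^\w\in\linfw$. To verify $\mathbf{C}^\w\in a_{\{i,j\}}^\w$ I specialise Lemma~\ref{lemma:arc_construction} by choosing $W_n(\mathbf{E}_i^\w):=e_i^\w(n)$ and $W_n(\mathbf{E}_j^\w):=e_j^\w(n)$, which gives
\[
a_{\{i,j\}}^\w=\bigcap_{n\geq 1}\bigcup_{W\in p_n(e_i^\w(n),e_j^\w(n))} W;
\]
since $C_n$ lies on $p_n(e_i^\w(n),e_j^\w(n))$ for every $n$, the inclusion $\mathbf{C}^\w\in C_n\subseteq\bigcup_{W\in p_n(e_i^\w(n),e_j^\w(n))} W$ holds at every level, and therefore $\mathbf{C}^\w\in a_{\{i,j\}}^\w$ for each $\{i,j\}\in\P$.

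For the uniqueness assertion $\{\mathbf{C}^\w\}=a_{\{1,2\}}^\w\cap a_{\{1,3\}}^\w\cap a_{\{2,3\}}^\w$ I plan to invoke the dendrite structure of $\linfw$ (Theorem~\ref{theo:dendrite}): in any dendrite, the intersection of two arcs $a(\mathbf{A},\mathbf{B})\cap a(\mathbf{A},\mathbf{C})$ is itself an arc starting at $\mathbf{A}$ and ending at a unique branching point which also lies on $a(\mathbf{B},\mathbf{C})$, so the three pairwise arcs between three distinct points meet in a single point. Combined with the already established containment $\mathbf{C}^\w\in a_{\{1,2\}}^\w\cap a_{\{1,3\}}^\w\cap a_{\{2,3\}}^\w$, this forces the equality $\{\mathbf{C}^\w\} = a_{\{1,2\}}^\w\cap a_{\{1,3\}}^\w\cap a_{\{2,3\}}^\w$. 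The corresponding statement for $\linfy$ follows by exchanging the roles of the white and yellow patterns throughout.
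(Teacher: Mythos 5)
Your argument is sound and reaches the statement, but it takes a genuinely different (and much heavier) route than the paper. The paper's proof is purely topological and short: either some exit $\mathbf{E}_i^\w$ already lies on the arc joining the other two, in which case $\mathbf{C}^\w=\mathbf{E}_i^\w$, or else one uses that in a dendrite two non-disjoint arcs meet in a point or a common subarc, writes $a_{\{2,3\}}^\w\cap a_{\{1,3\}}^\w=a(\mathbf{E}_3^\w,\mathbf{C}^\w)$, and identifies $a(\mathbf{E}_1^\w,\mathbf{C}^\w)\cup a(\mathbf{C}^\w,\mathbf{E}_2^\w)$ with $a_{\{1,2\}}^\w$ by uniqueness of arcs in a dendrite. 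Notice that your own final paragraph invokes exactly this dendrite fact, and together with Theorem~\ref{theo:dendrite} it already gives the proposition; logically, your nested-median construction is therefore not needed for the statement itself. What it buys instead is the concrete identification $\{\mathbf{C}^\w\}=\bigcap_{n\ge1}C_n$, which is precisely what the paper establishes separately: your level-$n$ median triangle is the paper's $W_n^*$ from Proposition~\ref{prop:exists_W_star} (proved there by an explicit path argument in the tree rather than by citing the Steiner/median fact), and your intersection formula is Corollary~\ref{cor:C_intersection_of W_star}. So your proposal essentially merges the proposition with these later results; that is a legitimate, more constructive route, at the cost of the substitution/monotonicity analysis.

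Two small repairs are needed in your sketch. First, the claim that $C_n$ ``must have three distinct neighbours in the tree'' fails exactly when $C_n$ coincides with one of the exit triangles $e_i^\w(n)$ — the degenerate case $\mathbf{C}^\w=\mathbf{E}_i^\w$ that the paper handles separately; there one of the three directions is the boundary side of type $i$, and it is the type convention for exit endpoints in the proof of Proposition~\ref{prop:path_matrix} (the endpoint is assigned the pair containing $i$) that guarantees your three restricted subpaths inside $C_n$ still join the three scaled exits pairwise, so that the median argument and the inclusion $C_{n+1}\subseteq C_n$ go through in this case too. Second, when you specialise Lemma~\ref{lemma:arc_construction} by taking $W_n(\mathbf{E}_i^\w):=e_i^\w(n)$, you should appeal to Lemma~\ref{lemma:exits_are_fixpoints}(a), which says this is the only level-$n$ triangle containing $\mathbf{E}_i^\w$, so it is indeed the admissible choice to which the lemma's conclusion applies.
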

\begin{proof} We give a proof for the white fractal.
Let us first consider the special case when there exists an $i \in \{1,2,3\}$ such that $\mathbf{E}_i^\w \in a_{\{j,k\}}^\w$, where $\{i,j,k \}=\{1,2,3 \}$. In this case we have $\{ \mathbf{E}_i^\w  \}=a_{\{1,2\}}^\w \cap a_{\{2,3\}}^\w \cap a_{\{1,3\}}^\w$, i.e., $\mathbf{C}^\w=\mathbf{E}_i^\w,$
as shown in the left example in Figure \ref{fig:lemma_exists_C}.
Assume now that there exists no $i \in \{1,2,3\}$ as in the special case above. Since $\linfw$ is a dendrite (and thus, cycle-free), it immediately follows that  distinct
non-disjoint arcs in the dendrite share either a point or a common subarc. Let thus $\mathbf{C}^\w\in \linfw$ be such that $a_{\{2,3\}}^\w \cap a_{\{1,3\}}^\w=a(\mathbf{E}_3^\w,\mathbf{C}^\w)$. Then $a(\mathbf{E}_1^\w,\mathbf{C}^\w)\cup a(\mathbf{C}^\w,\mathbf{E}_2^\w)$ is an arc that connects $\mathbf{E}_1^\w$ and $\mathbf{E}_2^\w$ in $\linfw$, see also Figure \ref{fig:lemma_exists_C}. From the uniqueness of the arc that connects given distinct points in a dendrite we get  $a(\mathbf{E}_1^\w,\mathbf{C}^\w)\cup a(\mathbf{C^\w},\mathbf{E}_2^\w)=a_{\{1,2\}}^\w$, which yields the desired result.
\end{proof}

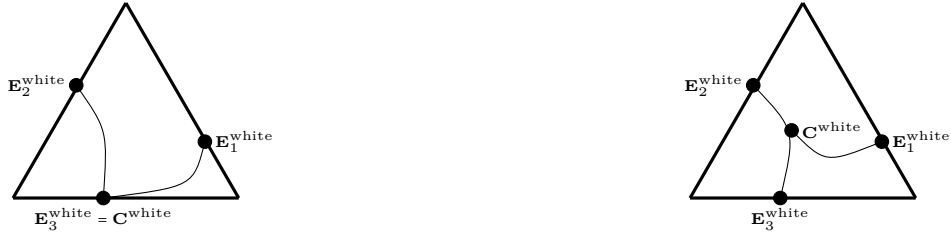
\begin{figure}[h]
\begin{center}
 \begin{tikzpicture}
[scale=3.0]
\draw[line width=1.2pt, draw= black] (-2, 0) -- (-1,0);
\draw[line width=1.2pt, draw= black] (-2, 0) -- (-1.5,0.865);
\draw[line width=1.2pt, draw= black] (-1,0) -- (-1.5,0.865);

\draw[fill] (-1.6,0) circle [radius=0.03];
\coordinate[label=below:\tiny{$\mathbf{E}_3^\w=\mathbf{C}^\w$}] (E3) at (-1.6,0);

\draw[fill] (-1.72,0.5) circle [radius=0.03];
\coordinate[label=left:\tiny{$\mathbf{E}_2^\w$}] (E2) at (-1.72,0.5);

\path[draw](-1.6,0) .. controls (-1.58,0.3) .. (-1.72,0.5);

\draw[fill] (-1.15,0.25) circle [radius=0.03];
\coordinate[label=right:\tiny{$\mathbf{E}_1^\w$}] (E1) at (-1.15,0.25);

\path[draw](-1.15,0.25) .. controls (-1.2,0.05) .. (-1.6,0);

\draw[line width=1.2pt, draw= black] (1, 0) -- (2,0);
\draw[line width=1.2pt, draw= black] (2, 0) -- (1.5,0.865);
\draw[line width=1.2pt, draw= black] (1,0) -- (1.5,0.865);

\draw[fill] (1.4,0) circle [radius=0.03];
\coordinate[label=below:\tiny{$\mathbf{E}_3^\w$}] (E3) at (1.4,0);

\draw[fill] (1.28,0.5) circle [radius=0.03];
\coordinate[label=left:\tiny{$\mathbf{E}_2^\w$}] (E2) at (1.28,0.5);

\path[draw](1.4,0) .. controls (1.47,0.30) .. (1.28,0.5);

\draw[fill] (1.85,0.25) circle [radius=0.03];
\coordinate[label=right:\tiny{$\mathbf{E}_1^\w$}] (E1) at (1.85,0.25);

\draw[fill] (1.45,0.30) circle [radius=0.03];
\coordinate[label=right:\tiny{$\mathbf{C}^\w$}] (C) at (1.45,0.30);

\path[draw](1.85,0.25) .. controls (1.6,0.15) .. (1.45,0.30);

\end{tikzpicture}
\end{center}
\caption{The three arcs that connect the exits of a (white) triangular labyrinth fractal intersect in one point $\mathbf{C}$. The left triangle shows the case  $\mathbf{E}_3^\w=\mathbf{C}^\w$.}
\label{fig:lemma_exists_C}
\end{figure}

With the notations in the above  Lemma, we define the core of the fractals, als follows.

\begin{definition}\label{def:core_fractals}
The core of the white triangular labyrinth fractal is the set
\[ \mathrm{core}(\linfw)=\bigcup_{\pi \in \P} a_{\pi}^\w=\bigcup_{i=1}^3 a(\mathbf{E}_i^\w,\mathbf{C}^\w),  \]
and the core of the yellow triangular labyrinth fractal is 
\[ \mathrm{core}(\linfy)=\bigcup_{\pi \in \P} a_{\pi}^\y=\bigcup_{i=1}^3 a(\mathbf{E}_i^\y,\mathbf{C}^\y).\]
\end{definition}

\begin{lemma}\label{lemma:exits_are_fixpoints}
With the above notations we have for every $i \in \{1,2,3\}$:
\begin{enumerate} 
\item[(a)] For all $n \geq 1$ the triangle $e_i^\w(n)$ is the only element of $\W_n \cup \W'_n$ that contains the exit $\mathbf{E}_i^\w  \in \linfw$.
\item[(b)] If $n\ge 1$ then $\mathbf{E}^\w_i$ is a fixed point of the projection map $P_{e_i^\w(n)}$. 
\item[(c)] Let $\mathbf{Z} \in T_1$. Then for all $n\ge 1$ the vectors $P_{e_i^\w(n)}(\mathbf{Z})-\mathbf{E}^\w_i$ and $\mathbf{Z}-\mathbf{E}^\w_i$ are linearly dependent.
\end{enumerate}
\end{lemma}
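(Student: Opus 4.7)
The plan is to prove (b) first, since it characterises $\mathbf{E}^\w_i$ as the fixed point of $P_{e_i^\w(n)}$, and then to deduce (a) and (c) from this characterisation together with the explicit form of the projection map. Unwinding the recursive definition of $\W_{n+1}$ shows that every triangle of $\W_{n+1}\cup\W'_{n+1}$ sitting inside $e_i^\w(n)$ has the form $P_{e_i^\w(n)}(W)$ with $W\in\W_1\cup\W'_1$, and only $W=e_i^\w(1)\in\W_1$ produces a triangle whose side of type $i$ lies on $\varrho_i$, because no upside-down triangle has a side on $\partial T_1$. The exits property at level $n+1$ (Proposition~\ref{prop:triangle_laby_system_order_n}) therefore forces $e_i^\w(n+1)=P_{e_i^\w(n)}(e_i^\w(1))$, and iterating via the composition identity \eqref{eq:compose_projections} yields $e_i^\w(kn)=P_{e_i^\w(n)}^k(T_1)$. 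Since $P_{e_i^\w(n)}$ is a similarity of ratio $1/m^n<1$, Banach's fixed point theorem supplies a unique fixed point; the diameters of the nested sets $e_i^\w(kn)$ tend to zero and their intersection equals $\{\mathbf{E}^\w_i\}=\bigcap_k e_i^\w(k)$, so this fixed point must be $\mathbf{E}^\w_i$.

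For (a), I would start by observing that $P_{e_i^\w(1)}$ maps $\varrho_i$ into itself (the image of the side of type $i$ of $T_1$ is the side of type $i$ of $e_i^\w(1)$, itself a subsegment of $\varrho_i$); hence its fixed point $\mathbf{E}^\w_i$ lies on $\varrho_i$. For each $n$, the point $\mathbf{E}^\w_i$ therefore lies on the side of $e_i^\w(n)$ of type $i$, a segment of $\partial T_1$. If $\mathbf{E}^\w_i$ sits in the relative interior of this segment, then it belongs to no other triangle of $\T_{m^n}\cup\T'_{m^n}$, since the segment lies on $\partial T_1$ and no upside-down triangle has a side on $\partial T_1$. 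Otherwise $\mathbf{E}^\w_i$ coincides with an endpoint of this segment. Writing, say for $i=1$, $e_1^\w(n)=T_{m^n}(0,j_n,m^n-1-j_n)$ and solving the fixed-point equation in barycentric coordinates, one finds that $\mathbf{E}^\w_i$ has barycentric coefficients on $\varrho_i$ with denominator $m^n-1$, whereas the endpoints of the segment have denominator $m^n$; since $\gcd(m^n,m^n-1)=1$, equality forces $\mathbf{E}^\w_i$ to be a corner of $T_1$. But a corner of $T_1$ is contained in exactly one triangle of $\T_{m^n}\cup\T'_{m^n}$, namely the corner triangle at that vertex, which is $e_i^\w(n)$ itself. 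In every case $e_i^\w(n)$ is the unique element of $\W_n\cup\W'_n$ containing $\mathbf{E}^\w_i$.

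For (c), I would use the explicit formulas in \eqref{formel_Tm} for the vertices of an upright triangle to write $\mathbf{A}_j=\tfrac{1}{m^n}\mathbf{P}_j+\mathbf{c}_n$ for a common translation $\mathbf{c}_n$. Therefore $P_{e_i^\w(n)}(\mathbf{Z})=\tfrac{1}{m^n}\mathbf{Z}+\mathbf{c}_n$ for every $\mathbf{Z}\in T_1$, i.e.\ $P_{e_i^\w(n)}$ is a pure positive homothety of ratio $1/m^n$ (no rotational component); its centre is its fixed point, which by (b) is $\mathbf{E}^\w_i$. Hence $P_{e_i^\w(n)}(\mathbf{Z})-\mathbf{E}^\w_i=\tfrac{1}{m^n}(\mathbf{Z}-\mathbf{E}^\w_i)$, proving linear dependence. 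The main delicate step is the case analysis in (a): one must rule out the possibility that $\mathbf{E}^\w_i$ accidentally lands at an internal vertex of the triangulation, where several triangles of $\T_{m^n}\cup\T'_{m^n}$ meet. The divisibility observation comparing denominators $m^n-1$ and $m^n$ handles exactly this, and the only remaining boundary case (a corner of $T_1$) is harmless because each corner belongs to a single triangle of the triangulation.
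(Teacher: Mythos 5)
Your proof of (b) rests on the claim that among the level-$(n+1)$ triangles $P_{e_i^\w(n)}(W)$, $W\in\W_1\cup\W'_1$, only $W=e_i^\w(1)$ produces a triangle whose side of type $i$ lies on $\varrho_i$, and you justify this solely by excluding upside-down triangles. That claim is false in general: the exits property guarantees uniqueness of the mirrored white--yellow \emph{pair}, not uniqueness of white border triangles on a given side. In the paper's Example~\ref{Beispiel1}, both $T_4(2,0,1)$ and $T_4(3,0,0)$ lie in $\W_1$ and have their side of type $2$ on $\varrho_2$, so inside $e_2^\w(n)$ there are two level-$(n+1)$ white triangles with side of type $2$ on $\varrho_2$. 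In addition, your deduction tacitly assumes $e_i^\w(n+1)\subseteq e_i^\w(n)$ (you only inspect triangles sitting inside $e_i^\w(n)$), which is part of what must be proved. The correct route to $e_i^\w(n+1)=P_{e_i^\w(n)}(e_i^\w(1))$ uses the pair: for $i=1$, writing $e_1^\w(n)=T_{m^n}(0,K,m^n-1-K)$, $e_1^\y(n)=T_{m^n}(0,m^n-1-K,K)$ and $e_1^\w(1)=T_m(0,k,m-1-k)$, $e_1^\y(1)=T_m(0,m-1-k,k)$, one computes $P_{e_1^\w(n)}(e_1^\w(1))=T_{m^{n+1}}(0,\,mK+k,\,m^{n+1}-1-mK-k)\in\W_{n+1}$ and $P_{e_1^\y(n)}(e_1^\y(1))=T_{m^{n+1}}(0,\,m^{n+1}-1-(mK+k),\,mK+k)\in\Y_{n+1}$, a mirrored pair of border triangles; the uniqueness in the exits property at level $n+1$ (Proposition~\ref{prop:triangle_laby_system_order_n}) then identifies these as $e_1^\w(n+1)$ and $e_1^\y(n+1)$, giving nestedness as a by-product. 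With this repair, your iteration $e_i^\w(kn)=P_{e_i^\w(n)}^k(T_1)$, the Banach fixed point argument, and hence (b) go through; this is exactly the fact the paper invokes (very tersely) when it writes $\{\mathbf{E}_i^\w\}=\lim_{n\to\infty}P_{\Delta_1,\dots,\Delta_n}(T_1)$ with every $\Delta_k=e_i^\w(1)$.

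Granting the repaired (b), your parts (a) and (c) are correct. Your (c) coincides with the paper's argument: $P_{e_i^\w(n)}$ is a pure homothety of ratio $m^{-n}$ whose centre is its fixed point $\mathbf{E}_i^\w$. Your (a) takes a genuinely different route: the paper argues indirectly via corner triangles, whereas you place $\mathbf{E}_i^\w$ on $\varrho_i$ at barycentric parameter $j_n/(m^n-1)$, compare with the subdivision vertices at denominator $m^n$, and use $\gcd(m^n,m^n-1)=1$ to exclude every vertex of the triangulation except a corner of $T_1$; this is more explicit than the paper's proof and in fact yields uniqueness within all of $\T_{m^n}\cup\T'_{m^n}$, not merely within $\W_n\cup\W'_n$.
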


\begin{proof}
(a)\quad We give an indirect proof. W.l.o.g., let $i =3$, i.e., we consider the exit $\mathbf{E}_3^\w $.  Assume that for some $n\ge 1$, $\mathbf{E}_3^\w$ lies in two distinct triangles $W_a, W_b \in \W_n$. Then one of these two triangles is $e_3^\w(n)$, the exit of type $3$ in $\W_n\cup \W'_n$, otherwise we would have a contradiction with the definition of the exit of type $i$ in $\linfw$. Assume, w.l.o.g., $W_a= e_3^\w(n)$. 
If $W_a \in \T_{m^n}$, $W_a=T_{m^n}(k_1,k_2,k_3)$ satisfies $k_1=0$ or $k_2=0$, i.e., $W_a$ lies in one of the other two border strips of level  $n$ of the triangle, then $\mathbf{E}_3^\w\in \{\mathbf{P}_1,\mathbf{P}_2\}$, which is in contradiction with the fact that $\mathbf{E}_3^\w$ lies in two distinct triangles $W_a, W_b \in \W_n$. Thus, $W_a$ can not be a corner triangle in $\W_n$. This implies that $P_{W_a}(\linfw)$ does not contain a corner vertex of the triangle $W_a$, which is in contradiction with the fact $\mathbf{E}_3^\w \in W_a \cap W_b$. This completes the proof.

(b)\quad 
With the above notations, the definition of the exits in $\linfw$  and the GIFS approach from Section \ref{sec:TLF} we have $\{ \mathbf{E}_i^\w \}=\lim_{n\to \infty}P_{\Delta_1, \dots,\Delta_n}(T_1)$, where $\Delta_k=W_1(\mathbf{E}_i^\w)=e_i^\w(1)$, for all $k=1,2\ldots$. This implies $P_{e_i^\w(n)}(\mathbf{E}_i^\w)=\mathbf{E}_i^\w$.

(c)\quad The linear dependence of the vectors $P_{e_i^\w(n)}(\mathbf{Z})-\mathbf{E}_i^\w$ and $\mathbf{Z}-\mathbf{E}_i^\w$ immediately follows from $P_{e_i^\w(n)}(\mathbf{E}_i^\w)=\mathbf{E}_i^\w$ and the fact that the vectors $\mathbf{Z}-\mathbf{E}_i^\w$ and $P_{e_i^\w(n)}(\mathbf{Z}-\mathbf{E}_i^\w)=P_{e_i^\w(n)}(\mathbf{Z})-P_{e_i^\w(n)}(\mathbf{E}_i^\w)$ are linearly dependent, due to the fact that the map $P_{e_i^\w}$ is a similarity (of factor $m^{-1}$) and fixed point $\mathbf{E}_i^\w$, more precisely it is a homothetic map with fixed point $\mathbf{E}_i^\w$ and similarity factor $m^{-1}$. 
\end{proof}
An analogous lemma holds for the yellow fractal $\linfy$.

\begin{remark}
In Section~\ref{sec:TLF} we have mentioned that
for each $T \in \T_m \cup \T'_m$ the map $P_T$ is a contraction. Therefore, there exists a uniquely determined fixed point $\mathbf{X}$ that satisfies $P_T(\mathbf{X})=\mathbf{X}$.
With this observation the affirmation $(b)$ in the above lemma gives an alternative (and equivalent) definition for the exits of the white  and yellow fractal.
\end{remark}

The following result describes a characteristic property of the graph of a triangular labyrinth pattern, and provides the discrete analogon of the result in Proposition \ref{prop:exists_C}.

\begin{proposition}\label{prop:exists_W_star} 
Let $(\W \cup \W', \Y \cup \Y')$ be an $m$-triangular labyrinth patterns system. Then there exists a unique triangle $W^*\in \W \cup \W'$ that is contained in the path $p\,(e_i^\w,e_j^\w)$ that connects $e_i^\w$ and $e_j^\w$ in $\G(\W \cup \W')$ for all $\{i,j\} \in \P$.
The analogous assertion holds for $Y^*\in \Y \cup \Y'$. 
\end{proposition}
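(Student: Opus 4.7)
The plan is to reduce the statement to the well-known fact that in a tree any three vertices admit a unique median, i.e.\ a unique vertex lying on all three pairwise connecting paths. The hypotheses give us exactly the setting in which this tree-theoretic fact applies: by the tree property, $\G(\W \cup \W')$ is a tree, and by Proposition~\ref{prop:no_double_exits} the three exits $e_1^\w, e_2^\w, e_3^\w$ are pairwise distinct vertices of this tree. Hence for every $\{i,j\} \in \P$ the path $p(e_i^\w, e_j^\w)$ is the uniquely determined path in $\G(\W \cup \W')$ connecting the two exits.

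For existence, first consider the two paths $p(e_1^\w, e_2^\w)$ and $p(e_1^\w, e_3^\w)$. Traversing both starting from $e_1^\w$, they share a common initial segment. Let $W^*$ be the last common vertex on this shared segment. Beyond $W^*$ the two paths leave $W^*$ through different neighbours (otherwise $W^*$ would not be the last shared vertex). Denote by $q_2$ the subpath of $p(e_1^\w, e_2^\w)$ from $W^*$ to $e_2^\w$ and by $q_3$ the subpath of $p(e_1^\w, e_3^\w)$ from $W^*$ to $e_3^\w$. The concatenation $q_2^{-1} \cdot q_3$ traversed from $e_2^\w$ to $e_3^\w$ is a walk in the tree; it is in fact a simple path, because any vertex shared by the interiors of $q_2$ and $q_3$ would contradict the maximality of $W^*$. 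By uniqueness of paths in the tree, this path coincides with $p(e_2^\w, e_3^\w)$, so in particular $W^* \in p(e_2^\w, e_3^\w)$. Thus $W^*$ lies in all three paths.

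For uniqueness, suppose $W^{**}$ is another triangle lying in all three paths. Since both $W^*$ and $W^{**}$ lie on $p(e_1^\w, e_2^\w)$, they appear in some order; without loss of generality $W^*$ precedes $W^{**}$ on $p(e_1^\w, e_2^\w)$ when traversed from $e_1^\w$. Both also lie on $p(e_1^\w, e_3^\w)$, and the uniqueness of paths from $e_1^\w$ to $W^{**}$ in the tree forces the order on $p(e_1^\w, e_3^\w)$ to be $e_1^\w, \ldots, W^*, \ldots, W^{**}, \ldots, e_3^\w$ as well. But then both $p(e_1^\w, e_2^\w)$ and $p(e_1^\w, e_3^\w)$ leave $W^*$ through the neighbour lying on the subpath from $W^*$ to $W^{**}$; this contradicts the defining property of $W^*$ that the two paths diverge at it. Hence $W^{**} = W^*$.

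The analogous statement for $\G(\Y \cup \Y')$ and the exits $e_1^\y, e_2^\y, e_3^\y$ is obtained by exchanging the roles of the white and the yellow patterns; the argument is identical. The main substance of the proof is really the standard tree median argument, so no step presents a genuine obstacle; the only point requiring care is invoking Proposition~\ref{prop:no_double_exits} to guarantee that the three exits are distinct vertices, since otherwise the ``median'' would collapse and the formulation of the statement would need adjustment.
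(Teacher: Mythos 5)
Your argument is essentially the paper's own proof: the paper also fixes the two paths from $e_1^\w$ to $e_2^\w$ and to $e_3^\w$, takes the last vertex $W^*$ of their common initial segment, shows that the concatenation of the two remaining pieces is a path and hence, by the tree property, coincides with $p(e_2^\w,e_3^\w)$, and then deduces uniqueness. So the route is the same; what I would flag are two places where your justification is stated too quickly. First, your claim that a vertex shared by the interiors of $q_2$ and $q_3$ ``would contradict the maximality of $W^*$'' is not literally correct: such a vertex would lie beyond the common initial segment, so it is compatible with $W^*$ being the last vertex of that segment. What it does contradict is the tree property -- either via uniqueness of the path from $e_1^\w$ to that shared vertex, or, as the paper does, by exhibiting an explicit cycle $W^{(1)}_{h_1-1},\ldots,W^{(1)}_s,W^{(2)}_{s+1},\ldots,W^{(2)}_{h_2}$ in $\G(\W\cup\W')$. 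Since this is the one point in the statement that genuinely uses acyclicity (connectedness alone would not do), it deserves the correct attribution, but the repair is a one-liner with tools you already invoke.

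Second, in the uniqueness step the ``without loss of generality'' is not by symmetry: your contradiction uses the defining property of $W^*$ as the divergence point, which $W^{**}$ need not share, so the case in which $W^{**}$ precedes $W^*$ on $p(e_1^\w,e_2^\w)$ requires its own (easy) argument -- e.g.\ then $W^{**}$ lies strictly before $W^*$ in the common prefix, hence cannot occur on $p(e_2^\w,e_3^\w)=q_2^{-1}\cdot q_3$, because $q_2$ and $q_3$ are tails of simple paths that already contain $W^{**}$ earlier. (The paper avoids the case split altogether by showing the three pieces $\W(1),\W(2),\W(3)$ and $\{W^*\}$ are pairwise disjoint and concluding by pigeonhole.) Relatedly, your parenthetical that the two paths ``leave $W^*$ through different neighbours'' only applies when both paths continue past $W^*$; as the paper remarks, $W^*$ may coincide with one of the exits, in which case one of $q_2,q_3$ is trivial -- this does not hurt your existence argument, but the degenerate case should not be silently excluded in the uniqueness discussion either.
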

\begin{proof}
Let $W^{(1)}_1, \ldots, W^{(1)}_{r_1}$ and $W^{(2)}_1, \ldots, W^{(2)}_{r_2}$ denote the paths $p\,(e_1^\w,e_2^\w)$ and $p\,(e_1^\w,e_3^\w)$, respectively, with $W^{(1)}_1 =W^{(2)}_1 = e_1^\w$, $W^{(1)}_{r_1} = e_2^\w$, and $W^{(2)}_{r_2} = e_3^\w$. Let $s \leq \min\{{r_1}, {r_2}\}$ be the largest index such that $W^{(1)}_h =W^{(2)}_h$ holds for all $h \in \{1, \ldots, s\}$. 
 Define $\W(1):=\{W^{(1)}_h: 1\leq h < s\}$, $\W(2):=\{W^{(1)}_h: s< h \leq r_1\}$, $\W(3):=\{W^{(2)}_h: s< h \leq r_2\}$ and $W^*:=W^{(1)}_s = W^{(2)}_s$. We claim that $\W(1)$, $\W(2)$, $\W(3)$ and $\{W^*\}$ are pairwise disjoint sets. 
Indeed, except for the disjointness of $\W(2)$ and $\W(3)$, this follows immediately by the definition of a path. 
We show this remaining case indirectly and assume that $\W(2) \cap \W(3) \not=\emptyset$.  
We can  exclude that $\W(2)$ or $\W(3)$ is the empty set (that is $s < \min\{r_1, r_2\}$). Thus, there are indices $h_1$ and $h_2$ with $s < h_1 \leq r_1$ and $s < h_2  \leq r_2$ such that 
$W^{(1)}_{h_1} = W^{(2)}_{h_2}$. W.l.o.g. we may assume that 
$\{W^{(1)}_h: s< h <h_1\}$ and $\{W^{(2)}_h: s< h <h_2\}$ are disjoint sets. Observe that either $h_1>s+1$ or $h_2>s+1$, otherwise the maximality of $s$ would be violated. This yields that, $W^{(1)}_{h_1-1}, \ldots, W^{(1)}_{s}, W^{(2)}_{s+1}, \ldots, W^{(2)}_{h_2}=W^{(1)}_{h_1}$ is a cycle in $\G(\W \cup \W')$ which contradicts the tree property.

The pairwise disjointness of $\W(2)$, $\W(3)$ and $\{W^*\}$ shows that
$W^{(1)}_{r_1}, \ldots, W^{(1)}_{s+1}, W^*, W^{(2)}_{s+1}, \ldots, W^{(2)}_{r_2}$ is a path in 
$\G(\W \cup \W')$ from $W^{(1)}_{r_1}= e_2^\w$ to $W^{(2)}_{r_2}= e_3^\w$ and by the tree property it must coincide with $p\,(e_2^\w,e_3^\w)$. Therefore, $W^*$ is contained in $p\,(e_i^\w,e_j^\w)$ for all $\{i,j\} \in \P$ and since $\W(1)$, $\W(2)$ and $\W(3)$ are pairwise disjoint. By a pigeonhole principle argument, we also see that $W^*$ is the unique triangle with this property.

Observe that the triangle $W^*$ can also coincide with one of the exits of $\W$. Indeed, 
if $s=1$ then $W^* = e_1^\w$. Note that in this case we have $\W(1)=\emptyset$. If $s=\max\{r_1,r_2\}=r_1$ then 
$W^* = e_2^\w$ (and $\W(2)=\emptyset$) and if $s=\max\{r_1,r_2\}=r_2$ then we have $W^* = e_3^\w$ (and $\W(3)=\emptyset$).
\end{proof}
With the  notations from the proof,  we call the set of triangles 
\[\W^0:=\W(1) \cup \W(2)\cup \W(3) \cup \{W^*\}\]
the core of the (white) labyrinth pattern. The core $\Y^0$ of the yellow pattern in the system is defined in the obvious analogous way. The core of the white and yellow labyrinth set of level $n\ge 1$ are obtained in the analogous way.

\begin{remark} 
Since the pair $(\W_n\cup \W'_n, \Y_n \cup \Y'_n)$ of labyrinth sets obtained at the $n$-th iteration, for some $n\ge 1$, can be viewed as an $m^n$-triangular labyrinth patterns system, the above proposition can be applied to it. We denote by $W_n^*$ and $Y_n^*$ the corresponding vertices in the graphs $\G(\W_n \cup \W'_n)$ and $\G(\Y_n \cup \Y'_n)$, respectively. In the patterns shown in Figure \ref{fig:W_star_and_Y_star} the triangles $W^*$ and $Y^*$ are marked by a grey triangle inside each of them. On the other hand, these patterns coincide with the triangular labyrinth sets in the triangular labyrinth system of level $2$ shown in Figure~\ref{fig:Bsp1W23} and Figure~\ref{fig:Bsp1Y23}, respectively, and therefore the marked triangles are the triangles $W^*_2$ and $Y^*_2$ obtained from the triangular patterns system shown in Figure \ref{fig:Bsp1WY1}.
\end{remark}
\begin{figure}[h]
\begin{center}
\includegraphics[width=0.30\textwidth]{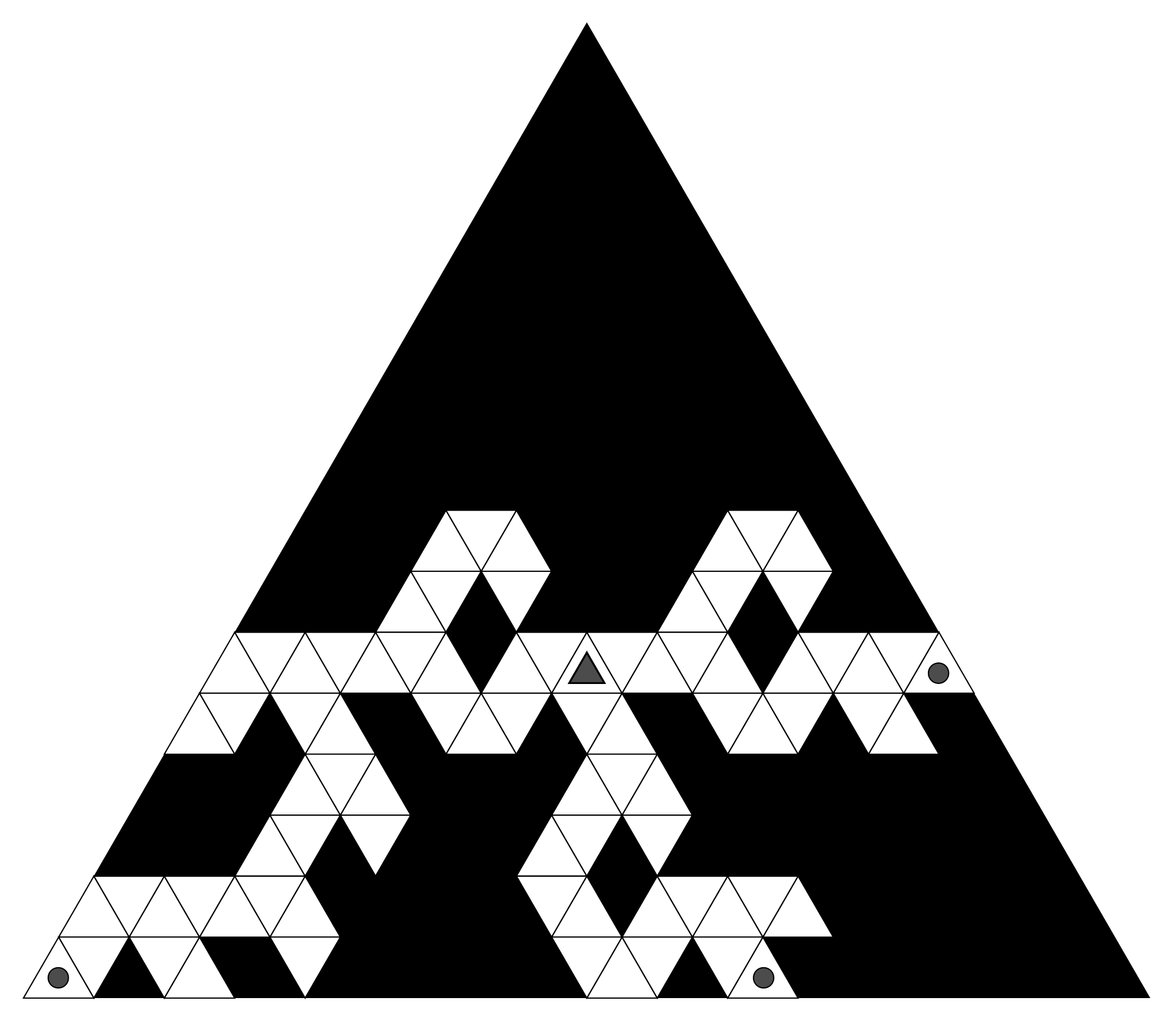}
\hspace{2.00cm}
\includegraphics[width=0.30\textwidth]{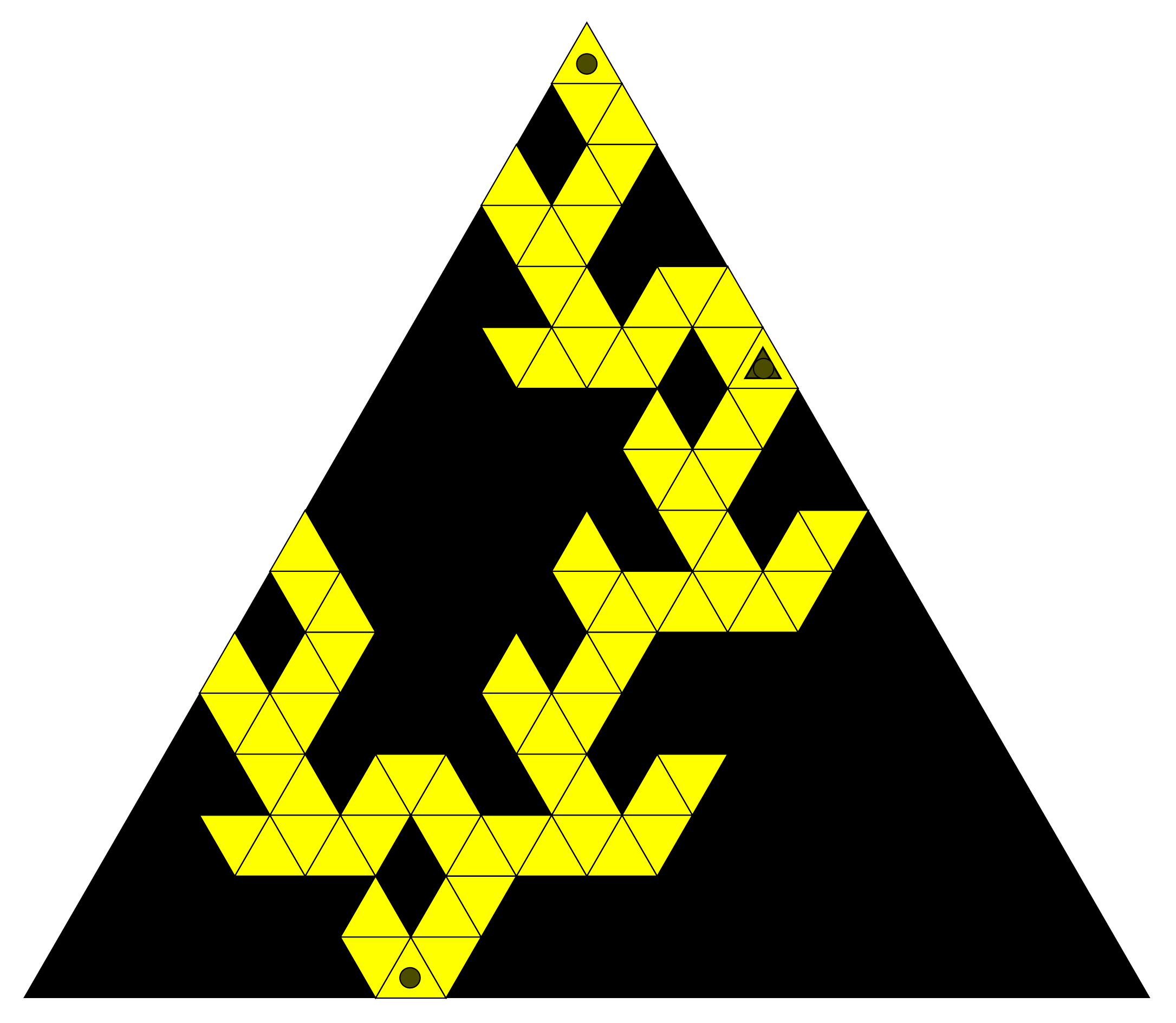}
\caption{The exits and the special triangles $W^*$ and $Y^*$ in a triangular labyrinth patterns system}
\label{fig:W_star_and_Y_star}
\end{center}
\end{figure}
\begin{corollary}\label{cor:C_intersection_of W_star}
$ \{C^\w\}=\bigcap_{n\ge 1} W_n^*$ and $ \{C^\y\}=\bigcap_{n\ge 1} Y_n^*$. In other words, the points $C^\w \in \linfw$ and $C^\y \in \linfy$ in Proposition \ref{prop:exists_C} are uniquely determined.
\end{corollary}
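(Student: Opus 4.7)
The plan is to show that the triangles $W_n^*$ form a nested sequence of compact sets whose diameters tend to zero, and then identify their (singleton) intersection with $\mathbf{C}^\w$. The argument for $\mathbf{C}^\y$ is entirely analogous, so I will only discuss the white case.

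The crucial step is the nested property $W_{n+1}^* \subseteq W_n^*$ for every $n \geq 1$. Recall from the proof of Proposition~\ref{prop:path_matrix} that the path $p_{n+1}(e_i^\w(n+1), e_j^\w(n+1))$ is obtained from $p_n(e_i^\w(n), e_j^\w(n))$ by replacing each triangle $W$ in the latter by a sub-path lying in the subdivision of $W$. Consequently every triangle appearing in $p_{n+1}(e_i^\w(n+1), e_j^\w(n+1))$ is contained in some triangle of $p_n(e_i^\w(n), e_j^\w(n))$. Applying this to $W_{n+1}^*$, which by Proposition~\ref{prop:exists_W_star} lies on all three paths at level $n+1$, produces level-$n$ triangles $W_{ij} \in p_n(e_i^\w(n),e_j^\w(n))$ with $W_{n+1}^* \subseteq W_{ij}$, one for each $\{i,j\} \in \P$. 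Since $W_{n+1}^*$ has non-empty interior and distinct level-$n$ triangles have pairwise disjoint interiors, we conclude $W_{12} = W_{13} = W_{23}$; this common value lies on all three level-$n$ paths, and the uniqueness part of Proposition~\ref{prop:exists_W_star} then forces it to be $W_n^*$. Hence $W_{n+1}^* \subseteq W_n^*$.

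The triangles $W_n^*$ are compact with $\operatorname{diam}(W_n^*) = m^{-n} \to 0$, so the nested intersection $\bigcap_{n \geq 1} W_n^*$ reduces to a single point $X \in \linfw$. To identify $X$ with $\mathbf{C}^\w$, I would apply Lemma~\ref{lemma:arc_construction}: using Lemma~\ref{lemma:exits_are_fixpoints}(a), which forces the choice $W_n(\mathbf{E}_i^\w) = e_i^\w(n)$, it yields
\[
a_{\{i,j\}}^\w \;=\; \bigcap_{n \geq 1}\; \bigcup_{W \in p_n(e_i^\w(n), e_j^\w(n))} W
\]
for each $\{i,j\} \in \P$. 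Because $W_n^*$ belongs to every path $p_n(e_i^\w(n), e_j^\w(n))$, we have $X \in W_n^* \subseteq \bigcup_{W \in p_n(e_i^\w(n), e_j^\w(n))} W$ for every $n$ and every $\{i,j\}$, hence $X \in a_{\{i,j\}}^\w$ for each $\{i,j\} \in \P$. By Proposition~\ref{prop:exists_C}, $\bigcap_{\{i,j\} \in \P} a_{\{i,j\}}^\w = \{\mathbf{C}^\w\}$, so $X = \mathbf{C}^\w$.

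The main obstacle is the nested property of $W_n^*$; its delicate ingredient is the merging of the three potential ``ancestors'' $W_{12}, W_{13}, W_{23}$ of $W_{n+1}^*$ into a single level-$n$ triangle, which rests on the disjointness of interiors of level-$n$ triangles combined with uniqueness of $W_n^*$. Once this is in place, the identification with $\mathbf{C}^\w$ follows directly from the arc-construction lemma and Proposition~\ref{prop:exists_C}.
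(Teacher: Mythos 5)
Your proposal is correct and follows essentially the same route as the paper, whose proof is only a one-line sketch citing exactly the ingredients you use: Proposition~\ref{prop:exists_W_star} (applied at each level via Proposition~\ref{prop:triangle_laby_system_order_n}), the arc construction of Lemma~\ref{lemma:arc_construction} with the forced choice $W_n(\mathbf{E}_i^\w)=e_i^\w(n)$ from Lemma~\ref{lemma:exits_are_fixpoints}, and Proposition~\ref{prop:exists_C}. Your explicit verification of the nesting $W_{n+1}^*\subseteq W_n^*$ (via the path substitution of Proposition~\ref{prop:path_matrix}, disjointness of interiors, and uniqueness of $W_n^*$) is precisely the detail the paper leaves implicit, and it is sound.
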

\begin{proof} This easily follows from  Proposition \ref{prop:exists_C} and  Proposition \ref{prop:exists_W_star} and the construction of arcs in the fractal given in Lemma \ref{lemma:arc_construction}.

\end{proof}

\begin{lemma} \label{lemma:exits_of_neighbours_coincide}
Let $\linfw$, $\linfy$ be the white and yellow triangular labyrinth fractal, respectively, generated by an $m$-triangular labyrinth patterns system. For arbitrarily fixed $n\ge 1$ and $i\in \{1,2,3 \}$, let $W\in \W_n$ and $W'\in \W'_n$, such that $W$ and $W'$ are $i$-neighbours. Then $P_W(\mathbf{E}_i^\w)=P_{W'}(\mathbf{E}_i^\y)$.
\end{lemma}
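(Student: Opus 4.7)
The plan is to reduce the statement to two ingredients: a symmetry between the white and yellow exit points on the side $\varrho_i$, and a matching symmetry between the projection maps $P_W$ and $P_{W'}$ restricted to $\varrho_i$. By symmetry, I will treat only the case $i=1$; the other cases are identical after relabelling.

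First, I would locate $\mathbf{E}_1^\w$ and $\mathbf{E}_1^\y$ on $\varrho_1$. By Proposition~\ref{prop:triangle_laby_system_order_n}, the exits property holds for the level-$n$ patterns system, so there is an integer $K=K(n)$ with
\[
e_1^\w(n)=T_{m^n}(0,K,m^n-1-K),\qquad e_1^\y(n)=T_{m^n}(0,m^n-1-K,K).
\]
A direct computation of the fixed point of $P_{e_1^\w(n)}$ using \eqref{formel_Tm} (solve the equation $\alpha_1 \mathbf{A}_1+\alpha_2\mathbf{A}_2+\alpha_3\mathbf{A}_3=\alpha_1\mathbf{P}_1+\alpha_2\mathbf{P}_2+\alpha_3\mathbf{P}_3$ in homogeneous coordinates) forces $\alpha_1=0$ and yields
\[
\mathbf{E}_1^\w=\frac{K}{m^n-1}\mathbf{P}_2+\frac{m^n-1-K}{m^n-1}\mathbf{P}_3,\qquad \mathbf{E}_1^\y=\frac{m^n-1-K}{m^n-1}\mathbf{P}_2+\frac{K}{m^n-1}\mathbf{P}_3,
\]
by Lemma~\ref{lemma:exits_are_fixpoints}(b). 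Hence both points lie on $\varrho_1$, and if $\tau_1$ denotes the affine involution of $T_1$ that fixes $\mathbf{P}_1$ and swaps $\mathbf{P}_2\leftrightarrow\mathbf{P}_3$, then $\tau_1(\mathbf{E}_1^\w)=\mathbf{E}_1^\y$.

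Next, I would compare the projections. If $W=T_{m^n}(k_1,k_2,k_3)=\Delta(\mathbf{A}_1,\mathbf{A}_2,\mathbf{A}_3)\in\W_n$ and $W'=T'_{m^n}(k_1-1,k_2,k_3)=\Delta(\mathbf{A}'_1,\mathbf{A}'_2,\mathbf{A}'_3)\in\W'_n$ are $1$-neighbours, then reading off \eqref{formel_Tm} gives $\mathbf{A}'_2=\mathbf{A}_3$ and $\mathbf{A}'_3=\mathbf{A}_2$ (the common side of type~$1$, with its two endpoints swapped). Consequently $P_{W'}$ and $P_W\circ\tau_1$ send $\mathbf{P}_2\mapsto \mathbf{A}_3$ and $\mathbf{P}_3\mapsto \mathbf{A}_2$, and since both maps are linear in homogeneous coordinates, they coincide on the whole segment $\varrho_1=\{\alpha_1=0\}$.

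Finally, I would combine the two steps: because $\mathbf{E}_1^\w\in\varrho_1$ and $\tau_1(\mathbf{E}_1^\w)=\mathbf{E}_1^\y\in\varrho_1$,
\[
P_{W'}(\mathbf{E}_1^\y)=P_{W'}\bigl(\tau_1(\mathbf{E}_1^\w)\bigr)=P_W\bigl(\tau_1(\tau_1(\mathbf{E}_1^\w))\bigr)=P_W(\mathbf{E}_1^\w),
\]
yielding the claim. The analogous argument for $i\in\{2,3\}$ uses the corresponding involutions $\tau_i$. The main bookkeeping obstacle is Step~2: one must verify carefully from \eqref{formel_Tm} that the vertex labelling of an upside-down triangle $W'$ is precisely the swapped labelling of its upright $i$-neighbour $W$ on the common side. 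Once this is pinned down, the rest is a one-line consequence of the mirror symmetry of the exits.
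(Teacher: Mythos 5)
Your proposal is correct and follows essentially the same route as the paper: both proofs rest on the mirror symmetry of the two exits on $\varrho_i$ (writing $\mathbf{E}_i^\w=\alpha\mathbf{P}_j+(1-\alpha)\mathbf{P}_k$ and $\mathbf{E}_i^\y=(1-\alpha)\mathbf{P}_j+\alpha\mathbf{P}_k$, which the paper deduces from the nested exit triangles and you deduce, equivalently and a bit more explicitly, from the fixed-point characterisation in Lemma~\ref{lemma:exits_are_fixpoints}(b)), combined with the vertex formul{\ae} \eqref{formel_Tm} for the $i$-neighbours $W=T_{m^n}(k_1,k_2,k_3)$ and $W'=T'_{m^n}(k_1-1,k_2,k_3)$. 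The only cosmetic difference is the final step: the paper evaluates $P_W(\mathbf{E}_1^\w)$ and $P_{W'}(\mathbf{E}_1^\y)$ directly in homogeneous coordinates, whereas you package the same computation as the observation $\mathbf{A}'_2=\mathbf{A}_3$, $\mathbf{A}'_3=\mathbf{A}_2$ and the resulting identity $P_{W'}=P_W\circ\tau_1$ on $\varrho_1$.
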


\begin{proof}
Without loss of generality, let us assume $i=1$, since the proof for $i=2$ or $i=3$ works in the same way. Let us first remark that since $\mathbf{E}_1^\w$ and $\mathbf{E}_1^\y$ are exits of the fractals, they can be represented by homogeneous coordinates 

$(0, \alpha, 1-\alpha), (0, \alpha', 1-\alpha') \in H$ such that
\[\mathbf{E}_1^\w = \alpha \mathbf{P}_2 + (1-\alpha) \mathbf{P}_3 \text{ and }
\mathbf{E}_1^\y = \alpha' \mathbf{P}_2 + (1-\alpha') \mathbf{P}_3.\]
From the definition of the exits in the fractal (obtained as limits of nested sequences of the triangles that are exits in labyrinth sets of level $n\ge 1$), and the fact that the exits of type $1$ at the $n$-th iteration are triangles of the form  $T_{m^n}(0,k_1(n),m^n-1-k_1(n))\in \W_n$ and $T_{m^n}(0,m^n-1-k_1(n),k_1(n))\in \Y_n$, respectively, with $0\le k_1(n)<m$, it follows that $\alpha'=1-\alpha$.

Now, for $n\ge 1$, let $W\in \W_n$ and $W' \in \W'_n$ be $1$-neighbours. By using the notations from \eqref{formel_Tm}, there exist non-negative integers $k_1,k_2,k_3$ with $k_1+k_2+k_3=m-1$ such that $W=T_m(k_1,k_2,k_3)=\Delta(\mathbf A_1,\mathbf A_2, \mathbf A_3)$ and $W'=T'_m(k_1-1,k_2,k_3)=\Delta(\mathbf A'_1, \mathbf A'_2,\mathbf A'_3   )$ and we obtain:
\begin{align*}
P_W(\mathbf E_1^\w) &=0\cdot \mathbf A_1+ \alpha \cdot \mathbf A_2 +(1-\alpha)\cdot \mathbf  A_3)\\
&= \frac{1}{m}(\alpha k_1 \mathbf P_1+\alpha (k_2+1) \mathbf P_2 +\alpha k_3 \mathbf P_3)\\
&+ \frac{1}{m}((1-\alpha k_1) \mathbf P_1+(1-\alpha) k_2 \mathbf P_2 +(1-\alpha)( k_3+1) \mathbf P_3)\\
&=\frac{1}{m}(k_1 \mathbf P_1+ (\alpha+k_2) \mathbf P_2 +(k_3+1-\alpha)\mathbf P_3)
\end{align*}
and, analogously, 
\begin{align*}
P_W(\mathbf E_1^\y) &=0\cdot \mathbf A'_1+ (1-\alpha) \cdot \mathbf A'_2 +\alpha\cdot \mathbf  A'_3\\
&=\frac{1}{m}(k_1 \mathbf P_1+ (\alpha+k_2) \mathbf P_2 +(k_3+1-\alpha)\mathbf P_3)
\end{align*}

\end{proof}

For  $i\in \{1,2,3\}$, let $\mathbf{E}_i^\w$ and $\mathbf{E}_i^\y$ be the exits of type $i$ in $\linfw$ and $\linfy$, respectively, and $n\ge 1$. For $W \in \W_n$, we call $P_W(\mathbf{E}_i^\w)$ the \emph{exit of type $i$ of} $W$, and for $W' \in \W'_n$, we call $P_{W'}(\mathbf{E}_i^\y)$ the \emph{exit of type $i$ of} $W'$. Analogously, for $Y \in \Y_n$, we call $P_Y(\mathbf{E}_i^\y)$ the \emph{exit of type $i$ of} $Y$, and for $Y' \in \Y'_n$, we call $P_{Y'}(\mathbf{E}_i^\w)$ the \emph{exit of type $i$ of} $Y'$.

\begin{remark}  From the above definition and the construction of triangular labyrinth fractals one can see that
the exit of type $i$ of a triangle $W \in \W_n \cup \W'_n$ is  a point $\mathbf{E}_i(W)$  that lies on the intersection of $\linfw$ with the side of type $i$ of $W$ such that  if $U$ is a neighbour of type $i$ of $W$ there exist points $\mathbf{P} \in {\rm int}(W)\cap \linfw$ and $\mathbf{Q} \in {\rm int}(U) \cap \linfw$ such that  $\mathbf{E}_i(W)$ lies on the arc that connects $\mathbf{P}$ and $\mathbf{Q}$ in $\linfw$. 
The same holds for the analogous definitions for triangles from $\Y_n \cup \Y'_n$ in $\linfy$. In other words, the exits of a triangle are the points where the fractal crosses its sides to ``pass'' to (the interior of) one of its neighbour triangles.

Moreover,  the structure of labyrinth fractals and the definition of the exits imply that if ${\mathbf X}\in \linfw$ is an exit of a triangle, say ${\mathbf X}={\mathbf E}_i(W)$ for some $W \in \W_{n_0}\cup \W'_{n_0}$, then for all $n\ge n_0$ the point ${\mathbf X}$  is the exit of type $i$ of any triangle $W' \in \W_{n}\cup \W'_{n}$, $W' \subseteq W$, that contains ${\mathbf X}$ and whose side of type $i$ lies on the side of type $i$ of $W$, i.e., ${\mathbf X}={\mathbf E}_i(W')$. 
\end{remark}

Thus, we have defined exits for three different kinds of objects: for triangular labyrinth sets of  order $n\ge 1$, for the triangular labyrinth fractals $\linfw$ and $\linfy$, and for triangles in $\W_n  \cup \W'_n$ or $\Y_n \cup \Y'_n$.

\begin{lemma}\label{lemma:arc_exits_triangle}
Let $\{i,j\} \in \P$, $p$ be the path in $\G(\W_n\cup \W'_n)$ that connects $e_i^\w(n)$ and $e_j^\w(n)$, and $W \in \W_n \cup \W'_n$ a triangle of type $\{i',j'\}$ with respect to $p$. Then $W \cap a_{\{i,j\}}^\w$ is an arc that connects the exits of type $i'$ and $j'$ of $W$. An analogous assertion holds for the yellow fractal $\linfy$.
\end{lemma}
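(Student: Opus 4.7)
The plan is to exhibit $W\cap a_{\{i,j\}}^\w$ as the image under the similarity $P_W$ of a labyrinth arc between two exits of the appropriate sub-fractal. Since $P_W$ is a similarity of ratio $m^{-n}$ and hence a homeomorphism onto its image, the set $P_W(a_{\{i',j'\}}^\w)$ (if $W\in\W_n$) or $P_W(a_{\{i',j'\}}^\y)$ (if $W\in\W'_n$) is automatically an arc; its endpoints are $P_W(\mathbf{E}_{i'}^\w),P_W(\mathbf{E}_{j'}^\w)$, respectively $P_W(\mathbf{E}_{i'}^\y),P_W(\mathbf{E}_{j'}^\y)$, which by definition are the exits of types $i'$ and $j'$ of $W$. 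Hence it suffices to establish the equality $W\cap a_{\{i,j\}}^\w = P_W(a_{\{i',j'\}}^\w)$ (or its yellow variant).

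To obtain the relevant descriptions I would first invoke Lemma~\ref{lemma:arc_construction} with $\mathbf{A}=\mathbf{E}_i^\w$ and $\mathbf{B}=\mathbf{E}_j^\w$. By Lemma~\ref{lemma:exits_are_fixpoints}(a) the only level-$k$ triangle containing $\mathbf{E}_i^\w$ is $e_i^\w(k)$, so the choices $W_k(\mathbf{E}_i^\w)=e_i^\w(k)$ and $W_k(\mathbf{E}_j^\w)=e_j^\w(k)$ are forced, giving
\[
a_{\{i,j\}}^\w = \bigcap_{k\ge 1}\bigcup_{U\in p_k(e_i^\w(k),e_j^\w(k))} U.
\]
Next I would apply the recursive description from the proof of Proposition~\ref{prop:path_matrix}: for $k\ge n$ the path $p_k$ is obtained from $p=p_n$ by substituting inside each triangle $U\in p$ of type $\{\alpha,\beta\}$ the type-$\{\alpha,\beta\}$ path of the sub-pattern attached to $U$ (white if $U\in\W_n$, yellow if $U\in\W'_n$), rescaled via $P_U$. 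Specialised to $W$, this shows that the triangles of $p_k$ contained in $W$ are exactly $\{P_W(V): V\in p_{k-n}(e_{i'}^\w(k-n),e_{j'}^\w(k-n))\}$ when $W\in\W_n$, and $\{P_W(V): V\in p_{k-n}(e_{i'}^\y(k-n),e_{j'}^\y(k-n))\}$ when $W\in\W'_n$.

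Combining these two ingredients, I would split each level-$k$ union into a part supported in $W$ and a residual consisting of contributions from triangles of $p_k$ lying inside the neighbours $U^\pm$ of $W$ in $p_n$. Intersected with $W$, the first part is $P_W\bigl(\bigcup_{V\in p_{k-n}} V\bigr)$, whose nested intersection over $k$ equals $P_W(a_{\{i',j'\}}^\w)$ (respectively $P_W(a_{\{i',j'\}}^\y)$) by a second application of Lemma~\ref{lemma:arc_construction} inside the sub-fractal. The main obstacle will be controlling the residual: triangles of $p_k$ lying inside $U^\pm$ meet $W$ only along the side that $W$ shares with $U^\pm$, and I will need to show that the nested intersection of these contributions over $k$ collapses to the single common exit of $W$ and $U^\pm$. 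I would verify this by iterating the exits property of the sub-pattern inside $U^\pm$, observing that at every refinement step exactly one triangle of the relevant sub-pattern has a side on the shared boundary with $W$, so the intersection shrinks to a single point, which by Lemma~\ref{lemma:exits_of_neighbours_coincide} coincides with the corresponding exit of $W$. Once this residual is absorbed into the endpoints of the image arc, and using that finite unions commute with intersections of decreasing sequences of compact sets, the required equality follows; the proof for $\linfy$ is identical after interchanging the roles of the white and yellow patterns.
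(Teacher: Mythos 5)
Your proposal is correct and follows essentially the same route as the paper: both combine the path-substitution construction from Proposition~\ref{prop:path_matrix} with the arc construction of Lemma~\ref{lemma:arc_construction}, identify the restriction of $p_k$ to $W$ with the $P_W$-image of the level-$(k-n)$ path between the appropriate exits of the white (resp.\ yellow) sub-pattern, and conclude from the definition of the exits of $W$. Your write-up is merely more explicit than the paper's, which leaves the control of the boundary contributions (your ``residual'' along the sides shared with the neighbouring path triangles, handled in the paper via the corner property and the sets $\tilde W$ inside Lemma~\ref{lemma:arc_construction}) implicit in the phrase ``continuity of $P_W$ and the definition of the exits of $W$''.
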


\begin{proof} We give a proof for the white fractal and $\{i',j'\}=\{1,2\}$. By using the path construction described in the proof of Proposition \ref{prop:path_matrix} and the arc construction method given in Lemma \ref{lemma:arc_construction} we construct the arc $a_{\{1,2\}}^\w$ from a sequence of paths $\{p_k\}_{k\ge 1}$.
Now, we separately consider  the cases $W \in \W_n$, and $W \in \W'_n$, respectively. 

First, let $W \in \W_n$ be a triangle of type $\{1,2\}$ in the path $p$. Thus, for $k=n+h$,  $h\ge 1$ the path $p_k$ ``restricted to'' $W$, i.e., restricted to the subgraph of $\G(\W_{n+h} \cup \W'_{n+h})$ induced by the triangles from $\W_{n+h}$ and $\W'_{n+h}$ that are contained in $W$ is a path from $P_{W}(e_1^\w(h))$ to $P_{W}(e_2^\w(h))$ in $\G(\W_h \cup \W'_h)$.
From the continuity of the projection $P_{W}$ and the definition of the exits of $W$ it follows that $W \cap a$ is the arc in $\linfw$ that connects the exits of type $1$ and $2$ of $W$.

In the case when $W \in \W'_n$, for all $k=n+h>n$ the path $p_h$ restricted to $W$ is a path from $P_{W}(\mathbf{E}_1^\y)$ to $P_{W}(\mathbf{E}_2^\y)$ in $\G(\Y_h \cup \Y'_h)$. Again, the continuity of $P_{W}$ and the definition of the exits of $W$ yield that $W \cap a$ is the arc in $\linfw$ that connects the exits of type $1$ and $2$ of $W$.
\end{proof}

\begin{lemma} \label{lemma:projections_of_arcs_between_exits}
\begin{enumerate} With the above notations, the following assertions hold for all $\{i,j\} \in \P$:
\item[(a)] If $W \in \W_n$ then $P_W(a_{\{i,j\}}^\w)$ is the arc in $\linfw$  that connects the exits of type $i$ and $j$ of $W$.
\item[(b)] If  $Y' \in \Y'_n$ then $P_{Y'}(a_{\{i,j\}}^\w)$ is the arc in $\linfy$  that connects the exits of type $i$ and $j$ of $Y'$.
\item[(c)] If  $W' \in \W'_n$ then $P_{W'}(a_{\{i,j\}}^\y)$ is the arc in $\linfw$ that connects the exits of type $i$ and $j$ of $W'$.
\item[(d)] If $Y \in \Y_n$ then $P_{Y}(a_{\{i,j\}}^\y)$ is the arc $\linfy$ that connects the exits of type $i$ and $j$ of $Y$.
\end{enumerate}
\end{lemma}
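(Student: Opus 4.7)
The plan is to reduce all four assertions to a single template argument, exploiting the self-similarity identity of Corollary~\ref{cor:fractal_as_finite_union} and the uniqueness of arcs in a dendrite (Theorem~\ref{theo:dendrite}). I would prove (a) in detail and then note that (b), (c), (d) follow by an entirely parallel argument, differing only in which of $\linfw$, $\linfy$ the image lies in.

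For (a), fix $n \geq 1$, $W \in \W_n$, and $\{i,j\} \in \P$. First I would observe that $P_W$ is a similarity of ratio $m^{-n}$ from $T_1$ onto $W$, hence a homeomorphism. Next I would invoke Corollary~\ref{cor:fractal_as_finite_union} to obtain the inclusion $P_W(\linfw) \subseteq \linfw$; since $a_{\{i,j\}}^\w \subseteq \linfw$ by definition, it follows that $P_W(a_{\{i,j\}}^\w) \subseteq \linfw$. Because $P_W$ is a homeomorphism, the image $P_W(a_{\{i,j\}}^\w)$ is itself an arc, and its two endpoints are $P_W(\mathbf{E}_i^\w)$ and $P_W(\mathbf{E}_j^\w)$, which by the definition given just before Lemma~\ref{lemma:arc_exits_triangle} are exactly the exits of type $i$ and $j$ of $W$. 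Finally, since $\linfw$ is a dendrite (Theorem~\ref{theo:dendrite}), the arc in $\linfw$ joining any two distinct points is unique; hence $P_W(a_{\{i,j\}}^\w)$ must coincide with that unique arc.

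For (b)--(d) the same four ingredients apply, with the appropriate swap between white and yellow dictated by the structure of the GIFS. Specifically, for $Y' \in \Y'_n$ Corollary~\ref{cor:fractal_as_finite_union} yields $P_{Y'}(\linfw) \subseteq \linfy$, so $P_{Y'}(a_{\{i,j\}}^\w)$ is an arc in $\linfy$ with endpoints $P_{Y'}(\mathbf{E}_i^\w)$ and $P_{Y'}(\mathbf{E}_j^\w)$, which are by definition the exits of type $i$ and $j$ of $Y'$; then uniqueness of arcs in the dendrite $\linfy$ closes the case. The arguments for $W' \in \W'_n$ and $Y \in \Y_n$ are identical, using $P_{W'}(\linfy) \subseteq \linfw$ and $P_Y(\linfy) \subseteq \linfy$, respectively, together with the matching definition of exits for $W'$ via $\mathbf{E}_i^\y$ and for $Y$ via $\mathbf{E}_i^\y$.

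I do not foresee any real obstacle: the whole lemma is essentially a bookkeeping consequence of the fact that $P_\Delta$ is a similarity, that the set equation in Corollary~\ref{cor:fractal_as_finite_union} places $P_\Delta(\linfw)$ or $P_\Delta(\linfy)$ inside the correct fractal, and that arcs in a dendrite are unique. The only point that requires genuine care is matching each of the four cases to the correct colour of exit (using $\mathbf{E}_i^\w$ for triangles in $\W_n \cup \Y'_n$ and $\mathbf{E}_i^\y$ for triangles in $\W'_n \cup \Y_n$), which is exactly what was verified in Lemma~\ref{lemma:exits_of_neighbours_coincide} and formalised in the definition preceding Lemma~\ref{lemma:arc_exits_triangle}.
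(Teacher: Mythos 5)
Your proposal is correct and follows essentially the same route as the paper's own proof: the set equation (Corollary~\ref{cor:fractal_as_finite_union}) places each projected arc inside the correct fractal with the colours matched exactly as in the definition of the exits of a triangle, the injectivity of the similarity $P_\Delta$ makes the image again an arc with the right endpoints, and uniqueness of arcs in the dendrites $\linfw$, $\linfy$ finishes the argument. Your write-up is in fact slightly more explicit than the paper's, which compresses these same ingredients into two sentences.
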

\begin{proof}
By the construction of  triangular labyrinth fractals, and the construction of the arcs in fractals, we obtain, for any arcs $a \subset \linfw $ and $a'\subset \linfy$, that $P_W(a), P_{W'}(a')\subset \linfw$ and $P_Y(a'), P_{Y'}(a)\subset \linfy$, and these images are also arcs.  The uniqueness  of the arc between given distinct points in each of the dendrites $\linfw$ and $\linfy$, and the definition of the exits of white or yellow triangles  imply the facts of the present lemma.
\end{proof}

Let $W_1, \ldots, W_{2h_w+1}$ be the path in $\G(\W_1 \cup \W'_1)$ from $W_1 = e_i^\w$ to $W_{2h_w+1} = e_j^\w$ and denote for each $n \in \{1, \ldots, 2h_w+1\}$ by $\{i_n,j_n\}$ the type of $W_n$ (according to the proof of Proposition~\ref{prop:path_matrix}). 
Then we have 
\[a_{\{i,j\}}^\w = \bigcup_{r=0}^{h_w} P_{W_{2r+1}}(a^\w_{\{i_r,j_r\}}) \cup \bigcup_{r=1}^{h_w} P_{W_{2r}}(a^\y_{\{i_r,j_r\}}),\]
where $P_{W_n}$ is a contraction with ratio $m^{-1}$ for each $r \in \{1, \ldots, 2h_w+1\}$.

Therefore, the six arcs between exits of the two fractals (three in $\linfw$ and three in $\linfy$) are the invariant sets of a GIFS. 
We denote the corresponding directed multigraph by $\HH'=(\V(\HH'),\E(\HH'))$. The set of its vertices
$\V(\HH')$ consists of the six arcs, that is
\[\V(\HH'):=\{a_{\pi}^\w: \pi \in \P\} \cup \{a_{\pi}^\y: \pi \in \P\},\]
where $\P=\{\{1,2\},\{1,3\},\{2,3\}\}$. 
The set of edges $\E(\HH')$ can be obtained as outlined above,  i.e.,
for each vertex the outgoing edges
are given by the triangles (including their types) that appear in the corresponding path in $\G(\W_1 \cup \W'_1)$ and $\G(\Y_1 \cup \Y'_1)$, respectively.
We see that the incidence matrix is the global path matrix $M$ from Proposition~\ref{prop:path_matrix}. This approach of the arcs in triangular labyrinth fractals with GIFS is used repeatedly in Sections \ref{sec:blocked} to \ref{sec:partially_blocked} in order to study properties like, e.g., the Hausdorff dimension of arcs between exits in triangular labyrinth fractals. 

\section{Blocked triangular labyrinth patterns systems}\label{sec:blocked}

Recall that a strip of type $1$ is a collection of triangles
\begin{multline*}
\{T_m(K,k_2,k_3): k_2, k_3 \in \NN, K+k_2+k_3=m-1\} \cup \\
\{T'_m(K,k_2,k_3): k_2, k_3 \in \NN, K+k_2+k_3=m-2\} \subset \T_{m} \cup \T'_{m}
\end{multline*}
for some fixed $K \in \{0, \ldots, m-1\}$.
We say that the white path (yellow path, respectively) of type $\{2,3\}$ is \emph{blocked} if
it contains two triangles $W_1, W_2$ that are not contained in the same strip of type $1$. 
Analogously we define  blocked (white and yellow) paths of type $\{1,3\}$  and of type $\{1,2\}$.

For  $\pi \in \P=\{\{1,2\}, \{1,3\}, \{2,3\}\}$,
we say that the triangular labyrinth patterns system $(\W \cup \W', \Y \cup \Y')$ is {\it $\pi$-blocked} if the yellow or the white path of type $\pi$ is blocked.
 A triangular labyrinth patterns system $(\W \cup \W', \Y \cup \Y')$ is { \it globally blocked} if it is {\it $\pi$-blocked} for all $\pi \in \P=\{\{1,2\}, \{1,3\}, \{2,3\}\}$.
We call the fractals defined by globally blocked triangular labyrinth patterns systems \emph{globally blocked} triangular labyrinth fractals.

Most results of this section concern globally blocked triangular labyrinth patterns systems and the corresponding fractals.

For globally blocked labyrinth systems all arcs in $\linfw$ and $\linfy$ have a fractal shape and a Hausdorff dimension larger than $1$. In order to prove this, let us recall that the arcs between the exits are obtained as attractors of a GIFS whose incidence matrix is 
\[M = \left(\begin{array}{cc} M_\w & \tilde M_\w \\ \tilde M_\y & M_\y \end{array}\right),\]
which we introduced at the end of Section~\ref{sec:arcs_TLF}. Moreover, $M_\w= \tilde M_\w + I_3$ and
$M_\y= \tilde M_\y + I_3$, where $I_3$ is the $3 \times 3$ identity matrix.
We are going to analyse this matrix $M$ in more detail. More precisely, we show that $M$ is a primitive matrix, i.e. there exists a power $n \ge 1$ such that all entries of $M^n$ are strictly positive.

\begin{theorem}\label{theorem:matrix_primitive}
Let $(\W \cup \W', \Y \cup \Y')$  be a globally blocked triangular labyrinth patterns system. Then its global path matrix $M$ is a primitive matrix with a dominant eigenvalue strictly larger than $m$.
\end{theorem}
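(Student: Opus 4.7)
The plan is to establish primitivity of $M$ (irreducibility plus aperiodicity) and then derive $\lambda > m$ from a Perron--Frobenius argument. For irreducibility, I view $M$ as the incidence matrix of the directed multigraph $\HH'$ on the six arcs $a^\w_\pi, a^\y_\pi$ ($\pi \in \P$) introduced at the end of Section~\ref{sec:arcs_TLF}, and show that $\HH'$ is strongly connected. The sets $\W'$ and $\Y'$ are non-empty (a bipartite tree cannot connect three leaves on the boundary of $T_1$ while lying entirely in one colour class), so the off-diagonal blocks $M_\w - I_3$ and $M_\y - I_3$ have a positive entry in every row, giving edges bridging the white and yellow halves of $\HH'$. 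Global blocking then interconnects the three types within each colour: for every $\pi \in \P$, at least one of the white or yellow paths of type $\pi$ is blocked, and such a path uses at least one edge of type $k \notin \pi$ in $\G(\W \cup \W')$ or $\G(\Y \cup \Y')$; the two triangles incident to that edge necessarily have types distinct from $\pi$, producing positive off-diagonal entries in the corresponding colour block of $M$. A short case analysis combining these with the cross-colour edges yields strong connectivity, hence irreducibility.

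For aperiodicity I will find a positive diagonal entry of $M$: analysing each exit in the paths passing through it, and using the fact that the side of type $i$ of $e_i^\w$ lies on $\varrho_i$ so the edge incident to $e_i^\w$ in any path bears a label in $\{1,2,3\}\setminus\{i\}$, a case analysis -- with Proposition~\ref{prop:no_double_exits} and the corner property excluding degenerate configurations -- produces at least one exit whose type in some path matches the path type, giving the required diagonal entry. Combined with irreducibility this yields primitivity of $M$.

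Finally, for $\lambda > m$, I invoke the following Perron--Frobenius consequence: if $A$ is an irreducible non-negative matrix with spectral radius $\rho$ and $Av \geq cv$ for some $v \geq 0$, $v \neq 0$, with $Av \neq cv$, then $\rho > c$ (pair the inequality with the strictly positive left Perron eigenvector of $A$). I plan to apply this with $v := M^n \mathbf{1}$ for suitably large $n$; by Corollary~\ref{cor:iteration_matrix}, $Mv = M^{n+1}\mathbf{1}$ has coordinates $\ell_\pi(n+1)$, so the required inequality reads $\ell_\pi(n+1) \geq m\,\ell_\pi(n)$ for every $\pi$ with strict inequality for some $\pi$. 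The key point is that by Proposition~\ref{prop:triangle_laby_system_order_n} the iterated system $(\W_n \cup \W'_n, \Y_n \cup \Y'_n)$ is again a triangular labyrinth patterns system, and global blocking is inherited through the iteration: each blocked path at the outer level refines into replacements that themselves detour, compounding extra triangles at every scale. The main obstacle, which I expect to be the technically heaviest part, is converting this qualitative ``compounding detour'' into the strict inequality in at least one coordinate of $M^{n+1}\mathbf{1}$; my plan is to isolate a single blocked path of type $\pi$ at level $1$, count the surplus triangles it contributes over the longest straight strip traversal (which has length at most $2m-1$), and trace that surplus through one level of the iteration in Corollary~\ref{cor:iteration_matrix} to produce the required inequality.
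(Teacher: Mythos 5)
Your overall architecture (prove that $M$ is primitive, then get the eigenvalue bound from a Perron--Frobenius comparison) is the same as the paper's, but both load-bearing steps are exactly the ones you leave as sketches, and as stated they do not close. For strong connectivity of $\HH'$: from a $\pi$-blocked path you only extract \emph{one} triangle whose type differs from $\pi$ (a triangle incident to an edge labelled $k\notin\pi$), i.e.\ one off-diagonal edge leaving the pair $\{a^\w_\pi,a^\y_\pi\}$, together with the white--yellow bridging edges and (implicitly) the self-loops coming from $M_\w=\tilde M_\w+I_3$, $M_\y=\tilde M_\y+I_3$. These facts alone do not imply strong connectivity: one can exhibit a digraph on the six vertices with a self-loop at every vertex, an edge from each white vertex to a yellow one and back, and one extra off-diagonal edge per type, in which the two vertices of type $\{1,2\}$ are never reached from the other four. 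What the paper actually proves (Lemma~\ref{lemma:row_positive}) is the much stronger fact that $\pi$-blockedness forces \emph{every} entry of the $\pi$-row of $M_\w+M_\y-I_3$ to be strictly positive, i.e.\ both of the other two types occur among the white and yellow paths of type $\pi$ combined; the proof is a genuinely nontrivial counting argument with the ordered pairs $(i_{2h-1},i_{2h})$ and the displacement identities \eqref{equat2}--\eqref{equat4}, deriving a contradiction if, say, type $\{2,3\}$ were absent from a blocked path of type $\{1,2\}$. Your sketch contains no substitute for this step. (Your aperiodicity case analysis, by contrast, is unnecessary: every diagonal entry of $M$ is at least $1$ by Proposition~\ref{prop:path_matrix}, since $M_\w$ and $M_\y$ are $\tilde M_\w+I_3$ and $\tilde M_\y+I_3$ with $\tilde M_\w,\tilde M_\y\ge 0$.)

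For $\lambda>m$, your comparison lemma is correct, but its hypothesis, $M^{n+1}\mathbf{1}\ge m\,M^{n}\mathbf{1}$ coordinatewise with strict inequality somewhere, is precisely the unproven combinatorial core. It cannot be obtained by isolating a single blocked level-$1$ path: the substitution mixes white and yellow paths of all three types, so a per-coordinate factor-$m$ estimate at a fixed level requires control of entire rows of $M$, which is again what Lemma~\ref{lemma:row_positive} supplies (each row sum of $M_\w+M_\y-I_3$ is at least $m+1$, proved from the same displacement identities); note also that a straight strip traversal generally has length $2k+1<m$, so even the benchmark in your ``surplus'' count is off, and deducing the coordinatewise inequality for large $n$ from the spectral radius would be circular. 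The paper avoids your iteration entirely: by Lemma~\ref{eigenvalues} the spectrum of $M$ consists of that of $M_\w+M_\y-I_3$ together with the eigenvalue $1$, so the row-sum bound on the primitive matrix $M_\w+M_\y-I_3$ already yields a dominant eigenvalue $\theta\ge m+1>m$ for $M$, and primitivity of $M$ follows from a direct block computation of $M^2$ (using that every row of $\tilde M_\w$, $\tilde M_\y$ is nonzero because $\ell_\pi(1),\ell'_\pi(1)\ge 3$ by Proposition~\ref{prop:no_double_exits}). In short, both halves of your proposal hinge on the row-positivity/row-sum lemma, which is missing, so the argument as it stands has genuine gaps.
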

For the proof of Theorem \ref{theorem:matrix_primitive} we need two lemmas. The first one shows us that, in fact, the matrix $M_\w + M_\y - I_3$ contains all necessary information concerning the eigenvalues of $M$.

\begin{lemma}\label{eigenvalues}
Every eigenvalue of $M_\w + M_\y - I_3$ is also an eigenvalue of
the global path matrix $M$.
Additionally, $M$ has a three dimensional eigenspace with respect to the eigenvalue $1$.
\end{lemma}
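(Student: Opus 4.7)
The plan is to exhibit a single explicit similarity transformation that puts $M$ into block lower triangular form with diagonal blocks $M_\w + M_\y - I_3$ and $I_3$; both assertions of the lemma will then fall out of standard linear algebra.

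First I would record the elementary identity that for every $v \in \RR^3$,
\[
M \left(\begin{array}{c} v \\ -v \end{array}\right)
= \left(\begin{array}{c} M_\w v - (M_\w - I_3) v \\ (M_\y - I_3) v - M_\y v \end{array}\right)
= \left(\begin{array}{c} v \\ -v \end{array}\right),
\]
so that the subspace $\{(v,-v)^\top : v \in \RR^3\}$ already lies inside the eigenspace of $M$ for the eigenvalue $1$ and has dimension $3$. This gives the second assertion directly, and it will also motivate the choice of similarity used for the first assertion.

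Guided by this computation, I would next introduce the invertible $6 \times 6$ matrix
\[
T := \left(\begin{array}{cc} I_3 & I_3 \\ I_3 & -I_3 \end{array}\right),
\]
which satisfies $T^2 = 2 I_6$ so that $T^{-1} = \tfrac12 T$. Using $\tilde M_\w = M_\w - I_3$ and $\tilde M_\y = M_\y - I_3$, a direct block multiplication will yield
\[
T^{-1} M T = \left(\begin{array}{cc} M_\w + M_\y - I_3 & 0 \\ M_\w - M_\y & I_3 \end{array}\right).
\]
Since the characteristic polynomial of a block triangular matrix factors as the product of the characteristic polynomials of its diagonal blocks, the spectrum of $M$ (with algebraic multiplicities) is then the union of the spectrum of $M_\w + M_\y - I_3$ and three copies of the eigenvalue $1$. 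In particular, every eigenvalue of $M_\w + M_\y - I_3$ is an eigenvalue of $M$, which is the first assertion.

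The only genuine work is the verification that $T^{-1} M T$ has the claimed block lower triangular shape, which is a routine bookkeeping exercise once $T$ has been written down. The non-obvious ingredient is the choice of $T$ itself; it is suggested by the preliminary identity, where one uses the eigenvectors $(v,-v)^\top$ of $1$ as half of the new basis and completes it with the natural companion vectors $(v,v)^\top$, into which the ``non-trivial'' action of $M$ is pushed.
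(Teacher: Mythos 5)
Your proof is correct: the computation $T^{-1}MT=\bigl(\begin{smallmatrix} M_\w+M_\y-I_3 & 0 \\ M_\w-M_\y & I_3\end{smallmatrix}\bigr)$ with $T=\bigl(\begin{smallmatrix} I_3 & I_3 \\ I_3 & -I_3\end{smallmatrix}\bigr)$ checks out, and your verification that $M(v,-v)^\top=(v,-v)^\top$ for all $v\in\RR^3$ settles the three-dimensional eigenspace for the eigenvalue $1$ exactly as in the paper. For the first assertion, however, your route differs from the paper's: instead of conjugating $M$ into block triangular form and reading the spectrum off the factorized characteristic polynomial, the paper argues directly by exhibiting, for each eigenvalue $\lambda$ of $M_\w+M_\y-I_3$ with eigenvector $\mathbf{v}$, the explicit candidate eigenvector $\bigl((M_\w-I_3)\mathbf{v},\,(M_\y-I_3)\mathbf{v}\bigr)^\top$ of $M$. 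Your version buys a bit more: you get the full spectrum of $M$ with algebraic multiplicities, namely $\chi_M(x)=\chi_{M_\w+M_\y-I_3}(x)\,(x-1)^3$, and you sidestep the degenerate case that the paper's construction leaves implicit (if $(M_\w-I_3)\mathbf{v}=(M_\y-I_3)\mathbf{v}=0$ the concatenated vector is zero, though then necessarily $\lambda=1$, which is covered by the second assertion). The paper's construction, on the other hand, hands you explicit eigenvectors of $M$ lifted from those of $M_\w+M_\y-I_3$, which is occasionally convenient when one wants the Perron eigenvector itself rather than just the eigenvalue. Either argument fully establishes the lemma.
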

\begin{proof}
Let $\lambda$ be an eigenvalue of $M_\w + M_\y - I_3$ and $\mathbf{v} \in \RR^3$ a corresponding right eigenvector. By concatenation of the vectors $(M_\w  - I_3)\mathbf{v}$ and $(M_\y - I_3)\mathbf{v}$ we obtain a right eigenvector of $M$ with respect to $\lambda$ since
\[
\left(\begin{array}{cc}
M_\w & M_\w - I_3 \\ M_\y-I_3 & M_\y
\end{array}\right) \cdot \left(\begin{array}{c}(M_\w  - I_3)\mathbf{v}\\(M_\y - I_3)\mathbf{v}\end{array}\right)  = \lambda\left(\begin{array}{c}(M_\w  - I_3)\mathbf{v}\\(M_\y - I_3)\mathbf{v}\end{array}\right).\]
Furthermore, for any arbitrary vector $\mathbf{v} \in \RR^3$ a similar calculation yields
\[M\cdot\left(\begin{array}{c}\mathbf{v}\\-\mathbf{v}\end{array}\right) = \left(\begin{array}{c}\mathbf{v}\\-\mathbf{v}\end{array}\right).\]
This shows that $1$ is an additional eigenvalue of $M$ and the induced eigenspace is of dimension three.
\end{proof}

The next lemma implies that $M_\w + M_\y - I_3$ is a primitive matrix if and only if the triangular labyrinth patterns system $(\W \cup \W', \Y \cup \Y')$ is  globally blocked, and it includes an estimation for the row sum norm.

\begin{lemma}
\label{lemma:row_positive}
Let $\pi \in \{\{1,2\},\{1,3\},\{2,3\}\}$. If the triangular labyrinth patterns system $(\W \cup \W', \Y \cup \Y')$ is  $\pi$-blocked then all entries of the corresponding row of the matrix $M_\w + M_\y - I_3$ are strictly positive and their sum is strictly larger than $m$. Otherwise, the
diagonal entry in the row indexed by $\pi$ equals $m$ and the other two entries are zero.
\end{lemma}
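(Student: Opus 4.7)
The plan is to translate each entry of the row of $M_\w+M_\y-I_3$ indexed by $\pi=\{i,j\}$ into a count of upright white (respectively yellow) triangles of a prescribed type along the tree-paths $p(e_i^\w,e_j^\w)$ in $\G(\W\cup\W')$ and $p(e_i^\y,e_j^\y)$ in $\G(\Y\cup\Y')$. By the symmetry permuting the three sides of $T_1$, I fix $\pi=\{2,3\}$, so that the distinguished direction is $1$. The essential combinatorial fact is that an edge labelled $\ell\in\{1,2,3\}$ along a path changes exactly the $\ell$-th coordinate of the triangle-index triple by $\pm1$, with the sign forced by the upright/upside-down alternation on the bipartite graph, and that two consecutive edges in a path have distinct labels (since a triangle has at most one neighbour of each type).

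For the \emph{otherwise} clause, assume neither of the two paths of type $\{2,3\}$ is blocked. Each is then confined to a single strip of type $1$ and therefore uses only edges labelled $2$ or $3$, making every upright triangle in both paths of type $\{2,3\}$. Let $K_\w,K_\y$ denote the common first coordinates of the white and yellow paths; since the two exits of $\W$ on $\varrho_2$ and $\varrho_3$ share the first coordinate $K_\w$, the exits property identifies $K_\w=k_2=k_3$, and the analogous computation for yellow gives $K_\y=m-1-k_2=m-1-K_\w$, so that $K_\w+K_\y=m-1$. Each strip of type $1$ at first coordinate $K$ contains exactly $m-K$ upright triangles and the straight traversal from one side to the other visits all of them, hence $a^\w=m-K_\w$ and $a^\y=m-K_\y$. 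The diagonal entry of the row is $(m-K_\w)+(m-K_\y)-1=m$ and the off-diagonal entries vanish.

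For the row sum strictly exceeding $m$ in the blocked case, I would compute the net coordinate changes along the white path from $e_2^\w=T_m(k_2,0,m-1-k_2)$ to $e_3^\w=T_m(k_3,m-1-k_3,0)$, namely $(k_3-k_2,\,m-1-k_3,\,-(m-1-k_2))$. The number of $\ell$-labelled edges in the path is at least the absolute value of the $\ell$-th component, and any excess above this minimum comes in parity-respecting pairs of one up-edge and one down-edge with the same label; consequently, blocking the path (use of more $1$-edges than the required $|k_3-k_2|$) adds at least one upright triangle to $a^\w$. Analogous bounds hold for the yellow path, whose endpoints have first coordinates $m-1-k_2$ and $m-1-k_3$. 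Summing these bounds and simplifying using the relations between the white and yellow exits yields $a^\w+a^\y\geq m+1$, with strict inequality whenever at least one path is blocked; the row sum $a^\w+a^\y-1$ then strictly exceeds $m$.

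The principal obstacle is the positivity of the two off-diagonal entries in the blocked case. Note that every $1$-edge in either path is incident to an upright triangle whose type is $\{1,j\}$ for some $j\in\{2,3\}$: indeed $j$ is the label of the other edge incident to that upright in the path, or the side-type of the corresponding exit if the upright is itself an exit. Thus positivity reduces to showing that both types $\{1,2\}$ and $\{1,3\}$ appear somewhere in the combined white and yellow paths. I would exploit the complementary structure of the two colours, related via the reflection $k_i\mapsto m-1-k_i$ of the exits, together with the constraint that both paths must use $2$-edges and $3$-edges to realise the nonzero net changes in coordinates $2$ and $3$. A case analysis on how $1$-edges meet $2$- and $3$-edges at uprights shows that if the white path avoids, say, $\{1,2\}$-type uprights by flanking all of its $1$-edges with $3$-edges, then the mirrored coordinate constraints on the yellow path force a $1$-edge of the yellow path to meet a $2$-edge at an upright, producing a $\{1,2\}$-type triangle; a symmetric argument handles the type $\{1,3\}$. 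Degenerate corner configurations are ruled out or handled via the corners property and Proposition~\ref{prop:no_double_exits}, which prevent simultaneous corner placement of both colours' exits. The delicate case analysis at this last step — in particular verifying that the white and yellow paths cannot both avoid the same off-diagonal type at once — is the crux of the proof.
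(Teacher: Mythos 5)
Your treatment of the \emph{otherwise} clause is correct and essentially matches the paper's (constant first coordinate forces $k_2=k_3$, the two strips are complementary, and the counts give diagonal entry $m$ and zero off-diagonal entries). Your row-sum bound is also sound and in fact takes a slightly different, arguably cleaner route than the paper: you bound the number of $\ell$-labelled edges below by the absolute net change of the $\ell$-th coordinate, use parity for the excess, and conclude $r_w+r_y+1>m$ directly, whereas the paper derives the bound from its displacement identities \emph{together with} the already-established positivity of an off-diagonal entry. So that part would stand on its own.

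The genuine gap is exactly where you place it: the strict positivity of \emph{both} off-diagonal entries in the blocked case. You only prove that \emph{some} off-diagonal type occurs (any $1$-edge is incident to an upright of type $\{1,2\}$ or $\{1,3\}$), and then assert that ``a case analysis shows'' the two colours cannot both avoid a fixed off-diagonal type; this is precisely the hard content of Lemma~\ref{lemma:row_positive} and it is not supplied. Moreover, the mechanism you sketch --- ``if the white path flanks all its $1$-edges with $3$-edges, the mirrored constraints force the yellow path to produce a $\{1,2\}$-type triangle'' --- cannot be the right shape of argument in general: the hypothesis is only that the system is $\pi$-blocked, so the yellow path may be entirely unblocked, in which case it contains no $1$-edges and hence no triangle of type $\{1,2\}$ or $\{1,3\}$ at all; the burden then falls entirely on the single blocked white path, and nothing can be ``forced onto'' the other colour. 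What is actually needed (and what the paper does) is a quantitative bookkeeping of \emph{ordered} label pairs at the upside-down triangles: the net-displacement identities (the analogues of the paper's equations for the exit coordinates, summed over both colours) first show that if one off-diagonal type is absent then some path contains a pair entering via a $1$-edge and leaving via a $3$-edge (in your indexing), and then an alternation/propagation argument along that path shows every upside-down triangle is entered via a $1$-edge, which contradicts the non-negativity of an exit coordinate. Without this (or an equivalent) argument, the claim that each off-diagonal entry is individually positive --- the statement later used to prove primitivity of $M$ in Theorem~\ref{theorem:matrix_primitive} --- remains unproved in your proposal.
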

\begin{proof}
We prove the lemma for $\pi=\{1,2\}$ and, correspondingly, the first row of $M_\w + M_\y - I_3$. For other choices of $\pi$ the proof runs analogously.
To start with, we need some notation and preliminary considerations.

Let $m_1, m_2, m_3$ denote the entries of the first row 
of $M_\w + M_\y - I_3$.
Observe
that from Proposition~\ref{prop:path_matrix} we know 
that both $\tilde M_\w=M_\w -I_3$ and $\tilde M_\y = M_\y - I_3$ are  non-negative matrices, hence  $M_\w + M_\y - I_3$ is a non-negative matrix with  strictly positive main diagonal, which means that $m_1 \geq 1$. 
Furthermore,  the first row of
$M_\w$ ($M_\y$, respectively) is
completely determined by the white (yellow, respectively) path of type $\{1,2\}$.
Let $k_1, k_2 \in \{0, \ldots, m-1\}$ such that $e^\w_1=T_m(0, k_1, m-1-k_1)$ and
$e^\w_2=T_m(k_2, 0, m-1-k_2)$.
Denote by  $W_1, \ldots, W_{2r_w+1}$ the white path of type $\{1,2\}$ in $\G(\W \cup \W')$, hence
 $W_1=e^\w_1$ and $W_{2r_w+1}=e^\w_2$. We have $W_{2h+1} \in \W$ for $h \in \{0, \ldots, r_w\}$ and
$W'_{2h} \in \W'$ for $h \in \{1, \ldots, r_w\}$.

Denote by $w^{(1)}_h, w^{(2)}_h, w^{(3)}_h$ the integers such that $W_h=T_m(w^{(1)}_h, w^{(2)}_h, w^{(3)}_h)$ for all $h \in \{1, \ldots, 2r_w+1\}$.
For any  $h  \in \{1, \ldots, 2r_w\}$ let $i_{h}$ be the label of the edge that connects $W_h$ with $W_{h+1}$ in $\G(\W \cup \W')$. By the definition of  these edge labels, $W_{h}$ and $W_{h+1}$ are $i_h$-neighbours and 
$W_{h}$ is of type $\{i_{h-1},i_{h}\}$. In particular, $i_h \not= i_{h+1}$.
Therefore, for every $h  \in \{1, \ldots, r_w\}$ we have
\begin{equation}\label{equat1}
\begin{split}
(w^{(1)}_{2h-1}, w^{(2)}_{2h-1}, w^{(3)}_{2h-1}) - (w^{(1)}_{2h}, w^{(2)}_{2h}, w^{(3)}_{2h}) =& (\delta_{1i_{2h-1}},\delta_{2i_{2h-1}},\delta_{3i_{2h-1}}),\\
(w^{(1)}_{2h+1}, w^{(2)}_{2h+1}, w^{(3)}_{2h+1}) - (w^{(1)}_{2h}, w^{(2)}_{2h}, w^{(3)}_{2h}) =& (\delta_{1i_{2h}},\delta_{2i_{2h}},\delta_{3i_{2h}}).
\end{split}
\end{equation}

In the spirit of  the proof of Proposition~\ref{prop:path_matrix} we define 
for each ordered pair $(i,j) \in \{1,2,3\}^2$ with $i \not=j$
\[\tilde m^\w_{(i,j)} := \#\{h \in \{1, \ldots, r_w\}: (i_{2h-1},j_{2h}) = (i,j)\}.\]
Clearly, the triangle $W_{2h}$ is of  type $\{i_{2h-1},i_{2h}\}$. 
Therefore, the entries of the first row  of $\tilde M_\w = M_\w - I_3$ 
are given by the three integers
\begin{align*}
\tilde m^\w_{\{1,2\},\{1,2\}} = &  \tilde m^\w_{(1,2)} + \tilde m^\w_{(2,1)},\\
\tilde m^\w_{\{1,2\},\{1,3\}} = & \tilde m^\w_{(1,3)} + \tilde m^\w_{(3,1)}, \\
\tilde m^\w_{\{1,2\},\{2,3\}} = & \tilde m^\w_{(2,3)} + \tilde m^\w_{(3,2)}.
\end{align*}
But differently from the type  (which is an unordered pair) of a triangle with respect to a path, the ordered pair $(i_{2h-1},i_{2h})$ contains information concerning the ``direction'' along which  the triangle $W_{2h}$ is passed when we go from the exit $e^\w_1=W_1$ to the exit  $e^\w_2=W_{2r_w+1}$. In particular, by using \eqref{equat1} we obtain
\begin{equation}\label{equat2}
\left(\begin{array}{c} k_2\\ -k_1 \\ k_1-k_2 \end{array}\right) = 
\left(\begin{array}{c} w^{(1)}_{2r_w+1} \\ w^{(2)}_{2r_w+1} \\ w^{(3)}_{2r_w+1} \end{array}\right)-
\left(\begin{array}{c} w^{(1)}_{1} \\ w^{(2)}_{1} \\ w^{(3)}_{1} \end{array}\right) =
\left(\begin{array}{c}
\tilde m^\w_{(2,1)} + \tilde m^\w_{(3,1)} - \tilde m^\w_{(1,2)} - \tilde m^\w_{(1,3)} \\
\tilde m^\w_{(1,2)} + \tilde m^\w_{(3,2)} - \tilde m^\w_{(2,1)} - \tilde m^\w_{(2,3)} \\
\tilde m^\w_{(1,3)} + \tilde m^\w_{(2,3)} - \tilde m^\w_{(3,1)} - \tilde m^\w_{(3,2)}
\end{array}\right).
\end{equation}

Now we turn our attention to $\G(\Y \cup \Y')$. Let $Y_1, \ldots, Y_{2r_y+1}$ be the yellow path of type $\{1,2\}$ 
from $Y_1=e^\y_1=(0,m-1-k_1,k_1)$ to $Y_{2r_y+1}=e^\y_2=(m-1-k_2,0,k_2)$. Analogously,  for all  $h  \in \{1, \ldots, 2r_y\}$ let $i_{h}$ be the label of the edge that connects $Y_h$ with $Y_{h+1}$ and set for each ordered pair $(i,j) \in \{1,2,3\}^2$ with $i \not=j$
\[\tilde m^\y_{(i,j)} := \#\{h \in \{1, \ldots, r_y\}: (i_{2h-1},i_{2h}) = (i,j)\}.\]
Similarly, the entries in the first row of  $\tilde M_\y = M_\y - I_3$ are given by
\begin{align*}
\tilde m^\y_{\{1,2\},\{1,2\}} = &  \tilde m^\y_{(1,2)} + \tilde m^\y_{(2,1)},\\
\tilde m^\y_{\{1,2\},\{1,3\}} = & \tilde m^\y_{(1,3)} + \tilde m^\y_{(3,1)}, \\
\tilde m^\y_{\{1,2\},\{2,3\}} = & \tilde m^\y_{(2,3)} + \tilde m^\y_{(3,2)}.
\end{align*}
and we have
\[
\left(\begin{array}{c} m-1-k_2 \\ k_1-m+1 \\ k_2-k_1 \end{array}\right) = 
\left(\begin{array}{c}
\tilde m^\y_{(2,1)} + \tilde m^\y_{(3,1)} - \tilde m^\y_{(1,2)} - \tilde m^\y_{(1,3)} \\
\tilde m^\y_{(1,2)} + \tilde m^\y_{(3,2)} - \tilde m^\y_{(2,1)} - \tilde m^\y_{(2,3)} \\
\tilde m^\y_{(1,3)} + \tilde m^\y_{(2,3)} - \tilde m^\y_{(3,1)} - \tilde m^\y_{(3,2)}
\end{array}\right).
\]

Combining these results yields
\begin{equation}\label{equat3}
\begin{split}
m_1 = & \tilde m^\w_{(1,2)} + \tilde m^\y_{(1,2)} + \tilde m^\w_{(2,1)}+ \tilde m^\y_{(2,1)} +1,\\
m_2 = & \tilde m^\w_{(1,3)} +\tilde m^\y_{(1,3)} + \tilde m^\w_{(3,1)}+\tilde m^\y_{(3,1)}, \\ 
m_3 = & \tilde m^\w_{(2,3)}+ \tilde m^\y_{(2,3)} + \tilde m^\w_{(3,2)}+\tilde m^\y_{(3,2)},
\end{split}
\end{equation}
and furthermore the equations
\begin{equation}\label{equat4}\begin{split}
m-1 = \tilde m^\w_{(2,1)} +\tilde m^\y_{(2,1)} + \tilde m^\w_{(3,1)}+\tilde m^\y_{(3,1)} - \tilde m^\w_{(1,2)}- \tilde m^\y_{(1,2)}  - \tilde m^\w_{(1,3)} - \tilde m^\y_{(1,3)}, \\
1-m = \tilde m^\w_{(1,2)} +\tilde m^\y_{(1,2)} + \tilde m^\w_{(3,2)}+\tilde m^\y_{(3,2)} - \tilde m^\w_{(2,1)}- \tilde m^\y_{(2,1)}  - \tilde m^\w_{(2,3)} - \tilde m^\y_{(2,3)}, \\
0 = \tilde m^\w_{(1,3)} +\tilde m^\y_{(1,3)} + \tilde m^\w_{(2,3)}+\tilde m^\y_{(2,3)} - \tilde m^\w_{(3,1)}- \tilde m^\y_{(3,1)}  - \tilde m^\w_{(3,2)} - \tilde m^\y_{(3,2)}.
\end{split}\end{equation}

Now, let us start with the concrete proof of the two implications stated in the lemma.
First, we assume that our system is not $\{1,2\}$-blocked. Then neither the white path of type $\{1,2\}$ nor the yellow path of type $\{1,2\}$ is blocked.
For the white path this means that $w^{(3)}_h = K$ for all $h \in \{1, \ldots, 2r_w+1\}$ where $K=m-1-k_1 = m-1-k_2$, thus in this case $k_1$ and $k_2$ must coincide. 
Then \eqref{equat1} and the fact that two consecutive labels are distinct yields that
$\tilde m^\w_{(1,2)} = k_1$ and
$\tilde m^\w_{(2,1)}=\tilde m^\w_{(1,3)}=\tilde m^\w_{(3,1)}= \tilde m^\w_{(2,3)}=\tilde m^\w_{(3,2)}=0$.
Analogously, $\tilde m^\y_{(1,2)} = m-1-k_1$ and
$\tilde m^\y_{(2,1)}=\tilde m^\y_{(1,3)}=\tilde m^\y_{(3,1)}= \tilde m^\y_{(2,3)}=\tilde m^\y_{(3,2)}=0$.
Then, from the formul{\ae} \eqref{equat3} we obtain that $m_1= m$ and $m_2=m_3=0$.

Now we assume that our system is $\{1,2\}$-blocked. 
We indirectly prove that this implies $m_2>0$ and $m_3>0$. W.l.o.g., we may  assume  that $m_3=0$, hence, neither the white nor the yellow path of type $\{1,2\}$ contain a triangle of type $\{2,3\}$. 
This means that for all $h \in \{1, \ldots, 2r_w-1\}$ we have $(i_{h}, i_{h+1}) \not\in \{(2,3), (3,2)\}$ and also  $(j_{h}, j_{h+1}) \not\in \{(2,3), (3,2)\},$ for all $h \in \{1, \ldots, 2r_y-1\}$ and 
\begin{equation}\label{equat5}
\tilde m^\w_{(2,3)} = \tilde m^\y_{(2,3)} = \tilde m^\w_{(3,2)} = \tilde m^\y_{(3,2)}=0.
\end{equation}

The assumption that our system is $\{1,2\}$-blocked implies that at least one of the two paths, the white path of type $\{1,2\}$  or the yellow path of type $\{1,2\}$, is blocked. If the white path of type $\{1,2\}$ is blocked then there must be at least one index $h \in \{2, \ldots, 2r_w+1\}$ such that $w^{(3)}_{h} \not= 
w^{(3)}_{1} = m-1-k_1,$   since the path has to ``leave'' the strip of type $3$ containing the exit $W_1=e_1^\w$, and  the formul{\ae} in \eqref{equat1} imply that there exists at least one $h \in \{1, \ldots, 2r_w\}$ with $i_{h} = 3$. 
Analogously, if the yellow path of type $\{1,2\}$ is blocked then
there exists at least one index $h \in \{1, \ldots, 2r_y\}$ with $j_{h} = 3$. 
Due to \eqref{equat5} we conclude that $\tilde m^\w_{(1,3)} +\tilde m^\y_{(1,3)} + \tilde m^\w_{(3,1)}+\tilde m^\y_{(3,1)} >0$.
Now, \eqref{equat4} and \eqref{equat5} imply that 
 either $\tilde m^\w_{(1,3)}>0$ or $\tilde m^\y_{(1,3)}>0$. Assume that 
$\tilde m^\w_{(1,3)}>0$.
Then there exists an $h_0 \in \{1, \ldots, r_w\}$ such that
$i_{2h_0-1}=1$ and $i_{2h_0}=3$. Now observe that in this case necessarily $i_{2h_0+1}=1$ (provided that $h_0<r_w$). Indeed,
$i_{2h_0+1}=3$ is not possible since $i_{2h_0} \not= i_{2h_0+1}$. We also can exclude that $i_{2h_0+1}=2,$ since in that case $W_{2h_0+1}$ would be of type $\{2,3\}$.
For the next edge we have $i_{2h_0+2} \not=1$ and thus  $i_{2h_0+2}=2$ or $i_{2h_0+2}=3$. By successively applying this  argumentation we conclude that for every $h \in \{h_0, \ldots, r_w\}$ we have $(i_{2h-1}, i_{2h}) \in \{(1,2), (1,3)\}$.

Similarly, we have $i_{2h_0-2} = 2$ or $i_{2h_0-2} = 3$ and since, 
due to \eqref{equat5},
 $(i_{2h_0-3},i_{2h_0-2}) \not\in \{(2,2),(2,3),(3,2),(3,3)\}$, we necessarily have $i_{2h_0-3}=1$. Therefore, $(i_{2h-1}, i_{2h}) \in \{(1,2), (1,3)\}$ holds for all $h \in \{1, \ldots, r_w\},$ showing that $\tilde m^\w_{(1,2)}+\tilde m^\w_{(1,3)}>0$ while 
$\tilde m^\w_{(2,1)}+\tilde m^\w_{(3,1)}=0$. This is a contradiction to \eqref{equat2}, since $k_2 \geq 0$. For $\tilde m^\y_{(3,1)}>0$ the argumentation works analogously.

Finally we show that $m_1+m_2+m_3>m$. Indeed, by using the formul{\ae}  \eqref{equat3}, \eqref{equat4} and the fact that $m_3 \geq 1$ we get
\[m_1+m_2+m_3 \geq 1+\tilde m^\w_{(2,1)} + \tilde m^\y_{(2,1)} + \tilde m^\w_{(3,1)}+\tilde m^\y_{(3,1)} +  m_3
\geq 1+ m-1 +1\geq m+1.\]
\end{proof}

\begin{corollary}
\label{cor:matrix_primitive}
The matrix $M_\w + M_\y - I_3$ is a  matrix with all entries positive and row sum norm strictly larger than $m$ if and only if the triangular labyrinth patterns system $(\W \cup \W', \Y \cup \Y')$ is  globally blocked.
\end{corollary}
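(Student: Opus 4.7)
The plan is to derive Corollary~\ref{cor:matrix_primitive} as an immediate consequence of Lemma~\ref{lemma:row_positive}, by applying that lemma separately to each of the three indices $\pi \in \P = \{\{1,2\},\{1,3\},\{2,3\}\}$ which label the rows of the $3 \times 3$ matrix $M_\w + M_\y - I_3$.

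For the ``only if'' direction, I would assume that $(\W\cup\W',\Y\cup\Y')$ is globally blocked. By definition this means that the system is $\pi$-blocked for every $\pi \in \P$, so Lemma~\ref{lemma:row_positive} applies with each of the three choices of $\pi$. The conclusion of that lemma then states that the row of $M_\w + M_\y - I_3$ indexed by $\pi$ has all entries strictly positive and row sum strictly greater than $m$. Since this holds for all three rows simultaneously, the entire matrix has strictly positive entries, and because the row sum norm is the maximum over the three row sums, it also exceeds $m$.

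For the ``if'' direction, I would use the dichotomy built into Lemma~\ref{lemma:row_positive}: for each $\pi \in \P$, the row of $M_\w + M_\y - I_3$ indexed by $\pi$ either has all entries strictly positive (if the system is $\pi$-blocked) or else has two zero off-diagonal entries together with a diagonal entry equal to $m$ (if the system is not $\pi$-blocked). Now suppose that all entries of the matrix are positive. Then the second alternative is excluded in every row, so the system must be $\pi$-blocked for each $\pi \in \P$, which by definition means it is globally blocked. Note that for this implication the assumption on the row sum norm is in fact redundant, as positivity of all entries alone already forces the conclusion via the dichotomy.

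I do not foresee any technical obstacle: all the combinatorial content — the analysis of white and yellow paths and the sign structure of the row entries — is packaged inside Lemma~\ref{lemma:row_positive}, and the corollary is merely a simultaneous application of that lemma across the three rows.
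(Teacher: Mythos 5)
Your proposal is correct and matches the paper's (implicit) argument: the corollary is stated as an immediate consequence of Lemma~\ref{lemma:row_positive}, applied row by row exactly as you do, with the dichotomy in that lemma giving both directions. Your remark that positivity of the entries alone already forces global blockedness in the ``if'' direction is accurate and consistent with the lemma.
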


\begin{proof}[Proof of Theorem~\ref{theorem:matrix_primitive}]
We first prove that $M$ is primitive. We consider the square of the matrix $M$ and obtain
\[M^2=  
\left(\begin{array}{cc}
M_\w & M_\w - I_3 \\ M_\y - I_3 & M_\y
\end{array}\right)^2 = 
\left(\begin{array}{cc}
M^2_{11} & M^2_{12} \\ M^2_{21} & M^2_{22}
\end{array}\right),\]
where
\begin{align*}
M^2_{11} & = (M_\w - I_3)(M_\w+M_\y-I_3) + M_\w, \\
M^2_{12} & = (M_\w - I_3)(M_\w + M_\y), \\ 
M^2_{21} & = (M_\y - I_3)(M_\w + M_\y), \\
M^2_{22} & = (M_\y - I_3)(M_\w+M_\y-I_3) + M_\y.
\end{align*}

From Lemma~\ref{lemma:row_positive} we conclude that $M_\w+M_\y-I_3$ is a matrix with strictly positive entries. Thus, if we show that every row of the (non-negative) matrices $M_\w - I_3$ and $M_\y - I_3$, respectively, has at least one positive entry we are done. We only show this for the first row of $M_\w - I_3$; for the other five rows the argumentation runs analogously. 
Indeed, by taking into account Corollary~\ref{cor:iteration_matrix} (and the shape of the matrix $M$), we see that the row sum of the first row of $2M_\w - I_3$ gives $\ell_{\{1,2\}}(1)$, the length of the
path in $\G(\W \cup \W')$ from the exit $e^\w_1$ to the exit $e^\w_2$ which is always an odd number, and by Proposition~\ref{prop:no_double_exits} we have $\ell_{\{1,2\}}(1)>1$ (since $\ell_{\{1,2\}}(1)=1$ would mean that $e^\w_1 \not= e^\w_2$). Thus,  all entries of $M^2$ are positive.

In order to estimate the dominant eigenvalue, we observe that by Lemma~\ref{lemma:row_positive} the matrix $M_\w+M_\y-I_3$ is primitive and, thus, it has a dominant eigenvalue $\theta$. Furthermore,
the row sum norm of
 $M_\w+M_\y-I_3$ is strictly larger than $m$, hence we have  $\theta>m$ and from Lemma~\ref{eigenvalues} we conclude that $\theta$ is also the dominant eigenvalue of $M$.
\end{proof}

\begin{theorem}\label{theorem:dim_arcs}
Let  $(\W \cup \W', \Y \cup \Y')$ be a globally blocked triangular labyrinth patterns system.
Then the arc between any two distinct exits $\mathbf{E}_i^\w$ and $\mathbf{E}_j^\w$ in $\linfw$ (and also the arc between any two distinct exits $\mathbf{E}_i^\y$ and $\mathbf{E}_j^\y$ in $\linfy$) has Hausdorff dimension  $\log_{m}(\theta)>1$, where $\theta$ is the dominant eigenvalue  of $M_\w + M_\y - I_3$. 
\end{theorem}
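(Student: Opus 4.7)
The plan is to apply the Mauldin-Williams dimension theorem (\cite[Theorem 3]{MW}) to the graph directed iterated function system already set up at the end of Section~\ref{sec:arcs_TLF}. Recall that the six arcs $a^\w_\pi$ and $a^\y_\pi$, $\pi \in \P$, are the invariant sets associated with the vertices of the directed multigraph $\HH'$: the edges leaving a vertex $a^\w_{\{i,j\}}$ are indexed by the triangles of the white path $p(e_i^\w,e_j^\w)$ in $\G(\W \cup \W')$, each labelled by its type with respect to that path, and the outgoing edges of the yellow vertices are defined analogously. Every edge is associated with a similarity $P_\Delta$ of ratio $m^{-1}$, and the incidence matrix of $\HH'$ is precisely the global path matrix $M$ from Proposition~\ref{prop:path_matrix}.

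Next I would verify the hypotheses of the Mauldin-Williams theorem. Strong connectedness of $\HH'$ follows immediately from Theorem~\ref{theorem:matrix_primitive}: a primitive incidence matrix means that for some $n$ every entry of $M^n$ is positive, which in graph terms says that every ordered pair of vertices is joined by a path of length $n$. For the open set condition I would take $U_v := \mathrm{int}(T_1)$ for every vertex $v$ of $\HH'$. For any edge $\Delta$ of $\HH'$ we have $P_\Delta(\mathrm{int}(T_1)) = \mathrm{int}(\Delta) \subset \mathrm{int}(T_1)$, and the images corresponding to distinct edges leaving a common vertex belong to pairwise distinct triangles of $\T_m \cup \T'_m$ that appear in the same path, whose interiors are pairwise disjoint by construction.

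With the hypotheses in place, Mauldin-Williams yields that the common Hausdorff dimension $s$ of the six invariant sets (and thus of each arc between two distinct exits of $\linfw$ or $\linfy$) is the unique non-negative real satisfying $\rho(m^{-s}M) = 1$, equivalently $s = \log_m \rho(M)$. By Lemma~\ref{eigenvalues} every eigenvalue of $M_\w + M_\y - I_3$ is an eigenvalue of $M$, and by Theorem~\ref{theorem:matrix_primitive} the dominant eigenvalue $\theta$ of $M_\w + M_\y - I_3$ coincides with $\rho(M)$ and strictly exceeds $m$. Hence $s = \log_m \theta > 1$, as claimed. The substantive work has already been done in Theorem~\ref{theorem:matrix_primitive}, where global blockedness is used to force $\theta > m$; beyond this, the only step that requires attention is the choice of open sets witnessing the OSC, and the natural choice $\mathrm{int}(T_1)$ at every vertex works without further effort.
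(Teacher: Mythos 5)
Your proposal is correct and follows essentially the same route as the paper: the arcs are the invariant sets of the GIFS with incidence matrix $M$ and contraction ratios $m^{-1}$, primitivity of $M$ (Theorem~\ref{theorem:matrix_primitive}) gives strong connectedness, and \cite[Theorem~3]{MW} yields $\dim_H = \log_m(\theta) > 1$. The only difference is that you spell out the open set condition with $\mathrm{int}(T_1)$ at each vertex, which the paper leaves implicit.
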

\begin{proof}
At the end of Section~\ref{sec:arcs_TLF} we have mentioned that the arcs are given by a GIFS, where the incidence matrix of the associated multigraph $\HH'$ is 
\[M=\left(\begin{array}{cc} M_\w & \tilde M_\w \\ \tilde M_\y & M_\y \end{array}\right).\]
The similarity ratios of the maps in this GIFS are all equal $m^{-1}$.

From Theorem~\ref{theorem:matrix_primitive} we know that $M$ is a primitive matrix with dominant eigenvalue $\theta >m$. The primitivity of $M$ implies that $\HH'$ is strongly connected. Therefore,  by applying  \cite[Theorem~3]{MW}, we obtain
\[\dim_H(a_{\{i,j\}}^\w)=\dim_H(a_{\{i,j\}}^\y) = \log_m(\theta) > 1.\]
\end{proof}

\section{On arc length in globally blocked triangular labyrinth fractals}\label{sec:arc_length}
Throughout this section we study properties of globally blocked triangular labyrinth fractals, i.e. we assume the triangular labyrinth fractals studied in this section to be defined by globally blocked triangular labyrinth patterns systems. Moreover, we use the $1$-dimensional Lebesgue measure, which we denote by $\lambda$, for the length of arcs in the fractals.

\begin{corollary}\label{cor:infinite_arcs_btw_exits}
The globally blocked fractals $\linfw$ and $\linfy$ have the property that the length of any arc in the fractal between any two exits of the fractal is infinite.
\end{corollary}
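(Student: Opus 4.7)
The plan is to derive the corollary directly from Theorem~\ref{theorem:dim_arcs} via the standard relation between Hausdorff dimension and one-dimensional Hausdorff measure. Since the system is globally blocked, Theorem~\ref{theorem:dim_arcs} tells us that for every $\{i,j\} \in \P$ the arcs $a^\w_{\{i,j\}}$ and $a^\y_{\{i,j\}}$ have Hausdorff dimension $\log_m(\theta) > 1$. Because these are the only six arcs joining pairs of exits in the two fractals, it suffices to show that each of them has infinite one-dimensional Lebesgue measure.

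First I would recall the standard fact that if $A \subset \RR^2$ satisfies $\dim_H(A) > 1$, then its one-dimensional Hausdorff measure $\mathcal H^1(A)$ equals $+\infty$; this is immediate from the definition of Hausdorff dimension as the critical exponent at which $\mathcal H^s$ drops from $\infty$ to $0$. Applying this to $a^\w_{\{i,j\}}$ and $a^\y_{\{i,j\}}$ yields $\mathcal H^1(a^\w_{\{i,j\}}) = \mathcal H^1(a^\y_{\{i,j\}}) = +\infty$.

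Next I would invoke the fact that for a simple arc $a$ in the plane the one-dimensional Hausdorff measure $\mathcal H^1(a)$ coincides with the arc length $\lambda(a)$ (see, e.g., Falconer's book: for an injective continuous image of a compact interval, $\mathcal H^1$ equals the length of the curve, both being $+\infty$ when the curve is not rectifiable). Since $a^\w_{\{i,j\}}$ and $a^\y_{\{i,j\}}$ are arcs (by Lemma~\ref{lemma:arc_construction} and the fact that $\linfw$, $\linfy$ are dendrites), we conclude $\lambda(a^\w_{\{i,j\}}) = \lambda(a^\y_{\{i,j\}}) = +\infty$ for each $\{i,j\} \in \P$, which is the assertion of the corollary.

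There is no real obstacle here; the content is really contained in Theorem~\ref{theorem:dim_arcs}. The only small point to state carefully is the identification of $\mathcal H^1$ with arc length for simple arcs, which is what justifies passing from a dimension statement to a statement about Lebesgue/curve length.
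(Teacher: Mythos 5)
Your proposal is correct and follows essentially the same route as the paper: the paper's proof is a one-line appeal to Theorem~\ref{theorem:dim_arcs} together with the standard relations between Hausdorff dimension, Hausdorff measure and arc length, which is exactly what you spell out. The only difference is that you make the two standard facts ($\dim_H > 1 \Rightarrow \mathcal{H}^1 = \infty$, and $\mathcal{H}^1$ of a simple arc equals its length) explicit, which is fine.
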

\begin{proof}
The result easily follows from Theorem \ref{theorem:dim_arcs}, based on the relationship between the  Hausdorff dimension, the Hausdorff measure and the Lebesgue measure (see, e.g. \cite{falconerbook}). 
\end{proof}

\begin{lemma}\label{lemma:infinite_arcs_in_triangles}
Let $n\ge 1$, $W \in \W_n \cup \W'_n$, and $\{i,j\} \in \P$. Then the arc in $\linfw$ that connects the exits of type $i$ and $j$ of the triangle $W$ has infinite length. The analogous assertion holds for $Y \in \Y_n \cup \Y'_n$. 
\end{lemma}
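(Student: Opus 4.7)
The strategy is to reduce the statement to Corollary~\ref{cor:infinite_arcs_btw_exits} via the projection maps coming from the GIFS structure.

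First I would split into cases according to whether $W \in \W_n$ or $W \in \W'_n$. In the first case, Lemma~\ref{lemma:projections_of_arcs_between_exits}(a) identifies the arc in $\linfw$ between the exits of type $i$ and $j$ of $W$ as $P_W(a_{\{i,j\}}^\w)$, while in the second case Lemma~\ref{lemma:projections_of_arcs_between_exits}(c) identifies it as $P_W(a_{\{i,j\}}^\y)$. So in either case the arc in question is the image of one of the six ``master arcs'' from Section~\ref{sec:arcs_TLF} under a single projection map $P_W$.

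Next I would observe that for any $W \in \W_n \cup \W'_n$ the triangle $W$ is an equilateral triangle of side length $m^{-n}$, so the map $P_W\colon T_1 \to W$ is a similarity of ratio $m^{-n}$ (possibly composed with a reflection if $W$ is upside down, which does not affect scaling of lengths). Consequently, for any arc $a \subset T_1$ the $1$-dimensional Lebesgue measure satisfies
\[
\lambda(P_W(a)) = m^{-n}\,\lambda(a),
\]
so $\lambda(a) = +\infty$ implies $\lambda(P_W(a)) = +\infty$.

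Finally, since the triangular labyrinth patterns system is globally blocked by the standing assumption of this section, Corollary~\ref{cor:infinite_arcs_btw_exits} yields $\lambda(a_{\{i,j\}}^\w) = \lambda(a_{\{i,j\}}^\y) = +\infty$ for every $\{i,j\}\in\P$. Combining this with the previous step gives the claim for $W \in \W_n \cup \W'_n$. The analogous statement for $Y \in \Y_n \cup \Y'_n$ follows by the same argument, now invoking parts (b) and (d) of Lemma~\ref{lemma:projections_of_arcs_between_exits}. There is no real obstacle in this proof; the globally blocked hypothesis has already done the heavy lifting in Corollary~\ref{cor:infinite_arcs_btw_exits}, and this lemma is merely a self-similarity / scaling argument.
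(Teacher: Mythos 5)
Your proposal is correct and follows essentially the same route as the paper: identify the arc as $P_W(a_{\{i,j\}}^\w)$ or $P_W(a_{\{i,j\}}^\y)$ via Lemma~\ref{lemma:projections_of_arcs_between_exits}, invoke Corollary~\ref{cor:infinite_arcs_btw_exits} for the infinite length of these arcs, and use that $P_W$ is a similarity (of ratio $m^{-n}$) to conclude. The extra case discussion and the scaling formula are just a more explicit rendering of the paper's one-line argument.
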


\begin{proof}
By Lemma~\ref{lemma:projections_of_arcs_between_exits} the arc that connects the exits of type $i$ and $j$ of the triangle $W$ is given by $P_W(a_{\{i,j \}}^\w)$ or $P_W(a_{\{i,j \}}^\y)$. By Corollary~\ref{cor:infinite_arcs_btw_exits}, $a_{\{i,j \}}^\w$ and $a_{\{i,j \}}^\y$ have infinite length and since  $P_W$ is a similarity, it follows that $P_W(a_{\{i,j \}}^\w)$ also infinite length, too.
\end{proof}

\begin{theorem}\label{theorem:infinite_arcs_in_fractal}
Let $\mathbf X, \mathbf Y \in \linfw$, and $\mathbf X \ne \mathbf Y $. Then the length of the arc in $\linfw$ that connects the points $\mathbf X$ and $\mathbf Y$ is infinite, i.e.,   $\lambda \left(a (\mathbf X, \mathbf Y )\right)=\infty$.
\end{theorem}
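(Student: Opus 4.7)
The plan is to reduce the statement to Lemma~\ref{lemma:infinite_arcs_in_triangles} by exhibiting, for $n$ sufficiently large, an intermediate triangle $W \in \W_n \cup \W'_n$ on the path between $W_n(\mathbf{X})$ and $W_n(\mathbf{Y})$ such that $W \cap a(\mathbf{X}, \mathbf{Y})$ coincides with the whole arc between two distinct exits of $W$, which is known to have infinite length.

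First I would choose $n$ large enough that $2 m^{-n} < \|\mathbf{X} - \mathbf{Y}\|$; since each triangle in $\T_{m^n} \cup \T'_{m^n}$ has diameter $m^{-n}$, two distinct such triangles that share at least one point have union of diameter at most $2m^{-n}$. Invoking Lemma~\ref{lemma:arc_construction}, I obtain $W_n(\mathbf{X}), W_n(\mathbf{Y}) \in \W_n \cup \W'_n$ with $\mathbf{X} \in W_n(\mathbf{X})$ and $\mathbf{Y} \in W_n(\mathbf{Y})$. The choice of $n$ forces these two triangles to be disjoint, so in particular they do not share a common side. Consequently, the path $p_n(W_n(\mathbf{X}), W_n(\mathbf{Y}))$ in $\G(\W_n \cup \W'_n)$ has length at least $3$ and contains at least one intermediate triangle $W$. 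Let $i', j' \in \{1,2,3\}$ be the labels of the two edges of this path incident to $W$; since the two neighbours of $W$ in the path are distinct, we must have $i' \neq j'$, so $W$ has type $\{i',j'\} \in \P$ with respect to $p_n$.

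Next, using the arc construction in Lemma~\ref{lemma:arc_construction} together with the tree property of the graphs $\G(\W_k \cup \W'_k)$ for $k \geq n$, the path $p_k(W_k(\mathbf{X}), W_k(\mathbf{Y}))$ at every finer level $k$ is forced to pass through $W$, entering through its side of type $i'$ and leaving through its side of type $j'$. By uniqueness of the arc in the dendrite $\linfw$ (Theorem~\ref{theo:dendrite}), this identifies $W \cap a(\mathbf{X},\mathbf{Y})$ as the unique arc in $\linfw \cap W$ joining the exit of type $i'$ with the exit of type $j'$ of $W$. By Lemma~\ref{lemma:projections_of_arcs_between_exits}, this arc equals $P_W(a^\w_{\{i',j'\}})$ or $P_W(a^\y_{\{i',j'\}})$ depending on whether $W \in \W_n$ or $W \in \W'_n$; by Lemma~\ref{lemma:infinite_arcs_in_triangles} it has infinite $1$-dimensional Lebesgue measure. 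Since $W \cap a(\mathbf{X},\mathbf{Y}) \subseteq a(\mathbf{X},\mathbf{Y})$, we conclude $\lambda(a(\mathbf{X},\mathbf{Y})) = \infty$.

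The main obstacle I anticipate is justifying rigorously that $W \cap a(\mathbf{X}, \mathbf{Y})$ is the entire arc between the two chosen exits of $W$, rather than just some proper sub-arc; this is the step that secretly uses the dendrite structure of $\linfw$ and the coherence of the graph paths at different levels. The key observation is that the finite-level paths $p_k(W_k(\mathbf{X}), W_k(\mathbf{Y}))$, $k \geq n$, refine the level-$n$ path in the sense that their restrictions to sub-triangles of $W$ form the unique path between the two relevant exits of $W$ at level $k$. Combined with the uniqueness of arcs in a dendrite and with $\mathbf{X}, \mathbf{Y} \notin W$, this confines the crossing of $a(\mathbf{X},\mathbf{Y})$ through $W$ to a single passage between the exits of types $i'$ and $j'$, which is exactly the arc of infinite length provided by Lemma~\ref{lemma:infinite_arcs_in_triangles}.
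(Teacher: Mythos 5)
Your proposal is correct and follows essentially the same route as the paper: choose $n$ large enough that the path $p_n(W_n(\mathbf{X}),W_n(\mathbf{Y}))$ contains an intermediate triangle $W$, identify $W\cap a(\mathbf{X},\mathbf{Y})$ as the arc between two exits of $W$ (the paper cites Lemma~\ref{lemma:arc_exits_triangle} for this step, whose proof you essentially reproduce), and conclude by Lemma~\ref{lemma:infinite_arcs_in_triangles}. Your explicit diameter estimate $2m^{-n}<\|\mathbf{X}-\mathbf{Y}\|$ is a welcome quantitative justification of the paper's brief ``choose $n$ large enough'' remark, but the argument is the same.
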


\begin{proof} For $n\ge 1$ let $W_n(\mathbf X)$ and  $W_n(\mathbf Y)$ be the triangles in $\W_n \cup \W'_n$ that contain $\mathbf X$ and $\mathbf Y$, respectively (chosen as in the proof of Lemma \ref{lemma:arc_construction}).
Let $p_n$ be the path in $\G(\W_n \cup \W'_n) $ that connects these triangles (vertices of the graph). 

Since $\mathbf X \ne \mathbf Y$, one can choose $n$ large enough such that the path $p_n$ has length at least $3$, i.e., $p_n$ consists of at least three triangles. Let $W \in p_n$, $W\ne  W_n(\mathbf X), W_n( \mathbf Y)$. By the construction of the arc $a(\mathbf X, \mathbf Y)$ according to Lemma \ref{lemma:arc_construction} and Lemma \ref{lemma:arc_exits_triangle} it follows that $a \cap W$ is an arc between two exits of the triangle W. Lemma \ref{lemma:infinite_arcs_in_triangles} implies that the length of the arc $a\cap W$ is infinite, which immediately yields $\ell \left(a (\mathbf X, \mathbf Y )\right)=\infty$.
\end{proof}

Before dealing with tangents to arcs in triangular labyrinth fractals let us recall the definition from the book by Tricot \cite{Tricot_book}.

\begin{definition}\cite[Sect. 7.2., p.73]{Tricot_book}
We say that there exists a tangent $\tau$ at a point $\mathbf{X}_0$ to an arc $a$, if for every positive angle $\phi$ there exists an $\varepsilon$ such that for all $\mathbf X \in a$ that satisfy $\abs {\mathbf{X}_0 - \mathbf X }< \varepsilon$, the angle between the straight line $\tau$ and the straight line through $\mathbf X$ and $\mathbf{X}_0$ is not greater than $\phi$.
\end{definition}

\begin{lemma}\label{lemma:no_tangent_exits} 
Let $\linfw$ be the white fractal generated by a globally blocked triangular labyrinth patterns system. For any two distinct exits ${\mathbf E}_i^\w$ and ${\mathbf E}_j^\w$ of $\linfw$, there exists no tangent  to the arc $a_{\{i,j,\}}^\w$ at ${\mathbf E}_i^\w$ and ${\mathbf E}_j^\w$. The analogous assertion holds for distinct exits of the fractal $\linfy$.
\end{lemma}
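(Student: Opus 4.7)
The plan is to exploit the self-similar structure of the arc near the exit $\mathbf{E}_i^\w$ in order to show that, in arbitrarily small neighbourhoods of $\mathbf{E}_i^\w$, the arc $a_{\{i,j\}}^\w$ contains points lying on at least two distinct lines through $\mathbf{E}_i^\w$, with the angle between these lines bounded away from zero independently of the scale. First I would combine Lemma~\ref{lemma:arc_exits_triangle} with Lemma~\ref{lemma:projections_of_arcs_between_exits} to observe that for every $n \geq 1$ there exists an index $j'_n \in \{1,2,3\} \setminus \{i\}$, namely the label of the edge leaving $e_i^\w(n)$ in the level-$n$ path between the two exits, such that
\[a_{\{i,j\}}^\w \cap e_i^\w(n) = P_{e_i^\w(n)}\bigl(a_{\{i,j'_n\}}^\w\bigr).\]
Lemma~\ref{lemma:exits_are_fixpoints} then tells me that $P_{e_i^\w(n)}$ fixes $\mathbf{E}_i^\w$ and that $P_{e_i^\w(n)}(\mathbf{Z}) - \mathbf{E}_i^\w$ is always parallel to $\mathbf{Z} - \mathbf{E}_i^\w$, so the map sends every line through $\mathbf{E}_i^\w$ to itself.

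Next, for each $k \in \{1,2,3\} \setminus \{i\}$ I would pick two points $\mathbf{X}_1^{(k)}, \mathbf{X}_2^{(k)} \in a_{\{i,k\}}^\w \setminus \{\mathbf{E}_i^\w\}$ such that $\mathbf{E}_i^\w, \mathbf{X}_1^{(k)}, \mathbf{X}_2^{(k)}$ are not collinear. Their existence follows from Theorem~\ref{theorem:dim_arcs}: the arc $a_{\{i,k\}}^\w$ has Hausdorff dimension strictly greater than $1$ and hence cannot be contained in any straight line, in particular not in any line through $\mathbf{E}_i^\w$. Let $\psi_k > 0$ denote the angle between the lines $\mathbf{E}_i^\w \mathbf{X}_1^{(k)}$ and $\mathbf{E}_i^\w \mathbf{X}_2^{(k)}$, and set $\psi_\ast := \min_k \psi_k > 0$.

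Finally I would argue by contradiction. Suppose a tangent line $\tau$ to $a_{\{i,j\}}^\w$ at $\mathbf{E}_i^\w$ exists, and apply the tangent definition with $\phi := \psi_\ast / 4$. Given an arbitrary $\varepsilon > 0$, the diameter of $e_i^\w(n)$ is $m^{-n}$, so for $n$ sufficiently large $e_i^\w(n) \subset B(\mathbf{E}_i^\w, \varepsilon)$; the two image points $P_{e_i^\w(n)}(\mathbf{X}_1^{(j'_n)})$ and $P_{e_i^\w(n)}(\mathbf{X}_2^{(j'_n)})$ then belong to the arc and lie inside this ball, while, by the line-preserving property of $P_{e_i^\w(n)}$, the lines joining them to $\mathbf{E}_i^\w$ coincide with $\mathbf{E}_i^\w \mathbf{X}_1^{(j'_n)}$ and $\mathbf{E}_i^\w \mathbf{X}_2^{(j'_n)}$ and so enclose an angle at least $\psi_\ast$. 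Consequently, at least one of these two lines makes angle at least $\psi_\ast / 2 > \phi$ with $\tau$, yielding the desired contradiction. The identical argument applied at $\mathbf{E}_j^\w$, and then on the yellow fractal, completes the proof.

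The main obstacle I anticipate is a clean justification of the self-similar identity for the arc and of the direction-preserving behaviour of $P_{e_i^\w(n)}$: the former hinges on identifying $e_i^\w(n)$ as the initial vertex in the level-$n$ path between the two exits and on the iterative construction of such paths from Proposition~\ref{prop:path_matrix}, while the latter relies on Lemma~\ref{lemma:exits_are_fixpoints}(c), which forces $P_{e_i^\w(n)}$ to act on any line through $\mathbf{E}_i^\w$ as a homothety and in particular to fix the line setwise. Once these two ingredients are in place, the actual non-tangency follows from the clean fact that the arcs $a_{\{i,k\}}^\w$ cannot be contained in lines, a direct consequence of the strict inequality $\dim_H(a_{\{i,k\}}^\w) > 1$ supplied by the globally blocked hypothesis.
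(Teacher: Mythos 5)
Your proposal is correct and follows essentially the same route as the paper's proof: it rests on the same ingredients --- the scaled-copy identity $a_{\{i,j\}}^\w\cap e_i^\w(n)=P_{e_i^\w(n)}\bigl(a_{\{i,j'_n\}}^\w\bigr)$ obtained from Lemma~\ref{lemma:arc_exits_triangle} and Lemma~\ref{lemma:projections_of_arcs_between_exits}, the fixed-point and direction-preserving properties of $P_{e_i^\w(n)}$ from Lemma~\ref{lemma:exits_are_fixpoints}, and the fact that arcs between exits are not contained in straight lines --- to exhibit points of the arc arbitrarily close to the exit lying on two lines through it whose directions are uniformly separated. The only cosmetic differences are that you justify non-collinearity via $\dim_H>1$ (Theorem~\ref{theorem:dim_arcs}) where the paper invokes primitivity of the path matrix, and that you treat the two possible types of $e_i^\w(n)$ with a uniform minimal angle instead of passing, as the paper does, to a subsequence along which the type is constant; both variants are harmless.
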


\begin{proof}
We prove the above assertion for one of the exits, $\mathbf E_i^\w$. We assume without loss of generality that $i=1$.
By Lemma \ref{lemma:exits_are_fixpoints}, for any $n \ge 1$, $\mathbf E_1^\w$ and $\mathbf E_j^\w$ lie each in exactly one white triangle of level $n$, more precisely, $\mathbf E_1^\w\in W_n(\mathbf E_1^\w)=e_1^\w(n)$, and $\mathbf E_j^\w\in W_n(\mathbf E_j^\w)= e_j^\w(n)$. Let $p_n$ be the path in $\G(\W_n \cup \W'_n)$ from $e_1^\w(n)$ to $ e_j^\w(n)$. The triangle $e_1^\w(n)$ can be of type $\{1,2\}$ or $\{1,3\}$ with respect to $p_n$, for all $n\ge 1$. This implies that at least one of the types $\{1,2\}$ and $\{1,3\}$ (with respect to $p_n$) occurs infinitely many times in the sequence $\{W_n(\mathbf E_1^\w)\}_{n\ge 1}$. W.l.o.g. we assume there is a sequence $\{n_k\}_{k\ge 1}$ such that $W_{n_k}(\mathbf E_1^\w)$ is a triangle of type $\{1,2\}$ with respect to $p_n$. By Lemma \ref{lemma:arc_exits_triangle} it then follows that $\W_{n_1}\cap a_{\{i,j,\}}^\w$ is an arc between the exits of type $1$ and $2$ of the triangle $W_{n_1}(\mathbf E_1 ^{\w})$.

 By Theorem \ref{theorem:matrix_primitive}, the path matrix of the globally blocked triangular labyrinth patterns system is primitive. This implies that there exists an $n\ge 1$ such that 
images through projection maps $P_W$, with $W \in \W_n \cup \W'_n$, of all three exits of $\linfw$ and all three exits of $\linfy$ 
must be contained in any arc in $\linfw$ that connects two distinct exits of $\linfw$. It follows that an arc in $\linfw$ that connects two distinct exits of a triangle cannot be a straight line segment. Therefore, one can choose two distinct points $\mathbf Q_1$ and $\mathbf Q_2$ in $W_{n_1}(\mathbf E_1^\w) \cap a_{\{i,j,\}}^\w$, with $\mathbf Q_1,\mathbf Q_2 \ne \mathbf E_1^\w$, having the property that the vectors $\mathbf Q_1 - \mathbf E_1^\w$ and   $\mathbf Q_2 - \mathbf E_1^\w$ are linearly independent. 

It is easy to see that the sequences of points $\{ \mathbf A_{k} \}_{k\ge 1}$, with $\displaystyle \mathbf A_k=P_{W_{n_k}(\mathbf E_1^\w)}(\mathbf Q_1)$, and $\{ \mathbf B_{k} \}_{k\ge 1}$, with $\displaystyle \mathbf B_k=P_{W_{n_k}(\mathbf E_1^\w)}(\mathbf Q_2)$, converge to $\mathbf E_1^\w$, and $\mathbf A_{k},\mathbf B_{k}\ne \mathbf E_1^\w$, for all $k\ge 2$.
By Lemmas \ref{lemma:arc_exits_triangle} and \ref{lemma:projections_of_arcs_between_exits} we obtain $\mathbf A_{k},\mathbf B_{k}\in a_{\{i,j,\}}^\w$, for all $k\ge 1$.

On the other hand, by assertion $(c)$ in Lemma \ref{lemma:exits_are_fixpoints}, all $\mathbf A_{k}$, with $k\ge 1$, are collinear. Let $a_1$ be the straight line that contains these points. Likewise, all $\mathbf B_k$, for $k\ge 1$, lie on a straight line which we denote by $a_2$. Since the vectors  $\mathbf Q_1 - \mathbf E_1^\w$ and   $\mathbf Q_2 - \mathbf E_1^\w$ are linearly independent, it follows that $a_1 \ne a_2.$ This implies that there exists no tangent to the arc $a_{\{i,j,\}}^\w$ at the point $\mathbf E_1^\w$.
\end{proof}

\begin{corollary}\label{cor:points_notangent_dense}
If $a\subset \linfw$ (or $a\subset \linfy$) is an arc that connects two points in the globally blocked triangular labyrinth fractal, then the set of all points, at which there exists no tangent to $a$, is dense in $a$. 
\end{corollary}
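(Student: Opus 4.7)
The plan is to establish density directly: for every $\mathbf{X} \in a$ and every $\varepsilon > 0$ I will exhibit a point $\mathbf{Y} \in a$ with $\abs{\mathbf{X}-\mathbf{Y}}<\varepsilon$ at which $a$ has no tangent. The idea is that the ``transition exits'' of $a$ across the boundaries of the level-$n$ triangles lie arbitrarily close to any point of $a$ once $n$ is large, and at each such exit the no-tangent argument of Lemma~\ref{lemma:no_tangent_exits} transfers through a similarity.

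First I would choose $n$ large enough that every triangle in $\W_n \cup \W'_n$ has diameter smaller than $\varepsilon$. By the arc construction of Lemma~\ref{lemma:arc_construction} (together with the structural observation used in the proof of Theorem~\ref{theorem:infinite_arcs_in_fractal}), the arc $a$ follows a finite path $p_n$ in $\G(\W_n \cup \W'_n)$ from the level-$n$ triangle containing one endpoint of $a$ to the one containing the other. For each non-terminal triangle $W$ of $p_n$ of type $\{i,j\}$, Lemmas~\ref{lemma:arc_exits_triangle} and~\ref{lemma:projections_of_arcs_between_exits} identify $a \cap W$ with $P_W(a^{\w}_{\{i,j\}})$ if $W \in \W_n$, and with $P_W(a^{\y}_{\{i,j\}})$ if $W \in \W'_n$: an arc joining two specific exits of $W$. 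I would then pick a triangle $W$ in $p_n$ that contains $\mathbf{X}$ (handling the minor case where $W$ is terminal by using its unique ``entry exit'' on $a$) and let $\mathbf{Y}$ be one of the two exits of $W$ through which $a$ passes. Then $\abs{\mathbf{X}-\mathbf{Y}} \le \mathrm{diam}(W) < \varepsilon$, $\mathbf{Y} \in a$, and $\mathbf{Y} = P_W(\mathbf{E}_k^{\w})$ or $\mathbf{Y} = P_W(\mathbf{E}_k^{\y})$ for some $k \in \{1,2,3\}$.

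Finally, I would transfer the no-tangent argument of Lemma~\ref{lemma:no_tangent_exits} through the similarity $P_W$: that lemma produces two sequences of points on the appropriate exit arc converging to $\mathbf{E}_k^{\w}$ (respectively $\mathbf{E}_k^{\y}$) along two linearly independent directions, and applying $P_W$ yields two sequences on $a \cap W \subseteq a$ converging to $\mathbf{Y}$ along linearly independent directions. This rules out any tangent to $a$ at $\mathbf{Y}$, because a candidate tangent line $\tau$ would, by definition, force every sufficiently near secant $\mathbf{Y}\mathbf{Z}$ with $\mathbf{Z}\in a$ into an arbitrarily narrow cone about $\tau$, contradicting the presence of two distinct accumulation directions. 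The only delicate points, namely the passage through $P_W$ and from the subarc $a \cap W$ to the ambient arc $a$, are routine: similarities preserve angles, collinearity, and linear independence, and any witness of ``no tangent'' already produced on a subarc remains a witness on the larger arc. The argument for $\linfy$ is verbatim, so density of the set of no-tangent points in $a$ follows.
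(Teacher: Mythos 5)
Your proposal is correct and takes essentially the same route as the paper: both arguments use the arc construction (Lemma~\ref{lemma:arc_construction}) together with Lemmas~\ref{lemma:arc_exits_triangle} and~\ref{lemma:projections_of_arcs_between_exits} to see that the points $P_W(\mathbf{E}_k^{\w})$, $P_W(\mathbf{E}_k^{\y})$ through which $a$ passes are dense in $a$, and then transfer the no-tangent property of Lemma~\ref{lemma:no_tangent_exits} through the similarities $P_W$; you merely spell out the transfer step more explicitly than the paper does. The only point to tighten is your ``terminal triangle'' parenthesis: there $a\cap W$ is not a full exit-to-exit arc, so the two witness sequences should be taken via the adjacent non-terminal triangle of the path, whose exit-to-exit arc ends at the same entry exit --- a detail the paper's own proof also glosses over.
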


\begin{proof}
We give a proof for $a=a(\mathbf X, \mathbf Y) \subset \linfw$.  For $n\ge 1$ let $p_n$ be the path that connects $W_n(\mathbf X)$ and $W_n(\mathbf Y)$ in $\G(\W_n \cup \W'_n)$, and $p_n=\{W_1,\dots, W_{2k+1}  \}$,  constructed as in Lemma \ref{lemma:arc_construction}. Let $h\in \{1, \dots, 2k+1\}$. From Lemmas \ref{lemma:arc_exits_triangle} and \ref{lemma:projections_of_arcs_between_exits} it follows that $W_h \cap a$ is an arc that connects the exit of type $i_1$ and that of type $i_2$, with $1\le i_1< i_2 \le 3$ of $W_h$. Then, the set 
\[ a \cap \left(\bigcup_{n=1}^{\infty}\bigcup_{W\in \W_n}P_W\Big(\left\{\mathbf E_1^\w,\mathbf E_2^\w, \mathbf E_3^\w   \right\}\Big) \cup \bigcup_{W'\in \W'_n}P_{W'}\Big(\left\{\mathbf E_1^\y,\mathbf E_2^\y, \mathbf E_3^\y   \right\}\Big) \right)\]
 is dense in $a$. On the other hand, from Lemma \ref{lemma:no_tangent_exits} we then conclude that the above set is a subset of all points at which no tangent to a exists.
\end{proof}

The following theorem is the main result obtained for totally blocked triangular labyrinth fractals, that also summarises some of the results already proven in this paper.
\begin{theorem}\label{theorem:main_result_blocked}
Let $\linfw$ and $\linfy$ be the fractals generated by a globally blocked triangular labyrinth patterns system with global path matrix $M,$ let $\theta$ be the largest eigenvalue of $M.$ Then the following assertions hold:
\begin{itemize}
\item[(a)]Between any two points in $\linfw$ there is a unique arc  $a\in \linfw$ that connects them.
\item[(b)]The length of the arc $a$ is infinite. 
\item[(c)] $\dim_H(a)=\frac{\log \theta}{\log m}  > 1$ .
\item[(d)]The set of points of the arc $a$, where there exists no tangent to the arc, is dense in $a$. 
\end{itemize}
The analogous affirmations hold for the yellow fractal  $\linfy$.
 \end{theorem}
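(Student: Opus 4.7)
The plan is to deduce each of the four assertions essentially from results already proven in the paper. Items (a), (b), and (d) follow more or less immediately, while item (c) requires a short additional argument extending the Hausdorff dimension computation from arcs between exits (Theorem~\ref{theorem:dim_arcs}) to arbitrary arcs.

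For (a) I would observe that by Theorem~\ref{theo:dendrite} the set $\linfw$ is a dendrite, and by the very definition of a dendrite any two distinct points are joined by a unique arc. Item (b) is exactly Theorem~\ref{theorem:infinite_arcs_in_fractal}, and item (d) is exactly Corollary~\ref{cor:points_notangent_dense}, so nothing more needs to be said for these three.

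The main work therefore concerns (c). For the lower bound I would argue as follows: given distinct $\mathbf{X}, \mathbf{Y} \in \linfw$, pick $n$ large enough so that the path $p_n$ in $\G(\W_n \cup \W'_n)$ joining $W_n(\mathbf{X})$ to $W_n(\mathbf{Y})$ (constructed as in Lemma~\ref{lemma:arc_construction}) has length at least $3$, and let $W$ be any intermediate triangle of this path. By Lemma~\ref{lemma:arc_exits_triangle}, $a \cap W$ is the arc joining two exits of $W$, and by Lemma~\ref{lemma:projections_of_arcs_between_exits} it equals the image under the similarity $P_W$ (of ratio $m^{-n}$) of one of the six arcs $a_{\pi}^\w$ or $a_{\pi}^\y$. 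By Theorem~\ref{theorem:dim_arcs} each such arc has Hausdorff dimension $\log_m(\theta)$, and since this subarc is a similar copy, $\dim_H(a) \geq \log_m(\theta)$.

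For the upper bound I would decompose $a$ into a countable union of similar copies of the arcs $a_\pi^\w$, $a_\pi^\y$. At level $n$, the arc splits along $p_n$ into the contributions from the intermediate triangles (each of which is, by the same argument as above, a similar copy of some $a_\pi^\w$ or $a_\pi^\y$, hence of dimension $\log_m(\theta)$) together with the two ``end pieces'' $a \cap W_n(\mathbf{X})$ and $a \cap W_n(\mathbf{Y})$. Iterating the decomposition at levels $n+1, n+2, \ldots$, the end pieces split further, and by parts (c) and (d) of Lemma~\ref{lemma:arc_construction} their diameters tend to zero and shrink down to $\{\mathbf{X}\}$ and $\{\mathbf{Y}\}$, respectively. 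This writes $a$ as a countable union of similar copies of the six arcs between exits, together with the two points $\{\mathbf{X}\}$ and $\{\mathbf{Y}\}$. Since Hausdorff dimension is countably stable, $\dim_H(a) \leq \log_m(\theta)$, and combined with Theorem~\ref{theorem:matrix_primitive} (which gives $\theta > m$, hence $\log_m(\theta) > 1$) this completes the proof of (c). The main bookkeeping obstacle is to verify rigorously that the iterated end pieces exhaust $a$ in the limit, but this follows directly from the nested intersection properties established in Lemma~\ref{lemma:arc_construction}. The yellow case is completely analogous.
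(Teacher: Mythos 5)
Your proposal is correct and follows the paper's own route: (a), (b), (d) are cited from Theorem~\ref{theo:dendrite}, Theorem~\ref{theorem:infinite_arcs_in_fractal} and Corollary~\ref{cor:points_notangent_dense} exactly as in the paper, and your treatment of (c) — lower bound from a similar copy of an exit-to-exit arc inside an intermediate path triangle, upper bound by a countable decomposition into such copies plus the two shrinking end pieces, then countable stability of Hausdorff dimension — is precisely the argument the paper compresses into its one-sentence appeal to Theorem~\ref{theorem:dim_arcs}, the GIFS structure, and self-similarity. Your write-up just makes explicit (via Lemmas~\ref{lemma:arc_construction}, \ref{lemma:arc_exits_triangle}, \ref{lemma:projections_of_arcs_between_exits}) the details the paper leaves implicit, so nothing further is needed.
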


\begin{proof}
The assertion (a) is implied by the fact that $\linfw$ is a dendrite. Theorem \ref{theorem:infinite_arcs_in_fractal}  yields (b), and (d) is due to Corollary \ref{cor:points_notangent_dense}. The  dimension of the arcs follows from Theorem \ref{theorem:dim_arcs} in the previous section, the GIFS construction of the fractals, the structure of the arcs (the arc between two exits of the fractal is a finite union of arcs that connect exits of triangles), and the self-similarity properties of the fractal.
\end{proof}

\section{On partially blocked triangular labyrinth patterns systems and fractals}\label{sec:partially_blocked}

First, let us note that it is impossible to have a triangular labyrinth patterns system that is not $\pi$-blocked, for all $\pi \in \P$.
This is due to the fact that triangular labyrinth fractals are dendrites, or, even more basically, this would contradict the tree property of labyrinth patterns. Therefore, triangular labyrinth patterns systems are either globally blocked or partially blocked, i.e., $\pi$-blocked for some bot not all three pairs $\pi$. In this section we illustrate what is meant by a ``partially blocked triangular labyrinth patterns system''.
We call the triangular labyrinth fractals obtained from such partially blocked patterns systems \emph{partially blocked triangular labyrinth fractals}. 

First, let us analyse the case when a triangular labyrinth patterns system is blocked in only one direction, i.e., there exists a unique $\pi \in \P$ such that the labyrinth patterns system is $\pi$-blocked. Therefore, let us assume, w.l.o.g., that the triangular labyrinth patterns system is $\{2,3\}$-blocked.
As a consequence of the fact that the system is not $\{1,2\}$-blocked, we get :

\begin{align*}
\mathbf E_1^\w=\alpha\mathbf{P}_2 + (1-\alpha)\mathbf{P}_3 & \text{\;  and \;} \mathbf E_1^\y=(1-\alpha)\mathbf{P}_2+ \alpha\mathbf{P}_3, \text { and}\\
\mathbf E_2^\w=\alpha\mathbf{P}_1 + (1-\alpha)\mathbf{P}_3 & \text{\;  and \;} \mathbf E_2^\y=(1-\alpha)\mathbf{P}_1 + \alpha\mathbf{P}_3, 
\end{align*}
for some $\alpha \in (0,1)$. The fact that the system is, in addition, not $\{1,3\}$-blocked implies
that the third exit of the fractals generated by the system satisfy 
\begin{align*}
\mathbf E_3^\w=\alpha\mathbf{P}_1 + (1-\alpha)\mathbf{P}_2 & \text{\;  and \;} \mathbf E_3^\y=(1-\alpha)\mathbf{P}_1+ \alpha\mathbf{P}_2,
\end{align*}
where the value of $\alpha$ is the same as in the first two lines of fromul{\ae} above.

Moreover, since $\linfw$ and $\linfy$ are dendrites, and taking into account Proposition \ref{prop:exists_C},
we conclude that $a_{\{2,3\}}^\w = a_{\{1,2\}}^\w \cup a_{\{1,3\}}^\w$ and
$a_{\{2,3\}}^\y = a_{\{1,2\}}^\y \cup a_{\{1,3\}}^\y$.

\begin{remark}
\label{remark:cores}
It easily follows that the cores $\W^0$ and $\Y^0$ (see Proposition \ref{prop:exists_W_star}), respectively, of the white and yellow patterns of a $m$-triangular labyrinth patterns system that is $\{2,3\}$-blocked, but is neither $\{1,2\}$- nor $\{1,3\}$-blocked, have in essence the same shape, namely the white pattern consisting only of the $k$-th strip of type $2$ and the $(m-1-k)$-th strip of type $3$, and   the corresponding yellow pattern consisting of  of the $(m-1-k)$-th strip of type $2$ and the $k$-th strip of type $3$, with $k \in \{1,\dots,m-2 \}$. 
\end{remark}

\begin{example}\label{Beispiel2}
Let $m=4$ and consider the $4$-triangular labyrinth patterns system
\begin{align*}
\W_{1}:= & \{T_{4}({\bf k}): {\bf k} \in \{(0, 0, 3),(0, 2, 1),(1, 0, 2), (1, 1, 1), (1, 2, 0), (2, 0, 1)\}\}, \\
\W'_{1} := & \{T'_{4}({\bf k}): {\bf k} \in \{(0, 0, 2),(0, 1, 1), (0, 2, 0), (1, 0, 1)\}\}, \\
\Y_{1}:= & \{T_{4}({\bf k}): {\bf k} \in \{(0, 1, 2), (1, 0, 2), (1, 1, 1), (2, 1, 0)\}\}, \\
\Y'_{1} := & \{T'_{4}({\bf k}): {\bf k} \in \{(0, 0, 2), (0, 1, 1), (1, 1, 0), (2, 0, 0)\}\}.
\end{align*}
Figure~\ref{fig:Bsp2WY1} shows the white and yellow triangles of level $1$.
\begin{figure}[h]
\begin{center}
\includegraphics[width=0.2\textwidth]{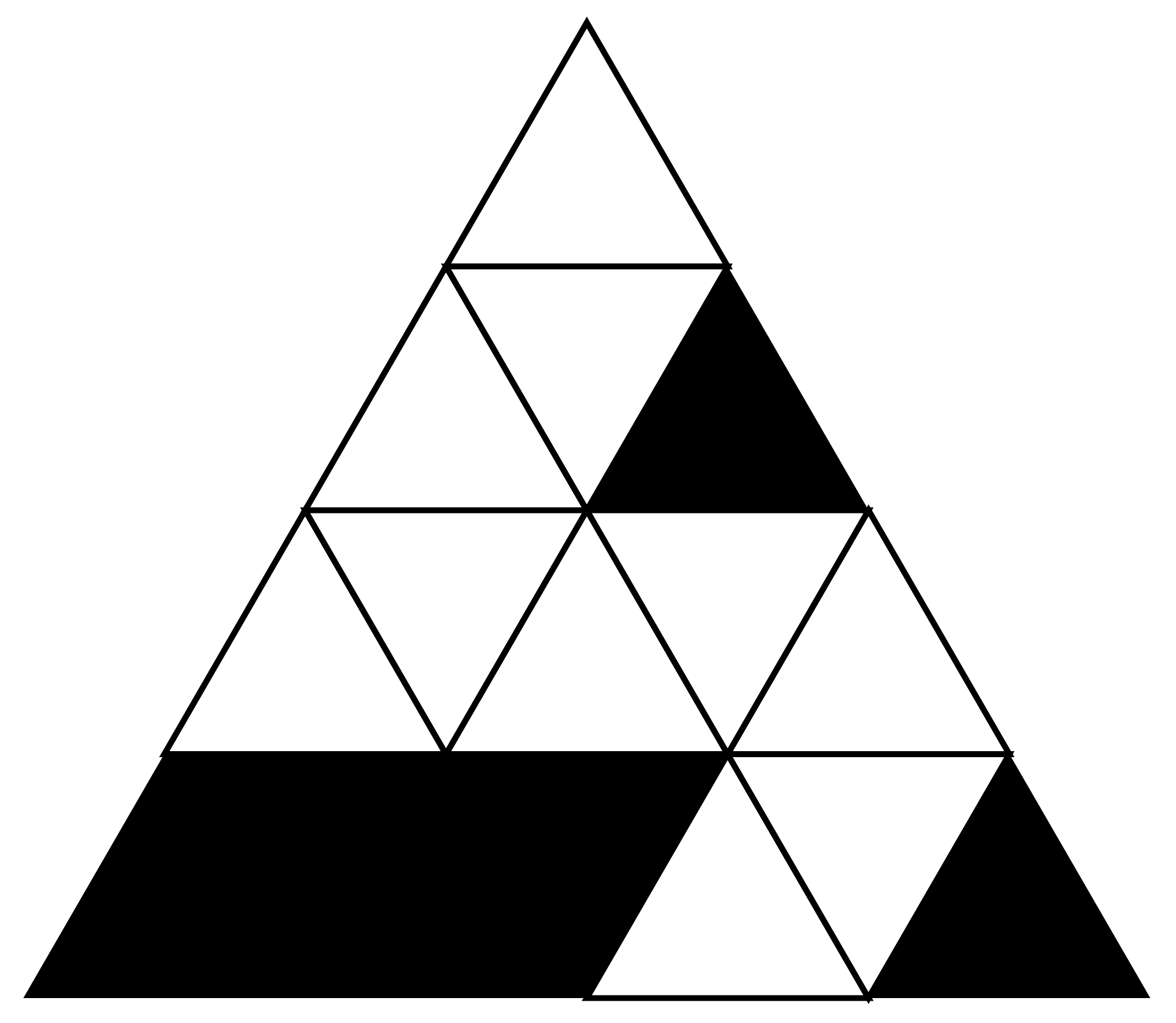}
\hspace{3cm}
\includegraphics[width=0.2\textwidth]{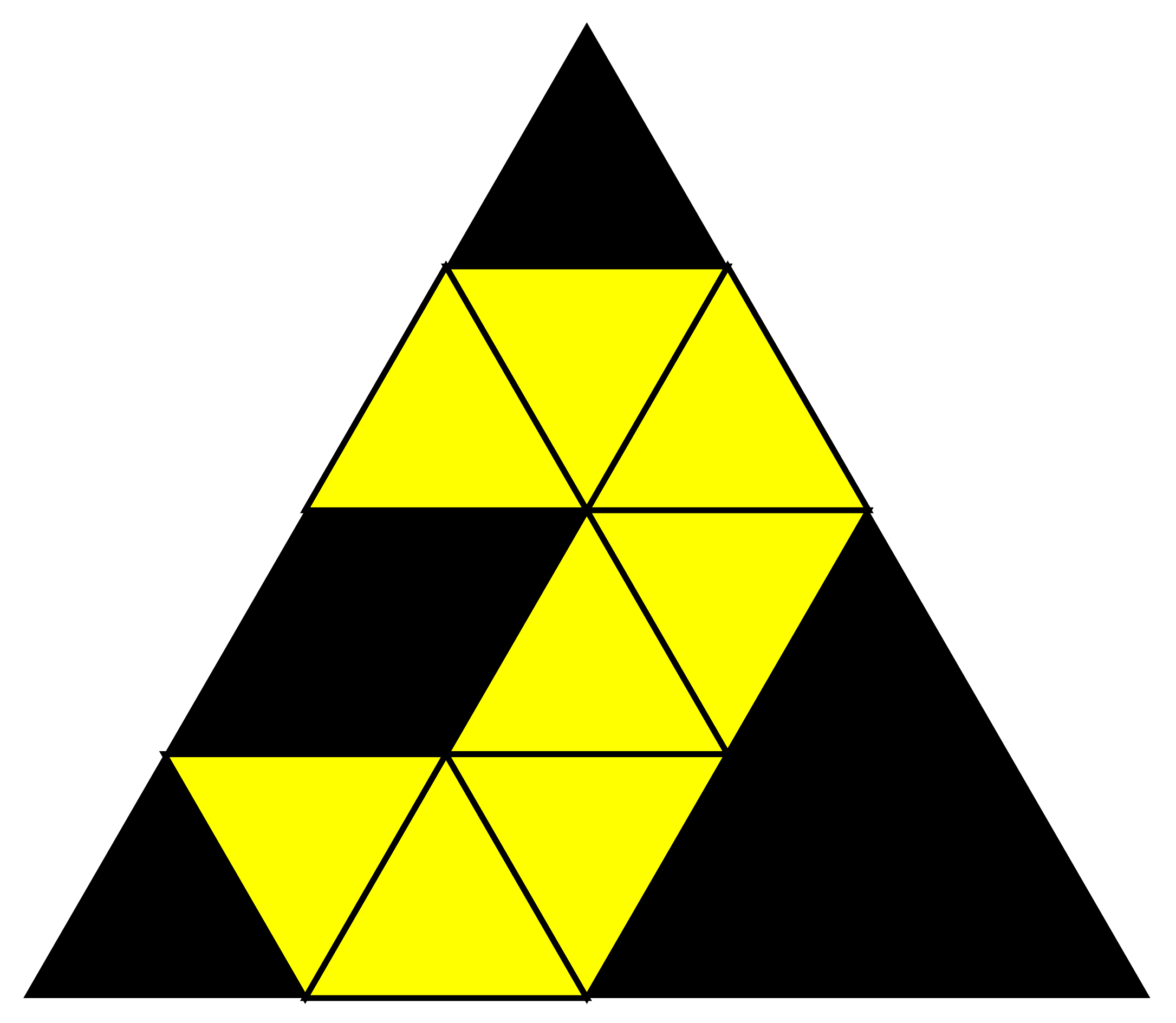}
\caption{On the left we see $\W_{1} \cup \W'_1$ and on the right we have $\Y_{1} \cup \Y'_1$ 
discussed in Example~\ref{Beispiel2}.}
\label{fig:Bsp2WY1}
\end{center}
\end{figure}
We see that $e_1^\w=T_4(0,2,1)$, $e_2^\w=T_4(2,0,1)$,  $e_3^\w=T_4(1,2,0)$ and 
$e_1^\y=T_4(0,1,2)$, $e_2^\y=T_4(1,0,2)$,  $e_3^\y=T_4(2,1,0)$.
Observe that $\W_{1}$ contains $T_4(1, 1, 1)$ and $\W'_{1}$ contains $T'_4(0, 1, 1)$ as well as
$T'_4(1, 0, 1)$,  i.e., $\W_{1} \cup \W'_{1}$ contains a strip of type $3$. Similarly, 
$\Y_{1} \cup \Y'_{1}$ contain a strip of of type $3$, too since $T'_{4}(0, 1, 1)$ is included in 
$\Y'_{1}$. In particular, our triangular labyrinth patterns system is not $\{1,2\}$-blocked. Without difficulties we also see that it is not $\{1,3\}$-blocked. Thus, it is only $\{2,3\}$-blocked.
The white and yellow triangles of level $2$ and $4$ are shown in 
Figure~\ref{fig:Bsp2WY23}.
\begin{figure}
\begin{center}
\includegraphics[width=0.3\textwidth]{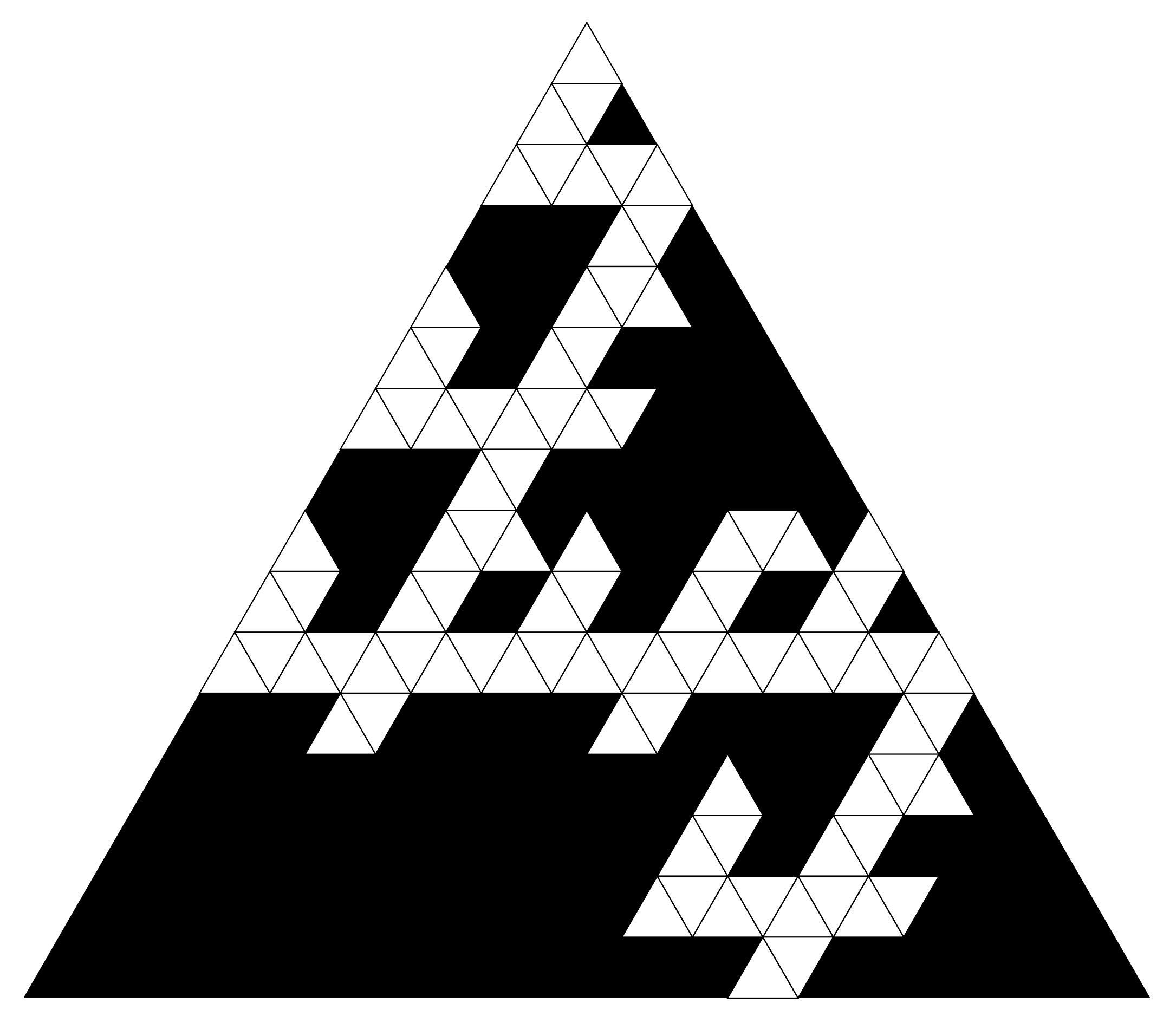}
\hspace{2cm}
\includegraphics[width=0.3\textwidth]{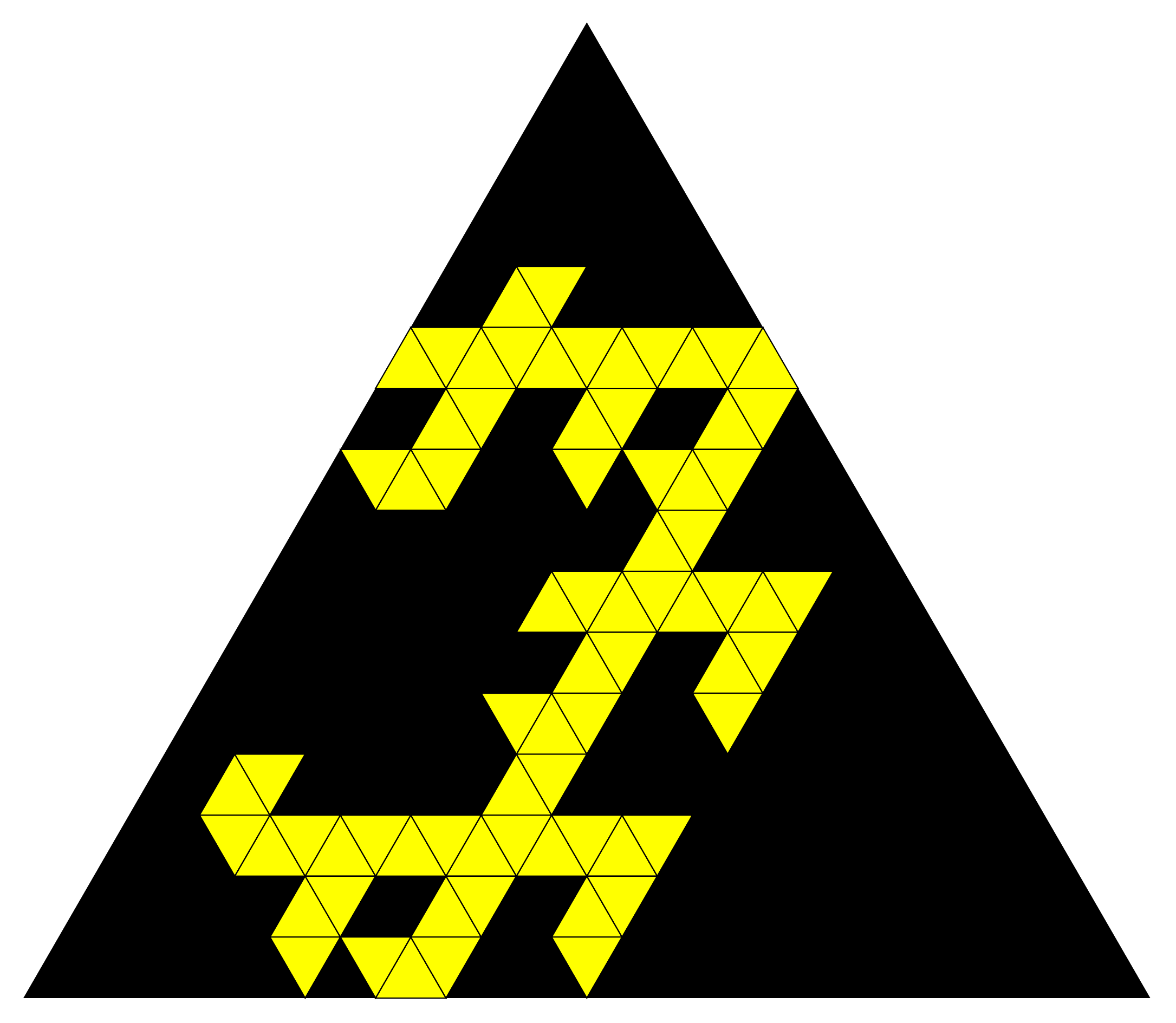}
\includegraphics[width=0.3\textwidth]{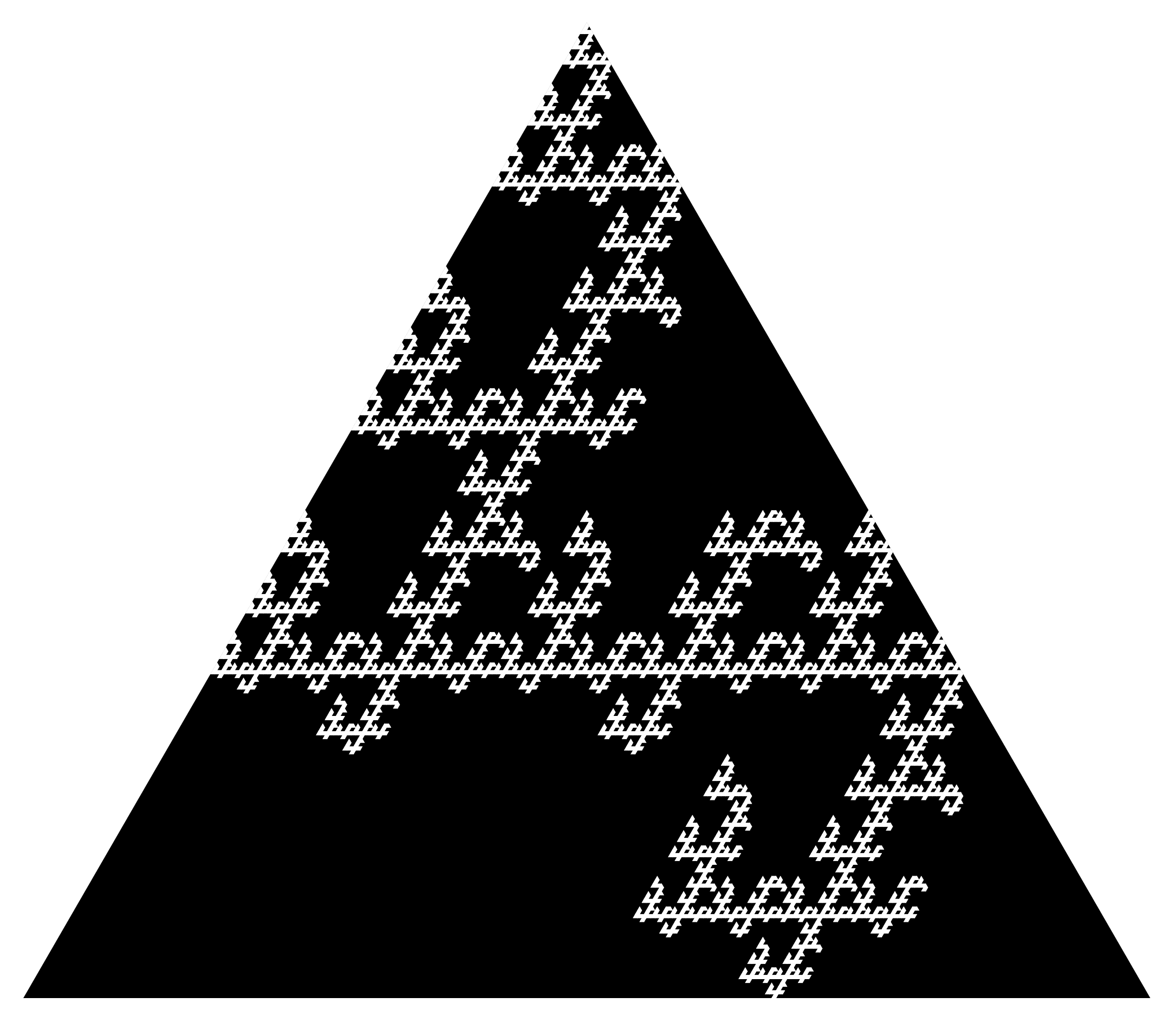}
\hspace{2cm}
\includegraphics[width=0.3\textwidth]{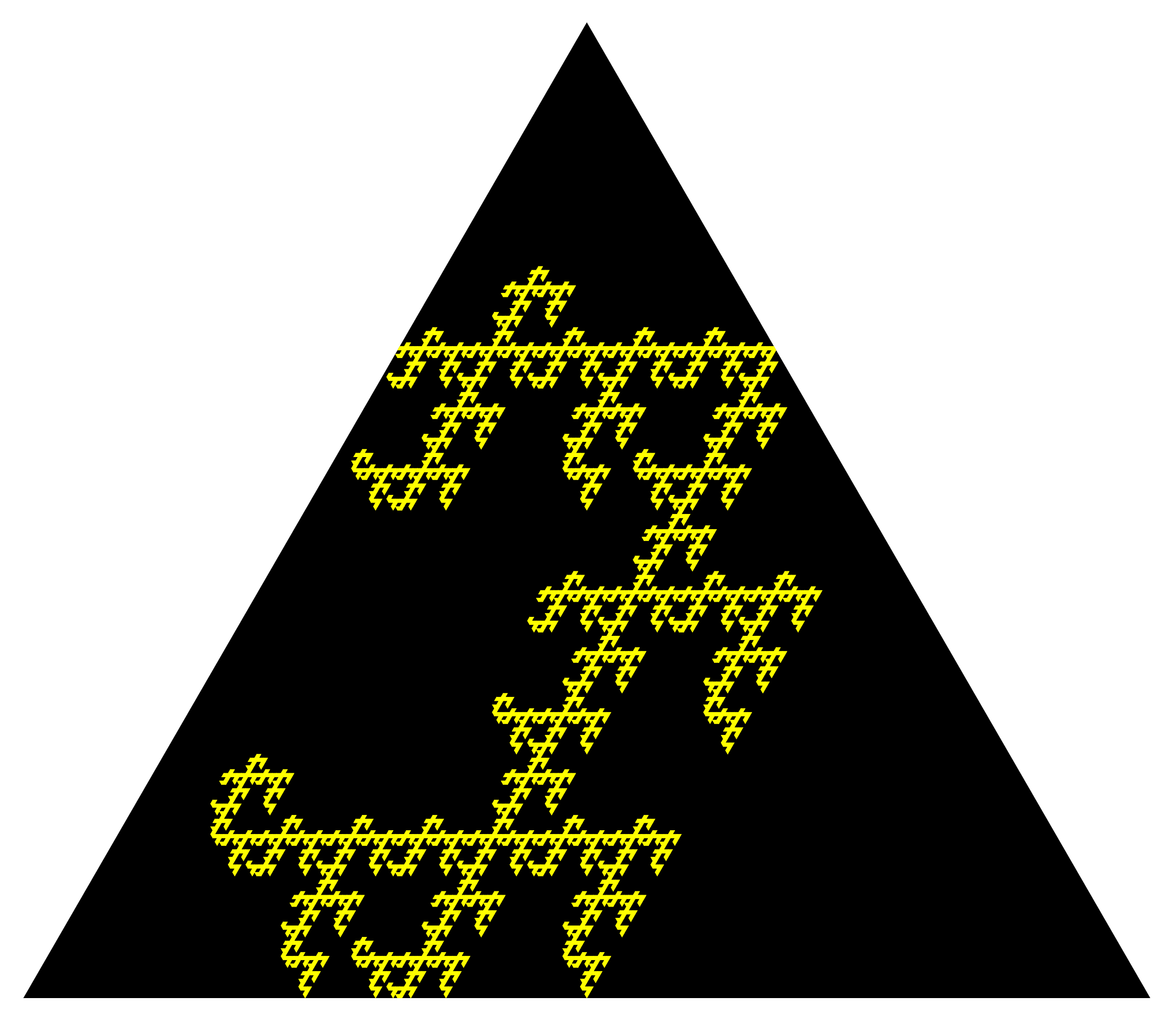}
\caption{The triangular labyrinth patterns system from Example~\ref{Beispiel2}: $\W_{2} \cup \W'_2$ (top left), $\Y_{2} \cup \Y'_2$ (top right), $\W_{4} \cup \W'_4$ (bottom left), $\Y_{4} \cup \Y'_4$ (bottom right).}\label{fig:Bsp2WY23}
\end{center}
\end{figure}
\end{example}

\begin{lemma}\label{lemma:2x_ungeblockt_paths_exits}
Let $\pi=\{i,j\}$ be the unique $\pi \in \P$ such that the triangular labyrinth patterns system is $\pi$-blocked. Then all arcs between exits of $\linfw$ or $\linfy$ have finite length. Moreover, the Hausdorff dimension of any arc between exits of $\linfw$ or $\linfy$ is one.
\end{lemma}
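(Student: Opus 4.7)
My plan is to show that in this partially blocked setting the arcs $a^\w_{\{1,2\}}$, $a^\w_{\{1,3\}}$, $a^\y_{\{1,2\}}$, $a^\y_{\{1,3\}}$ are in fact straight line segments, while $a^\w_{\{2,3\}}=a^\w_{\{1,2\}}\cup a^\w_{\{1,3\}}$ and $a^\y_{\{2,3\}}=a^\y_{\{1,2\}}\cup a^\y_{\{1,3\}}$ (the decomposition noted right before the lemma) are unions of two segments. Both conclusions of the lemma then follow immediately.

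Assume without loss of generality that $\pi=\{2,3\}$. Combining the exits property with the not-$\{1,2\}$- and not-$\{1,3\}$-blocked assumptions forces the defining parameters to satisfy $k_1=k_2=:k$ and $k_3=m-1-k$. Applying the fixed-point characterisation of exits from Lemma~\ref{lemma:exits_are_fixpoints}(b) to each of the six exit triangles, I would verify that every exit is determined by a single parameter $\alpha=k/(m-1)\in(0,1)$. The crucial geometric feature is that $\mathbf{E}_1^\w$ and $\mathbf{E}_2^\w$ share the same third homogeneous coordinate $1-\alpha$, so the straight segment $s^\w_{\{1,2\}}$ joining them is parallel to $\varrho_3$; analogously $\mathbf{E}_1^\w$ and $\mathbf{E}_3^\w$ share the second coordinate $\alpha$, so $s^\w_{\{1,3\}}$ is parallel to $\varrho_2$. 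The analogous statements hold for the yellow segments $s^\y_{\{1,2\}},s^\y_{\{1,3\}}$ with $\alpha$ replaced by $1-\alpha$.

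The main step is to show $s^\w_{\{1,2\}}\subset\linfw$, which I would do by a simultaneous induction on $n$ establishing $s^\w_{\{1,2\}}\subset L^\w_n$ and $s^\y_{\{1,2\}}\subset L^\y_n$ for every $n\ge 1$. For the base case $n=1$, the not-$\{1,2\}$-blocked condition forces the white path of type $\{1,2\}$ to lie entirely inside the strip of type $3$ with $k_3=m-1-k$. This strip contains exactly $2k+1$ triangles and the path itself has length $2k+1$, so the tree property implies that every triangle of the strip belongs to $\W\cup\W'$; since $s^\w_{\{1,2\}}$ lies horizontally inside that strip, we conclude $s^\w_{\{1,2\}}\subset L^\w_1$, and the yellow case is symmetric. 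For the inductive step I would compute the action of $P_W$ on the third homogeneous coordinate to show that, on an upright white triangle $W$ of the strip, $s^\w_{\{1,2\}}\cap W=P_W(s^\w_{\{1,2\}})$, whereas on an upside-down white triangle $W'$ of the strip, $s^\w_{\{1,2\}}\cap W'=P_{W'}(s^\y_{\{1,2\}})$. The substitution rule defining $L^\w_{n+1}$ from $L^\w_n$ and $L^\y_n$ then allows the two inductive hypotheses to combine, yielding $s^\w_{\{1,2\}}\subset L^\w_{n+1}$ and symmetrically $s^\y_{\{1,2\}}\subset L^\y_{n+1}$.

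Once $s^\w_{\{1,2\}}\subset\linfw$ is established, uniqueness of arcs in the dendrite $\linfw$ (Theorem~\ref{theo:dendrite}) forces $a^\w_{\{1,2\}}=s^\w_{\{1,2\}}$, which has length $\alpha$ and Hausdorff dimension $1$. The argument for $a^\w_{\{1,3\}}$ is verbatim, while $a^\w_{\{2,3\}}=a^\w_{\{1,2\}}\cup a^\w_{\{1,3\}}$ is a union of two segments of total length $1$, hence also of finite length and Hausdorff dimension $1$; the yellow fractal is treated identically. The main obstacle will be the inductive step, where one must track how the substitution rule (upright white triangles receive a copy of the white pattern; upside-down ones a copy of the yellow pattern) exchanges the two ``horizontal'' heights $1-\alpha$ and $\alpha$ between the white and yellow fractals. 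The coordinate computations for $P_W$ and $P_{W'}$ on the third homogeneous coordinate must be carried out cleanly enough to expose this exchange.
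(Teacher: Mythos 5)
Your proposal is correct, and it reaches the lemma by a somewhat different mechanism than the paper. The paper's (sketched) proof works top--down: by Remark~\ref{remark:cores} the cores of the two patterns are exactly two full strips, so at every level the path between the exits of type $1$ and $2$ (respectively $1$ and $3$) is confined to a single strip of type $3$ (respectively type $2$), and the arc construction of Lemma~\ref{lemma:arc_construction} then traps $a^{\w}_{\{1,2\}}$ and $a^{\w}_{\{1,3\}}$ in nested strips of width tending to $0$, forcing them to be line segments parallel to $\varrho_3$ and $\varrho_2$; Proposition~\ref{prop:exists_C} handles $a^{\w}_{\{2,3\}}$. You instead work bottom--up: you identify the candidate segments explicitly via the fixed-point description of the exits (Lemma~\ref{lemma:exits_are_fixpoints}), and prove by a simultaneous substitution induction that the white segment at height $1-\alpha$ and the yellow segment at height $\alpha$ lie in every $L^{\w}_n$ and $L^{\y}_n$; the computation that an upright strip triangle reproduces the white segment while an upside-down one imports the yellow segment (and vice versa) is exactly right and is what makes the joint induction close, after which uniqueness of arcs in a dendrite identifies the segments with $a^{\w}_{\{1,2\}}$, $a^{\w}_{\{1,3\}}$, and the decomposition $a^{\w}_{\{2,3\}}=a^{\w}_{\{1,2\}}\cup a^{\w}_{\{1,3\}}$ finishes the proof. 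Your route replaces the paper's ``it immediately follows'' by a verifiable invariance argument at the cost of the homogeneous-coordinate computations, while the paper's route is shorter because it reuses Lemma~\ref{lemma:arc_construction} and Remark~\ref{remark:cores} directly; incidentally, your coordinates $\mathbf{E}_3^{\w}=(1-\alpha)\mathbf{P}_1+\alpha\mathbf{P}_2$ are the consistent ones (the displayed formula for $\mathbf{E}_3^{\w}$ in the text preceding the lemma has $\alpha$ and $1-\alpha$ interchanged). One small point to tidy in the base case: the correct order of reasoning is that the $2k+1$ triangles of the strip form a chain whose two ends are $e_1^{\w}$ and $e_2^{\w}$, so a path between these exits that never leaves the strip must pass through every triangle of the chain; it is this chain structure (rather than the tree property, and prior to knowing the path length) that forces the whole strip to be white, after which the containment of the segment in $L^{\w}_1$ follows as you say.
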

\begin{proof} 
We sketch a constructive proof. W.l.o.g, let $\pi=\{2,3\} $ (as illustrated in Figure \ref{fig:Bsp2WY1}). Based on Remark \ref{remark:cores}, the arc construction given in Lemma \ref{lemma:arc_construction}, and the definition of the fractals' exits, it immediately follows that the arc in $\linfw$ between $\mathbf E_1^\w$ and $\mathbf E_2^\w$ is just a straight line segment included in a parallel to $\rho_3$, and the arc in $\linfw$ between $\mathbf E_1^\w$ and $E_3^\w$ is a straight line segment which is included in a parallel to $\rho_2$. Moreover, due to Proposition \ref{prop:exists_C}, it follows that the arc  in $\linfw$ that connects $\mathbf E_2^\w$ and $\mathbf E_3^\w$ is the union of the two straight line segments defined by $\mathbf E_1^\w$ and $\mathbf E_2^\w$, and  by $\mathbf E_1^\w$ and $\mathbf E_3^\w$, respectively. The definition of Hausdorff dimension and the relation between Hausdorff measure and $1$-dimensional Lebesgue measure immediately yield that the Hausdorff dimension of arcs between exits of the fractal equals one. This completes the proof.
\end{proof}

\begin{proposition}\label{prop:2x_ungeblockt_finite_arcs}
Let $\pi=\{i,j\}$ be the unique $\pi \in \P$ such that the triangular labyrinth patterns system is $\pi$-blocked. Then all arcs in $\linfw$ and $\linfy$ have  finite length. 
\end{proposition}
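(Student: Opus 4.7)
My strategy is to bootstrap the finite-length conclusion for arcs between exits, already secured by Lemma~\ref{lemma:2x_ungeblockt_paths_exits}, to arbitrary arcs using a self-similar decomposition and a geometric-series argument.

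\emph{Setup.} By Lemma~\ref{lemma:2x_ungeblockt_paths_exits}, each of the six arcs $a^\w_\pi, a^\y_\pi$ with $\pi \in \P$ is a straight line segment or a union of two such segments in $T_1$, so the constant $C := \max\{\lambda(a^\w_\pi), \lambda(a^\y_\pi) : \pi \in \P\}$ is finite (in fact $\leq 2$). By Lemma~\ref{lemma:projections_of_arcs_between_exits} together with the fact that each $P_W$ is a similarity of ratio $m^{-n}$, the arc between any two distinct exits of a triangle $W \in \W_n \cup \W'_n \cup \Y_n \cup \Y'_n$ has length at most $Cm^{-n}$. Let $K := \max(\#(\W \cup \W'), \#(\Y \cup \Y'))$, which trivially bounds the length of every path in $\G(\W_1 \cup \W'_1)$ or $\G(\Y_1 \cup \Y'_1)$.

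\emph{Bounding point-to-exit arcs.} Given $\mathbf X \in \linfw$ with $\mathbf X \ne \mathbf E_i^\w$, let $p_1$ be the path in $\G(\W_1 \cup \W'_1)$ from $W_1(\mathbf X)$ to $e_i^\w(1)$, of length $k \leq K$; by Lemmas~\ref{lemma:arc_construction} and~\ref{lemma:arc_exits_triangle}, it decomposes $a(\mathbf X, \mathbf E_i^\w)$ into $k$ pieces. Because $\mathbf E_i^\w$ is the exit of type $i$ of $e_i^\w(1)$ by Lemma~\ref{lemma:exits_are_fixpoints}, the piece inside $e_i^\w(1)$ is an arc between two exits of that triangle, of length at most $C/m$; the $k - 2$ interior pieces are, analogously, arcs between two exits of triangles in $\W_1 \cup \W'_1$, each of length at most $C/m$; finally, the piece inside $W_1(\mathbf X)$ is an arc from $\mathbf X$ to an exit of $W_1(\mathbf X)$ which, under $P_{W_1(\mathbf X)}^{-1}$, equals a point-to-exit arc in $\linfw$ or $\linfy$ scaled by $m$ (the case $k = 1$ is handled by the same observation applied to $P_{e_i^\w(1)}$). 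Letting $N$ denote the supremum of $\lambda(a(\mathbf X, \mathbf E))$ over all pairs $(\mathbf X, \mathbf E)$ with $\mathbf E$ an exit of $\linfw$ or $\linfy$ and $\mathbf X \ne \mathbf E$ a point of the same fractal, the decomposition yields $N \leq N/m + (K - 1)C/m$. Since the exits and corners properties in Definition~\ref{def:triangular labyrinth patterns system} force $m \geq 3$ (for $m \leq 2$ the three exits would each be distinct corner triangles, contradicting the corners property), solving gives $N \leq (K - 1)C/(m - 1) < \infty$, provided $N$ is known to be finite a priori.

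\emph{Arbitrary arcs and main obstacle.} For any two distinct points $\mathbf X, \mathbf Y \in \linfw$, the level-$1$ decomposition of $a(\mathbf X, \mathbf Y)$ produces two end pieces (point-to-exit arcs of level-$1$ triangles, each of length $\leq N/m$) and at most $K - 2$ interior pieces (each of length $\leq C/m$), so $\lambda(a(\mathbf X, \mathbf Y)) \leq 2N/m + (K - 2)C/m < \infty$; the same reasoning applies on $\linfy$. The key technical obstacle is the a priori finiteness of $N$, since the self-consistency inequality $N \leq N/m + (K - 1)C/m$ becomes vacuous in the infinite case. I would overcome this by iterating the decomposition $n$ times, producing the explicit bound
\[
\lambda(a(\mathbf X, \mathbf E_i^\w)) \;\le\; \frac{\ell^{(n)}}{m^n} \;+\; \frac{(K - 1)C}{m} \sum_{j=0}^{n - 1} m^{-j},
\]
where $\ell^{(n)}$ is the point-to-exit arc length arising after $n$ iterations. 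The geometric sum converges to $(K - 1)C/(m - 1)$; the delicate step is to control the residual $\ell^{(n)}/m^n$, which I would do by introducing an approximating sequence $N_n$ via truncation of the arc construction of Lemma~\ref{lemma:arc_construction} (replacing the innermost pieces by trivial estimates, exploiting that their enclosing triangles have Euclidean diameter $m^{-n}$), verifying the recursion $N_n \leq N_{n - 1}/m + (K - 1)C/m$, and concluding $N = \lim_n N_n \leq (K - 1)C/(m - 1)$ by passing to the limit. An alternative would be to invoke the spectral structure of the global path matrix $M$: in the partially blocked case Lemma~\ref{lemma:row_positive} implies that $M_\w + M_\y - I_3$ has dominant eigenvalue $m$, placing the associated GIFS for arcs between exits in a regime where $1$-dimensional Lebesgue measures remain finite.
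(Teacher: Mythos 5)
Your proof is correct in its main line, but it takes a genuinely different route from the paper. The paper's argument is a geometric case analysis tailored to the one-direction-blocked situation: it uses that the core of $\linfw$ is a union of two straight line segments (Remark~\ref{remark:cores}, Lemma~\ref{lemma:2x_ungeblockt_paths_exits}), writes the arc between two triangle exits as a finite union of exit-to-exit arcs of level-$n$ triangles, treats a non-exit point of the core by sandwiching the length of the arc to it between the lengths of arcs to nearby triangle exits lying on the same segment, and reduces a point on a branch to the previous case inside a scaled copy $W\cap\linfw$. Your argument is instead a renewal-type self-similar decomposition: bound the interior pieces of the level-$1$ decomposition by $C/m$ via Lemmas~\ref{lemma:arc_exits_triangle} and~\ref{lemma:projections_of_arcs_between_exits}, recurse into the end triangle, and close a geometric series. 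What this buys is generality: it only uses $C=\max_{\pi\in\P}\{\lambda(a^\w_\pi),\lambda(a^\y_\pi)\}<\infty$, so it in fact proves the stronger statement that finiteness of the six exit-to-exit arcs already forces finiteness of all arcs, with no appeal to those arcs being straight segments. The price is the a priori finiteness of $N$, which the paper's sandwiching argument never has to confront.

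Two caveats on your handling of that crux. First, the parenthetical about ``replacing the innermost pieces by trivial estimates, exploiting that their enclosing triangles have Euclidean diameter $m^{-n}$'' is misleading: the length of the arc piece inside a triangle of diameter $m^{-n}$ cannot be bounded by that diameter (this is exactly what fails in the globally blocked case). What does work, and what your recursion formalises, is to discard the innermost piece entirely: let $a_n$ be the subarc of $a(\mathbf X,\mathbf E)$ lying outside $W_n(\mathbf X)$, prove $N_n\le N_{n-1}/m+(K-1)C/m$ with each $N_n$ finite by induction, and recover $\lambda\bigl(a(\mathbf X,\mathbf E)\bigr)=\lim_n\lambda(a_n)$ by continuity of measure from below, since the $a_n$ increase to $a(\mathbf X,\mathbf E)\setminus\{\mathbf X\}$; the shrinking diameter of $W_n(\mathbf X)$ is used only to guarantee this exhaustion, not as a length estimate. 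Second, the proposed spectral alternative should be dropped: a dominant eigenvalue equal to $m$ (Hausdorff dimension $1$) does not imply finite one-dimensional Lebesgue measure --- the paper's Proposition~\ref{prop:1x_ungeblockt} exhibits precisely arcs of dimension possibly $1$ whose length is infinite by \cite[Theorem 5]{MW}. Fortunately your primary route does not rely on it.
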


\begin{proof} ({\it Sketch.}) W.l.o.g., we prove the above assertion for $\linfw$, the proof for $\linfy$ works in the analogous way. Let ${\mathbf X},{\mathbf Y} \in \linfw $,  ${\mathbf X}\ne {\mathbf Y}$. We consider several cases corresponding to the possible positions of the points ${\mathbf X}$ and $ {\mathbf Y}$. Due to the structure of the triangular labyrinth fractals it is sufficient to consider the following cases.
\\{\it Case 1.} The case when ${\mathbf X}$ and $ {\mathbf Y}$ are exits of $\linfw$ is covered by Lemma \ref{lemma:2x_ungeblockt_paths_exits}.
\\{\it Case 2.} If ${\mathbf X}$ and $ {\mathbf Y}$ are exits of triangles of level  $n$ in $\W_n \cup \W'_n$, then one can easily obtain that the arc connecting them is the finite union of arcs in $\linfw$ between exits of triangles of level $n$, which all have finite length, by the self similar structure of the fractal and Lemma \ref{lemma:2x_ungeblockt_paths_exits}.
\\{\it Case 3.} One of the points is an exit of $\linfw$, say, w.l.o.g, ${\mathbf X}={\mathbf E}_i^\w$ and the other is a point in the core of the fractal dendrite, $\mathrm{core}(\linfw)$, which is not an exit of any triangle of any level $n\ge 1$. 
In particular, in this case of a triangular labyrinth patterns system that is blocked only with respect to one direction, say $\pi=\{2,3\}$, the core is just $a_{\{1,2\}}^\w \cup a_{\{1,3\}}^\w $, i.e., the core is here the union of two straight line segments. In this case, one can find, based on the structure of the triangular labyrinth fractals, two sequences $\{ {\mathbf Z}_n^{(1)}\}_{n\ge 1}$ and $\{ {\mathbf Z}_n^{(2)}\}_{n\ge 1}$, of  points that are exits of triangles of level $n$,  such that ${\mathbf Y}$ lies, for every $n \ge 1$, on the line segment defined by 
${\mathbf Z}_n^{(1)}$ and ${\mathbf Z}_n^{(2)}$ (which is contained in the fractal) and 
\[\lambda \big(a({\mathbf X}, {\mathbf Z}_n^{(1)} )\big)\le \lambda \big(a({\mathbf X},{\mathbf Y})\big) \le  
\lambda \big(a({\mathbf X}, {\mathbf Z}_n^{(2)} )\big). \] Thus, by the result for case 2, the arc that connects ${\mathbf X}$ and ${\mathbf Y}$ in $\linfw$ has finite length.
\\ {\it Case 4.} ${\mathbf X}\in \linfw \setminus \mathrm{core}(\linfw)$, i.e., ${\mathbf X}$ lies on a ``branch'' of the dendrite outside its core,  and ${\mathbf Y}$ is one of the fractal's exits. Since the branch containing ${\mathbf X}$ starts from a ramification point that lies either on the arc (straight line segment) in $\linfw$ that connects ${\mathbf E}_1^\w$ and ${\mathbf E}_2^\w$, or on the one that connects  ${\mathbf E}_1^\w$ and ${\mathbf E}_3^\w$, let us assume, w.l.o.g., that the branch that contains ${\mathbf X}$ starts on the straight line that connects ${\mathbf E}_1^\w$ and ${\mathbf E}_2^\w$ and ${\mathbf Y}={\mathbf E}_1^\w$. An other choice for the position of the branch or of the exit ${\mathbf Y}$ would not essentially change  the proof. From the structure of $\mathrm{core}(\linfw)$  we see in this case that the only possible  ramification  points  in the fractal dendrite are among the exits of type $1$ of triangles of level $n$, for $n\ge 1$. If ${\mathbf X}$ is some exit of some triangle of level $n\ge 1$, then we are done. If this is not the case, there exists $n_0 \ge 1$ such that ${\mathbf X}$ lies in the interior of some triangle $W\in \W_{n_0} \cup \W'_{n_0}$, in the core of the fractal dendrite and at the same time scaled triangular labyrinth fractal $\linfw \cap W$, i.e. in $\mathrm {core}(\linfw \cap W)$. This, together with the result in case 3,  yields the desired result.
\end{proof}

In the following we give a more general result, which summarises the properties of the arcs in laybrinth fractals generated by a patterns systen blocked in only one direction.

\begin{theorem}\label{theorem:2x_unblocked_mainresult} Let $\pi=\{i,j\}$ be the unique $\pi \in \P$ such that the triangular labyrinth patterns system is $\pi$-blocked. Then both for $\linfw$ and for $\linfy$ the following hold:
\begin{enumerate}
\item[(a)] Every non-trivial arc in the fractal has finite length.
\item[(b)] All non-trivial arcs of the fractal have Hausdorff dimension one.
\item[(c)] For any non-trivial arc in the fractal there exists the tangent to the arc at almost all (with respect to Lebesgue measure) points of the arc.
\end{enumerate}
\end{theorem}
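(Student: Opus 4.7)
The plan is to derive each of the three assertions from results already established in the paper together with standard facts on rectifiable curves; most of the substantive work has been done in Lemma~\ref{lemma:2x_ungeblockt_paths_exits} and Proposition~\ref{prop:2x_ungeblockt_finite_arcs}. I would write the argument for $\linfw$; the case of $\linfy$ is completely analogous. By Theorem~\ref{theo:dendrite} the fractal is a dendrite, so every non-trivial arc is the unique arc between two distinct points of the fractal, whose existence is guaranteed by Lemma~\ref{lemma:arc_construction}.

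Assertion (a) is in fact already contained in Proposition~\ref{prop:2x_ungeblockt_finite_arcs}, so nothing new needs to be proved; I would simply cite it.

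For (b), let $a$ be a non-trivial arc with distinct endpoints $\mathbf X, \mathbf Y$. From (a) it has finite $1$-dimensional Lebesgue measure, hence $\dim_H(a) \le 1$. Conversely, $a$ is homeomorphic to a non-degenerate compact interval, so it is a non-degenerate continuum with topological dimension at least one, and the classical inequality $\dim_H \ge \dim_{\mathrm{top}}$ yields $\dim_H(a) \ge 1$. The two bounds force $\dim_H(a) = 1$.

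For (c), an arc $a$ of finite length $L$ is rectifiable and admits a $1$-Lipschitz arc-length parametrization $\gamma\colon [0,L] \to \RR^2$. By Rademacher's theorem $\gamma$ is differentiable at Lebesgue-almost every $t \in [0,L]$, and at each such point the derivative has norm one. Non-vanishing of $\gamma'(t_0)$ is equivalent, by a classical and elementary check, to the existence of a tangent at $\gamma(t_0)$ in Tricot's sense, since it says precisely that the secant directions from $\gamma(t_0)$ to nearby points of $a$ converge to the direction of $\gamma'(t_0)$. The main obstacle, which is rather minor, is interpreting ``almost all points of the arc'' in the correct sense: the natural measure on $a$ is its arc-length (equivalently, its $\mathcal H^1$-restriction), and one must translate the Lebesgue-a.e.\ differentiability on $[0,L]$ into an a.e.\ statement on $a$. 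This is immediate because $\gamma$ pushes Lebesgue measure on $[0,L]$ forward to arc-length measure on $a$, and a Lipschitz map sends Lebesgue-null sets to arc-length null sets.
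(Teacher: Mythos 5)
Your proposal is correct and follows essentially the same route as the paper: assertion (a) is exactly Proposition~\ref{prop:2x_ungeblockt_finite_arcs}, (b) follows from finite length together with the standard lower bound $\dim_H\ge 1$ for a non-degenerate arc, and (c) follows from rectifiability. The only difference is cosmetic: where the paper simply cites the theorem in Tricot's book \cite[Chapter 7.1]{Tricot_book} that a curve of finite length has a tangent at almost every point, you inline its standard proof via the arc-length parametrization (a legitimate elaboration, since the arc is simple, so its parametrization is a homeomorphism and parameter-a.e.\ differentiability with unit speed transfers to Tricot tangency $\mathcal H^1$-a.e.\ on the arc).
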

\begin{proof}
Proposition \ref{prop:2x_ungeblockt_finite_arcs}  yields the assertion (a), which then leads to (b), based on the relationship between  Hausdorff and Lebesgue measure, and the definition of Hausdorff dimension (see, e.g., \cite{falconerbook}).
For any arc in the labyrinth fractal that connects two distinct points of the fractal, the existence of the tangent easily follows from the finite length of the arc and a theorem in the book of Tricot \cite[p.73, Chapter 7.1]{Tricot_book}, which shows (c).
\end{proof}

We herefrom see that the case when there exists a unique $\pi \in \P $ such that the triangular labyrinth patterns system is $\pi$-blocked looks rather  less interesting for research.

Now, we consider the  case when there exists a unique $\pi \in \P$ such that the triangular labyrinth patterns system is not $\pi$-blocked.
As in the case of globally blocked triangular
labyrinth fractals we are interested in the  GIFS that generates the arcs (see the end of Section~\ref{sec:arcs_TLF}). But now,
Theorem~\ref{theorem:matrix_primitive} does not hold anymore, which is equivalent to the fact that  the 
multigraph $\HH'$  is not strongly connected. 
In order to study such systems we have to analyse  the structure of the strongly connected components.

\begin{proposition}\label{prop:1x_ungeblockt}
Let $\pi$ be the unique element of $\P$ such that the triangular labyrinth patterns system is not $\pi$-blocked. Then $a_{\pi}^\w$ and $a_{\pi}^\y$ have finite length
while $a_{\pi'}^\w$ and $a_{\pi'}^\y$ have infinite length 
for $\pi' \in \P\setminus \{ \pi \} $.

\end{proposition}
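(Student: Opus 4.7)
Assume without loss of generality that $\pi = \{2,3\}$.

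For the finite-length claim, since the system is not $\{2,3\}$-blocked, Lemma \ref{lemma:row_positive} guarantees that the row of $M_\w + M_\y - I_3$ indexed by $\{2,3\}$ equals $(0,0,m)$, so every triangle in both the white and yellow paths of type $\{2,3\}$ is itself of type $\{2,3\}$. This forces both paths to stay inside a single strip of type $1$. Proposition \ref{prop:triangle_laby_system_order_n}, together with a straightforward induction, shows that the level-$n$ system $(\W_n\cup\W'_n,\Y_n\cup\Y'_n)$ remains not $\pi$-blocked for every $n$, so the level-$n$ path of type $\{2,3\}$ lies in a strip of type $1$ of vanishing width $\sqrt{3}/(2m^n)$. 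Taking the intersection over $n$, the arc $a_\pi^\w$ is a connected subset of a line parallel to $\varrho_1$ with endpoints $\mathbf{E}_2^\w$ and $\mathbf{E}_3^\w$, hence must coincide with the straight segment between them, and in particular has finite length. The same argument, with the roles of $\W$ and $\Y$ interchanged, handles $a_\pi^\y$.

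For the infinite-length claim, the strategy mirrors the proof of Theorem \ref{theorem:dim_arcs}. The GIFS multigraph $\HH'$ of Section \ref{sec:arcs_TLF} is no longer strongly connected: it decomposes into the absorbing unblocked component $C_\pi = \{a_\pi^\w, a_\pi^\y\}$ (no edges exit, because the $\pi$-row of $M$ is supported on $C_\pi$) and the blocked component $C_{\pi'}$ containing the other four vertices, with edges flowing only from $C_{\pi'}$ into $C_\pi$. Let $A'$ and $B'$ denote the $2\times 2$ restrictions of $M_\w$ and $M_\y$ to the rows and columns indexed by the two blocked pairs. The restriction $M_B$ of $M$ to $C_{\pi'}$ inherits the block shape $\bigl(\begin{smallmatrix} A' & A'-I_2 \\ B'-I_2 & B' \end{smallmatrix}\bigr)$; by the argument of Lemma \ref{eigenvalues} transplanted to this setting, the spectrum of $M_B$ is $\{\mu_1, \mu_2, 1, 1\}$, where $\mu_1 \geq \mu_2$ are the eigenvalues of the positive $2\times 2$ matrix $A'+B'-I_2 = \bigl(\begin{smallmatrix} a & b \\ d & e \end{smallmatrix}\bigr)$ obtained by restricting $M_\w+M_\y-I_3$ to the two blocked rows and columns. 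If we show that $\mu_1 > m$, then \cite[Theorem~3]{MW} applied to the sub-GIFS associated with $C_{\pi'}$ yields $\dim_H(a_{\pi'}^\w) = \log\mu_1/\log m > 1$, and hence $a_{\pi'}^\w$ has infinite $1$-dimensional Lebesgue measure; the argument for $a_{\pi'}^\y$ is entirely symmetric.

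The hard part is the strict inequality $\mu_1 > m$, the analogue for the blocked sub-block of what Theorem \ref{theorem:matrix_primitive} proves for the whole matrix in the globally blocked case. By Lemma \ref{lemma:row_positive} applied to both blocked rows, the six entries $a,b,c,d,e,f$ are positive integers satisfying $a+b+c \geq m+1$ and $d+e+f \geq m+1$; the Collatz-Wielandt bound then yields only $\mu_1 \geq \min(a+b,d+e) \geq m+1-\max(c,f)$, which is insufficient as soon as $\max(c,f) \geq 2$. Closing this gap requires revisiting the counting identities \eqref{equat4} from the proof of Lemma \ref{lemma:row_positive} and exploiting the structural asymmetry introduced by $\pi$ being the \emph{unique} unblocked direction: intuitively, every $\{2,3\}$-type triangle appearing in a blocked path of type $\pi'$ must be balanced by compensating $\{1,2\}$- or $\{1,3\}$-transitions elsewhere in that path (otherwise the path could not start on one side of the reference triangle and end on another), and these transitions inflate the off-diagonal entries $b$ and $d$ by enough to push $\mu_1$ strictly above $m$. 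This structural bookkeeping is expected to be the longest and most delicate step of the proof.
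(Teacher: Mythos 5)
Your finite-length half is fine: showing that the level-$n$ paths of type $\pi$ consist only of type-$\pi$ triangles and stay inside a single level-$n$ strip, so that $a_\pi^\w$ and $a_\pi^\y$ are straight line segments, is a correct (and more geometric) route; the paper instead reads off the eigenvalues $m$ and $1$ of the component $\HH'(1)$ of the arc-GIFS and remarks that a GIFS-free argument is also possible. The decomposition of the multigraph into the two components, with edges only from the blocked component into the unblocked one, also matches the paper.

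The infinite-length half, however, has a genuine gap, and it sits exactly at the step you defer. Your plan needs the strict inequality $\mu_1>m$ for the Perron value of $M'_\w+M'_\y-I_2$, but the constraints you actually have (diagonal entries $\geq m-1$ by the displacement identities, off-diagonal entries $\geq 1$ by Lemma~\ref{lemma:row_positive}) only give row sums $\geq m$, hence $\mu_1\geq m$; the configuration with diagonal $m-1$ and off-diagonal $1$ in both rows has $\mu_1=m$ exactly, so no amount of Collatz--Wielandt estimating will close the gap, and the ``compensating transitions'' heuristic is not a proof. The paper does not prove $\mu_1>m$ either: it establishes only $\mu_1\geq m$ and then treats both cases at once by invoking \cite[Theorem~5]{MW} for graph-directed constructions whose graph is \emph{not} strongly connected. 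In the case $\mu_1=m$ the four blocked arcs have Hausdorff dimension $1$, yet their one-dimensional measure is still infinite (non-$\sigma$-finite), because the blocked component has the same critical value $m$ as the unblocked component it maps into; this is the missing idea in your proposal. A related technical slip: you cannot apply \cite[Theorem~3]{MW} to ``the sub-GIFS associated with $C_{\pi'}$'' as if it were self-contained, since edges leave $C_{\pi'}$ into $C_\pi$, so the sets $a_{\pi'}^\w$, $a_{\pi'}^\y$ are unions that also involve $a_\pi^\w$, $a_\pi^\y$; one must use the non-strongly-connected version of the Mauldin--Williams results, which is precisely what makes the $\mu_1=m$ case tractable.
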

\begin{proof} 
Assume that $\pi=\{1,2\}$ (the other cases can be dealt with analogously). We want to apply \cite[Theorem 5]{MW}, thus we are interested in the GIFS that determines the arcs.
We have already noted that the global path matrix $M$
is the incidence matrix of the corresponding  multigraph $\HH'$ and that  $M$ is not primitive any more.
Due to Lemma~\ref{lemma:row_positive} and Proposition~\ref{prop:path_matrix} we know the shape of $M$.
In particular, in this setting, there exists a $k \in \{1,\ldots, m-2\}$ and non-negative $2 \times 2$ integer matrices 
$M'_\w$ and $M'_\y$ such that
\[M=\left(\begin{array}{c|c|c|c}
k+1 & \begin{array}{cc} 0 \quad & \quad  0 \end{array} & k & \begin{array}{cc} 0 \quad & \quad  0 \end{array} \\
\hline
\begin{array}{c} * \\ * \end{array} &   M'_\w   & \begin{array}{c} * \\ * \end{array} & M'_\w - I_2 \\
\hline
m-k-1 & \begin{array}{cc} 0\quad & \quad 0 \end{array} & m-k & \begin{array}{cc} 0 \quad & \quad  0 \end{array} \\
\hline
\begin{array}{c} * \\ * \end{array} & M'_\y- I_2 & \begin{array}{c} * \\ * \end{array} & M'_\y  
\end{array}\right),\]
where $I_2$ is the two-dimensional identity matrix and each $*$ represents a non-negative entry.
Since our system is $\{1,3\}$-blocked and $\{2,3\}$-blocked we know by Lemma~\ref{lemma:row_positive} that the sum of the second and the fifth row as well as the sum of the third and the sixth row of $M$ are strictly positive.
From this we conclude that $\HH'$ consists of two strongly connected components $\HH'(1), \HH'(2)$.
The first one, $\HH'(1)$, contains the first vertex $a_{\{1,2\}}^\w$ and the fourth vertex $a_{\{1,2\}}^\y$, and the second one, $\HH'(2)$, consists of the other four vertices $a_{\{1,3\}}^\w, a_{\{1,3\}}^\y, a_{\{2,3\}}^\w, a_{\{2,3\}}^\y$.
We also see that  the only edges between the strongly connected components start in $\HH'(2)$ and terminate in $\HH'(1)$.

Each strongly connected component corresponds to certain eigenvalues of $M$. 
For $\HH'(1)$ these are $m$ and $1$, since $(1,0,0,1,0,0)$ is a left eigenvector of $M$ with respect to $m$ and $(1,0,0,-1,0,0)^T$ is a right eigenvector of $M$ with respect to the eigenvalue $1$.  Therefore, the Hausdorff dimension of the corresponding two arcs is $1$ and they are of finite length. (This part can also be proven without using GIFS.) 
In order to estimate the eigenvalues for $\HH'(2),$ we study the matrix 
\[M'=\left(\begin{array}{cc}
 M'_\w   & M'_\w - I_2 \\
 M'_\y- I_2  & M'_\y  
\end{array}\right),\]
which encodes  the adjacencies within $\HH'(2)$.
Analogously to Lemma~\ref{eigenvalues} we obtain that the eigenvalues of $M'$ are given by the eigenvalues of $M'_\w+M'_\y-I_2$ (and additionally $1$ with multiplicity $2$).
We have already observed that $M'_\w+M'_\y-I_2$ is a strictly positive matrix and 
from the proof of Lemma~\ref{lemma:row_positive} we also see that the diagonal elements of $M'_\w+M'_\y-I_2$  are larger than or equal to $m-1$.
 This shows that the row sum norm of $M'_\w+M'_\y-I_2$ and, hence, its dominant eigenvalue is at least $m$. 
This implies that the largest  among the eigenvalues corresponding to $\HH'(2)$ is at least $m$.
If this eigenvalue is larger than $m$ then the Hausdorff dimension of the four arcs (which are the vertices of the multigraph $\HH'(2)$) is strictly larger than $1$ and if the 
largest eigenvalue of $M'$ is $m$ then the four arcs have Hausdorff dimension $1$. However, in both cases 
the one-dimensional Lebesgue measure is infinite by \cite[Theorem 5]{MW}.
\end{proof}

\begin{example}\label{Beispiel3}
We consider the $4$-triangular labyrinth patterns system given by
\begin{align*}
\W_{1}:= & \{T_{4}({\bf k}): {\bf k} \in \{(0, 2, 1),(1, 0, 2), (1, 1, 1), (1, 2, 0)\}\}, \\
\W'_{1} := & \{T'_{4}({\bf k}): {\bf k} \in \{(0, 2, 0), (1, 0, 1), (1, 1, 0)\}\}, \\
\Y_{1}:= & \{T_{4}({\bf k}): {\bf k} \in \{(0, 1, 2), (1, 0, 2), (1, 1, 1),(1, 2, 0), (2, 0, 1), (2, 1, 0)\}\}, \\
\Y'_{1} := & \{T'_{4}({\bf k}): {\bf k} \in \{ (0, 1, 1), (1, 0, 1), (1, 1, 0)\}\}
\end{align*}
(see Figure~\ref{fig:Bsp3WY1}). 
\begin{figure}[h]
\begin{center}
\includegraphics[width=0.2\textwidth]{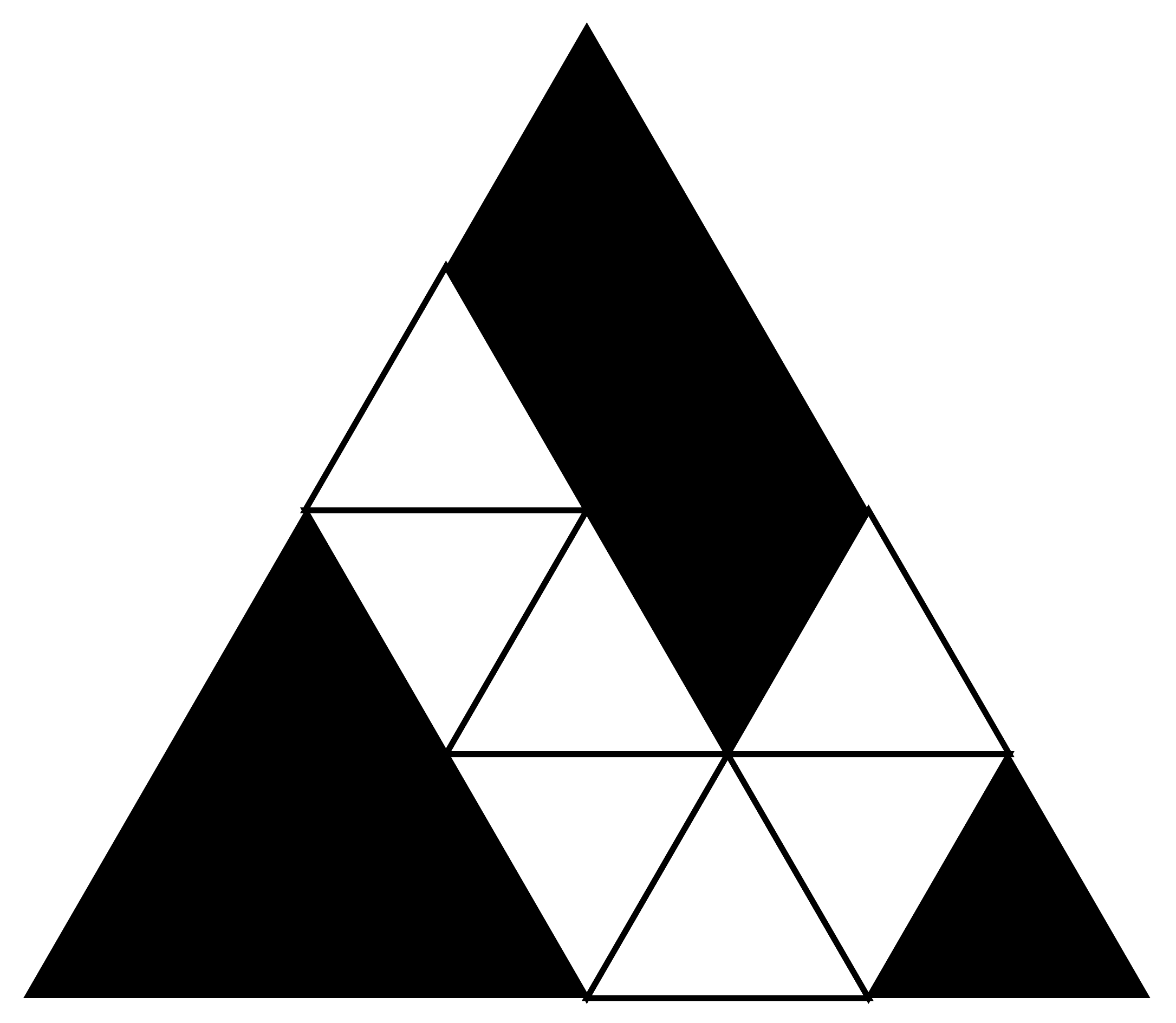}
\hspace{3cm}
\includegraphics[width=0.2\textwidth]{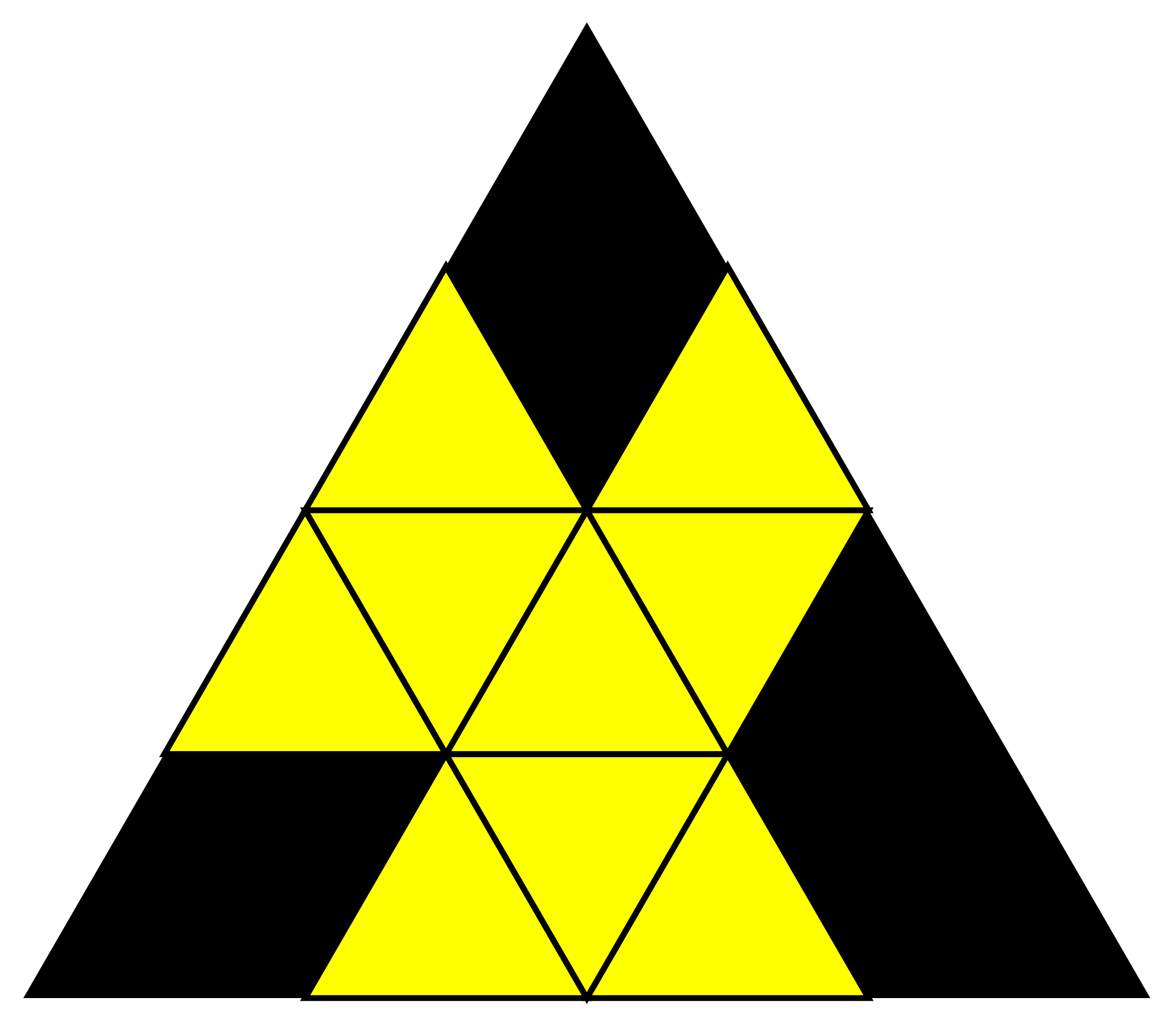}
\caption{The sets $\W_{1} \cup \W'_1$ and  $\Y_{1} \cup \Y'_1$ 
for Example~\ref{Beispiel2}.}
\label{fig:Bsp3WY1}
\end{center}
\end{figure}
This triangular labyrinth patterns system is $\{1,2\}$-blocked and 
$\{2,3\}$-blocked but not $\{1,3\}$-blocked. 
The white and yellow triangles of level $2$ and $4$ are depicted in 
Figure~\ref{fig:Bsp3WY23}.
\begin{figure}[h]
\begin{center}
\includegraphics[width=0.3\textwidth]{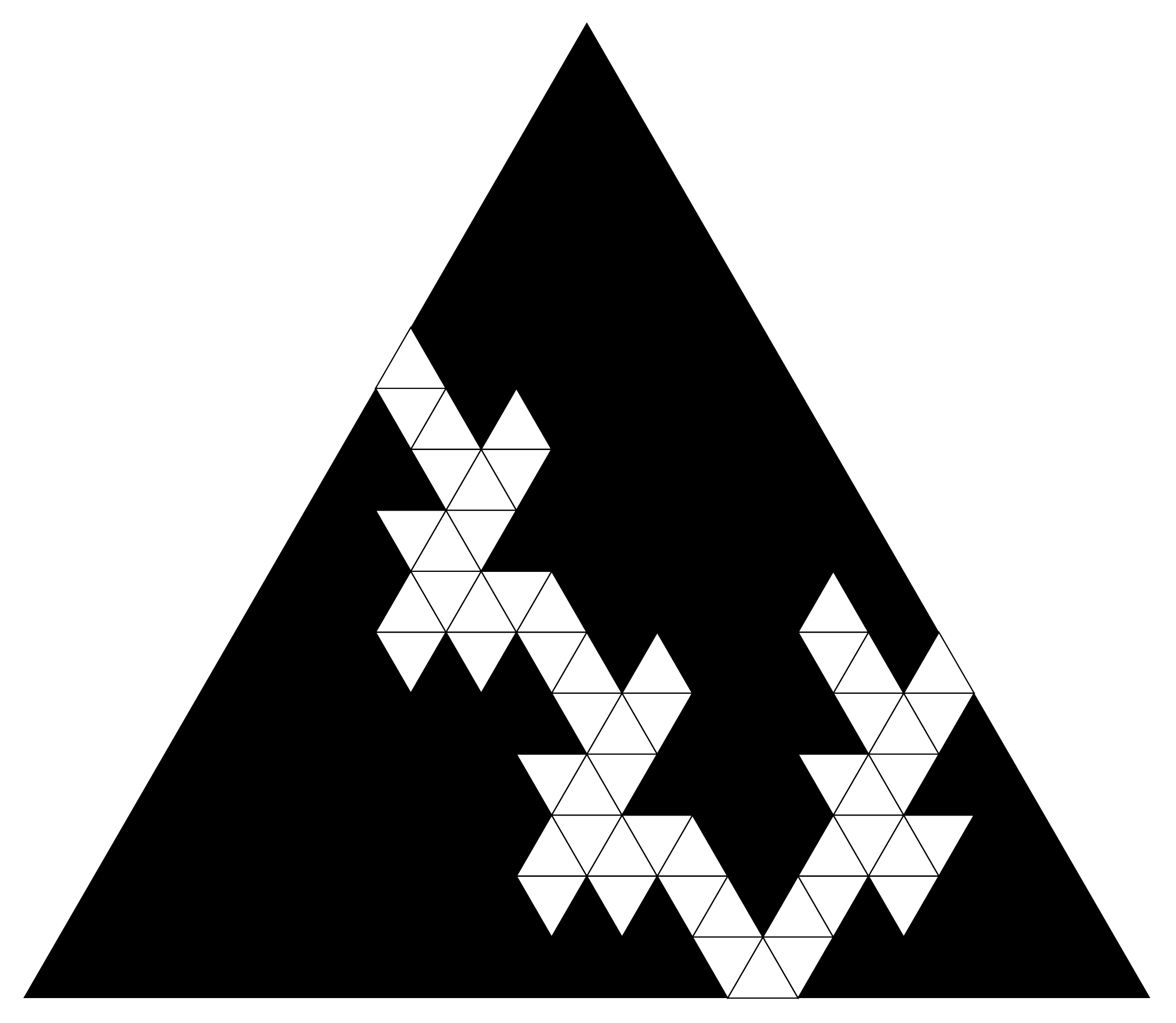}
\hspace{2cm}
\includegraphics[width=0.3\textwidth]{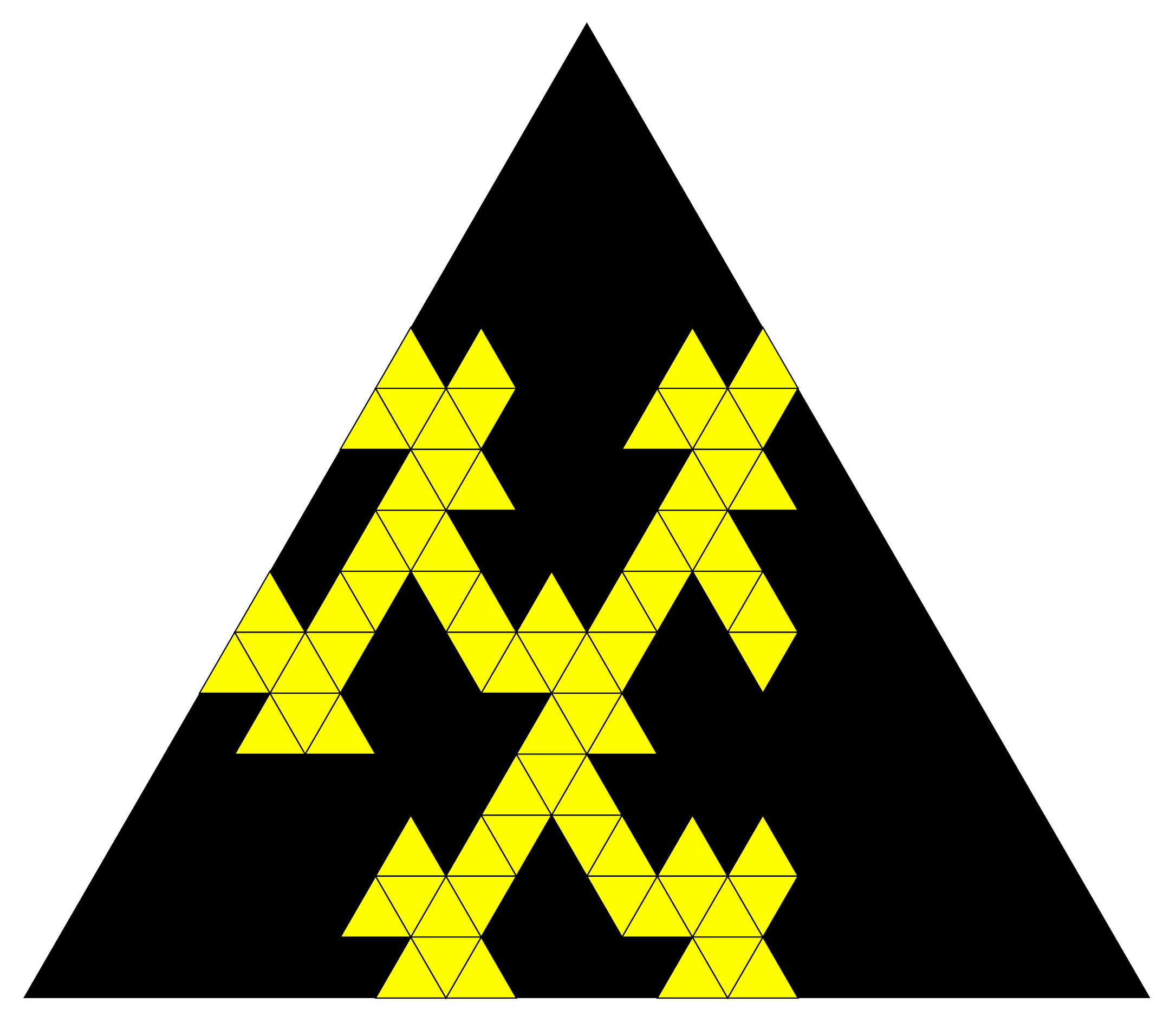}
\includegraphics[width=0.3\textwidth]{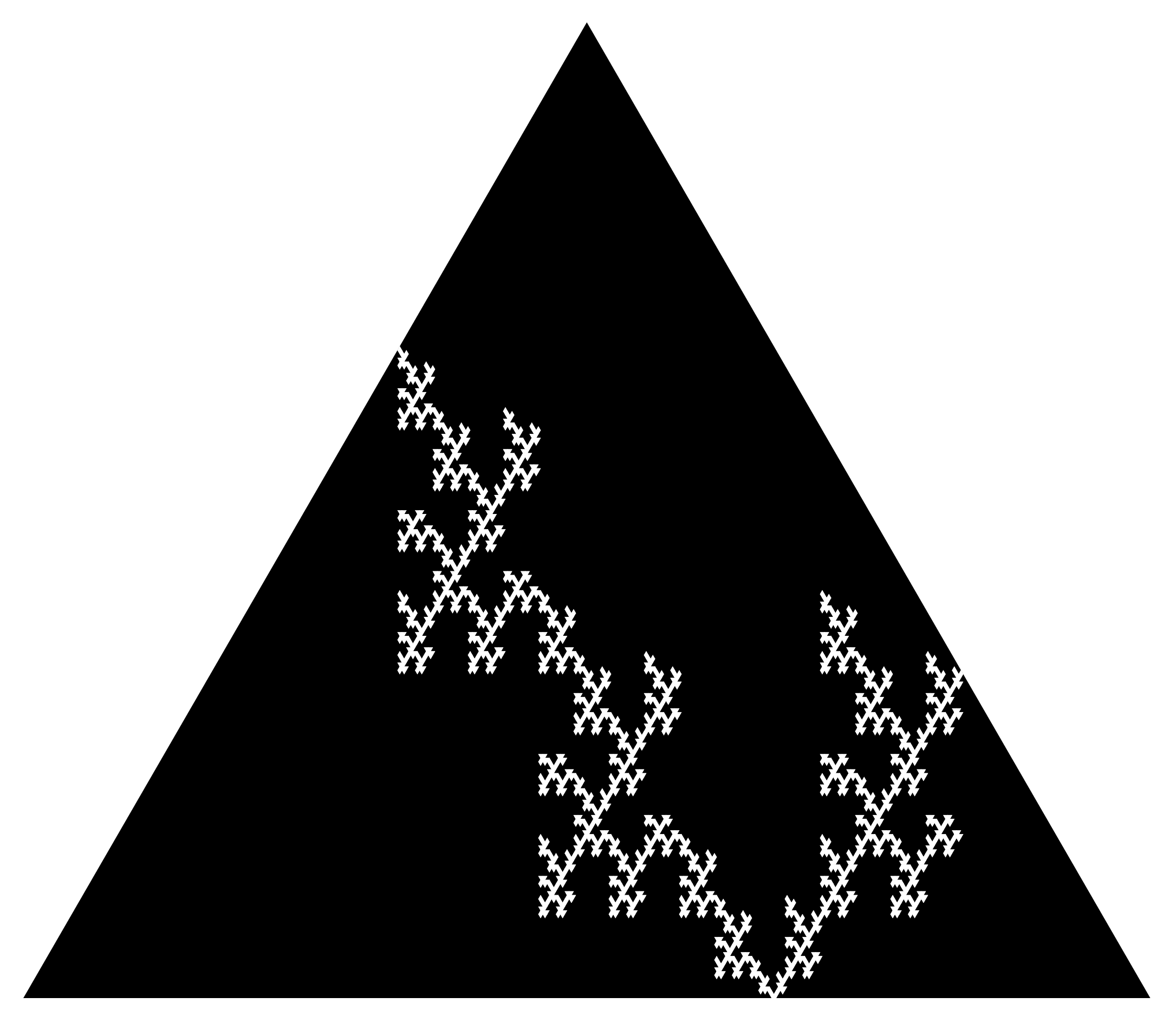}
\hspace{2cm}
\includegraphics[width=0.3\textwidth]{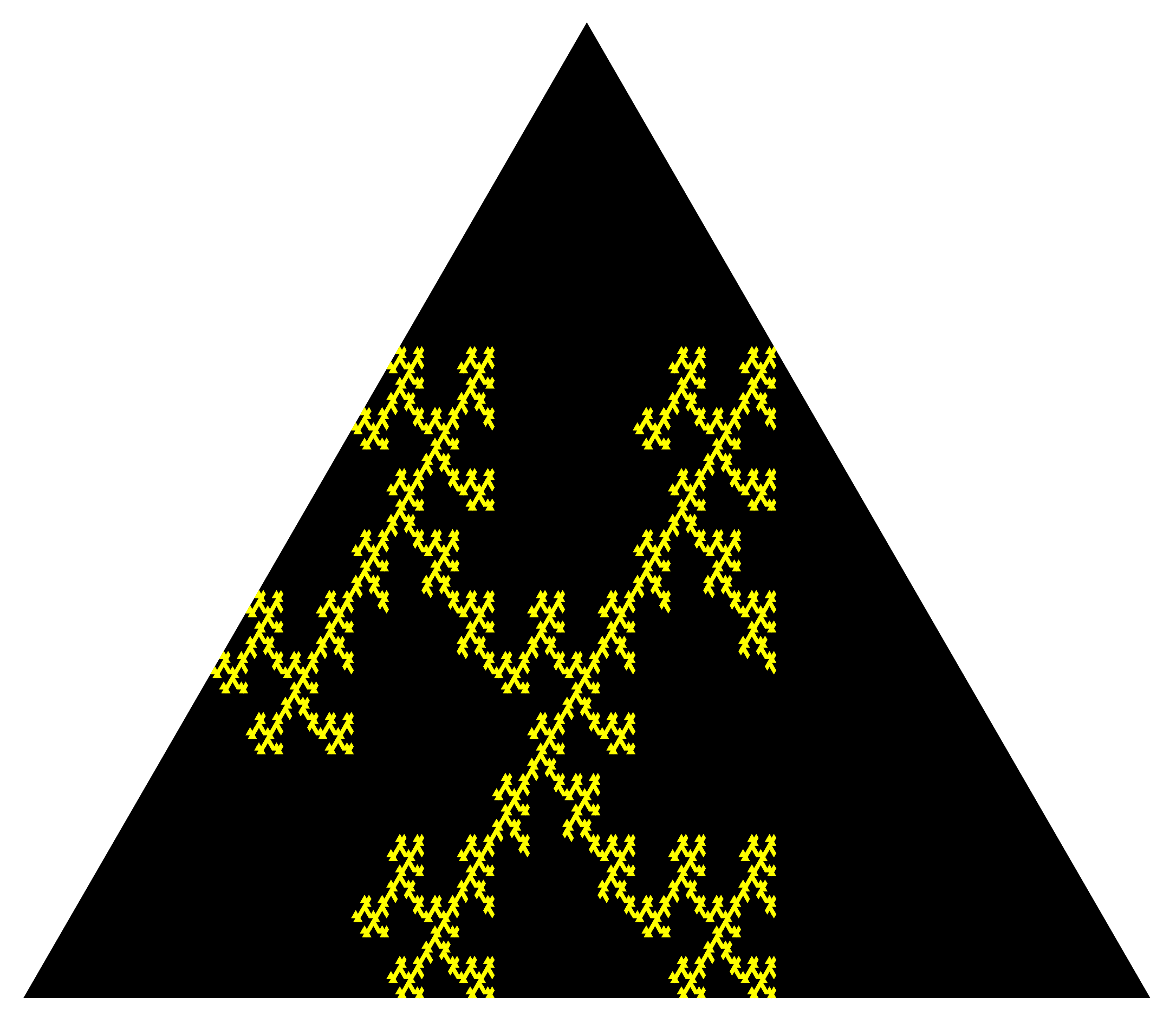}
\caption{The white and yellow triangles of level $2$ (top) and of level $4$ (bottom) for  
Example~\ref{Beispiel3}.}\label{fig:Bsp3WY23}
\end{center}
\end{figure}
\end{example}

Proposition \ref{prop:1x_ungeblockt} and Lemma \ref{lemma:projections_of_arcs_between_exits} immediately yield the following result.

\begin{corollary}\label{cor:1x_unblocked_arcs_triangle_exits}
Under the assumptions of Proposition \ref{prop:1x_ungeblockt}, for $\pi=\{i,j \}\in {\mathcal P}$, for all $n \ge 1$, the length of the arc that connects the exits of type $i$ and $j$ of any triangle $W \in \W_n \cup \W'_n$ (of any triangle
$Y \in \Y_n \cup \Y'_n$) is finite, while the length of every other arc connecting exits in $\linfw$  ($\linfy$, respectively) of these triangles is infinite. 
\end{corollary}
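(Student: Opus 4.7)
The plan is to obtain the corollary as an essentially direct consequence of Lemma~\ref{lemma:projections_of_arcs_between_exits} combined with Proposition~\ref{prop:1x_ungeblockt}, exploiting the fact that each projection map $P_W$ (respectively $P_{W'}$, $P_Y$, $P_{Y'}$) is a similarity of ratio $m^{-n}$ on $T_1$.

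First I would fix any $n \ge 1$ and any $W \in \W_n \cup \W'_n$, and split into the two cases according to whether $W$ is upright or upside down. By Lemma~\ref{lemma:projections_of_arcs_between_exits}, the arc in $\linfw$ connecting the exits of type $i'$ and $j'$ of $W$ is $P_W(a^{\w}_{\{i',j'\}})$ if $W \in \W_n$, and $P_W(a^{\y}_{\{i',j'\}})$ if $W \in \W'_n$. In particular, since the edges of the graph $\HH$ correspond to triangles in $\T_m \cup \T'_m$, the map $P_W$, obtained as a composition of $n$ projections of ratio $m^{-1}$, is a similarity transformation of $T_1$ with similarity ratio $m^{-n}$.

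The key observation is that similarities preserve finiteness and infiniteness of the $1$-dimensional Lebesgue measure: for any arc $a \subset T_1$, the length of $P_W(a)$ is exactly $m^{-n}\cdot \lambda(a)$, with the usual convention $m^{-n}\cdot \infty = \infty$. Thus $P_W(a)$ has finite length if and only if $a$ does. Applying Proposition~\ref{prop:1x_ungeblockt}, which tells us that $a^{\w}_{\pi}$ and $a^{\y}_{\pi}$ have finite length while $a^{\w}_{\pi'}$ and $a^{\y}_{\pi'}$ have infinite length for $\pi' \in \P \setminus \{\pi\}$, we immediately obtain that the arc between the exits of type $i$ and $j$ of $W$ has finite length, while for any other pair $\{i', j'\} \neq \pi$ the corresponding arc has infinite length. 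The argument for the yellow fractal and any $Y \in \Y_n \cup \Y'_n$ is completely analogous, using the corresponding statements of Lemma~\ref{lemma:projections_of_arcs_between_exits} for $Y$ and $Y'$.

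There is no substantial obstacle here: the statement is a structural corollary built on two previously proven facts. The only point worth double-checking is that the duality between upright and upside down triangles is handled correctly, namely that an upright white triangle corresponds (via $P_W$) to a scaled copy of the white fractal while an upside down white triangle corresponds to a scaled copy of the yellow fractal, and vice versa for the yellow case. Since Proposition~\ref{prop:1x_ungeblockt} gives the finiteness/infiniteness dichotomy in parallel for both $\linfw$ and $\linfy$ for each $\pi' \in \P$, the dichotomy transfers uniformly to the scaled copies regardless of the type of triangle involved.
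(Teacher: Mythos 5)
Your proposal is correct and follows essentially the same route as the paper, which derives the corollary directly from Proposition~\ref{prop:1x_ungeblockt} and Lemma~\ref{lemma:projections_of_arcs_between_exits}, using that each $P_W$ (resp.\ $P_{W'}$, $P_Y$, $P_{Y'}$) is a similarity and hence preserves finiteness and infiniteness of arc length. The case distinction between upright and upside down triangles (white copies versus yellow copies) is exactly the point the paper's Lemma~\ref{lemma:projections_of_arcs_between_exits} encodes, so nothing is missing.
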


Now, we extend the above results to arbitrary non-trivial arcs in such partially blocked triangular labyrinth fractals.

\begin{proposition}\label{prop:arcs_in_1x_unblocked}
Under the assumptions of Proposition \ref{prop:1x_ungeblockt}, the length of the arc in the fractal dendrite that connects two distinct points in $\linfw$ ($\linfy$) is finite if and only if the arc coincides with the straight line segment defined by the two points, that is contained in the fractal and parallel to the side $\rho_k$ of $T_1$, where $\pi = \{i,j\}$ and $ \{k\}= \{ 1,2,3 \}\setminus \{i,j\}$.
\end{proposition}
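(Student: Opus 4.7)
The ``if'' direction is immediate, since the straight segment from $\mathbf X$ to $\mathbf Y$ has finite length $\abs{\mathbf X - \mathbf Y}$; the plan therefore focuses on the converse. Assume $\lambda(a(\mathbf X, \mathbf Y))<\infty$. I would invoke Lemma~\ref{lemma:arc_construction} to represent the arc as a nested intersection and, for each $n\ge 1$, consider the path $p_n = (W_{n,0}, W_{n,1}, \ldots, W_{n,s_n})$ in $\G(\W_n \cup \W'_n)$ from $W_{n,0}=W_n(\mathbf X)$ to $W_{n,s_n}=W_n(\mathbf Y)$. Since $\mathbf X\neq \mathbf Y$ and the diameters of the triangles of level $n$ shrink to zero, $s_n\ge 2$ for all sufficiently large $n$. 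An argument parallel to the one in Lemma~\ref{lemma:arc_exits_triangle} then shows that for each intermediate $0<j<s_n$ the set $a(\mathbf X,\mathbf Y)\cap W_{n,j}$ is an arc joining two exits of $W_{n,j}$, whose types are determined by the type of $W_{n,j}$ in $p_n$. Corollary~\ref{cor:1x_unblocked_arcs_triangle_exits} asserts that such an arc has infinite length whenever that type differs from $\pi$; so, since $\lambda(a(\mathbf X,\mathbf Y))<\infty$, every intermediate triangle $W_{n,j}$ must be of type $\pi=\{i,j\}$.

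A triangle of type $\pi$ is connected to its path-neighbours only via sides of type $i$ or $j$, never via a side of type $k$. Hence the whole path $p_n$---including the endpoint triangles $W_n(\mathbf X)$ and $W_n(\mathbf Y)$, which are $i$- or $j$-neighbours of the adjacent triangles of type $\pi$---lies within a single strip of type $k$ at level $n$, a strip of width $m^{-n}$. Intersecting these nested strips over $n$ yields a single line segment parallel to $\rho_k$ that contains both $\mathbf X$ and $\mathbf Y$ as well as the entire arc $a(\mathbf X, \mathbf Y)$. An injective continuous curve contained in a straight line is forced to coincide with the segment between its endpoints, so $a(\mathbf X, \mathbf Y)$ is precisely that line segment, parallel to $\rho_k$ and automatically contained in $\linfw$.

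The main obstacle will be ensuring that the two endpoint triangles $W_n(\mathbf X)$ and $W_n(\mathbf Y)$ sit in the same strip of type $k$ as the intermediate triangles, because their ``type'' in $p_n$ is not defined in the same way as for intermediate vertices. I would resolve this by remarking that the first (respectively last) intermediate triangle has type $\pi=\{i,j\}$ and therefore connects to $W_n(\mathbf X)$ (respectively $W_n(\mathbf Y)$) through a side of type $i$ or $j$, so all three triangles share the same strip of type $k$. Once this is in hand, the remainder is routine: the strip widths $m^{-n}$ force the nested strips to contract to a single segment parallel to $\rho_k$, and the rigidity supplied by Corollary~\ref{cor:1x_unblocked_arcs_triangle_exits} carries the argument through.
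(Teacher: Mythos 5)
Your proposal is correct and takes essentially the same route as the paper's proof: via Lemma~\ref{lemma:arc_construction} the intersection $a\cap W$ with each intermediate triangle of the path $p_n$ is an arc between two exits of $W$, Corollary~\ref{cor:1x_unblocked_arcs_triangle_exits} then forces every intermediate triangle to be of type $\pi$, and from this one concludes that $a(\mathbf X,\mathbf Y)$ is a straight segment parallel to $\varrho_k$. The only difference is in presentation: where the paper finishes tersely by appealing to non-$\pi$-blockedness together with Lemma~\ref{lemma:arc_construction}, you spell the last step out explicitly through the observation that edges labelled $i$ or $j$ preserve the $k$-th index, so the whole path lies in a single strip of type $k$ of width $O(m^{-n})$ at every sufficiently large level, and these strips squeeze the arc onto a line; this is a correct elaboration of the same argument.
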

\begin{proof}
Since one of the implications is trivial, let us prove the other one. Let, w.l.o.g., $a\in \linfw$ be an arc that connects the distinct points ${\mathbf X}, {\mathbf Y} \in \linfw$ and has finite length. For $n \ge 1$, let $p_n$ be the path in $\G(\W_n \cup \W'_n)$ that connects $W_n({\mathbf X})$ and $W_n({\mathbf Y})$, chosen as in Lemma \ref{lemma:arc_construction}. Since ${\mathbf X}\ne {\mathbf Y}$, there exists some $n_0\ge 1$ such that $p_{n_0}$ consists of at least three triangles of $\W_n \cup \W'_n$. Let then $W$ be a triangle in the path $p_{n_0}$, such that $W \ne W_n({\mathbf X}),W_n({\mathbf Y})$ and let $\gamma \in \P$ be the type of the triangle $W$ with respect to the path $p_{n_0}$. By the arc construction given in Lemma \ref{lemma:arc_construction} and the definition of the exits of a triangle it follows that $a \cap W$ is an arc between two exits of $W$. If $\gamma \ne \pi$, then it follows from Corollary \ref{cor:1x_unblocked_arcs_triangle_exits} that the length of the curve $a \cap \W$ is infinite, which contradicts the finite length of $a$.  Therefore, all triangles in $p_{n_0}$ distinct from $W_{n_0}({\mathbf X})$ and $W_{n_0}({\mathbf Y})$ can only be of type $\pi$ with respect to $p_{n_0}$. Due to the fact that the labyrinth patterns system is not $\pi$-blocked, it follows, by also applying Lemma \ref{lemma:arc_construction},  that the arc $a \subseteq \linfw$ that connects ${\mathbf X}$ and ${\mathbf Y}$ in $ \linfw$ is a straight line segment, parallel to the side $\rho_k$ of $T_1$.
\end{proof}

\end{document}